\numberwithin{equation}{section}
\theoremstyle{plain}
\newtheorem{theorem}{Theorem}[section]
\newtheorem{corollary}[theorem]{Corollary}
\newtheorem{lemma}[theorem]{Lemma}
\newtheorem{proposition}[theorem]{Proposition}
\newtheorem{definition}[theorem]{Definition}
\theoremstyle{definition}
\theoremstyle{remark}
\newtheorem*{remark}{\textbf{Remark}}
\newmdenv[innerlinewidth=0.5pt, roundcorner=4pt,innerleftmargin=6pt,
innerrightmargin=6pt,innertopmargin=10pt,innerbottommargin=10pt,backgroundcolor=gray!21]{mybox}
\date{}
\title{Fluid limit and gelation in the frozen Erd\H{o}s-R\'enyi random graph}
\author{B\'en\'edicte Haas \thanks{Universit\'e Sorbonne Paris Nord, LAGA, CNRS (UMR  7539) 93430 Villetaneuse, France \newline \hspace*{0.5cm} E-mail: haas@math.univ-paris13.fr}  \quad \& \hspace{0.2cm}  Vincent Viau \thanks{Universit\'e Sorbonne Paris Nord, LAGA, CNRS (UMR  7539) 93430 Villetaneuse, France \newline \hspace*{0.5cm} E-mail: viau@math.univ-paris13.fr}}
\begin{document}

\let\oldproofname=\proofname
\renewcommand{\proofname}{\rm\bf{\oldproofname}}

\maketitle

\begin{abstract}
The frozen Erd\H{o}s-R\'enyi random graph is a variant of the  standard dynamical Erd\H{o}s-R\'enyi random graph that prevents the creation of the giant component by freezing the evolution of connected components with a unique cycle. The formation of multicyclic components is forbidden, and the growth of components with a unique cycle is slowed down, depending on a parameter $p \in [0,1]$ that quantifies the slowdown. At the time when all connected components of the graph have a (necessary unique) cycle, the graph is entirely frozen and the process stops.  In this paper we study the fluid limit of the main statistics of this process, that is their functional convergence as the number of vertices of the graph becomes large and after a proper rescaling, to the solution of a system of differential equations. Our proofs are based on an adaption of Wormald's differential equation method. We also obtain, as a main application, a precise description of the asymptotic behavior of the first time when the graph is entirely frozen. 
\end{abstract}

\tableofcontents

%%%%%%%%%%%%%%%%%%%%%%%
\section{Introduction and main results}
%%%%%%%%%%%%%%%%%%%%%%%

We study a variant of the standard dynamical Erd\H{o}s-R\'enyi random graph which generates a dynamical random graph with only \emph{simple} connected components. By simple, we mean either a \emph{tree} (the number of edges in the connected component is equal to the number of vertices minus one), or a connected component with a unique cycle, called a \emph{unicycle} (the number of edges in the connected component is equal to the number of vertices). 
In the standard Erd\H{o}s-R\'enyi graph, there is essentially one non-simple connected component: the giant component, which emerges in the so-called supercritical phase, the other non-bounded components being simple with high probability. Our variant model in a sense prevents the creation of the giant component by freezing the evolution of the unicycles. This model was introduced recently by Contat and Curien \cite{ContatCurien23}, motivated by connections with a parking model on a Cayley tree, and then studied by Viau \cite{viau24,viau25} and Krapivsky \cite{krapisvky24} in the physics literature. It is a discrete-time evolving model of graph on $n$ labelled vertices $\{1,\ldots,n\}$ which may be frozen or not frozen.  Its dynamics depends on a parameter $p\in [0,1]$ which slows down the growth of unicycles in the standard Erd\H{o}s-R\'enyi graph and prevents the formation of multicyclic components, that is with more edges than vertices. We denote this model by $$\big(\mathrm{F}_{p,n}(m), m\in \mathbb Z_+\big)$$ and refer to it as the \emph{$p$-frozen model}. Its construction proceeds recursively on $m \in \mathbb Z_{+}$ at follows. Initially, $\mathrm{F}_{p,n}(0)$ is the graph composed of $n$ isolated and \emph{non-frozen} vertices. Then at step $m$, given $\mathrm{F}_{p,n}(m-1)$, one of the $n(n-1)/2$ possible edges is selected uniformly at random and:

\begin{center}
\begin{minipage}{15.5cm}
\begin{enumerate}[topsep=-0cm,leftmargin=0.3cm]
\item[$\bullet$] If the selected edge connects two vertices of trees of $\mathrm{F}_{p,n}(m-1)$ (this may be two vertices of a same tree, or of two different trees), then it is \emph{added} to the graph to form a new connected component (the other connected components of $\mathrm{F}_{p,n}(m-1)$ remain unchanged). If two different trees were involved, this operation produces a new tree and none of its vertices are frozen. Otherwise it produces a unicycle component and we decide that this unicycle and its vertices are \emph{frozen}. This operation gives us a new graph: $\mathrm{F}_{p,n}(m)$.
\item[$\bullet$] If the selected edge connects two vertices of unicycle components of $\mathrm{F}_{p,n}(m-1)$ (possibly the same unicycle), then it is \emph{discarded} and $\mathrm{F}_{p,n}(m)=\mathrm{F}_{p,n}(m-1)$.
\item[$\bullet$] If  the selected edge connects a tree and a unicycle of $\mathrm{F}_{p,n}(m-1)$, then it is added with probability $p$ and discarded with probability $1-p$. If added, the tree is glued on the unicycle to form a new, bigger unicycle, whose vertices are all frozen. This gives $\mathrm{F}_{p,n}(m)$.
\end{enumerate}
\end{minipage}
\end{center}
Recall that the standard Erd\H{o}s-R\'enyi graph evolves similarly by selecting at each step an edge uniformly among the $n(n-1)/2$ possible edges, but then the selected edge is systematically added to the current graph. Throughout the paper we will denote by $(\mathrm{ER}_n(m), m\in \mathbb Z_+)$ a version of this standard model. We emphasize that various other variants of this standard model with different constraints preventing the formation of components, or destroying components, have been studied, see for example \cite{BR00, LMP23, RT09, Ross21} and the references therein, or  \cite{Ald00, BBKK23} for other models of frozen graphs.

Returning to the frozen model, the \emph{forest part} of $\mathrm{F}_{p,n}(m)$ is its subgraph corresponding to the set of trees. The \emph{gel} of $\mathrm{F}_{p,n}(m)$ is the set of all frozen vertices, that is the set of all vertices involved in a unicycle. A vertex of $\mathrm{F}_{p,n}(m)$ is thus either in the forest or in the gel. We will call \emph{total gelation time} the first time when all vertices are frozen: from this time the graph is completely frozen, i.e. it no longer evolves.

This paper addresses two main questions, namely 1) the existence of a \emph{fluid limit}, or \emph{law of large numbers}, for several statistics of the $p$-frozen model $\mathrm{F}_{p,n}$, such as its number of frozen vertices, discarded edges, trees of a given size, etc., and 2) the asymptotic behavior of its total gelation time as well as the distribution and extinction of the number of trees of a given size in the neighborhood of the gelation time. In general, the \emph{size} of a connected component refers to its number of vertices.

These questions have natural counterparts in the standard Erd\H{o}s-Rényi graph, respectively the fluid limit for the size of the largest component and the time needed for the graph to be connected. Both questions have been deeply studied and it is well-known since the initial works of Erd\H{o}s and R\'enyi \cite{ErdosRenyi59,ErdosRenyi60} that the size of the largest component in a graph with $n$ vertices exhibits a phase transition when the number of edges approaches $n/2$. This can be resumed as follows: the largest components of $\mathrm{ER}_n(m)$  are of order $\ln(n)$ when $m/n\sim t<1/2$ (subcritical regime), of order $n^{2/3}$ when $m/n\sim 1/2$ (critical regime) and there is a unique, giant, component of order $n$ when $m/n\sim t>1/2$ (supercritical regime), the others being of order at most $\ln(n)$. Among their numerous results, Erd\H{o}s and Rényi displayed the expression of the fluid limit of the size of the giant component in the supercritical regime, and proved that it is determined by the unique non-null function $g_{\mathrm{ER}}$ verifying the following equation
\begin{equation*}
	g_{\mathrm{ER}}(t)=1-\mathrm{e}^{-2tg_{\mathrm{ER}}(t)},\quad t>1/2.
\end{equation*}  
Since then, the phase transition has been studied extensively. We refer e.g. to \cite{Pittel88} for a study of the subcritical regime, to \cite{aldous97,JansonKnuthLuczakPittel93,luczak90} for the critical regime and the emergence of the giant component, and to \cite{barraezBoucheronVega00,pittel90,rath18,stepanov70} for the supercritical regime and especially for results on the fluctuations around the fluid limit. For an overview on Erd\H{o}s-Rényi random graph, one could also refer to the books \cite{AlonSpencer16,bollobas01,vanderhofstad24} which also address the connectedness of the graph. This last issue has been initially raised by Erd\H{o}s and Rényi \cite{ErdosRenyi59} and mostly investigated in the early papers on the subject \cite{ErdosRenyi59,ErdosRenyi60,stepanov70}. Erd\H{o}s and Rényi proved that the time needed for the graph to be connected, say $A_{\mathrm{ER},n}$, coincides in the limit with the vanishing time of isolated vertices,  which enabled them to show that
\begin{equation}
\label{connec:ER}
	\frac{A_{\mathrm{ER},n}}{n}-\frac{\ln(n)}{2}  \; \underset{n \rightarrow \infty}{\overset{\mathrm{(d)}}\longrightarrow} \; \frac{\mathrm{Gu}}{2},
\end{equation}
where $\mathrm{Gu}$ denotes a standard Gumbel distribution, $\mathbb P(\mathrm{Gu}\leq x)=e^{-e^{-x}}, x \in \mathbb R$.

\bigskip

For the frozen model $\mathrm{F}_{p,n}$ a similar phase transition at $m/n\sim 1/2$ has been demonstrated in \cite{ContatCurien23} for $p=1/2$ and generalized in \cite{viau25} for all $p \in [0,1]$.  This will be recalled later in more detail. The aim of this paper is to understand the effect of the gelation, and the induced slowdowns, in relation to the standard Erd\H{o}s-R\'enyi graph, essentially in the supercritical regime when $m \gg n/2$.

\bigskip

\emph{\textbf{Remarks.}}  1) In the frozen model, when $p=1$, the tree components are systematically added to the gel when the selected edge connects to the gel. There is then an obvious coupling with the standard Erd\H{o}s-R\'enyi  model so that the forest part of the frozen model coincides with that of the standard Erd\H{o}s-R\'enyi (and the gel coincides with the set of vertices in components with at least one cycle, namely the \emph{cyclic components}, in the  standard Erd\H{o}s-R\'enyi). So, several results that we will state below retrieve similar results on the standard Erd\H{o}s-R\'enyi. 

\vspace{-0.1cm}

2) In contrast, the tree components of the frozen model when $p=0$ are never added to the gel when the selected edge connects to  the gel. 

\vspace{-0.1cm}

3) Although there is no obvious monotonicity of the gel size in the parameter $p\in [0,1]$, there is also an obvious coupling, for each $p$,  so that the gel of the $p$-frozen model is included in the set of vertices in cyclic components of the standard Erd\H{o}s-R\'enyi model: the gel process in $\mathrm{F}_{1,n}$ is thus stochastically larger than the gel process in $\mathrm{F}_{p,n}$, whatever $p\in [0,1]$.

\bigskip

We present our main results in the three forthcoming subsections, focussing in this paper on the case 
$$
p \in (0,1].
$$
This is implicit in all statements.
The case $p=0$, where unicycles become unattractive as soon as they are created, shows partially different behavior and requires an adapted approach, although several of our intermediate steps for the implementation of the main results are still valid for $p=0$. We discuss the expected results and open questions on this case  in Section \ref{sec:open}.

\bigskip

\textbf{Notation.} We will use the following notation throughout the paper, for $m \in \mathbb Z_+$:
\begin{enumerate}[topsep=0cm]
\item[-] $G_{p,n}(m)$ is the size of the gel at time $m$, that is the number of frozen vertices 
\item[-] $D_{p,n}(m)$ is the number of discarded edges at time $m$
\item[-] $V_{p,n}(m)$ is the number of vertices in the forest part of $\mathrm{F}_{p,n}(m)$
\item[-] $E_{p,n}(m)$ is the number of edges  in the forest part of $\mathrm{F}_{p,n}(m)$
\item[-] $N_{p,n}^{(k)}(m)$ is the number of trees of size $k$ in $\mathrm{F}_{p,n}(m)$, $k \in \mathbb N$. 
\end{enumerate}

Note the following obvious relations, which we will regularly use to pass from one quantity to the other,
\begin{equation}
\label{def:V_{p,n}E_{p,n}}
V_{p,n}(m)+G_{p,n}(m)=n \qquad \text{and} \qquad  E_{p,n}(m)+G_{p,n}(m)+D_{p,n}(m)=m.
\end{equation}
Of course, for $n$ fixed, the processes $G_{p,n}$ and $D_{p,n}$ are non-decreasing. Note also the trivial bound $G_{p,n}(m)\leq \min(m,n)$.

\bigskip

\emph{\textbf{Remark.}} As usual in dynamical random graph models, it may be easier in some situations to work with a continuous version of the model. In Section \ref{sec:continuous} we introduce a \emph{Poissonized} counterpart of $\mathrm{F}_{p,n}$. This will be useful for studying the total gelation time. In this introduction the main results are stated on the discrete model, their continuous counterparts will be given in the core of the paper.

\bigskip

\emph{\textbf{Remark.}} Krapivsky's paper \cite{krapisvky24} has related interests and was published in the physics literature while we were working on this project. He obtained, via a more intuitive approach, the expression and some properties of the fluid limits of the gel, of the number of trees of size $k\geq 1$ and of the average number of unicycles of size $k\geq 1$, by identifying the differential equations there are solutions to. He also develops heuristics for the total gelation time. Our paper confirms and completes his predictions.

\subsection{Fluid limit of the gel}

We start by defining the function that will describe the fluid limit of the gel (size) $G_{p,n}$. An equivalent definition, as a solution to a differential equation, is given in Section \ref{sec:fluid}.

\begin{definition}
\label{def:gel_function}
We call \emph{gel mass function} the function $g_p:[0,\infty)\rightarrow [0,1)$ which is null on $[0,1/2]$ and defined on $[1/2,\infty)$ as the \emph{inverse} of the function $f_p:[0,1) \rightarrow [1/2,\infty)$ given for $t \in [0,1)$ by
\begin{equation*}
f_p(t)~=~\frac{1}{2}+\frac{t}{2p}\int_{0}^1 \frac{u^{\frac{1}{p}}}{1-tu} \mathrm du~=~\frac{1}{2}\sum_{n=0}^{\infty}\frac{t^n}{1+pn}.
\end{equation*}
\end{definition}

Note that $f_p$ is decreasing in $p$ and so $g_p$ is increasing in $p$. This monotonicity was a priori not obvious since there is no stochastic monotonicity in $p \mapsto G_{p,n}(m)$. However, as already observed, the gel size $G_{p,n}(m)$ is stochastically smaller (whatever $p$) than the total number of vertices involved in cyclic components at time $m$ of the standard Erd\H{o}s-R\'enyi graph, which is distributed as $G_{1,n}(m)$. So the bound $g_p \leq g_1$ was predictable.

We will work in  detail on the function $g_p$ in Section \ref{sec:fluid}, but already emphasize here that $g_p$ is infinitely differentiable on $(1/2,\infty)$, $g_p(1/2)=0$, $g_p'(1/2^+)=2(1+p)$ and $g_p(t)=1-e^{-2pt}+o\left(e^{-2pt}\right)$ as $t \rightarrow \infty$. When $p=1$, the inverse function $f_1$ is particularly simple
$$f_1(s)=\frac{-\ln(1-s)}{2s},\quad s\in [0,1),$$
and we see that the gel mass function $g_1$ is indeed equal to the fluid limit $g_{\mathrm{ER}}$ of the giant component in the classical Erd\H{o}s-Rényi random graph. 

We introduce simultaneously the function $d_p:[0,\infty) \rightarrow [0,1)$ defined by 
\begin{equation}
\label{def:d_p}
d_p(t)=t-g_p(t)-t(1-g_p(t))^2.
\end{equation}
Note that $d_p(t)=0$ for $t \in [0,1/2]$, $d_p(1/2^+)=0$ and $d_p(t)\sim t-1+O(e^{-2pt})$ when $t \rightarrow \infty$.

Our main result expresses the scaling limit of the processes $G_{p,n}$ and $D_{p,n}$ in terms of these deterministic functions. All other results of the paper rely on this one.

\begin{theorem}
\label{thm:fluid limit}
As $n \rightarrow \infty$, for the topology of uniform convergence on compacts,
$$\left(\bigg(\frac{G_{p,n}(\lfloor nt\rfloor)}{n}, \frac{D_{p,n}(\lfloor nt\rfloor)}{n} \bigg), t \geq 0 \right) ~\overset{\mathbb P}\longrightarrow~\left(\big(g_p(t),d_p(t)\big), ~t\geq 0 \right).$$
\end{theorem}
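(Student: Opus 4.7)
The plan is to apply Wormald's differential equation method to an augmented Markov chain that tracks, besides $G_{p,n}$ and $D_{p,n}$, the tree-size counts $N_{p,n}^{(k)}$ for $k\geq 1$. Augmentation is unavoidable because a case analysis over the three edge types gives
$$
\mathbb E\big[D_{p,n}(m)-D_{p,n}(m-1)\mid \mathcal F_{m-1}\big] \;=\; \frac{G(G-1)+2(1-p)VG}{n(n-1)},
$$
which depends only on $G/n$, but the analogous computation for $G_{p,n}$ yields
$$
\mathbb E\big[G_{p,n}(m)-G_{p,n}(m-1)\mid \mathcal F_{m-1}\big] \;=\; \frac{1}{\binom{n}{2}}\bigg( pG\sum_{k\geq 1} k^2 N_{p,n}^{(k)} \,+\, \frac{1}{2}\sum_{k\geq 1} k(k-1)(k-2) N_{p,n}^{(k)}\bigg),
$$
which involves the second and third moments of the tree-size distribution. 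The first step is therefore to write down the analogous (more elaborate) one-step drifts for each $N_{p,n}^{(k)}$, coming from tree-tree merges, internal cycle gelations, and gel absorptions, and to assemble an infinite triangular ODE system (the $k$-th equation involving only $\nu^{(j)}$ for $j\leq k$).

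Next, I would apply Wormald's theorem to the truncated state $(G_{p,n}/n,\,D_{p,n}/n,\,(N_{p,n}^{(k)}/n)_{k\leq K})$ on a compact interval $[1/2+\delta,T]$, obtaining convergence in probability to a finite-dimensional ODE solution. Letting $K\to\infty$ then requires uniform-in-$(n,m)$ tail control on the truncation error $\sum_{k>K} k^2 N_{p,n}^{(k)}(m)/n$, which I expect to be the main technical obstacle. Such tail control should follow from a coupling with the standard Erdős-Rényi process $\mathrm{ER}_n$ via the same edge sequence (each frozen tree is contained in an $\mathrm{ER}_n$ component), combined with the observation that, as soon as $g$ is bounded below by a positive constant, the effective forest density $2t(1-g)$ is bounded below $1$, giving exponential tails for the tree-size distribution. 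The behaviour on $[0,1/2+\delta]$ is handled separately by showing $G_{p,n}(\lfloor nt\rfloor)/n \to 0$ uniformly, using the well-known fact that the number of cyclic vertices in the subcritical $\mathrm{ER}_n$ is $O_{\mathbb P}(1)$ for $t<1/2$ together with the coupling $G_{p,n}(m)\leq G_{1,n}(m)$.

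Once the fluid limit of the infinite system is in place, the cycle-gelation contribution to the drift of $G_{p,n}/n$ (being $O(1/n)$ at the time scale $m=\lfloor nt\rfloor$) disappears, leaving $g'(t) = 2p\,g(t)\,\mu_2(t)$ with $\mu_2(t)=\sum_{k\geq 1}k^2 \nu^{(k)}(t)$. Reading off the limiting densities $\nu^{(k)}$ from the triangular ODE system shows that the forest tree-size distribution is Borel with effective parameter $c_{\mathrm{eff}}(t)=2t(1-g(t))$, hence $\mu_2(t)=(1-g)/(1-2t(1-g))$. This closes the scalar Cauchy problem
$$
g'(t) \;=\; \frac{2p\,g(t)(1-g(t))}{1-2t(1-g(t))}, \qquad g\equiv 0 \text{ on } [0,1/2],
$$
with the indeterminate form at $t=1/2$ resolved by the first-order expansion $g(1/2+\varepsilon)\sim 2(1+p)\varepsilon$ (this is where the hypothesis $p>0$ is used, since $g(1/2+\varepsilon)>2\varepsilon$ is precisely what makes $c_{\mathrm{eff}}<1$ immediately past criticality). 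A direct computation shows that the function $g_p$ defined via $f_p(g_p(t))=t$ satisfies the same equation, so Cauchy-Lipschitz uniqueness away from $t=1/2$ identifies $\lim G_{p,n}/n = g_p$. The identification $\lim D_{p,n}/n = d_p$ is then an algebraic check: the function $t-g_p(t)-t(1-g_p(t))^2$ satisfies $d_p'=g_p^2+2(1-p)g_p(1-g_p)$ with $d_p(1/2)=0$, exactly matching what the drift computation and the ODE for $g_p$ produce in the limit.
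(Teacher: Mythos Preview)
Your approach is genuinely different from the paper's. The paper does not augment with the tree counts $N_{p,n}^{(k)}$; instead it exploits the free forest property (Proposition~\ref{prop:freeforest}): conditionally on $(V_{p,n}(m),E_{p,n}(m))$, the forest part is a \emph{uniform} random forest, so the conditional jump law of $G_{p,n}$ given only the pair $(G_{p,n},D_{p,n})$ is an explicit function of these two variables (Proposition~\ref{lm_transitions}), with asymptotics read off from Britikov's enumeration of uniform forests. Wormald's method is then applied directly to the bivariate process $(G_{p,n},D_{p,n})$, bypassing the infinite truncation-and-tail-control step entirely. This is what the comment after Theorem~\ref{thm:fluid limit} is pointing at.

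Your route could in principle be made to work, but the tail-control argument as stated is circular. You propose to bound $\sum_{k>K} k^2 N_{p,n}^{(k)}/n$ by observing that ``as soon as $g$ is bounded below by a positive constant, the effective forest density $2t(1-g)$ is bounded below $1$''. But a positive lower bound on $G_{p,n}/n$ past $t=1/2$ is exactly what the theorem asserts; establishing it \emph{a priori} is the hardest part of the paper's proof (Proposition~\ref{prop:starting_pt} and Section~\ref{section:starting_pt}). One must show $G_{p,n}(\lfloor(1/2+\varepsilon)n\rfloor)\ge(2+c(p))\varepsilon n$ with $c(p)>0$, i.e.\ strictly more than $2\varepsilon n$, which is what forces $2E_{p,n}-V_{p,n}<0$ and makes the forest subcritical. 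The paper's argument for this is a delicate drift computation requiring separate estimates in the sub-, near- and super-critical forest regimes (Corollary~\ref{corol:est_transitions}, Lemma~\ref{lm:supercrit}, Lemma~\ref{lm:min_esp}). Your $\mathrm{ER}_n$ coupling shows only that each tree of $F_{p,n}$ sits inside some $\mathrm{ER}_n$-component, but for $t>1/2$ that component may well be the giant, so no tail bound follows. Relatedly, the ODE $g'=2pg(1-g)/(1-2t(1-g))$ with $g(1/2)=0$ is not covered by Cauchy--Lipschitz at $t=1/2$ (the denominator vanishes), so ``uniqueness away from $t=1/2$'' does not select the nontrivial branch; the paper handles this by explicit inversion (Lemma~\ref{lm:equadiff}) and by running Wormald's method from $t=1/2+\varepsilon$ with the random initial condition provided by Proposition~\ref{prop:starting_pt}, then letting $\varepsilon\downarrow 0$.
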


\vspace{-0.2cm}

\begin{figure}[!h]
	\centering
	\subfloat[The functions $g_1$ (blue) and $g_{1/2}$ (red)]{\includegraphics[width=8cm, height=5cm]{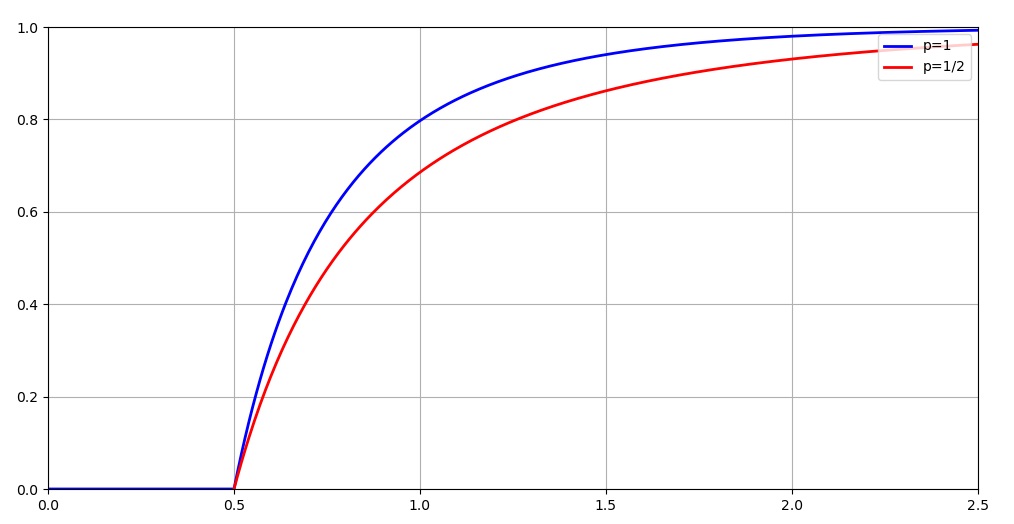}}
	\qquad \subfloat[The functions $d_1$ (blue) and $d_{1/2}$ (red)]{\includegraphics[width=8cm, height=4.9cm]{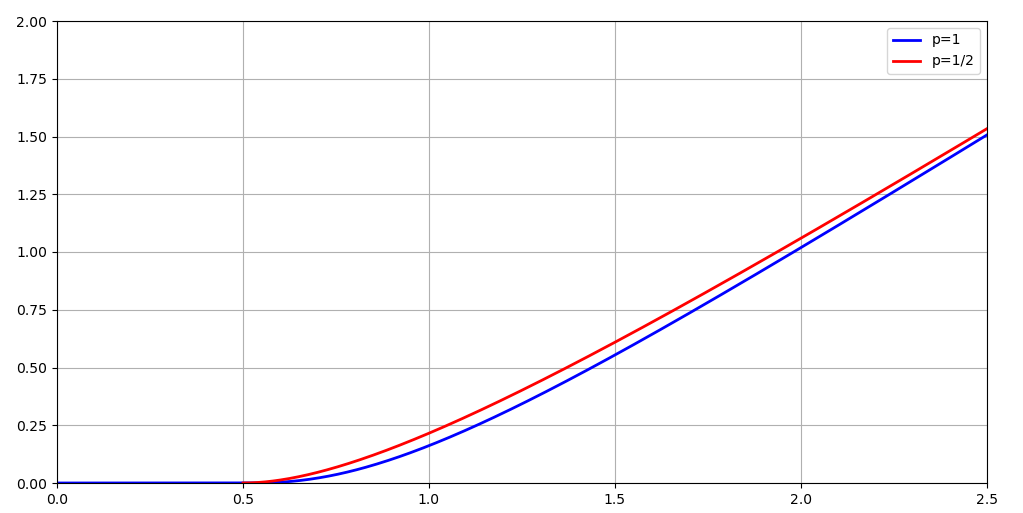}}	
	\label{fig:f_p_d_p}
\end{figure}

\bigskip

\emph{\textbf{Remark.}} When $t<1/2$, the limits $g_p(t)$ and $d_p(t)$ are null.  The phase transition at $t=1/2$ for the frozen model $\mathrm F_{p,n}$ was revealed by Contat and Curien \cite{ContatCurien23} (generalized in \cite{viau25} for $p\neq 1/2$). They proved that, as for the Erd\H{o}s-R\'enyi graph, the sizes of the connected components of the frozen model behave in $n^{2/3}$ when $m$ is of order $n/2$, as well as the size of the gel, and more precisely that appropriately rescaled in the critical window $ \lfloor n/2  + \lambda n^{2/3}\rfloor, \lambda \in \mathbb R$, the connected components converge, as a process in the variable $\lambda$, to a frozen multiplicative coalescent, generalizing thus the well-known result of Aldous \cite{aldous97} for the standard Erd\H{o}s-R\'enyi model. 
This is completed by the paper \cite{viau24}, which studies the behavior of the gel at the exit of the critical window. Theorem \ref{thm:fluid limit} therefore completes these results by describing the asymptotics in the supercritical regime $t>1/2$. Note that for $p=1$, it gives the functional convergence of the rescaled number of vertices in cyclic components of the standard Erd\H{o}s-R\'enyi graph towards the function $g_{\mathrm{ER}}$. 

\bigskip

\textbf{A word on the proof.} The approximation of trajectories of random processes by solutions to differential equations has been deeply studied. See e.g. Darling and Norris's survey \cite{DarlingNorris08} for background and references.  The proof of Theorem \ref{thm:fluid limit} is based on the so-called \emph{differential equation method}  as developed by  Wormald \cite{wormald95,wormald97} for discrete-time processes (notably related to combinatorial structures) whose jumps are not too big and well approximated by sufficiently smooth functions. The implementation of this method will not be trivial here because the differential equations involving the limit functions $g_p$ and $d_p$ (see (\ref{eq:f}), (\ref{eq:syst_EDO_0}) in Section \ref{sec:fluid}) are \emph{not smooth enough} around the critical time $t=1/2$ to apply the method as is. Bypassing this flaw will require a detailed technical work based on approximations of the differential equations, which will be undertaken in Section \ref{sec:mise_en_place}. We also emphasize that to obtain the fluid limit of $G_{p,n}$ via this approach, we really need to consider the two-dimensional process $(G_{p,n},D_{p,n})$ and apply the method to this bivariate process. The reason is that the expectation of the jump of $G_{p,n}$ at time $m$ given the past of the process $(G_{p,n},D_{p,n})$ until then depends both on (and only on) $G_{p,n}(m)$ and $D_{p,n}(m)$. 

At the heart of our approach there is a useful connection between  the frozen Erd\H{o}s-R\'enyi model and uniform random forests: conditionally on its number of vertices and edges at a given time, the forest part of the frozen model is a uniform random forest. This was highlighted by Contat and Curien \cite{ContatCurien23} in the case $p=1/2$, generalized without difficulty in \cite{viau25} to any $p \in [0,1]$, and called the \textit{free forest property}. Let us state it formally and denote, for $N \in \mathbb N$ and $M\in \mathbb Z_+$, by $\mathcal{W}(N,M)$ the set of unrooted unordered forests with $N$ labeled vertices $\{1,\ldots,N\}$ and $M$ edges (hence $N-M$ trees). 

\begin{proposition}[Free forest property, \cite{ContatCurien23},\cite{viau25}]
\label{prop:freeforest}
	For any $n\in \mathbb N,m\in \mathbb Z_+$, conditionally on $G_{p,n}(m)$ and $D_{p,n}(m)$, the forest part of $\mathrm{F}_{p,n}(m)$ is uniformly distributed over 
	$\mathcal{W}\left(V_{p,n}(m),E_{p,n}(m) \right)$ when $V_{p,n}(m) \geq 1$.
\end{proposition}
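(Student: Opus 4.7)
The plan is to prove the following strengthening by induction on $m$: conditionally on the entire gel configuration $\mathcal U$ (the labeled graph of unicycle components on the frozen vertices) together with $D_{p,n}(m)=d$, the forest part of $\mathrm F_{p,n}(m)$ is uniformly distributed over forests on $\{1,\ldots,n\}\setminus V(\mathcal U)$ with $m-|V(\mathcal U)|-d$ edges. The proposition then follows by marginalizing over $\mathcal U$ (using the vertex-permutation symmetry of the model) together with the identities \eqref{def:V_{p,n}E_{p,n}}, since the number of forests on a size-$v$ vertex set with $e$ edges depends only on $v$ and $e$.

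For the base case $m=0$, the forest is the unique element of $\mathcal W(n,0)$, so the claim is trivial. For the inductive step, I fix a target $(F,\mathcal U,d)$ at time $m$ with $F$ a forest on $\{1,\ldots,n\}\setminus V(\mathcal U)$ having $e$ edges, and compute $\mathbb P(\text{forest}_m=F,\,\mathcal U_m=\mathcal U,\,D_{p,n}(m)=d)$ by backtracking one step. The uniformly chosen edge $\epsilon$ at step $m$ falls into four disjoint categories: (A) $\epsilon$ joins two distinct trees of the forest and is added; (B) $\epsilon$ closes a cycle inside a single tree, freezing it into a new unicycle; (C) $\epsilon$ joins a tree vertex to a unicycle vertex, being added with probability $p$ or discarded with probability $1-p$; (D) $\epsilon$ joins two gel vertices and is discarded.

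In (A) the predecessor is $(F\setminus\{\epsilon\},\mathcal U,d)$ for one of the $e$ edges of $F$. In (B) the new unicycle $C\in\mathcal U$ born at step $m$ is arbitrary and $\epsilon$ is one of its $\ell(C)$ cycle-edges, yielding predecessor $(F\cup(C\setminus\{\epsilon\}),\mathcal U\setminus\{C\},d)$; this is a valid frozen configuration since $C\setminus\{\epsilon\}$ is a tree on $V(C)$. In the add sub-case of (C), $\epsilon$ is one of the $k(C)-\ell(C)$ bridge (non-cycle) edges of some $C\in\mathcal U$, and removing $\epsilon$ splits $C$ into a smaller unicycle $C'_\epsilon$ and a tree $T_\epsilon$, giving predecessor $(F\cup T_\epsilon,\,(\mathcal U\setminus\{C\})\cup\{C'_\epsilon\},d)$. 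In the discard sub-case of (C), the predecessor is $(F,\mathcal U,d-1)$ with any of the $V_{p,n}(m)\cdot G_{p,n}(m)$ tree-to-gel edges. In (D), the predecessor is again $(F,\mathcal U,d-1)$ with any of the $\binom{G_{p,n}(m)}{2}$ gel-to-gel edges. Weighting each predecessor by $1/\binom{n}{2}$, and by $p$ or $1-p$ where relevant, the inductive hypothesis applied to each predecessor yields a probability that depends on its forest only through its edge count; summing over all predecessors, every term is independent of $F$ beyond its vertex set and edge count, which closes the induction.

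The main obstacle is the combinatorial bookkeeping in (B) and the add sub-case of (C): one must correctly identify which edges of a newly created unicycle could have been the one added at step $m$ (the $\ell(C)$ cycle-edges in (B), the $k(C)-\ell(C)$ bridges in (C)) and verify that the corresponding predecessors are valid frozen-model configurations with the correct forest edge count for the inductive hypothesis to apply. Once these enumerations are set up, the remainder reduces to repeated invocations of the inductive hypothesis and elementary arithmetic.
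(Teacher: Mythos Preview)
The paper does not give its own proof of this proposition; it is quoted from \cite{ContatCurien23} (case $p=1/2$) and \cite{viau25} (general $p$), so there is no in-paper argument to compare against. Your backward induction on $m$, strengthened to condition on the full labeled gel configuration $\mathcal U$ rather than just $(G_{p,n}(m),D_{p,n}(m))$, is the natural route and the argument is correct: in each of the four transition types the contribution to $\mathbb{P}(\text{forest}_m=F,\,\mathcal U_m=\mathcal U,\,D_{p,n}(m)=d)$ depends on $F$ only through its vertex set and edge count (case (A) contributes $e$ times a quantity determined by $(\mathcal U,d,e-1)$; cases (B), (C)-add, (C)-discard and (D) contribute quantities determined by $(\mathcal U,d)$ alone), which closes the induction.

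Two minor remarks. First, no vertex-permutation symmetry is actually needed in the marginalization: once the conditional law given $(\mathcal U,d)$ is uniform on forests over $\{1,\ldots,n\}\setminus V(\mathcal U)$, relabeling that vertex set increasingly to $\{1,\ldots,V_{p,n}(m)\}$ and summing over all $\mathcal U$ with $|V(\mathcal U)|=G_{p,n}(m)$ gives the proposition directly, since the number of forests on a labeled $v$-set with $e$ edges depends only on $(v,e)$. Second, your count ``$\ell(C)$ cycle-edges'' in case (B) needs a small adjustment when the cycle is a multi-edge: the model allows re-selecting an edge already present in a tree (this is what makes the identity $E_{p,n}+G_{p,n}+D_{p,n}=m$ hold), producing a cycle of length $2$, and then there is only one admissible choice of $\epsilon$, not two. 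This is irrelevant to the conclusion, however, since the case-(B) contribution does not depend on $F$ regardless of the exact count.
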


This property will be crucial at different steps of our study. 

\bigskip

\textbf{Further results.} To complete the results of Theorem \ref{thm:fluid limit} and those of  \cite{ContatCurien23} and \cite{viau25} in the critical window, we note that when $t<1/2$, the gel $G_{p,n}(\lfloor nt\rfloor)$ is bounded in probability: 

\begin{proposition}
For all $t<1/2$, $G_{p,n}(\lfloor nt\rfloor)=O_{\mathbb P}(1)$.
\end{proposition}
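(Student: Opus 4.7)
The plan is to dominate $G_{p,n}$ by the number of vertices in cyclic components of the standard Erdős-Rényi graph, and then invoke a classical first-moment bound in the subcritical regime.

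By Remark 3 above, for each $p \in (0,1]$ one can realize $\mathrm{F}_{p,n}$ and $\mathrm{ER}_n$ on a common probability space so that every edge of $\mathrm{F}_{p,n}(m)$ is also an edge of $\mathrm{ER}_n(m)$ (simply feed both dynamics with the same sequence of proposed edges; the frozen dynamics discards some of them and otherwise performs the same addition). Every cycle of $\mathrm{F}_{p,n}(m)$ is thus a cycle of $\mathrm{ER}_n(m)$, and each connected component of $\mathrm{F}_{p,n}(m)$ is contained in a component of $\mathrm{ER}_n(m)$. Hence, pointwise on this coupling space,
\[
G_{p,n}(m)\ \leq\ C_n(m),
\]
where $C_n(m)$ denotes the number of vertices of $\mathrm{ER}_n(m)$ that lie in a component containing at least one cycle. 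It therefore suffices to show $C_n(\lfloor nt \rfloor) = O_{\mathbb P}(1)$ for every $t < 1/2$.

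For this I would use a standard first-moment argument. Let $U_k$ be the number of connected unicyclic graphs on $k$ labelled vertices; a classical enumeration gives $U_k = \tfrac{(k-1)!}{2}\sum_{i=0}^{k-3} k^i/i! \sim c\,k^{k-1/2}$. Writing $N_k^u$ for the number of unicyclic components of size $k$ in $\mathrm{ER}_n(\lfloor nt\rfloor)$, the usual counting argument (pick the $k$ vertices, the unicyclic structure $U_k$, and impose the $k$ internal edges to be present while the remaining incident edges are absent) yields, uniformly in $n$ for each fixed $k$,
\[
\mathbb E[N_k^u]\ \leq\ \binom{n}{k} U_k\,\frac{\binom{\binom{n}{2}-\binom{k}{2}-k(n-k)}{\lfloor nt\rfloor - k}}{\binom{\binom{n}{2}}{\lfloor nt\rfloor}}\ \sim\ \frac{U_k (2t)^k}{k!}\, e^{-2tk}.
\]
Summing $k\,\mathbb E[N_k^u]$ over $k$ and applying Stirling, the general term is of order $(2te^{1-2t})^k$. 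Since the function $t\mapsto 2te^{1-2t}$ is strictly less than $1$ on $[0,1/2)$ (attaining $1$ exactly at the critical value $t=1/2$), the series converges. The expected contribution of multicyclic components is handled by an analogous, easier computation and is in fact $o(1)$ in the subcritical regime. Altogether $\sup_n \mathbb E[C_n(\lfloor nt\rfloor)] < \infty$.

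The conclusion then follows by Markov's inequality: given $\varepsilon > 0$, choose $K$ with $\sup_n \mathbb E[C_n(\lfloor nt\rfloor)]/K < \varepsilon$ to obtain $\mathbb P(G_{p,n}(\lfloor nt\rfloor) > K) \leq \mathbb P(C_n(\lfloor nt\rfloor) > K) < \varepsilon$, uniformly in $n$. The only delicate point is the uniformity in $k$ and $n$ of the tail estimate on $\sum_k k\,\mathbb E[N_k^u]$, and this is precisely where the phase transition manifests itself: the geometric ratio $2te^{1-2t}$ reaches $1$ exactly at $t=1/2$, so the method breaks down at the critical threshold but gives the desired uniform bound throughout the subcritical regime.
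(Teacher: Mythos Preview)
Your approach is essentially the same as the paper's: both reduce to the standard Erd\H{o}s--R\'enyi graph via the coupling of Remark~3, so that $G_{p,n}(m)$ is dominated by the number $C_n(m)$ of vertices in cyclic components of $\mathrm{ER}_n(m)$, and then bound the latter in the subcritical regime. The difference lies in how the second step is carried out. The paper simply cites Pittel's result that $C_n(\lfloor nt\rfloor)$ converges in distribution for $t<1/2$ (which is stronger than $O_{\mathbb P}(1)$), while you sketch an elementary first-moment bound showing $\sup_n \mathbb E[C_n(\lfloor nt\rfloor)]<\infty$. Your route is more self-contained but yields less; the paper's is shorter but relies on an external reference.

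Two minor points on your write-up. First, your exact formula for $\mathbb E[N_k^u]$ is that of the $G(n,m)$ model with exactly $m$ distinct edges, whereas the dynamical model here samples edges with replacement; since the number of repeated selections is $O_{\mathbb P}(1)$ when $m=O(n)$, this is harmless, but it is worth saying. Second, the uniformity-in-$k$ you flag as delicate is not really an obstacle: the crude bounds $\binom{n}{k}\le n^k/k!$, $U_k\le Ck^{k-1/2}$, and $(1-2t/n)^{k(n-k)}\le e^{-2tk(1-k/n)}$ combine with Stirling to give a summable geometric tail $C'k^{-1}(2te^{1-2t})^k$ valid uniformly in $n$ and $k\le n/2$, while the contribution from $k>n/2$ is trivially zero in the subcritical regime. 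The multicyclic contribution, as you say, is $o(1)$ by the same kind of counting.
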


Indeed, as previously mentioned, $G_{p,n}(\lfloor nt\rfloor)$ is stochastically smaller than the total number of vertices involved in cyclic components at time $\lfloor nt\rfloor$ of the standard Erd\H{o}s-R\'enyi random graph, and it is known that for $t<1/2$ this number  converges in distribution as $n \rightarrow \infty$ (see e.g. Theorem 5 of \cite{Pittel88}; in fact this theorem states the convergence in distribution of the total number of vertices involved in unicycles, but jointly with the well-known fact that at time $t<1/2$ the number of vertices which are not involved in trees or unicycles converges in probability to 0, this gives the result). 

Let us also emphasize the following corollary of Theorem \ref{thm:fluid limit}, which
identifies the asymptotic distribution of the first time a which a given vertex is frozen. It follows from the fact that the probability that a given vertex is frozen at time $\lfloor nt \rfloor$ is equal, by exchangeability, to $\mathbb E[G_{p,n}(\lfloor nt \rfloor)]/n$. 

\begin{corollary}
\label{cor:entrance1}
Let $\tau^*_{p,n}$ be the time at which the vertex $1$ is frozen in the $p$-frozen model, $p \in (0,1]$. Then,
$$\frac{\tau^*_{p,n}}{n} ~ \underset{n \rightarrow \infty}{\overset{\mathrm{(d)}} \longrightarrow} ~X_{p},$$
where $X_{p}$ is a random variable with cumulative distribution function $g_p$.
\end{corollary}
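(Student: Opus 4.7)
The plan is to translate the statement about the freezing time of a single vertex into a statement about the expected gel size, use Theorem \ref{thm:fluid limit} together with boundedness to upgrade the convergence in probability to convergence of expectations, and finally deduce convergence in distribution by checking that $g_p$ has the properties of a cumulative distribution function.

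First, I would fix $t\geq 0$ and write
\begin{equation*}
	\mathbb P(\tau^*_{p,n}\leq \lfloor nt\rfloor)~=~\mathbb P\big(\text{vertex } 1 \text{ is frozen in } \mathrm{F}_{p,n}(\lfloor nt\rfloor)\big).
\end{equation*}
By the symmetry of the construction of $\mathrm{F}_{p,n}$ under any permutation of the vertex labels $\{1,\ldots,n\}$, the indicators $\mathbbm 1_{\{\text{vertex } i \text{ is frozen in } \mathrm{F}_{p,n}(\lfloor nt\rfloor)\}}$, $1\leq i \leq n$, are exchangeable, so each one has the same mean, namely the average mean, equal to $\mathbb E[G_{p,n}(\lfloor nt\rfloor)]/n$. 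This gives the identity
\begin{equation*}
	\mathbb P\big(\tau^*_{p,n}/n \leq t\big)~=~\frac{\mathbb E[G_{p,n}(\lfloor nt\rfloor)]}{n}.
\end{equation*}

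Next, Theorem \ref{thm:fluid limit} tells us that $G_{p,n}(\lfloor nt\rfloor)/n \to g_p(t)$ in probability as $n\to\infty$. Since $G_{p,n}(\lfloor nt\rfloor)/n\in [0,1]$, bounded convergence upgrades this to convergence of expectations, so the right-hand side above converges to $g_p(t)$ for every $t\geq 0$.

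Finally, I would check that $g_p$ is the cumulative distribution function of some probability measure on $[0,\infty)$: monotonicity of $g_p$ follows from the fact that $G_{p,n}$ is non-decreasing, so the limit in probability along dyadic or rational times is non-decreasing; $g_p(0)=0$ is in Definition \ref{def:gel_function}; and $g_p(t)\to 1$ as $t\to\infty$ follows from the asymptotics $g_p(t)=1-e^{-2pt}+o(e^{-2pt})$ recalled after Definition \ref{def:gel_function}. The infinite differentiability of $g_p$ on $(1/2,\infty)$, together with $g_p\equiv 0$ on $[0,1/2]$ and $g_p(1/2)=0$, ensures continuity on $[0,\infty)$, so that the pointwise convergence of the cumulative distribution functions $\mathbb P(\tau^*_{p,n}/n\leq \cdot)$ to $g_p$ yields the claimed convergence in distribution. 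There is no real obstacle here: once the exchangeability trick reduces the question to a statement about $\mathbb E[G_{p,n}(\lfloor nt\rfloor)]/n$, everything is already prepared by Theorem \ref{thm:fluid limit} and the regularity of $g_p$.
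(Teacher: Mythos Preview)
Your proof is correct and follows exactly the approach sketched in the paper: the exchangeability identity $\mathbb P(\tau^*_{p,n}/n\leq t)=\mathbb E[G_{p,n}(\lfloor nt\rfloor)]/n$, then Theorem~\ref{thm:fluid limit} plus bounded convergence. Your added verification that $g_p$ is a genuine cumulative distribution function is a welcome detail that the paper leaves implicit.
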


Other consequences of Theorem \ref{thm:fluid limit} are developed in the next two sections.

\subsection{Fluid limit of the forest}

With the relations (\ref{def:V_{p,n}E_{p,n}}), the asymptotics of the number of vertices $V_{p,n}$ and edges $E_{p,n}$ of the forest part of the graph $\mathrm{F}_{p,n}$ follow directly from Theorem \ref{thm:fluid limit}, as well as that of the ratio
$$
R_{p,n}(m):=\frac{E_{p,n}(m)}{V_{p,n}(m)}, \quad m \in \mathbb Z_+,
$$
where we use the convention $0/0=0$.
Recalling the free forest property of Proposition \ref{prop:freeforest}, this ratio is a major source of information since there is also a phase transition for uniform random forests depending on the position of the ratio relative to $1/2$, see Britikov \cite{britikov88} and Luczak-Pittel \cite{LuczakPittel92} (their results are summarized in Section \ref{section:forests}). This point will e.g. lead to the forthcoming Corollary \ref{prop:cc}. 

We complete these asymptotics with the behavior of the number of trees of a given size. In that aim, consider the functions $t_{p,k}:[0,\infty) \rightarrow [0,1)$, $k \in \mathbb N,$ defined for $t \geq 0$ by 
\begin{equation}\label{def:t_pk}
	t_{p,k}(t)=\frac{k^{k-2}}{k!}\left(2t\right)^{k-1}\left(1-g_p(t)\right)^k \mathrm{e}^{-2kt\left(1-g_p(t)\right)}.
\end{equation}
We emphasize that for each fixed $t \geq 0$, the weights
$$
\frac{k \cdot t_{p,k}(t)}{1-g_p(t)}, \quad k \geq 1
$$
are those of a Borel distribution of parameter $2t(1-g_p(t)) \in [0,1]$, see the Appendix \ref{app:BT} for background. Such a  distribution is, among other things, the distribution of the total progeny of a subcritical Galton-Watson tree with Poisson offspring distribution with mean $2t(1-g_p(t))$. Based on Theorem \ref{thm:fluid limit}, we obtain:

\begin{theorem}
\label{thm:forest}
As $n \rightarrow \infty$, for the topology of uniform convergence on compacts,
$$\left(\bigg(\frac{V_{p,n}(\lfloor nt\rfloor)}{n}, \frac{E_{p,n}(\lfloor nt\rfloor)}{n}, R_{p,n}(\lfloor nt\rfloor)\bigg), ~ t\geq 0\right) ~\overset{\mathbb P}\longrightarrow ~\Big((v_p(t),e_p(t),r_p(t)),~ t\geq 0 \Big)$$
where
$$
v_p(t)=1-g_p(t); \qquad e_p(t)=t(1-g_p(t))^2; \qquad r_p(t)=t(1-g_p(t)). 
$$
Moreover,
$$\left(\bigg(\frac{k \cdot N_{p,n}^{(k)}(\lfloor nt\rfloor)}{n}\bigg)_{k\geq 1}\bigg),~ t\geq 0\right)~\overset{\mathbb P}\longrightarrow~\left(\big(k\cdot t_{p,k}(t) \big)_{k\geq 1}\big), ~t\geq 0 \right)$$
for the usual norm $\|x\|_1:=\sum_{k\geq 1}|x_k|$ on $\ell^1$, the space of summable sequences. 
\end{theorem}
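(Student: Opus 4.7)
The first assertion (convergence of $V_{p,n}/n$, $E_{p,n}/n$ and $R_{p,n}$) is a direct consequence of Theorem \ref{thm:fluid limit} combined with the identities (\ref{def:V_{p,n}E_{p,n}}): one has $V_{p,n}(\lfloor nt\rfloor)/n = 1-G_{p,n}(\lfloor nt\rfloor)/n \to 1-g_p(t) = v_p(t)$, and $E_{p,n}(\lfloor nt\rfloor)/n = \lfloor nt\rfloor/n - G_{p,n}(\lfloor nt\rfloor)/n - D_{p,n}(\lfloor nt\rfloor)/n \to t - g_p(t) - d_p(t) = t(1-g_p(t))^2 = e_p(t)$ by the very definition (\ref{def:d_p}) of $d_p$. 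Since $v_p(t) \geq v_p(T) > 0$ on any compact $[0,T]$, dividing yields $R_{p,n}(\lfloor nt\rfloor) \to r_p(t)=t(1-g_p(t))$ uniformly on compacts in probability.

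For the tree-count statement, the plan is to exploit the free forest property (Proposition \ref{prop:freeforest}): conditionally on $(V_{p,n}(m), E_{p,n}(m))$, the forest part of $\mathrm{F}_{p,n}(m)$ is uniform over $\mathcal{W}(V_{p,n}(m), E_{p,n}(m))$. By the first part, the ratio $E_{p,n}/V_{p,n}$ converges to $r_p(t)$, and a direct check from the defining relation $f_p(g_p(t))=t$ gives $r_p(t) \leq 1/2$, with strict inequality for $t \neq 1/2$. I would then invoke the Britikov--Luczak--Pittel-type asymptotics in the subcritical regime for uniform random forests $\mathcal{W}(N,M)$ with $M/N \to \alpha < 1/2$, to be recalled in Section \ref{section:forests}, which yield $N^{(k)}/N \to \tfrac{1}{k}\mathbb{P}(\mathrm{Borel}(2\alpha)=k)$ in probability for every fixed $k$. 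Substituting $(V_{p,n}(\lfloor nt\rfloor), E_{p,n}(\lfloor nt\rfloor))$ and multiplying by $V_{p,n}(\lfloor nt\rfloor)/n \to v_p(t)$ yields, for each fixed $t \neq 1/2$ and each $k$, the pointwise convergence $k\, N_{p,n}^{(k)}(\lfloor nt\rfloor)/n \to k\, t_{p,k}(t)$ in probability.

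It then remains to handle (i) the degenerate instant $t=1/2$, where $r_p=1/2$ is precisely the critical forest ratio, (ii) uniformity in $t$ on compacts, and (iii) the $\ell^1$-convergence in $k$. For (i), since $t \mapsto k\, t_{p,k}(t)$ is continuous at $1/2$ thanks to $g_p(1/2)=0$, a sandwich argument exploiting the bounded per-step increments of $N_{p,n}^{(k)}$ and comparing with values at $t=1/2 \pm \varepsilon$ extends the convergence to $t=1/2$. For (ii), the continuity of the $t_{p,k}$ together with bounded increments upgrades pointwise to uniform convergence on compacts by standard arguments. For (iii), the identity
$$\sum_{k\geq 1} k\, N_{p,n}^{(k)}(\lfloor nt\rfloor)/n = V_{p,n}(\lfloor nt\rfloor)/n \to v_p(t) = \sum_{k\geq 1} k\, t_{p,k}(t)$$
(the last equality holding because $k\, t_{p,k}/v_p$ are exactly the Borel weights of parameter $2r_p(t) \leq 1$, summing to $1$) combines with the pointwise-in-$k$ convergence in a Scheff\'e-type argument to deliver the desired $\ell^1$ convergence. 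The main technical obstacle is ensuring uniformity around the critical instant $t=1/2$, where the underlying uniform forests sit exactly at their phase-transition threshold: there one must control simultaneously the tail in $k$ and the finite-$k$ oscillations, which is where most of the bookkeeping effort will be spent.
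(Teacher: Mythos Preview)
Your treatment of the first assertion matches the paper's. For the tree counts, the paper takes a genuinely different route: rather than reading off the tree-size profile statically from the free forest property and subcritical forest asymptotics, it works dynamically. It writes the Smoluchowski-type conditional increment $\mathbb{E}[\Delta N_{p,n}^{(k)}(m)\mid\mathbf{F}^{(N)}_{p,n}(m)]$ (equation~\eqref{eq:esp_N_k}) and then argues by induction on $k$: given the uniform convergence of $G_{p,n}/n$ and of $N_{p,n}^{(j)}/n$ for $j<k$, a martingale-plus-drift decomposition (Lemma~\ref{lm:approx:esp}) and a discrete Gr\"onwall inequality control $N_{p,n}^{(k)}(m)/n-t_{p,k}(m/n)$ uniformly on $[0,An]$. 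The Scheff\'e reduction to $\ell^1$ is the same as yours. The dynamic approach delivers uniform-in-$t$ convergence in one stroke and is entirely indifferent to the instant $t=1/2$; it also needs no input about component-size laws in uniform forests. Your static route is conceptually more direct, but it rests on a law of large numbers for the empirical tree-size distribution in a subcritical $\mathcal{W}(N,M)$ along random $(N,M)$---a fact derivable from Proposition~\ref{prop_forest_rw} but not stated as such in Section~\ref{section:forests}---and it then requires the extra sandwich/equicontinuity work around the critical point that you correctly flag as the main cost.
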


\begin{figure}
	\centering
		\subfloat[The functions $e_1$ (blue) and $e_{1/2}$ (red)]{\includegraphics[width=7.5cm, height=5cm]{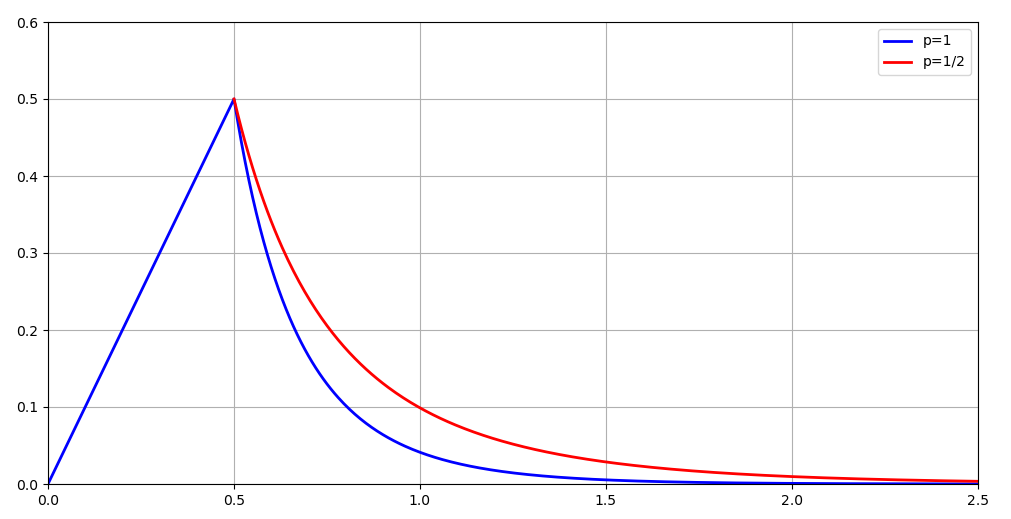}}
	\qquad \subfloat[The functions $r_1$ (blue) and $r_{1/2}$ (red)]{\includegraphics[width=7.5cm, height=5cm]{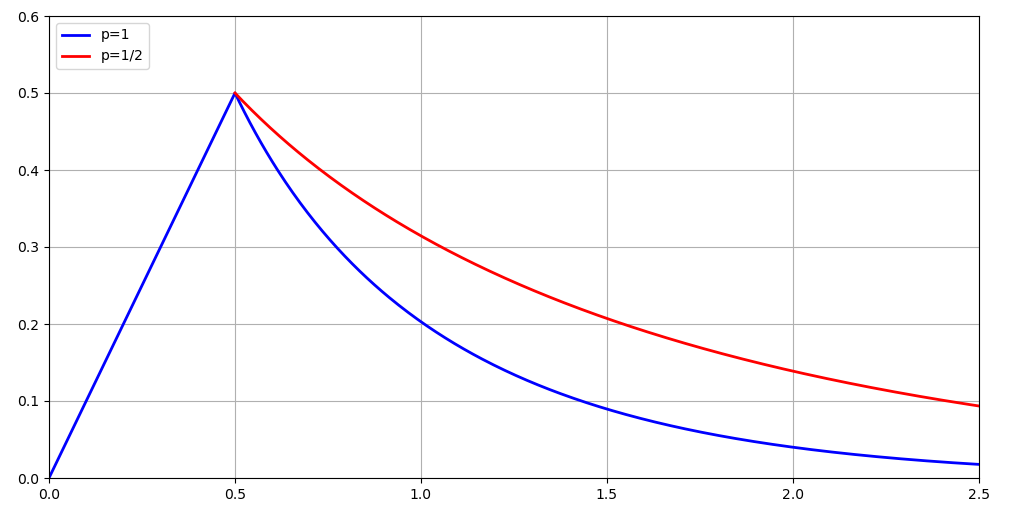}}
	\label{fig:f_p_d_p}
\end{figure}

\bigskip

Some properties of the functions $v_p,e_p, r_p$ and $t_{p,k}$ will be highlighted in Section \ref{sec:fluid}, in connection with the functions $g_p$ and $d_p$. For example, their right-derivative at $t=1/2$ are respectively $e'_p(1/2^+)=-1-2p$ and $r'_p(1/2^+)=-p$. We will also see that the ratio $r_p$ is increasing on $(0,1/2]$, decreasing on $[1/2,1)$, with a maximum equals to $1/2$ reached at $t=1/2$ (whatever $p \in (0,1]$). In particular,
$$
r_p(t)<1/2 \quad \text{for }t\neq 1/2.
$$
This means that for $t\neq 1/2$ the forest part of the graph $\mathrm{F}_{p,n}$ is in a subcritical regime. For $p=1$, this is related to the facts that the complement of the giant in the standard Erd\H{o}s-R\'enyi graph behaves as a standard Erd\H{o}s-R\'enyi graph conditioned to have connected components smaller than the initial giant, and that $t(1-g_{\mathrm{ER}}(t))<1/2$ for $t\neq 1/2$. For general $p \in (0,1]$, this subcriticality leads to: 
 
\bigskip

\begin{corollary}[Largest trees]
\label{prop:cc}
Let $\#T^{(i)}_{p,n}(m)$ be the size of the $i$-th largest tree in $\mathrm{F}_{p,n}(m)$. Then for all $i \in \mathbb N$:
\begin{enumerate}[topsep=0cm]
\item[\emph{1)}] When $t<1/2$, 
$$\frac{\#T^{(i)}_{p,n}(\lfloor nt \rfloor)}{\ln(n)} ~ \underset{n \rightarrow \infty}{\overset{\mathbb P} \longrightarrow}~  \frac{1}{2t-1-\ln(2t)}.$$
\item[\emph{2)}] When $t>1/2$, $$\frac{\#T^{(i)}_{p,n}(\lfloor nt \rfloor)}{\ln(n)} ~ \underset{n \rightarrow \infty}{\overset{\mathbb P} \longrightarrow}~  \frac{1}{2t(1-g_p(t))-1-\ln(2t(1-g_p(t)))}.$$\end{enumerate}
\end{corollary}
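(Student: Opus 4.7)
The plan is to apply the free forest property (Proposition~\ref{prop:freeforest}) to reduce the question to one about uniform random forests, and then invoke the classical subcritical asymptotics of Britikov~\cite{britikov88} and \L uczak--Pittel~\cite{LuczakPittel92} (to be recalled in Section~\ref{section:forests}). The input one needs is: for a uniformly random forest on $V$ labeled vertices with $E$ edges, with $V \to \infty$ and $E/V \to r \in (0,1/2)$, the size $L^{(i)}_{V,E}$ of its $i$-th largest tree satisfies $L^{(i)}_{V,E}/\ln V \overset{\mathbb P}\longrightarrow 1/I(2r)$, where $I(\lambda) := \lambda - 1 - \ln \lambda$ is the Cram\'er rate function naturally attached to the Borel distribution (which already underlies the expression of $t_{p,k}$ in Theorem~\ref{thm:forest}).

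By Theorem~\ref{thm:forest}, $V_{p,n}(\lfloor nt\rfloor)/n \overset{\mathbb P}\longrightarrow v_p(t) = 1-g_p(t)$ and $R_{p,n}(\lfloor nt\rfloor) \overset{\mathbb P}\longrightarrow r_p(t) = t(1-g_p(t))$, and the discussion following that theorem gives $r_p(t) \in (0,1/2)$ for every $t \neq 1/2$. In case~1 one has $g_p(t)=0$, hence $r_p(t) = t$, while in case~2 one has $r_p(t) = t(1-g_p(t))$; both stated limits thus unify as $\#T^{(i)}_{p,n}(\lfloor nt\rfloor)/\ln(n) \overset{\mathbb P}\longrightarrow 1/I(2r_p(t))$.

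I would implement the reduction as follows. Fix $t \neq 1/2$ and $\varepsilon > 0$ small enough that the rectangle $B_\varepsilon := [v_p(t)-\varepsilon,\, v_p(t)+\varepsilon] \times [r_p(t)-\varepsilon,\, r_p(t)+\varepsilon]$ lies inside $(0,1] \times (0,1/2)$. The event $A_{n,\varepsilon}$ on which $(V_{p,n}(\lfloor nt\rfloor)/n,\, R_{p,n}(\lfloor nt\rfloor)) \in B_\varepsilon$ has probability tending to $1$. Conditionally on $A_{n,\varepsilon}$ and on the realized pair $(V_{p,n}, E_{p,n}) = (V,E)$, Proposition~\ref{prop:freeforest} makes the forest part of $\mathrm{F}_{p,n}(\lfloor nt\rfloor)$ uniform on $\mathcal W(V,E)$, so that $\#T^{(i)}_{p,n}(\lfloor nt\rfloor)$ has the conditional law of $L^{(i)}_{V,E}$. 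Combining $\ln V/\ln n \to 1$ on $A_{n,\varepsilon}$ with the continuity of $r \mapsto 1/I(2r)$ on $(0,1/2)$, and then letting $n \to \infty$ followed by $\varepsilon \downarrow 0$, yields the required convergence in probability.

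\textbf{Main obstacle.} The only subtle point is the passage from the deterministic-parameter statement of Britikov/\L uczak--Pittel to the random pair $(V_{p,n}, E_{p,n})$ generated by the frozen dynamics. What is needed is a uniform-in-parameters version of the asymptotic $L^{(i)}_{V,E}/\ln V \to 1/I(2E/V)$ over any compact subset of $\{E/V < 1/2\}$; this is built into the Borel large-deviation estimates for tree sizes underlying \cite{britikov88, LuczakPittel92}, and once combined with the free forest property and Theorem~\ref{thm:forest} it closes the argument without further input.
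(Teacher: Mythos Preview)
Your proposal is correct and follows essentially the same route as the paper: combine the free forest property (Proposition~\ref{prop:freeforest}) with the fluid limits $V_{p,n}(\lfloor nt\rfloor)/n \to 1-g_p(t)$ and $R_{p,n}(\lfloor nt\rfloor)\to r_p(t)=t(1-g_p(t))\in(0,1/2)$ from Theorem~\ref{thm:forest}, and then invoke the \L uczak--Pittel subcritical result (recorded as Proposition~\ref{prop:largest_trees}). The paper's proof is terser and does not spell out the uniformity-in-parameters point you flag, but the argument is the same.
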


\bigskip

Informally, we have therefore, for large $n$,
$$
\#T^{(i)}_{p,n}(\lfloor nt \rfloor)~\underset{t \rightarrow 0}\approx~\frac{\ln(n)}{|\ln(t)|}, \quad \text{while} \quad  \#T^{(i)}_{p,n}(\lfloor nt \rfloor)~\underset{t \rightarrow \infty}\approx~\frac{\ln(n)}{2pt}.
$$

\bigskip

\emph{\textbf{Remark.}} To complete the above corollary, let us state the results of \cite{ContatCurien23} and \cite{viau25} in the critical window a little more precisely: if we let $\#U^{(i)}_{p,n}(m)$ be the size of the $i$-th largest unicycle in $\mathrm{F}_{p,n}(m)$, then the process of couple of sequences $$\left(\left(\left(\frac{\#T^{(i)}_{p,n}(\lfloor \frac{n}{2} + \lambda n^{2/3}\rfloor)}{n^{2/3}}\right)_{i\geq 1},~\left(\frac{\#U^{(i)}_{p,n}(\lfloor \frac{n}{2} + \lambda n^{2/3}\rfloor)}{n^{2/3}}\right)_{i\geq 1}\right), \lambda \in \mathbb R \right)$$
has a limit in distribution in $\ell^2 \times \ell^1$ towards a frozen multiplicative coalescent.

\bigskip

We finish this section with a corollary on the geometry of a typical tree.

\begin{corollary}[Typical tree]
\label{cor:CC1}
Let $\mathrm{CC}^*_{p,n}(m)$ denote the connected component of $\mathrm F_{p,n}(m)$ containing the vertex $1$, with vertices relabeled in increasing order from $1$ to $\# \mathrm{CC}^*_{p,n}(m)$. Then,
$$
\mathrm{CC}^*_{p,n}(\lfloor nt \rfloor)~ | ~  \mathrm{CC}^*_{p,n}(\lfloor nt \rfloor)\text{ is a tree }~ \underset{n \rightarrow \infty}{\overset{\mathrm{(d)}} \longrightarrow}~\mathrm{GW}_{\mathrm {Poi}(2t(1-g_p(t)))},
$$
where $\mathrm{GW}_{\mathrm {Poi}(2t(1-g_p(t)))}$ designs a Galton-Watson tree whose offspring distribution is Poisson with mean $2t(1-g_p(t))$, equipped with uniform random labels from $1$ to $\#\mathrm{GW}_{\mathrm {Poi}{2(1-g_p(t))}}$ on its vertices, and where the original order is forgotten, as well as the root. 
In a related way,
$$\mathbb P \left( \mathrm{CC}^*_{p,n}(\lfloor nt \rfloor)\text{ is a tree of size $k$} \right)~ \underset{n \rightarrow \infty}{\overset{\mathrm{(d)}} \longrightarrow} ~ k\cdot t_{p,k}(t).$$
\end{corollary}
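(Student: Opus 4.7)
The plan is to combine Theorem \ref{thm:forest} with the free forest property (Proposition \ref{prop:freeforest}) and the classical identification between uniform labelled trees on $k$ vertices and Poisson Galton-Watson trees conditioned on their total progeny.

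I would first handle the scalar convergence $\mathbb P(\mathrm{CC}^*_{p,n}(\lfloor nt\rfloor)\text{ is a tree of size }k)\to k\cdot t_{p,k}(t)$. By exchangeability of the labels $\{1,\ldots,n\}$, a given vertex lies in a tree of size $k$ with probability equal to $k/n$ times the expected number of such trees, so
\[
\mathbb P\big(\mathrm{CC}^*_{p,n}(\lfloor nt\rfloor)\text{ is a tree of size }k\big)=\frac{k\,\mathbb E\big[N_{p,n}^{(k)}(\lfloor nt\rfloor)\big]}{n}.
\]
Each coordinate $k\,N_{p,n}^{(k)}(\lfloor nt\rfloor)/n$ is bounded by $V_{p,n}(\lfloor nt\rfloor)/n\leq 1$, so the convergence in probability provided by Theorem \ref{thm:forest} upgrades to convergence of expectations by bounded convergence, yielding the claim.

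For the convergence in distribution of $\mathrm{CC}^*_{p,n}(\lfloor nt\rfloor)$ itself, conditional on being a tree, I would show that the conditional law given the tree size $k$ is independent of $n$ and coincides with that of the GW tree conditioned on size $k$. By Proposition \ref{prop:freeforest} and exchangeability, conditionally on the set of forest vertices, the forest part is uniformly distributed among all labelled forests on that set having $E_{p,n}(\lfloor nt\rfloor)$ edges. A direct counting argument---using Cayley's formula and the fact that $|\mathcal W(V-k,E-k+1)|/|\mathcal W(V,E)|$ does not depend on the tree under consideration---shows that, given the tree of vertex $1$ has vertex set $S\ni 1$ of size $k$, this tree is uniform over the $k^{k-2}$ labelled trees on $S$. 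After the canonical relabelling sending $1$ to $1$ and the other $k-1$ vertices of $S$ to $2,\ldots,k$ in increasing order, this becomes the uniform distribution on labelled unrooted trees on $\{1,\ldots,k\}$. On the Galton-Watson side, a classical consequence of the exchangeability of the Poisson offspring distribution is that a Poisson Galton-Watson tree conditioned on total progeny $k$, equipped with uniform random labels and stripped of its planar order and root, is also uniform on labelled unrooted trees on $\{1,\ldots,k\}$. Hence the conditional-on-size laws match for every $n$.

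Combining these two ingredients yields the first convergence: the size of $\mathrm{CC}^*_{p,n}(\lfloor nt\rfloor)$ under the conditioning on being a tree converges in distribution to a $\mathrm{Borel}(2t(1-g_p(t)))$ random variable (since $k\cdot t_{p,k}(t)/(1-g_p(t))$ is precisely the Borel probability mass function), while its conditional shape given size $k$ coincides for every $n$ with the shape of $\mathrm{GW}_{\mathrm{Poi}(2t(1-g_p(t)))}$ conditioned on size $k$. The main technical point is the bookkeeping around the free forest property---identifying the conditional law of the tree of vertex $1$ inside a uniform forest---after which the rest follows cleanly from Theorem \ref{thm:forest} and the classical Poisson Galton-Watson bijection.
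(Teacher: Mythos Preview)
Your proposal is correct and follows essentially the same approach as the paper: both use exchangeability and bounded convergence (via Theorem~\ref{thm:forest}) to get the size law, invoke the free forest property to see that the tree of vertex~$1$ is uniform over the $k^{k-2}$ labelled trees given its size, and then match this with the Poisson Galton--Watson law. The paper cites Proposition~\ref{prop_forest_rw} rather than Proposition~\ref{prop:freeforest} for the uniformity step and computes the GW probability explicitly at the end, but these are cosmetic differences.
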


\bigskip

As already mentioned, the function $t \mapsto 2t(1-g_p(t))=2r_p(t)$ is increasing on $(0,1/2]$, decreasing on $[1/2,1)$, with a maximum equal to 1 reached at $t=1/2$. The Galton-Watson tree appearing in the limit above is therefore subcritical for $t \neq 1/2$ and critical for $t=1/2$. When $t \in (0,1/2]$, since $g_p(t)=0$, it is simply a Galton-Watson tree with a Poisson offspring distribution with mean $2t$.

\subsection{Total gelation time and vicinity}

Theorem \ref{thm:fluid limit} also leads us to a precise asymptotic for the first time at which the $n$ vertices of $\mathrm F_{p,n}$ are all frozen, as well as related quantities. We call this time the \emph{absorption time} or \emph{total gelation time} and denote it by 
$$A_{p,n}:=\inf\big\{m\geq 0 : G_{p,n}(m)=n\big\}.$$
Similarly to what happens in the standard Erd\H{o}s-R\'enyi model for the first time at which the graph is connected, we will see that $A_{p,n}$ is identical, with high probability, to the first time at which there are no more isolated vertices  in the process. This is explained by the fact that larger trees aggregate more quickly to the gel and so disappear earlier. If we let $A^{(k)}_{p,n}$ denote the last time at which there are some trees of size $k \in \mathbb N$ in the process and $A_{p,n}^{(k+)}$ the last time at which there are some trees of size larger or equal to $k \in \mathbb N$ (so that $A_{p,n}^{(1+)}=A_{p,n}$), we will see that for $n$ large
$$
A_{p,n}   \  \approx    \; A^{(1)}_{p,n}  \  \approx   \  \frac{n\ln n}{2p}
$$
and more generally that
$$
A_{p,n}^{(k+)} \; \approx \; A^{(k)}_{p,n}   \; \approx   \; \frac{n}{2} \cdot \left( \frac{\ln(n)}{kp}+\frac{(k-1)}{kp}\cdot \ln\left(\frac{\ln(n)}{kp}\right) \right).
$$
In the following we set
\begin{equation}
\label{def:tnk}
\mathsf t^{(k)}_{p,n}=\frac{\ln(n)}{kp}+\frac{(k-1)}{kp}\cdot \ln\left(\frac{\ln(n)}{kp}\right),
\end{equation}
and, to describe precisely the above asymptotics, introduce the following notation:
\begin{enumerate}[topsep=0cm]
\item[-] $\gamma_E$ is Euler's constant
\item[-] $\psi$ is the digamma function, that is $\psi(x)=\frac{\Gamma'(x)}{\Gamma(x)}, x>0$, with $\Gamma$ the gamma function
\item[-] $\mathrm{Gu}$ is a standard Gumbel random variable, that is with cumulative distribution function 
$e^{-e^{-x}}$, $x \in \mathbb R$.
\end{enumerate}
We emphasize that the function $p \in (0,1] \rightarrow \psi(1/p) +\gamma_E$ is decreasing, equal to 0 when $p=1$ and to $1$ when $p=1/2$, and that $\psi(1/p) +\gamma_E \sim -\ln p$ when $p \rightarrow 0$.

\begin{theorem}
\label{thm:absorption} 
For all $k \in \mathbb N$, 
as $n \rightarrow \infty$, 
$$
\mathbb P\Big(A^{(k+)}_{p,n} =A^{(k)}_{p,n}  \Big)  \; \underset{n \rightarrow \infty}{\longrightarrow} \; 1
$$
and
$$
\frac{A^{(k+)}_{p,n} }{n} - \frac{\mathsf t^{(k)}_{p,n}}{2}  \; \underset{n \rightarrow \infty}{\overset{(\mathrm d)}\longrightarrow} \; \frac{\mathrm{Gu}}{2kp}-\frac{\psi(1/p)+\gamma_{\mathrm E}}{2p}+\frac{\ln (k^{k-2}/k!)}{2kp}.
$$
\end{theorem}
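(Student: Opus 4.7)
The plan is to reduce the theorem to a Poisson limit for the count of size-$k$ trees in the critical window. Setting
$$
m_n(x) := \Big\lfloor n\Big(\frac{\mathsf t_{p,n}^{(k)}}{2} + \frac{x}{2kp} + c_{p,k}\Big)\Big\rfloor, \qquad c_{p,k} := \frac{\ln(k^{k-2}/k!)}{2kp} - \frac{\psi(1/p)+\gamma_{\mathrm E}}{2p},
$$
the stated Gumbel limit for $A_{p,n}^{(k+)}/n - \mathsf t_{p,n}^{(k)}/2$ amounts to showing $\mathbb P(A_{p,n}^{(k+)} \leq m_n(x)) \to \exp(-e^{-x})$. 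The strategy is first to prove that $N_{p,n}^{(k)}(m_n(x))$ converges in distribution to $\mathrm{Poi}(e^{-x})$, and then to identify $\{A_{p,n}^{(k+)} \leq m_n(x)\}$ with $\{N_{p,n}^{(k)}(m_n(x)) = 0\}$ up to events of vanishing probability. It is convenient to work in the Poissonized model of Section \ref{sec:continuous} to exploit the Markov structure of the edge process.

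The Poisson convergence is obtained by the method of factorial moments through the free forest property (Proposition \ref{prop:freeforest}). Writing $\mathrm W(N, M)$ for the number of labeled forests on $N$ vertices with $M$ edges, the free forest property yields exactly, for every $r \geq 1$,
$$
\mathbb E\big[(N_{p,n}^{(k)}(m))_r \,\big|\, V_{p,n}(m), E_{p,n}(m)\big] = \frac{V!}{(k!)^r (V - rk)!}\,(k^{k-2})^r \,\frac{\mathrm W(V - rk,\, E - r(k-1))}{\mathrm W(V, E)},
$$
with $(V,E)=(V_{p,n}(m),E_{p,n}(m))$. Substituting the fluid-limit values $V \approx n(1-g_p(t))$, $E \approx nt(1-g_p(t))^2$ (Theorem \ref{thm:forest}) and using the Britikov--Luczak--Pittel subcritical asymptotics of $\mathrm W$ (applicable since $r_p(t)=E/V \to 0$ in the deep supercritical window), the right-hand side is shown to converge to $(n\,t_{p,k}(t))^r(1+o(1))$. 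To evaluate this numerically one needs the sharp tail
$$
1-g_p(t) \sim e^{-\psi(1/p) - \gamma_{\mathrm E} - 2pt}, \qquad t \to \infty,
$$
derived from the series representation of $f_p$ in Definition \ref{def:gel_function} together with the identity $\sum_{n \geq 0} t^n/(n+1/p) = -\ln(1-t) - \psi(1/p) - \gamma_{\mathrm E} + o(1)$ as $t \to 1^-$. Plugging this into $n\,t_{p,k}(m_n(x)/n)$, the powers of $n$ and $\ln n$ cancel by the very definitions of $\mathsf t_{p,n}^{(k)}$ and $c_{p,k}$, leaving $\mathbb E[(N_{p,n}^{(k)}(m_n(x)))_r] \to e^{-rx}$, which are the factorial moments of $\mathrm{Poi}(e^{-x})$. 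In particular $\mathbb P(N_{p,n}^{(k)}(m_n(x)) = 0) \to \exp(-e^{-x})$.

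The final and most delicate step is to transfer the Poisson conclusion to $A_{p,n}^{(k+)}$. The coincidence $\mathbb P(A_{p,n}^{(k+)} = A_{p,n}^{(k)}) \to 1$, together with the replacement of $\{N_{p,n}^{(\geq k)}(m_n(x))=0\}$ by $\{N_{p,n}^{(k)}(m_n(x))=0\}$, both reduce by Markov's inequality to
$$
\mathbb E\big[N_{p,n}^{(\geq k+1)}(m_n(x))\big] \;=\; \sum_{j > k} n\, t_{p,j}\big(m_n(x)/n\big) \;=\; o(1),
$$
each summand being of order $n^{1-j/k}$ up to polylog factors, hence negligible for $j>k$. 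There remains a \emph{no-resurgence} estimate: after $m_n(x)$, a new tree of size $\geq k$ can only arise from a forest-forest merger of subtrees whose sizes sum to at least $k$. In the Poissonized model such events form a point process whose intensity, computed via the Borel tree-size distribution of the uniform forest and integrated against the fluid-limit forest profile (with $V(t) \asymp n\,e^{-2pt-\psi(1/p)-\gamma_{\mathrm E}}$ shrinking at rate $\asymp pV/n$ from freezes and gel-absorptions), is $o(1)$ in expectation as $n \to \infty$. This bookkeeping is the main technical obstacle of the proof: the pointwise merger intensity is not uniformly small for $k \geq 3$ and must be compensated against the rapid depletion of the forest, which is precisely where the Poissonized formulation is most helpful. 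Assembling these ingredients gives $\mathbb P(A_{p,n}^{(k+)} \leq m_n(x)) \to \exp(-e^{-x})$, which is the Gumbel limit of the statement.
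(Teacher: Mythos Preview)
Your overall architecture is right and your computation of the sharp tail $1-g_p(t)\sim e^{-(\psi(1/p)+\gamma_{\mathrm E})}e^{-2pt}$ is correct (in fact more precise than the $\sim e^{-2pt}$ stated in Proposition~\ref{prop:propfunctions}). With it, the formal calculation $n\,t_{p,k}(m_n(x)/n)\to e^{-x}$ and hence the factorial moments $\to e^{-rx}$ is the right endpoint.

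However, there is a genuine gap in how you get there. Your conditional factorial-moment identity via the free forest property is exact, but to pass from it to $(n\,t_{p,k})^r$ you substitute the fluid-limit values $V\approx n(1-g_p(t))$, $E\approx nt(1-g_p(t))^2$ at time $t=m_n(x)/n\sim \ln(n)/(2kp)$. At that time $1-g_p(t)$ is of order $n^{-1/k}$, so $V$ is of order $n^{1-1/k}$, and you need the random $(V_{p,n},E_{p,n})$ to track these deterministic values with \emph{vanishing relative error}. Theorem~\ref{thm:fluid limit} only gives the fluid limit on compact time intervals; extending it to times of order $\ln n$, with the precision required here, is a separate (nontrivial) estimate that your sketch does not supply. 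The same issue recurs when you write $\mathbb E[N_{p,n}^{(\geq k+1)}(m_n(x))]=\sum_{j>k} n\,t_{p,j}(m_n(x)/n)$: that identity presupposes the late-time fluid limit and is not justified by the results available.

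The paper avoids this difficulty by a different route. Instead of conditioning on $(V,E)$ and invoking Britikov's asymptotics, it writes down an \emph{exact} formula in the Poissonized model (Lemma~\ref{lem:cvPnk}) for the probability that $\ell$ prescribed trees of size $k$ are components at time $t$, as a product of explicit edge-absence factors times $\mathbb E[\exp(-c\int_0^t(1-\mathcal G_{p,n-\ell k}(u)/(n-\ell k))\,\mathrm du)]$. The only asymptotic input needed is the convergence of this \emph{integral} (Corollary~\ref{cor:cvintegrale}), which follows from the fluid limit on compacts plus the crude tail bound of Lemma~\ref{lem:ub_k}; the constant $\psi(1/p)+\gamma_{\mathrm E}$ then appears via Lemma~\ref{lem:identintegral}. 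This integral mechanism is exactly what replaces your pointwise late-time control. The factorial moments (Lemma~\ref{lem:factorial_moments}) and the Poisson limit (Proposition~\ref{prop:Poisson_continu}) then follow, and the expectations $\mathbb E[\mathcal N^{(k')}_{p,n}]$ at late times are bounded directly from the exact formula (Proposition~\ref{cor:boundsexp}, Corollary~\ref{cor:unifbound}), not via $n\,t_{p,k'}$.

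Your no-resurgence paragraph identifies the right obstacle but does not discharge it. The paper carries this out concretely: a merger bound for two prescribed trees conditional on being components (Lemma~\ref{lem:treei}), combined with the expectation bounds above, shows no tree of size $\geq k$ is formed after $\mathsf t^{(k)}_{p,n}+c$ (Lemma~\ref{lem:formation}); together with Corollary~\ref{cor:treesafterk} this gives $\mathbb P(\mathcal A^{(k+)}_{p,n}=\mathcal A^{(k)}_{p,n})\to 1$ and the Gumbel limit in the continuous model (Theorems~\ref{prop:tau_k_continu}--\ref{prop:A_k_continu}), and finally a standard de-Poissonization (Section~\ref{sec:dePoisson}) transfers everything to the discrete model.
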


\bigskip

In particular, the absorption time $A_{p,n}$ behaves as
$$
\frac{A_{p,n}}{n} - \frac{\ln(n)}{2p}  \; \underset{n \rightarrow \infty}{\overset{(\mathrm d)}\longrightarrow} \; \frac{\mathrm{Gu}}{2p}-\frac{\psi(1/p)+\gamma_{\mathrm E}}{2p}.
$$

The proof of Theorem \ref{thm:absorption} uses a continuous version of the $p$-frozen model. In this continuous framework, we obtain exact expressions of the factorial moments of the total number of vertices involved in a tree of size $k \in \mathbb N$ at any time $t \geq 0$, which depend on the (continuous version of the) gel process $G_{p,n}$. Theorem \ref{thm:fluid limit} will then give their asymptotic behaviors. As an intermediate and complementary result to Theorem \ref{thm:absorption}, we obtain via this approach the behavior in distribution of the number of trees of size $k$ around the threshold times $n \cdot \mathsf t^{(k)}_{p,n}/2$. More precisely, if we let $N_{p,n}^{(k)}\big(\big \lfloor n \cdot \big(\mathsf t^{(k)}_{p,n}+c\big) /2\big \rfloor \big)$ be the number of trees of size $k$ at time $\big \lfloor n \big(\mathsf t^{(k)}_{p,n}+c\big)/2\big \rfloor $, for $c \in \mathbb R$, in the discrete model $\mathrm{F}_{p,n}$, one then has:

\medskip

\begin{proposition}
\label{prop:arbreskdiscrets}
For all $k \in \mathbb N$ and all $c \in \mathbb R$
$$
N_{p,n}^{(k)}\left(\left \lfloor \frac{n}{2} \cdot \big(\mathsf t^{(k)}_{p,n}+c\big) \right \rfloor \right) \; \underset{n \rightarrow \infty}{\overset{(\mathrm d)}\longrightarrow} \; \mathcal P\left(\frac{k^{k-2}e^{-kpc} e^{-k\left(\psi(1/p)+\gamma_{\mathrm E}\right)}}{k!} \right),
$$
where  the notation $\mathcal P(\lambda)$ refers to a Poisson distribution with expectation $\lambda>0$.

Additionally, each positive moment of $N_{p,n}^{(k)}\big(\big \lfloor n \cdot \big(\mathsf t^{(k)}_{p,n}+c\big) /2\big \rfloor \big)$ converges to the corresponding moment of the Poisson distribution.
\end{proposition}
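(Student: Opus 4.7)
The plan is the method of factorial moments, carried out in the Poissonized version of the model (Section \ref{sec:continuous}) and then transferred back to the discrete-time model. Set $t_n = (\mathsf t^{(k)}_{p,n}+c)/2$. By the free forest property (Proposition \ref{prop:freeforest}), conditionally on $V := V_{p,n}(t_n)$ and $E := E_{p,n}(t_n)$ the forest part of $\mathrm F_{p,n}(t_n)$ is uniform on $\mathcal W(V,E)$, and a direct counting argument (choose an ordered $r$-tuple of disjoint $k$-vertex subsets, apply Cayley's $k^{k-2}$ on each, and require the remaining vertices to support a forest with $V-rk$ vertices and $E-r(k-1)$ edges) produces the exact identity
$$
\mathbb E\bigl[N_{p,n}^{(k)}(N_{p,n}^{(k)}-1)\cdots(N_{p,n}^{(k)}-r+1)\,\big|\, V,E\bigr] \;=\; \frac{V!\,(k^{k-2})^r}{(V-rk)!\,(k!)^r} \cdot \frac{\#\mathcal W(V-rk,\,E-r(k-1))}{\#\mathcal W(V,E)}.
$$
The ratio of forest counts is the analytic heart of the argument: in the subcritical regime $E/V \to 0$ relevant here, the Britikov/Luczak--Pittel estimates recalled in Section \ref{section:forests} collapse the right-hand side to $(n\,t_{p,k}(t_n))^r(1+o(1))$, with $t_{p,k}$ the Borel-type weight defined in \eqref{def:t_pk}.

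Next I would substitute the fluid limits $V \sim n(1-g_p(t_n))$ and $E \sim n t_n(1-g_p(t_n))^2$ furnished by Theorem \ref{thm:fluid limit}. The crucial additional input is the refined large-$t$ asymptotic
$$
1-g_p(t) \;=\; e^{-2pt - (\psi(1/p)+\gamma_{\mathrm E})}\,(1+o(1)),
$$
obtained from a singular analysis of the defining identity $f_p(g_p(t))=t$ near $s=1$ -- the Euler/digamma constant appears once one peels off the logarithmic divergence of $f_p$ from its integral representation. With this asymptotic, the definition \eqref{def:tnk} of $\mathsf t^{(k)}_{p,n}$ is precisely calibrated so that $n\,t_{p,k}(t_n) \to \lambda$ with $\lambda = k^{k-2}e^{-kpc}e^{-k(\psi(1/p)+\gamma_{\mathrm E})}/k!$, and the same calculation iterated for general $r$ yields $\mathbb E\bigl[N_{p,n}^{(k)}(t_n)(N_{p,n}^{(k)}(t_n)-1)\cdots(N_{p,n}^{(k)}(t_n)-r+1)\bigr] \to \lambda^r$ for every $r \geq 1$.

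Since $\lambda^r$ is exactly the $r$-th factorial moment of $\mathcal P(\lambda)$, the classical method of factorial moments delivers both convergence in distribution to $\mathcal P(\lambda)$ and convergence of every positive integer moment (integer moments being polynomials in factorial moments). Transfer from the Poissonized setting to the discrete-time model is a routine de-Poissonization: the $m$-th edge of $\mathrm F_{p,n}$ is placed at a Poisson arrival time whose fluctuations around the deterministic prediction are negligible on the scale $\ln n$, and Theorem \ref{thm:fluid limit} controls the discrepancy between $N_{p,n}^{(k)}$ at deterministic integer indices and at the corresponding random continuous times.

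The main obstacle is establishing the refined asymptotic for $1-g_p$ with the precise Euler/digamma constant: absent this factor the factorial moments converge to the wrong limit, so the digamma term must be extracted rigorously from the inverse-function representation of $g_p$. A secondary technical point is to justify the collapse of the forest-count ratio uniformly in the random fluctuations of $(V,E)$ permitted by Theorem \ref{thm:fluid limit}; this is handled by restricting to a high-probability event on which $V$ and $E$ lie within a favourable neighbourhood of their fluid limits and using the deterministic bound $N_{p,n}^{(k)} \leq n/k$ to dispatch the complementary event.
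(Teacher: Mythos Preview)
Your factorial-moment strategy is sound in spirit and the refined asymptotic $1-g_p(t)\sim e^{-2pt-(\psi(1/p)+\gamma_E)}$ is correct (and sharper than the $\sim e^{-2pt}$ stated in the introduction). But there is a real gap at the step where you ``substitute the fluid limits $V\sim n(1-g_p(t_n))$ and $E\sim n t_n(1-g_p(t_n))^2$ furnished by Theorem~\ref{thm:fluid limit}.'' That theorem gives uniform convergence on \emph{compact} time intervals, whereas here $t_n=(\mathsf t^{(k)}_{p,n}+c)/2\to\infty$ and the target values $1-g_p(t_n)$, $e_p(t_n)$ tend to $0$. An $o_{\mathbb P}(1)$ additive error on $V_{p,n}(\lfloor nt_n\rfloor)/n$ is useless when the quantity itself is $n^{-1/k}$ up to polylogs; you would need the \emph{relative} error to vanish, which Theorem~\ref{thm:fluid limit} does not provide. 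The same issue contaminates your use of the Britikov ratio, since the ``favourable neighbourhood'' of the fluid limit you restrict to is not furnished by any result in the paper at these diverging times (and for $k=1$ the fluid-limit prediction for $V$ is $O(1)$, so the subcritical forest asymptotics do not even apply).

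The paper circumvents this by never trying to pin down $V,E$ at time $t_n$. In the Poissonized model it derives an \emph{exact} formula (Lemma~\ref{lem:cvPnk}) for the probability that $\ell$ prescribed size-$k$ trees are components at time $t$; this formula involves the cumulative quantity $\int_0^{t}(1-\mathcal G_{p,n-\ell k}(u)/(n-\ell k))\,\mathrm du$, which \emph{is} controlled by the fluid limit on compacts together with a tail bound (Lemma~\ref{lem:ub_k}, Corollary~\ref{cor:cvintegrale}). The digamma constant then enters through the integral identity $(1-p)\int_0^\infty(1-g_p)=\tfrac12(\psi(1/p)+\gamma_E)$ of Lemma~\ref{lem:identintegral}, not through your pointwise asymptotic. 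Finally, the de-Poissonization is not quite routine: because $N_{p,n}^{(k)}$ is not monotone, the paper needs Lemma~\ref{lem:formation} (no tree of size $\ge k$ is created after the threshold, with high probability) to sandwich the discrete count between continuous counts at times $\mathsf t^{(k)}_{p,n}+c\pm\varepsilon$; your sketch does not supply an analogue of this step.
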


\textbf{Remark.} When $p=1$, using that the forest part of the frozen model is distributed as the forest part of the standard Erd\H{o}s-R\'enyi graph, Proposition \ref{prop:arbreskdiscrets} recovers a result by Erd\H{o}s and R\'enyi \cite{ErdosRenyi60} for the number of trees of size $k$ in their model. When $k=1$ this result is well known, but the cases $k\geq 2$ are perhaps less known.  Regarding  Theorem \ref{thm:absorption} again when $p=1$, the case $k=1$  corresponds to the asymptotic behavior of the connectedness time $A_{\mathrm{ER},n}$ in the standard Erd\H{o}s-R\'enyi graph and retrieves (\ref{connec:ER}). We are not aware if similar counterparts when $k\geq 2$ were investigated for the standard Erd\H{o}s-R\'enyi model. In any case,  Theorem \ref{thm:absorption} gives the asymptotics  for this model of the last time at which there are some trees of size $k$ (say $A^{(k)}_{\mathrm{ER},n}$), and the last time at which there are some trees of size larger or equal to $k$ (say $A^{(k+)}_{\mathrm{ER},n}$), with $~\mathbb P\big(A^{(k+)}_{\mathrm{ER},n} =A^{(k)}_{\mathrm{ER},n}  \big)  \underset{n \rightarrow \infty}{\rightarrow}  1~$
and
$$
\frac{A^{(k+)}_{\mathrm{ER},n} }{n} - \frac{\mathsf t^{(k)}_{1,n}}{2}  \; \underset{n \rightarrow \infty}{\overset{(\mathrm d)}\longrightarrow} \; \frac{\mathrm{Gu}}{2k}+\frac{\ln (k^{k-2}/k!)}{2k}.
$$

\subsection{Organization of the paper}

In Section \ref{section:forests}, the connections of the model $\mathrm F_{p,n}$ with uniform random forests are used to obtain asymptotics on the jumps of the processes $G_{p,n}$ and $D_{p,n}$, which are preliminary results needed to establish the fluid limit results. Some properties of the functions $g_p,d_p,v_p,e_p$ and $r_p$ are then highlighted in Section \ref{sec:fluid}. Section \ref{sec:mise_en_place} is devoted to the proofs of the fluid limit results, Theorem \ref{thm:fluid limit} and Theorem \ref{thm:forest}, as well as Corollary \ref{prop:cc} and Corollary \ref{cor:CC1}.  
A continuous, {Poissonized}, version of the model is  introduced and studied in Section \ref{sec:continuous}. It is used in Section \ref{sec:gelation} to prove continuous counterparts of Theorem \ref{thm:absorption} and Proposition \ref{prop:arbreskdiscrets}, and then these results themselves. Finally, in Section \ref{sec:open}, we discuss the case $p=0$ and some open questions concerning the sizes of unicycle components in the supercritical regime, for any $p$. The paper ends with an Appendix \ref{sec:app} recalling background on the Borel-Tanner distribution and Wormald's result on the differential equation method which we will use for the proofs of the fluid limit.

%%%%%%%%%%%%%%%%%%%%%%%%%%%%%%%%%
\section{Frozen Erd\H{o}s-R\'enyi and uniform random forests}
\label{section:forests}
%%%%%%%%%%%%%%%%%%%%%%%%%%%%%%%%%

As stated in Proposition \ref{prop:freeforest}, it turns out that conditionally on its number of vertices $V_{p,n}(m)$ and edges $E_{p,n}(m)$, the forest part of the frozen model $\mathrm{F}_{p,n}(m)$ is a uniform random forest. This property is crucial for our study. We develop here several consequences.

A main point concerns the expressions of the distribution of the jumps of the processes $G_{p,n}$ and $D_{p,n}$ at time $m$, given their history until then. This will be fondamental to implement the results on the fluid limit.
We use in the following proposition and throughout the paper the notation $$ \Delta G_{p,n}(m)=G_{p,n}(m+1)-G_{p,n}(m),$$ and similarly for $D_{p,n}$, to denote the increments of these processes, and let $(\mathbf F_{p,n}(m),m\geq 1)$ design the filtration generated by $(G_{p,n},D_{p,n})$, or equivalently $(V_{p,n},E_{p,n})$ recalling the relations (\ref{def:V_{p,n}E_{p,n}}). Recall also that for $N \in \mathbb N$ and $M\in \mathbb Z_+$, with $M \leq N-1$, $\mathcal{W}(N,M)$ denotes the set of unrooted unordered forests with $N$ labeled vertices $\{1,\ldots,N\}$ and $M$ edges. 

\medskip

\begin{proposition}\label{lm_transitions}
	For every $n\in \mathbb N$, $m\in \mathbb Z_+$ and $k\in \llbracket 1; E_{p,n}(m)+1 \rrbracket$,
	\setlength{\jot}{10pt}
	\begin{eqnarray*}
      && \hspace{-1cm}\mathbb{P}\left( \Delta G_{p,n}(m)=k ~| ~ \mathbf{F}_{p,n}(m)\right) \\
	  & =&\binom{V_{p,n}(m)}{k} \cdot k^{k-2} \cdot \frac{\#\mathcal{W}(V_{p,n}(m)-k,E_{p,n}(m)-k+1)}{\#\mathcal{W}(V_{p,n}(m),E_{p,n}(m))} \cdot \left(\frac{k(k-1)+2pkG_{p,n}(m)}{n(n-1)}\right),
	\end{eqnarray*}
	with the conventions $\#\mathcal{W}(0,0)=1$, $\#\mathcal{W}(N,N)=0$ for any $N \in \mathbb N$, $\#\mathcal{W}(-1,0)=0$,
	and 
	$$\mathbb{P}\left(\Delta D_{p,n}(m)=1~| ~ \mathbf{F}_{p,n}(m)\right)=2(1-p) \cdot \frac{G_{p,n}(m)(n-G_{p,n}(m))}{n(n-1)}+\frac{G_{p,n}(m)(G_{p,n}(m)-1)}{n(n-1)}.$$
\end{proposition}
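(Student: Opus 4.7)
The plan is to treat the two identities separately; each reduces to a direct counting argument once conditioning is set up properly, with the free forest property (Proposition \ref{prop:freeforest}) doing the main lifting for the gel increment.

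\textbf{Discard probability.} No information about the forest structure is needed here, so I would argue purely from the dynamics. At step $m+1$, one of the $n(n-1)/2$ pairs of vertices is chosen uniformly and independently of $\mathbf F_{p,n}(m)$. A discard occurs in exactly two disjoint situations: either both endpoints belong to the gel (no $p$ factor), or exactly one endpoint lies in the gel and the edge is rejected with probability $1-p$. Counting ordered pairs yields $G_{p,n}(m)(G_{p,n}(m)-1)/(n(n-1))$ and $2(1-p)G_{p,n}(m)(n-G_{p,n}(m))/(n(n-1))$ respectively, whose sum is the displayed formula.

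\textbf{Gel increment.} For $\Delta G_{p,n}(m) = k$, exactly $k$ forest vertices must get frozen in one transition. Inspecting the three dynamical rules, this happens in exactly two disjoint ways: either the selected edge falls inside a single tree of size $k$ in the forest (producing a unicycle on these $k$ vertices---selecting a pair that already carries a tree edge still counts, via the multi-edge convention), or the edge connects a tree of size $k$ to the gel and is accepted with probability $p$. Indexing by the vertex set $S$ of the tree involved, the events
\begin{equation*}
A_S \ = \ \{S \text{ is a tree component of the forest part, and the edge selected at step } m+1 \text{ triggers one of these two scenarios on } S\}
\end{equation*}
are pairwise disjoint as $S$ ranges over $k$-subsets, since a vertex belongs to at most one tree component. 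By the free forest property, conditionally on $\mathbf F_{p,n}(m)$ the forest part is uniform in $\mathcal W(V_{p,n}(m),E_{p,n}(m))$, so
\begin{equation*}
\mathbb P\bigl(S \text{ is a tree component of the forest part} \mid \mathbf F_{p,n}(m)\bigr) \ = \ k^{k-2}\cdot \frac{\#\mathcal W(V_{p,n}(m)-k,\, E_{p,n}(m)-k+1)}{\#\mathcal W(V_{p,n}(m),\, E_{p,n}(m))},
\end{equation*}
using Cayley's formula for the $k^{k-2}$ labelled trees on $S$ and the fact that the complement must form a forest on $V_{p,n}(m)-k$ vertices with $E_{p,n}(m)-(k-1)$ edges. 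Given such an $S$, the independent uniform edge selection produces the internal-cycle scenario with probability $\binom{k}{2}/\binom{n}{2} = k(k-1)/(n(n-1))$ and the aggregation scenario with probability $p\cdot kG_{p,n}(m)/\binom{n}{2} = 2pkG_{p,n}(m)/(n(n-1))$. Multiplying these conditional probabilities and summing over the $\binom{V_{p,n}(m)}{k}$ subsets $S$ yields the stated identity; the range $k\leq E_{p,n}(m)+1$ together with the stated conventions on $\#\mathcal W$ handle the boundary configurations, including $k=1$ where the internal-cycle contribution vanishes.

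\textbf{Main subtlety.} The computation is largely bookkeeping once Proposition \ref{prop:freeforest} is invoked. The step requiring most care is justifying that, under the model's convention, all $\binom{k}{2}$ pairs inside a tree of size $k$ contribute to the internal-cycle case---which is what produces the factor $k(k-1)$ rather than $(k-1)(k-2)$---and verifying that the events $A_S$ for distinct $S$ are indeed disjoint so that the summation carries no double counting. Everything else (Cayley's formula, the independence of the uniform edge draw from the graph, and the ratio of forest counts) slots in directly.
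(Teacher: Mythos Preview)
Your proof is correct and follows essentially the same approach as the paper's own proof: both rely on the free forest property, Cayley's formula, and direct counting of the edge-selection scenarios. The only cosmetic difference is that you index the gel-increment computation by vertex sets $S$ (and then multiply by $k^{k-2}$ for the number of trees on $S$), whereas the paper indexes directly by specific labeled trees; the two are equivalent. Your remark on the $k(k-1)$ factor (including pairs already carrying a tree edge) is a useful clarification that the paper leaves implicit.
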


\begin{proof} Given that a tree of size $k$ is a connected component of $\mathrm{F}_{p,n}(m)$, it will freeze at time $m+1$ 
\begin{enumerate}[topsep=0pt]
	\item[-] either if the edge selected at time $m+1$ involves two vertices of that tree, which happens with probability $\frac{k(k-1)}{n(n-1)}$
	\item[-] or if the edge selected at time $m+1$ involves a vertex of the tree and a vertex of the freezer and is retained, which happens with probability $p \cdot \frac{2k G_{p,n}(m)}{n(n-1)}$.
\end{enumerate}
Next, with a set of $V_{p,n}(m) \geq 1$ vertices, one can build for $k\leq V_{p,n}(m)$ 
$$
\binom{V_{p,n}(m)}{k} \cdot k^{k-2} \quad \text{different trees of size $k$}
$$
(recall Cayley's formula: there are $k^{k-2}$ different trees on a fixed set of $k$ vertices). And since the forest part of $\mathrm{F}_{p,n}(m)$, conditionally on $\mathbf{F}_{p,n}(m)$, is a uniform random forest with $V_{p,n}(m)$ vertices and $E_{p,n}(m)$ edges, the probability that a given tree of size $k \leq E_{p,n}(m)+1$, $k\geq 1$, belongs to this forest is 
$$
\frac{\#\mathcal{W}(V_{p,n}(m)-k,E_{p,n}(m)-k+1)}{\#\mathcal{W}(V_{p,n}(m),E_{p,n}(m))},
$$
with the conventions of the statement when $E_{p,n}(m)=V_{p,n}(m)-1$ or $E_{p,n}(m)=V_{p,n}(m)=0$. Gathering these remarks gives the stated expression of $\mathbb{P}\left( \Delta G_{p,n}(m)=k ~| ~ \mathbf{F}_{p,n}(m)\right)$.

Regarding $D_{p,n}$, simply note that the edge selected at time $m+1$ is discarded
\begin{enumerate}[topsep=0pt]
	\item[-] either if it involves two vertices of the freezer of $\mathrm{F}_{p,n}(m)$, which, conditionally on $\mathbf{F}_{p,n}(m)$, happens with probability $\frac{G_{p,n}(m)\left(G_{p,n}(m)-1\right)}{n(n-1)}$
	\item[-] or if it involves a vertex of the forest and a vertex of the freezer and it is not retained, which,  conditionally on $\mathbf{F}_{p,n}(m)$, happens with probability $(1-p) \cdot 2 \cdot  \frac{G_{p,n}(m)\left(n-G_{p,n}(m)\right)}{n(n-1)}$.
\end{enumerate}
\end{proof}

\smallskip

In the rest of the section we recall some background on uniform random forests (Section \ref{sec:URF}) in order to evaluate the asymptotics for $n$ large of the above conditional distributions (Section \ref{sec:asymp_conditioned}) and to get estimates on the largest jump of $G_{p,n}$ (Section \ref{sec:largest_jump}).

\subsection{Background on uniform random forests}
\label{sec:URF}

We gather here the information we need on the enumeration of uniform random forests, following Kolchin  \cite{kolchin86} and Britikov \cite{britikov88}, and on the sizes of the largest connected components of the forest, following Luczak-Pittel \cite{LuczakPittel92} and Bernikovich-Pavlov \cite{BernikovichPavlov11}. A key point is that the sizes of the connected components of a uniform random forest can be interpreted as the increments of a conditioned random walk. For $x\in (0,\mathrm{e}^{-1}]$ consider the probability measure
\begin{equation}\label{def:mu_x}
\mu_x(k)=\frac{k^{k-2}}{k!} \cdot \frac{x^k}{T(x)},\quad k\geq 1, \quad \text{with } \quad T(x)=\sum_{k\geq 1}\frac{k^{k-2}}{k!}x^k.
\end{equation}
Setting $x=\theta e^{-\theta}$ with $\theta \in (0,1]$, one checks that $T(x)=\theta-\theta^2/2~$, that the expectation of $\mu_x$ is  $2/(2-\theta)$ and its variance $2\theta/(1-\theta)(2-\theta)^2$ (see e.g. Lemma \ref{lm:lambert_fct} in the Appendix).

The following result dates back at least to Kolchin  \cite{kolchin86} and Britikov \cite{britikov88} and was formulated as is by Contat-Curien \cite{ContatCurien23}.

\begin{proposition}[\cite{kolchin86},\cite{britikov88},\cite{ContatCurien23}]
\label{prop_forest_rw}
Let $N \in \mathbb N$, $M \in \mathbb Z_+$ with $M\leq N-1$ and for $x\in (0,e^{-1}]$, \linebreak $\big(S_i^{(x)}:0\leq i\leq N-M\big)$ be a random walk with i.i.d. increments of law $\mu_x$, started from $S_0^{(x)}=0$.
\begin{enumerate}[topsep=0cm]
\item[\emph{1)}] Whatever $x\in (0,e^{-1}]$, the cardinal of $\mathcal W(N,M)$ is given by
	$$\#\mathcal{W}(N,M)=\frac{N!}{(N-M)!} \cdot \frac{T(x)^{N-M}}{x^N} \cdot \mathbb{P}\big(S_{N-M}^{(x)}=N\big).$$
\item[\emph{2)}] If $W(N,M)$ is a uniform random forest of $\mathcal W(N,M)$ and $\mathcal{C}_1,\ldots,\mathcal{C}_{N-M}$ denote the sizes of its connected components indexed in a uniform random order, then, whatever $x\in (0,\mathrm{e}^{-1}]$, the vector $\left(\mathcal{C}_1,...,\mathcal{C}_{N-M}\right)$ has the same law as the increments of $\big(S_i^{(x)}:0\leq i\leq N-M\big)$ conditioned on $S_{N-M}^{(x)}=N$. Moreover, conditionally on their sizes, the connected components are independent uniform Cayley trees.
\end{enumerate}
\end{proposition}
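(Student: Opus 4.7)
My plan is to prove both parts via a single double-counting argument centered on the auxiliary notion of an \emph{ordered forest}: a pair consisting of $F\in\mathcal W(N,M)$ together with a bijection between $\{1,\ldots,N-M\}$ and the set of connected components of $F$. By construction each unordered forest yields exactly $(N-M)!$ ordered forests. I would first enumerate ordered forests with prescribed ordered component-size tuple $(c_1,\ldots,c_{N-M})$, where $c_i\geq 1$ and $\sum c_i=N$. Choosing which vertices occupy the successive components contributes $\binom{N}{c_1,\ldots,c_{N-M}}$, and Cayley's formula gives $c_i^{c_i-2}$ trees on each chosen vertex set. Summing over admissible tuples and dividing by $(N-M)!$ yields
$$\#\mathcal W(N,M)=\frac{N!}{(N-M)!}\sum_{\substack{c_1+\cdots+c_{N-M}=N\\c_i\geq 1}}\prod_{i=1}^{N-M}\frac{c_i^{c_i-2}}{c_i!}.$$

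For Part 1, I would match the combinatorial sum with $\mathbb P(S^{(x)}_{N-M}=N)$. From the definition $\mu_x(k)=\frac{k^{k-2}}{k!}\cdot\frac{x^k}{T(x)}$ and the constraint $\sum c_i=N$, the product $\prod\mu_x(c_i)$ factors as $\frac{x^N}{T(x)^{N-M}}\prod\frac{c_i^{c_i-2}}{c_i!}$, so summing over the same index set gives
$$\mathbb P\bigl(S^{(x)}_{N-M}=N\bigr)=\frac{x^N}{T(x)^{N-M}}\cdot\frac{(N-M)!}{N!}\cdot\#\mathcal W(N,M),$$
which rearranges to the announced identity. As a sanity check, the right-hand side of the target formula is visibly independent of $x\in(0,e^{-1}]$, consistent with $\#\mathcal W(N,M)$ being an intrinsic quantity.

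For Part 2, I would observe that sampling $W(N,M)$ uniformly and then indexing its components by a uniform random permutation is equivalent to sampling uniformly from the set of ordered forests. The probability that $(\mathcal C_1,\ldots,\mathcal C_{N-M})=(c_1,\ldots,c_{N-M})$ is therefore
$$\frac{\binom{N}{c_1,\ldots,c_{N-M}}\prod c_i^{c_i-2}}{(N-M)!\,\#\mathcal W(N,M)},$$
and substituting the Part 1 formula collapses this to $\prod\mu_x(c_i)/\mathbb P(S^{(x)}_{N-M}=N)$, which is precisely the conditional law of the increments of $S^{(x)}$ given $\{S^{(x)}_{N-M}=N\}$. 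The same enumeration simultaneously delivers the last assertion: the numerator factorizes over the components into independent choices of (i) the underlying labelled vertex set and (ii) a uniform Cayley tree on that set, so conditionally on the sizes the components are independent uniform Cayley trees.

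The main obstacle is purely notational: keeping ordered and unordered forests carefully separated and recognizing that the free parameter $x\in(0,e^{-1}]$ plays no combinatorial role but simply converts a generating-function identity into a probabilistic one. No analytic input (Stirling, saddle point, etc.) is needed at this stage; such tools enter only later when one extracts asymptotics from the probability $\mathbb P(S^{(x)}_{N-M}=N)$ via a judicious choice of $x$.
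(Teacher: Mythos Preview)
Your argument is correct and is the standard double-counting proof of this classical result. Note, however, that the paper does not actually give a proof of this proposition: it is stated with attribution to Kolchin, Britikov, and Contat--Curien and used as a black box, so there is no ``paper's own proof'' to compare against. Your write-up is exactly the kind of proof one finds in those references.
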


Britikov \cite{britikov88} used the first point to estimate the asymptotic of $\#\mathcal{W}(N,M)$ in different regimes, using for each of them an appropriate value of $x$ (depending possibly on $N,M$) to obtain relevant estimates. The case $x=\mathrm{e}^{-1}$ is of particular interest in the critical regime: the resulting measure $\mu_{\mathrm{e}^{-1}}$ is then heavy-tailed, in the domain of attraction of a $3/2$-stable law, with $\mu_{\mathrm{e}^{-1}}(k)\sim \sqrt{\frac{2}{\pi}}k^{-5/2}$ as $k\to\infty$, and its expectation is equal to $2$. Let 
\begin{equation*}\label{def:p1}
	p_1(x)=\frac{1}{\pi}\int_{0}^{\infty}\mathrm{e}^{-\frac{2}{3}t^{3/2}}\cos\left(xt+\frac{2}{3}t^{3/2}\right)\mathrm{d}t
\end{equation*}
be the density of the corresponding $3/2$-stable law. Britikov's result reads as follows. 

\begin{proposition}[Britikov \cite{britikov88}]
\label{lm:Britikov}
Let $\omega=(2M-N)/N^{2/3}$, with $N \in \mathbb N$, $M \in \mathbb Z_+$, $M\leq N-1$.
\begin{enumerate}[topsep=0cm]
\item[\emph{1)}] \emph{(Subcritical regime)}
When $\omega \to -\infty$,  
$$\#\mathcal{W}(N,M)=\left(1+o(1)\right)\cdot \frac{N^{2M}}{2^M M!} \cdot \left(1-\frac{2M}{N}\right)^{1/2}.$$
\item[\emph{2)}]  \emph{(Near-critical regime)}
When $N \rightarrow \infty$ and $\omega$ is bounded,
$$\#\mathcal{W}(N,M)=\left(1+o(1)\right)\cdot \frac{N^{N-1/6}}{2^{N-M}(N-M)!} \cdot \sqrt{2\pi} \cdot p_1\left(\omega\right).$$ 
\item[\emph{3)}]  \emph{(Supercritical regime)} When $\omega \to \infty$, 
$$ \#\mathcal{W}(N,M)=\left(1+o(1)\right)\cdot  \frac{N^{N-2}}{2^{N-M-1}(N-M-1)!} \cdot \left(\frac{2M}{N}-1\right)^{-5/2}.$$
\end{enumerate}
\end{proposition}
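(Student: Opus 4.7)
The plan is to start from the exact identity of Proposition \ref{prop_forest_rw}:
$$\#\mathcal{W}(N,M)=\frac{N!}{(N-M)!} \cdot \frac{T(x)^{N-M}}{x^N} \cdot \mathbb{P}\big(S_{N-M}^{(x)}=N\big),$$
which is valid for every $x\in(0,e^{-1}]$. Since the left-hand side is independent of $x$, one tunes $x=\theta e^{-\theta}$ in each regime so that $\mathbb{P}(S_{N-M}^{(x)}=N)$ falls in the scope of a local limit theorem. The natural Cram\'er-type choice centres the walk at the target: asking $(N-M)\mathbb{E}_x[S_1]=N$ together with $\mathbb{E}_x[S_1]=2/(2-\theta)$ forces $\theta=2M/N$. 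The parameter $\omega=(2M-N)/N^{2/3}$ measures how close this $\theta$ is to the critical value $1$, which dictates which local limit theorem applies and, in the supercritical case, when one must abandon the tilt and instead exploit heavy tails.

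\textbf{Subcritical regime} ($\omega\to-\infty$). Take $\theta=2M/N$, bounded away from $1$, so that $\mu_x$ has finite variance $\sigma^2(\theta)=2\theta/((1-\theta)(2-\theta)^2)$. The classical lattice local central limit theorem gives $\mathbb{P}(S_{N-M}^{(x)}=N)=(1+o(1))/\sqrt{2\pi(N-M)\sigma^2(\theta)}$. Substituting into the identity, using $T(x)=\theta-\theta^2/2$, $x=\theta e^{-\theta}$ and Stirling's formula on $N!/(N-M)!$, one should recover $N^{2M}/(2^M M!)\cdot (1-2M/N)^{1/2}$ after routine simplification.

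\textbf{Near-critical regime} ($\omega$ bounded). Now fix $x=e^{-1}$, so $\theta=1$ and $\mu_{e^{-1}}(k)\sim\sqrt{2/\pi}\,k^{-5/2}$ lies in the domain of attraction of a one-sided $3/2$-stable law with density $p_1$. The appropriate local limit theorem for lattice sums in this domain of attraction yields $\mathbb{P}(S_{N-M}^{(e^{-1})}=N)=(1+o(1))\,(N-M)^{-2/3}\,p_1(\omega)$ up to an explicit scaling constant (one checks $N-M=N/2+O(N^{2/3})$ and that the argument of $p_1$ converges to $\omega$). Plugging back together with $T(e^{-1})=1/2$ and Stirling gives the stated near-critical equivalent.

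\textbf{Supercritical regime} ($\omega\to+\infty$). Keep $x=e^{-1}$. Now $N$ lies far to the right of the mean $2(N-M)$ of $S_{N-M}^{(e^{-1})}$, so the event is a heavy-tailed large deviation governed by the principle of \emph{one big jump}: a single increment of size $\approx 2M-N$ together with the remaining $N-M-1$ increments summing to $2(N-M)$ within stable fluctuations. Nagaev/Doney-type local estimates then give
$$\mathbb{P}(S_{N-M}^{(e^{-1})}=N)\sim (N-M)\,\mu_{e^{-1}}(2M-N)\sim (N-M)\sqrt{2/\pi}\,(2M-N)^{-5/2},$$
and combining this with the identity and Stirling produces the claimed $N^{N-2}/(2^{N-M-1}(N-M-1)!)\cdot(2M/N-1)^{-5/2}$.

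The main obstacle is the near-critical step: it requires a \emph{local} (not merely distributional) limit theorem on the lattice for a walk in the domain of attraction of a one-sided $3/2$-stable law, with quantitative control uniform in the target. Bridging the three regimes smoothly near $|\omega|\asymp 1$ — so that the subcritical and near-critical formulas agree when $\omega\to-\infty$, and the near-critical and supercritical formulas agree when $\omega\to+\infty$ — is what gives Britikov's original proof its technical bulk, and in modern language is handled by uniform local limit theorems in the Doney--Vatutin--Topchii tradition.
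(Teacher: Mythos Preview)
The paper does not prove this proposition: it is quoted from Britikov \cite{britikov88} and used as background, with no argument given. Your sketch follows exactly the strategy the paper attributes to Britikov (choosing $x=\theta e^{-\theta}$ with $\theta=2M/N$ in the subcritical regime so that $N$ is the mean of $S_{N-M}^{(x)}$, and $x=e^{-1}$ in the critical and supercritical regimes), and is correct in outline; indeed the paper reuses precisely this subcritical tilt in its proof of Lemma~\ref{lm:forest:sub}, and the heavy-tailed/one-big-jump heuristic in its proof of Lemma~\ref{lm:forest:super}.
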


\smallskip

Note that $\omega \to -\infty$ or $\omega \to \infty$ implies $N \rightarrow \infty$.

Using these estimates, Luczak and Pittel \cite{LuczakPittel92}  studied the asymptotics of the largest components of uniform random forests in each of the three regimes, showing similar, yet different, behaviors to the Erd\H{o}s-R\'enyi graph: a phase transition occurs according to whether  $M/N<1/2$ (with largests components of order $\ln(N)$), $M/N \sim 1/2$ (with largests components of order $N^{2/3}$) and $N/M>1/2$ (where a giant component emerges); however in the supercritical regime, removing the giant tree results in a critical random forest, whereas removing the giant component in the Erd\H{o}s-Rényi graph gives a subcritical Erd\H{o}s-Rényi graph. We specify some of Luczak and Pittel's results in the subcritical regime -- which themselves are based on results of Erd\H{o}s and R\'enyi \cite{ErdosRenyi60} in the subcritical regime of their model -- as we shall need them later. 

\begin{proposition}[Luczak-Pittel \cite{LuczakPittel92}, Theorem 3.1 (ii)]
\label{prop:largest_trees}
For $i\in \mathbb N$, let $L_i(N,M)$ denote the size of the $i$-th largest tree in a uniform random forest with $N$ vertices and $M$ edges. Then, when $\big(N,M/N\big) \rightarrow (\infty,c)$, with $c \in (0,1/2)$,
$$
\frac{L_i(N,M)}{\ln(N)} \overset{\mathbb P}\longrightarrow \frac{1}{2c-1-\ln(2c)}.
$$
\end{proposition}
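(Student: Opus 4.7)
The plan is to exploit the random walk representation in Proposition~\ref{prop_forest_rw}. I choose $x=\theta e^{-\theta}$ with $\theta=2M/N$, so that $\theta\to 2c\in(0,1)$ and the mean $2/(2-\theta)$ of $\mu_x$ matches $N/(N-M)\to 1/(1-c)$. With this tilting, the component sizes $(\mathcal C_j)_{j\le N-M}$ are the increments of a random walk $S^{(x)}$ with a mean-matching step law, conditioned on $S^{(x)}_{N-M}=N$; the conditioning sits in the bulk and should be well approximated by the unconditioned iid law.

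Step 1, tail of $\mu_x$: Stirling's formula yields $k^{k-2}/k!\sim e^k k^{-5/2}/\sqrt{2\pi}$, so
$$\mu_x(k)\;\sim\;\frac{1}{\sqrt{2\pi}\, T(x)}\cdot\frac{\rho^k}{k^{5/2}},\qquad \rho:=ex=\theta e^{1-\theta}.$$
For $\theta\in(0,1)$ one has $\rho<1$, and the exponential decay rate is $\alpha(\theta):=-\ln\rho=\theta-1-\ln\theta$, which at $\theta=2c$ is exactly the denominator $2c-1-\ln(2c)$ of the target limit. Step 2, iid extremes: for $X_1,\ldots,X_{N-M}$ iid with law $\mu_x$, a Poisson approximation to the number of indices $j$ with $X_j\ge(1\pm\epsilon)\ln(N)/\alpha(2c)$ yields $\widetilde L_i/\ln N \to 1/\alpha(2c)$ in probability, where $\widetilde L_i$ denotes the $i$-th largest among the $X_j$'s.

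Step 3 is to transfer this to the conditioned vector. Using the identity
$$\mathbb P\bigl((\mathcal C_1,\ldots,\mathcal C_{N-M})\in A\bigr)\;=\;\frac{\mathbb P_{\mathrm{iid}}\bigl((X_1,\ldots,X_{N-M})\in A,\;S_{N-M}=N\bigr)}{\mathbb P_{\mathrm{iid}}(S_{N-M}=N)}$$
together with the subcritical local-central-limit estimate $\mathbb P(S_{N-M}=N)\asymp N^{-1/2}$ (which follows from Proposition~\ref{lm:Britikov}(1) combined with Proposition~\ref{prop_forest_rw}(1)), every unconditional probability gives a bound on its conditional counterpart up to a factor $O(N^{1/2})$. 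For the upper tail this is enough: a union bound gives $\mathbb P(L_1(N,M)\ge(1+\epsilon)\ln(N)/\alpha(2c))\lesssim N^{3/2}K^{-3/2}\rho^{(1+\epsilon)K}\to 0$.

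For the lower bound I set $k_N:=\lfloor(1-\epsilon)\ln N/\alpha(2c)\rfloor$ and count the number $Z$ of components of size $k_N$. Its factorial moments admit the exact formula
$$\mathbb E\bigl[Z(Z-1)\cdots(Z-r+1)\bigr]\;=\;\frac{N!}{(k_N!)^r(N-rk_N)!}\,(k_N^{k_N-2})^r\,\frac{\#\mathcal W(N-rk_N,M-rk_N+r)}{\#\mathcal W(N,M)},$$
obtained by summing, over all $r$-tuples of disjoint labeled trees of size $k_N$, the probability that each appears as a component (the same mechanism as in the proof of Proposition~\ref{lm_transitions}). Applying Britikov's subcritical estimate (Proposition~\ref{lm:Britikov}(1)) to both cardinalities -- which is legitimate because the mean-matching choice of $\theta$ keeps the perturbed parameters $(N-rk_N,M-rk_N+r)$ inside the subcritical regime for each fixed $r$ -- the $r$-th factorial moment converges to $(C_c\, N \rho^{k_N} k_N^{-3/2})^r$, i.e.\ to the $r$-th factorial moment of a Poisson variable whose mean tends to $+\infty$. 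Chebyshev's inequality then yields $Z\ge i$ with probability tending to $1$, and hence $L_i(N,M)\ge k_N$, completing the argument.

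The main obstacle is the uniform applicability of Britikov's asymptotic after excising up to $r$ trees of size $k_N$: one must verify that the perturbed parameters remain in the subcritical regime and that the ratios of cardinalities yield precisely the Poisson-like factorial moments indicated above. Once this bookkeeping is done, the extreme-value conclusion follows along entirely standard lines.
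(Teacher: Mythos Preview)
The paper does not prove this proposition; it is quoted from Luczak and Pittel \cite{LuczakPittel92} without argument, so there is no ``paper's own proof'' to compare against. Your approach via the random-walk representation of Proposition~\ref{prop_forest_rw} with the tilted measure $\mu_{\theta e^{-\theta}}$, $\theta=2M/N$, is the natural one and is essentially the strategy of Luczak and Pittel.

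There is, however, a genuine gap in your upper-tail step. You assert that the crude bound $\mathbb P_{\mathrm{cond}}(\,\cdot\,)\le O(N^{1/2})\,\mathbb P_{\mathrm{iid}}(\,\cdot\,)$, obtained from $\mathbb P(S_{N-M}=N)$ being of order $N^{-1/2}$, is ``enough'', and write
$$\mathbb P\bigl(L_1(N,M)\ge (1+\epsilon)\ln N/\alpha(2c)\bigr)\;\le\; C\,N^{3/2}\,K^{-3/2}\,\rho^{(1+\epsilon)K}\;\to\;0.$$
But with $K=\ln N/\alpha(2c)$ one has $\rho^{(1+\epsilon)K}=N^{-(1+\epsilon)}$, so the right-hand side is of order $N^{1/2-\epsilon}(\ln N)^{-3/2}$, which \emph{diverges} for every $\epsilon\in(0,1/2)$. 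The crude conditioning factor is too lossy here. The fix is to retain the numerator in the decomposition
$$\mathbb P(L_1\ge K)\;\le\; (N-M)\sum_{r\ge K}\mu_x(r)\,\frac{\mathbb P\bigl(S_{N-M-1}^{(x)}=N-r\bigr)}{\mathbb P\bigl(S_{N-M}^{(x)}=N\bigr)}$$
and to observe, via the local limit theorem (Lemma~\ref{lm:Britikov_sub}), that for $r=O(\ln N)=o(\sqrt N)$ the ratio of the two local probabilities stays bounded: your mean-matching choice $\theta=2M/N$ puts both targets $N$ and $N-r$ within a bounded number of standard deviations of the respective means. This replaces the factor $N^{3/2}$ by $N$, after which the bound becomes of order $N^{-\epsilon}(\ln N)^{-5/2}\to 0$ for every $\epsilon>0$, as required. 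Your lower-bound argument via factorial moments of the count of components of size $k_N$ is sound (the perturbations $r k_N=O(\ln N)$ indeed keep the parameters in the subcritical window of Proposition~\ref{lm:Britikov}), though the polynomial exponent of $k_N$ in your mean should be $-5/2$ rather than $-3/2$; this does not affect the conclusion.
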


\subsection{Asymptotics of expected conditional jumps}
\label{sec:asymp_conditioned}

Combining Proposition \ref{lm:Britikov} with Proposition \ref{lm_transitions} gives the asymptotics of Corollary \ref{corol:est_transitions} below. In this statement, we specify the variable of the Landau notation $o(1)$ by writing $o_l(1)$ for a (deterministic) quantity that vanishes as $l$ is large (this function may differ in each assertion) . 
Also, for all $n \in \mathbb N, m \in \mathbb Z_+$ such that $V_{p,n}(m)\geq 1$ and all $k \in \mathbb N, k <V_{p,n}(m)$, we set 
$$\Omega_{p,n}(m)=\frac{2E_{p,n}(m)-V_{p,n}(m)}{(V_{p,n}(m))^{2/3}}, \qquad \Omega_{p,n}^{(k)}(m)=\frac{2E_{p,n}(m)-V_{p,n}(m)-k+2}{\left(V_{p,n}(m)-k\right)^{2/3}}$$
and $\Omega_{p,n}(m)=0$ when $V_{p,n}(m)=0$, $\Omega_{p,n}^{(k)}(m)=0$ when $k \geq V_{p,n}(m)$.

\begin{corollary}\label{corol:est_transitions} Let $\epsilon:\mathbb Z_+ \rightarrow \mathbb R$ denote a positive function that vanishes at infinity. 
\begin{enumerate}[topsep=0cm]
\item[\emph{1)}] When $(\Omega_{p,n}(m),E_{p,n}(m))\to (-\infty,\infty)$,  
\begin{eqnarray*}
\mathbb{P}\left( \Delta G_{p,n}(m)=k ~ | ~\mathbf{F}_{p,n}(m)\right) &=& \big(1+o_{(\Omega_{p,n}(m),E_{p,n}(m))}(1)\big) \cdot \frac{k^{k-1}}{k!} \cdot \left(\frac{2E_{p,n}(m)}{V_{p,n}(m)}\right)^{k-1}\\
&& \hspace{0cm} \times  ~\mathrm{e}^{-2k\frac{E_{p,n}(m)}{V_{p,n}(m)}} \cdot \left(\frac{k-1+2pG_{p,n}(m)}{n}\right)\frac{n-G_{p,n}(m)}{n},
\end{eqnarray*}	
where the function $o_{\cdot}(1)$ is uniform over all $k$ such that $1\leq k\leq \epsilon(E_{p,n}(m))\left(E_{p,n}(m) \right)^{1/2}$.
\item[\emph{2)}] Let $c>0$ be some arbitrary constant.  When $\left\vert \Omega_{p,n}(m)\right\vert \leq c $ and $V_{p,n}(m) \rightarrow \infty$,
\begin{eqnarray*}
&& \hspace{-1cm}
	\mathbb{P}\left( \Delta G_{p,n}(m)=k ~ | ~\mathbf{F}_{p,n}(m)\right) \\
	&=& \big(1+o_{V_{p,n}(m)}(1)\big) \cdot \frac{2k^{k-1}}{k!e^k} \cdot \frac{V_{p,n}(m)-E_{p,n}(m)}{n} \cdot \left(\frac{k-1+2pG_{p,n}(m)}{n}\right) \cdot \frac{p_1\left(\Omega_{p,n}^{(k)}(m)\right)}{p_1\left(\Omega_{p,n}(m)\right)} 
\end{eqnarray*}	
where the function $o_{\cdot}(1)$ is uniform over all $k$ such that $1\leq k\leq \epsilon(V_{p,n}(m))\left(V_{p,n}(m)\right)^{1/2}$. 
\item[\emph{3)}] When $\Omega_{p,n}(m)\to \infty$,
\begin{eqnarray*}
	&& \hspace{-2cm} \mathbb{P}\left( \Delta G_{p,n}(m)=k ~ | ~\mathbf{F}_{p,n}(m)\right) \\
	&=&\big(1+o_{\Omega_{p,n}(m)}(1)\big) \cdot \frac{2k^{k-1}}{k!\mathrm{e}^k}\cdot \left(\frac{k-1+2pG_{p,n}(m)}{n}\right)\frac{{V_{p,n}(m)-E_{p,n}(m)-1}}{n}
\end{eqnarray*}
where the function $o_{\cdot}(1)$ is uniform over all $k$ such that $1\leq k\leq \epsilon(V_{p,n}(m))\left(V_{p,n}(m)\right)^{1/2}$.
\end{enumerate}	
\end{corollary}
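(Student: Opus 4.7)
The plan is to substitute the three regime-specific asymptotics of Proposition~\ref{lm:Britikov} into the exact formula of Proposition~\ref{lm_transitions} and simplify, tracking uniformity in $k$ throughout. Writing $(V,E,G)$ for $(V_{p,n}(m),E_{p,n}(m),G_{p,n}(m))$, the heart of the calculation is the ratio $\#\mathcal{W}(V-k,E-k+1)/\#\mathcal{W}(V,E)$ in each regime, multiplied by $\binom{V}{k}k^{k-2}\cdot k(k-1+2pG)/(n(n-1))$.

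First I would verify that $(V-k,E-k+1)$ lies in the same Britikov regime as $(V,E)$ uniformly on the admissible range $k\leq \epsilon(\cdot)(\cdot)^{1/2}$: a direct expansion yields
$$\Omega_{p,n}^{(k)}(m)-\Omega_{p,n}(m)= O(kV^{-2/3})+O(|\Omega_{p,n}(m)|\,kV^{-1}),$$
which is $o(1)$ on this range, using the a priori bound $|\Omega_{p,n}(m)|\leq V^{1/3}$ coming from $|2E-V|\leq V$. This justifies applying the same case of Proposition~\ref{lm:Britikov} to both numerator and denominator of the $\#\mathcal{W}$ ratio. The simplification then follows the same template in every regime, relying on the three uniform estimates $\binom{V}{k}\sim V^k/k!$, $(1-k/V)^{\alpha V+\beta}= e^{-\alpha k}(1+o(1))$ (both valid on the range because $k^2/V=o(1)$), and $(V-k)^a/V^a\to 1$ for any fixed exponent $a$.

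In the subcritical case, the square-root factors $\sqrt{1-2E/V}$ coming from numerator and denominator produce a ratio tending to $1$, while $(V-k)^{2(E-k+1)}/V^{2E}=V^{-k+2}(1-k/V)^{2(E-k+1)}$ combined with $E!/(E-k+1)!\sim E^{k-1}$ and the $2^{k-1}$ from the Stirling-type denominators yields the factor $V\cdot(2E/V)^{k-1}e^{-2kE/V}$; multiplying by $k(k-1+2pG)/(n(n-1))$ and rewriting $V/(n(n-1))$ as $(n-G)/n\cdot 1/(n-1)$ gives part~1. In the near-critical case the densities $p_1(\Omega_{p,n}(m))$ and $p_1(\Omega_{p,n}^{(k)}(m))$ cannot in general be approximated (they need not be close, since $\Omega^{(k)}$ may drift by $O(k/V^{2/3})$ inside a bounded window), and are kept explicit; the factorial ratio $(V-E)!/(V-E-1)!=V-E$ replaces $E^{k-1}$, producing part~2. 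Part~3 is identical in spirit, with $V-E-1$ in place of $V-E$ and the ratio of the $(-5/2)$-powers tending to $1$.

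The only delicate point is ensuring uniformity in $k$ of every $o(1)$ involved: the crucial second-order Taylor bound is $(2E-2k+2)(k/V)^2 = O(k^2 E/V^2)=O(k^2/V)=o(1)$, which guarantees $(1-k/V)^{2(E-k+1)}=e^{-2kE/V}(1+o(1))$ uniformly in part~1; the analogous controls in parts~2 and~3 are simpler because the exponents $V-k-1/6$ and $V-k-2$ do not involve $E$. Beyond this, the argument is a careful bookkeeping exercise with no conceptual difficulty; in particular the distinction between $n(n-1)$ and $n^2$ is absorbed in the $1+o(1)$ factor.
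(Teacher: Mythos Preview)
Your proposal is correct and follows essentially the same route as the paper: plug the three Britikov estimates into the exact formula of Proposition~\ref{lm_transitions}, check that $(V-k,E-k+1)$ stays in the same regime as $(V,E)$ uniformly over the admissible range of $k$, and simplify using $\ell!/(\ell-k)!\sim \ell^k$ and $(1-k/V)^{2(E-k+1)}=e^{-2kE/V}(1+o(1))$, both valid because $k^2=o(E)$ (resp.\ $k^2=o(V)$).

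One small inaccuracy in your parenthetical remark for part~2: on the range $k\leq \epsilon(V)\sqrt{V}$ the drift $\Omega_{p,n}^{(k)}(m)-\Omega_{p,n}(m)=O(k/V^{2/3})=O(\epsilon(V)V^{-1/6})$ actually \emph{does} tend to $0$, so $p_1(\Omega^{(k)})/p_1(\Omega)\to 1$ uniformly. The ratio is kept explicit in the statement not because it fails to converge but because this form is what is used downstream (see the remark leading to \eqref{eq:crit_surcrit}); your proof would be perfectly fine either way.
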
 

\medskip

\begin{remark}
	A consequence of 2) and 3), together with Stirling's formula and the fact that the function $p_1$ is strictly positive on $\mathbb{R}$, is that for any $A>0$ and any function $\epsilon$ that vanishes at infinity, there exists a constant $c_{A,\epsilon,p}>0$ such that for every $V_{p,n}(m),E_{p,n}(m)$ verifying $2E_{p,n}(m)-V_{p,n}(m)\geq -AV_{p,n}(m)^{2/3}$ with $V_{p,n}(m)$ large enough, and then every $1\leq k\leq \epsilon\left(V_{p,n}(m)\right)\left(V_{p,n}(m)\right)^{1/2}$,
	\begin{equation}\label{eq:crit_surcrit}
		\mathbb{P}\left(\Delta G_{p,n}(m)=k ~ | ~\mathbf{F}_{p,n}(m)\right)\geq \frac{c_{A,\epsilon,p}}{k^{3/2}} \cdot \frac{G_{p,n}(m)\left(V_{p,n}(m)-E_{p,n}(m)-1\right)}{n^2}.
	\end{equation}	
\end{remark}

\medskip

\begin{proof} We shall repeatedly use the following consequence of Stirling's formula: as $l \in \mathbb N \rightarrow \infty$, uniformly for all integers $k \in \big[ 1,\epsilon(l) \sqrt l \big]$ 
$$
\frac{l!}{(l-k)!}=(1+o_l(1)) \cdot l^k.
$$	
1) When $\Omega_{p,n}(m)\to -\infty$, one has $~V_{p,n}(m)\to \infty~$ and so $~\Omega_{p,n}^{(k)}(m)= \Omega_{p,n}(m)\big(1+o_{V_{p,n}(m)}(1)\big)$, uniformly in $1\leq k\leq \left(V_{p,n}(m)\right)^{1/2}$. Applying the subcritical regime estimate of Proposition \ref{lm:Britikov} together with Stirling's formula, we thus get
\begin{spreadlines}{5pt}
\begin{align*}
	\frac{\#\mathcal{W}(V_{p,n}(m)-k,E_{p,n}(m)-k+1)}{\#\mathcal{W}(V_{p,n}(m),E_{p,n}(m))}&=\big(1+o_{\Omega_{p,n}(m)}(1)\big)\cdot 2^{k-1} \cdot \frac{\left(V_{p,n}(m)-k\right)^{2(E_{p,n}(m)-k+1)}}{V_{p,n}(m)^{2E_{p,n}(m)}}\\
         & \hspace{-0.5cm} \times  \frac{E_{p,n}(m)!}{\left(E_{p,n}(m)-k+1\right)!} \cdot \left(\frac{\left(2E_{p,n}(m)-V_{p,n}(m)-k+2\right)V_{p,n}(m)}{(2E_{p,n}(m)-V_{p,n}(m))(V_{p,n}(m)-k)}\right)^{1/2} \\
	&=\big(1+o_{(\Omega_{p,n}(m),E_{p,n}(m))}(1)\big) \cdot 2^{k-1} \cdot V_{p,n}(m)^{-2k+2} E_{p,n}(m)^{k-1} \\
	& \hspace{-0.5cm} \times \mathrm{e}^{-2kE_{p,n}(m)/V_{p,n}(m)}   
\end{align*}
\end{spreadlines}
when $(\Omega_{p,n}(m),E_{p,n}(m))\to (-\infty,\infty)$, uniformly for all $1\leq k\leq \epsilon(E_{p,n}(m))\left(E_{p,n}(m)\right)^{1/2}$. Together with Proposition \ref{lm_transitions} this leads to 
\begin{align*}
 \mathbb{P}\left( \Delta G_{p,n}(m)=k | \mathbf{F}_{p,n}(m)\right)&=\big(1+o_{(\Omega_{p,n}(m),E_{p,n}(m))}(1)\big) \cdot \frac{k^{k-1}}{k!} \cdot \left(2E_{p,n}(m)\right)^{k-1}V_{p,n}(m)^{-2k+2}\\
 & \qquad \times \mathrm{e}^{-2kE_{p,n}(m)/V_{p,n}(m)}\frac{V_{p,n}(m)!}{\left(V_{p,n}(m)-k\right)!} \cdot \left(\frac{k-1+2pG_{p,n}(m)}{n(n-1)}\right)\\
 &= \big(1+o_{(\Omega_{p,n}(m),E_{p,n}(m))}(1)\big) \cdot \frac{k^{k-1}}{k!}  \cdot  \left(\frac{2E_{p,n}(m)}{V_{p,n}(m)}\right)^{k-1} \cdot \mathrm{e}^{-2kE_{p,n}(m)/V_{p,n}(m)} \\
& \qquad \times \frac{\left(k-1+2pG_{p,n}(m)\right)\left(n-G_{p,n}(m)\right)}{n^2}
\end{align*}
uniformly for all $1\leq k\leq \epsilon(E_{p,n}(m))(E_{p,n}(m))^{1/2}$, where we used again Stirling's formula and that $V_{p,n}(m)=n-G_{p,n}(m)$.

2) The proof is similar. Observe that under the hypotheses we make here we have $\big\lvert\Omega_{p,n}^{(k)}(m)\big\rvert\leq c+o_{V_{p,n}(m)}(1)$, uniformly in $1\leq k\leq (V_{p,n}(m))^{1/2}$. 
 We can thus apply the asymptotics in the critical regime of Proposition \ref{lm:Britikov} to estimate both $\#\mathcal{W}\left(V_{p,n}(m)-k,E_{p,n}(m)-k+1\right)$ and \linebreak $\#\mathcal{W}\left(V_{p,n}(m),E_{p,n}(m)\right)$, and then plug them in the first identity of Proposition \ref{lm_transitions} to get the result.
 
3) When $\Omega_{p,n}(m)\to \infty$, again $~V_{p,n}(m)\to \infty~$ and $~\Omega_{p,n}^{(k)}(m)= \Omega_{p,n}(m)\big(1+o_{V_{p,n}(m)}(1)\big)$, uniformly in $1\leq k\leq \left(V_{p,n}(m)\right)^{1/2}$. So we now apply the asymptotics of the supercritical regime of Proposition \ref{lm:Britikov}, together with Proposition \ref{lm_transitions}, to get the expected result. Note that here one may have $E_{p,n}(m)=V_{p,n}(m)-1$, in which case the cardinal $\#\mathcal{W}(V_{p,n}(m)-k,E_{p,n}(m)-k+1)$ is null for each $k$, as well as the probability $\mathbb{P}\left( \Delta G_{p,n}(m)=k  | \mathbf{F}_{p,n}(m)\right)$. 
\end{proof}

\subsection{Estimates on the jumps of $G_{p,n}$}
\label{sec:largest_jump}

We will also need to control the jumps of $G_{p,n}$, which, by Proposition \ref{prop:freeforest}, are related to the sizes of the trees in uniform random forests $W(V_{p,n}(m),E_{p,n}(m))$. In that aim, we settle here estimates on the size $L_1(N,M)$  of the \emph{largest} tree in a uniform random forest  with $N$ vertices and $M$ edges, in the spirit of what has been done by Luczak-Pittel \cite{LuczakPittel92} and Bernikovich-Pavlov \cite{BernikovichPavlov11}. We refine slightly their results, relying on Proposition \ref{prop_forest_rw} and its notation. The first lemma below  concerns the subcritical regime, and the second the supercritical regime.

\begin{lemma}\label{lm:forest:sub}
Fix $\varepsilon>0$ and consider  a function $B$ such that $B(N)\to \infty $ as $N\to \infty$. Then for $N$ large enough and all $M$ verifying $2M-N\leq -\varepsilon N$,
$$\mathbb{P}\left(L_1(N,M)\geq B(N)\right)~\leq~\frac{C_{\varepsilon}\cdot N^{2}}{B(N)^2} \cdot \exp\left(-\frac{\varepsilon^2 B(N)}{2}\right)$$
where $C_{\varepsilon}\in (0,\infty)$ only depends on $\varepsilon$.
\end{lemma}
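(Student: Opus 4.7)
My plan is to combine a union bound with the random walk representation of uniform random forests (Proposition \ref{prop_forest_rw}). First, note the trivial bound $L_1(N,M) \leq M+1$, so the inequality is automatic when $B(N) > M+1$; otherwise we may assume $M \geq B(N)-1$, and in particular $M \to \infty$ since $B(N) \to \infty$. By the exchangeability of the component sizes $(\mathcal{C}_1, \ldots, \mathcal{C}_{N-M})$ in a uniform random forest of $\mathcal W(N,M)$, the union bound yields $\mathbb{P}(L_1(N,M) \geq B) \leq (N-M)\, \mathbb{P}(\mathcal{C}_1 \geq B)$. The natural choice of tilting parameter is $x = \theta e^{-\theta}$ with $\theta = 2M/N \in (0, 1-\varepsilon]$, since then the random walk $(S_{N-M}^{(x)})$ of Proposition \ref{prop_forest_rw} has mean exactly $N$. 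Writing $\mathcal{C}_1$ as the first increment of the conditioned walk and bounding $\mathbb{P}(S_{N-M-1}^{(x)} = N-k) \leq 1$ in each term of the numerator, Proposition \ref{prop_forest_rw}(2) gives
\[
\mathbb{P}(\mathcal{C}_1 \geq B) \;\leq\; \frac{1}{\mathbb{P}(S_{N-M}^{(x)} = N)}\sum_{k \geq B} \mu_x(k).
\]

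The core of the argument is a sharp tail bound on the step distribution $\mu_x$. By Stirling's formula, $\mu_x(k) \leq C (ex)^k / (T(x) k^{5/2})$. Set $h(\theta) := \log(ex) = 1 - \theta + \log\theta$. Since $h(1) = h'(1) = 0$ and $h''(\theta) = -1/\theta^2 \leq -1$ on $(0,1]$, Taylor's formula with remainder gives $h(\theta) \leq -(1-\theta)^2/2$. Hence for $\theta \leq 1-\varepsilon$ we have $(ex)^k \leq e^{-k\varepsilon^2/2}$, which is precisely the origin of the factor $e^{-\varepsilon^2 B/2}$ in the statement. Summing the resulting geometric-type tail yields
\[
\sum_{k \geq B} \mu_x(k) \;\leq\; \frac{C_\varepsilon}{T(x)\,B^{5/2}}\, e^{-\varepsilon^2 B/2}.
\]

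For the denominator I would invoke a local central limit theorem for $\mu_x$, or equivalently extract the estimate directly from Britikov's subcritical asymptotics (Proposition \ref{lm:Britikov}(1)) plugged into the identity of Proposition \ref{prop_forest_rw}(1); a short Stirling computation gives, for $N$ large and $\theta \leq 1-\varepsilon$,
\[
\mathbb{P}(S_{N-M}^{(x)} = N) \;\geq\; \frac{c_\varepsilon}{\sqrt{M}}.
\]
Combining the estimates, and using $T(x) = 2M(N-M)/N^2 \geq M/N$ together with $M \geq B/2$ (whenever the probability is nonzero), one obtains
\[
\mathbb{P}(L_1 \geq B) \;\leq\; C_\varepsilon\,(N-M)\sqrt{M} \cdot \frac{N}{M} \cdot \frac{1}{B^{5/2}}\, e^{-\varepsilon^2 B/2} \;\leq\; \frac{C'_\varepsilon\, N^2}{B^3}\, e^{-\varepsilon^2 B/2},
\]
which is in fact slightly stronger than the stated bound.

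The main obstacle, which the preliminary reduction $M \geq B(N)-1 \to \infty$ is designed to circumvent, is obtaining the lower bound on $\mathbb{P}(S_{N-M}^{(x)} = N)$ \emph{uniformly} in the parameter $\theta$. When $\theta \to 0$, the distribution $\mu_x$ degenerates towards a point mass at $1$ and the variance of $S_{N-M}^{(x)}$, which is of order $M$, becomes small; keeping $M \geq B/2$ while $B \to \infty$ guarantees that the local central limit regime is genuinely attained, and that Britikov's asymptotics can be applied with constants depending only on $\varepsilon$. All the remaining steps are routine computations of the type already performed in Corollary \ref{corol:est_transitions}.
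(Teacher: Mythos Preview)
Your proposal is correct and follows essentially the same route as the paper: the same tilting $x=\theta e^{-\theta}$ with $\theta=2M/N$, the same union bound via Proposition~\ref{prop_forest_rw}, the same Stirling-plus-quadratic bound $1-\theta+\log\theta\le -(1-\theta)^2/2$ (the paper phrases it as $e^x(1-x)\le e^{-x^2/2}$), and the same local limit estimate for the denominator, which the paper isolates as Lemma~\ref{lm:Britikov_sub} rather than deriving from Proposition~\ref{lm:Britikov}(1). Your bookkeeping is in fact slightly tighter (you retain $1/\sqrt{M}$ rather than replacing it by $1/\sqrt{B}$), yielding $N^2/B^3$ instead of $N^2/B^2$, but otherwise the arguments coincide.
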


For the proof, we use the following local limit theorem stemming from Britikov \cite{britikov88}. Recall the notation $\mu_x$ (\ref{def:mu_x}) and $S^{(x)}$ from Proposition \ref{prop_forest_rw}, and for $\theta\in (0,1)$, set
$$m(\theta)=\frac{2}{2-\theta}\quad \text{and}\quad \sigma^2(\theta)=\frac{2\theta}{(1-\theta)(2-\theta)^2}.$$

\smallskip

\begin{lemma}[Britikov \cite{britikov88}, Lemma 5]\label{lm:Britikov_sub}
	Let $x=\theta\mathrm{e}^{-\theta}$ for some $\theta\in (0,1)$ which may depend on $N,M$. Assume that $N-M \rightarrow \infty$,
	$(N-M)\theta\to \infty$ and $(N-M)^{1/3}(1-\theta) \to \infty$. Then, if $z=\frac{N-m(\theta)(N-M)}{\sigma(\theta)(N-M)^{1/2}}$ lies in some finite interval, one has
	$$\sigma(\theta)(N-M)^{1/2}\mathbb{P}\left(S_{N-M}^{(x)}=N\right)=\left(1+o(1)\right)\frac{1}{\sqrt{2\pi}}\mathrm{e}^{-z^2/2}.$$
\end{lemma}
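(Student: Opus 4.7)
The plan is to prove Lemma \ref{lm:Britikov_sub} by the classical Fourier-inversion route to the local central limit theorem, being careful that every estimate is uniform in the parameter $\theta=\theta(N,M)$ subject to the two hypotheses $(N-M)\theta\to\infty$ and $(N-M)^{1/3}(1-\theta)\to\infty$. Writing $\varphi_\theta(t)=\mathbb E[e^{itX}]$ for $X\sim\mu_x$ with $x=\theta e^{-\theta}$, Fourier inversion for integer-valued variables gives
$$\mathbb{P}\bigl(S_{N-M}^{(x)}=N\bigr)=\frac{1}{2\pi}\int_{-\pi}^{\pi}\varphi_\theta(t)^{N-M}\,e^{-itN}\,\mathrm dt.$$
Since the generating function $s\mapsto T(xs)/T(x)$ of $\mu_x$ is analytic in a neighbourhood of the unit circle (inherited from analyticity of $T$ up to its singularity at $e^{-1}$), $\varphi_\theta$ is smooth, and a Taylor expansion at $t=0$ yields
$$\log\varphi_\theta(t)=i\,m(\theta)t-\tfrac12\sigma^2(\theta)t^2+O\bigl(\kappa_3(\theta)|t|^3\bigr),$$
with the third cumulant $\kappa_3(\theta)$ computable explicitly from $T(x)=\theta-\theta^2/2$ and satisfying $\kappa_3(\theta)=O((1-\theta)^{-3})$ as $\theta\uparrow 1$.

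The second step is to cut the integral at $\delta=A(N-M)\cdot\sigma(\theta)^{-1}(N-M)^{-1/2}$ for an auxiliary function $A(N-M)\to\infty$ chosen slowly enough, and to analyse the central window $|t|\leq\delta$ by the change of variables $u=\sigma(\theta)(N-M)^{1/2}t$. Combining with the phase $e^{-itN}$ produces the integrand $e^{-iuz-u^2/2+\mathrm{err}(u)}$ where $z$ is as in the statement. The cubic error $|\mathrm{err}(u)|\lesssim\kappa_3(\theta)(N-M)|t|^3$ is uniformly $o(1)$ under the two hypotheses: the first ensures $\sigma(\theta)(N-M)^{1/2}\to\infty$, and the second controls the ratio $\kappa_3(\theta)/\sigma(\theta)^3$ as $\theta\to 1$. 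By dominated convergence (with $z$ in a finite interval), the central window contributes $(\sigma(\theta)(N-M)^{1/2})^{-1}(2\pi)^{-1/2}e^{-z^2/2}(1+o(1))$.

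The third and main technical step is to show that the tail $\delta<|t|\leq\pi$ contributes $o((\sigma(\theta)(N-M)^{1/2})^{-1})$ uniformly in $\theta$. This is the principal obstacle: although $|\varphi_\theta(t)|<1$ on $(0,2\pi)$ for every fixed $\theta$ (since $\mu_x(1),\mu_x(2)>0$ forces gcd $1$), this bound degenerates as $\theta\to 0$ or $\theta\to 1$. I plan to treat three sub-regimes. For $\theta$ bounded away from both endpoints, the direct bound $1-|\varphi_\theta(t)|^2\geq c\,\mu_x(1)\mu_x(2)(1-\cos t)$ yields $|\varphi_\theta(t)|\leq\exp(-ct^2)$ uniformly on $[-\pi,\pi]$. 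For $\theta$ near $1$, the Gaussian-type bound $|\varphi_\theta(t)|\leq\exp(-c\sigma(\theta)^2 t^2\wedge c)$ extends until $|t|$ of order $1$ and produces a tail bounded by $\exp(-cA^2)$. For $\theta$ near $0$, $\mu_x$ is concentrated on its atom at $1$, so $|\varphi_\theta(t)|^2\leq 1-c\theta\sin^2(t/2)$ yields $|\varphi_\theta(t)|^{N-M}\leq\exp(-c\theta(N-M)\sin^2(t/2))$, which is exponentially small on the tail thanks to $(N-M)\theta\to\infty$.

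Combining the central-window and tail contributions gives the claimed asymptotic. The only delicate point is the uniform tail estimate of step three; once that is in hand, the first two steps are the standard Fourier derivation of the local central limit theorem.
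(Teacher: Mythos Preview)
The paper does not give its own proof of this lemma: it is quoted verbatim as Lemma~5 of Britikov~\cite{britikov88} and used as a black box in the proof of Lemma~\ref{lm:forest:sub}. So there is no proof in the paper to compare against.

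That said, your plan is exactly the standard route (and essentially Britikov's): Fourier inversion, Taylor expansion of $\log\varphi_\theta$ in the central window, and a uniform-in-$\theta$ tail bound. Your identification of the roles of the two hypotheses is correct: a short computation gives $\kappa_3(\theta)/\sigma(\theta)^3\asymp (1-\theta)^{-3/2}$ as $\theta\uparrow 1$ and $\asymp\theta^{-1/2}$ as $\theta\downarrow 0$, so the cubic error after rescaling is $O\bigl(|u|^3\big/((N-M)^{1/2}(1-\theta)^{3/2})\bigr)$ or $O\bigl(|u|^3\big/((N-M)^{1/2}\theta^{1/2})\bigr)$, controlled precisely by $(N-M)^{1/3}(1-\theta)\to\infty$ and $(N-M)\theta\to\infty$ respectively. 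The three-regime tail strategy is also the right one.

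Two points where your write-up would need tightening before it is a complete proof. First, the Taylor remainder $O(\kappa_3(\theta)|t|^3)$ is only valid for $|t|$ below some threshold, and that threshold shrinks with $(1-\theta)$ since higher cumulants blow up faster; you should make explicit the range on which the expansion is used and check it covers the whole central window (it does, but this deserves a line). Second, for $\theta$ near $1$ the bound $|\varphi_\theta(t)|\leq\exp(-c\sigma(\theta)^2t^2\wedge c)$ does not follow from the two-atom argument alone once $\sigma(\theta)^2$ is large; one convenient way is to bound $1-|\varphi_\theta(t)|^2\geq \sum_k\mu_x(k)\mu_x(k+1)(1-\cos t)$ and then show $\sum_k\mu_x(k)\mu_x(k+1)\asymp\sigma(\theta)^{-1}$ as $\theta\uparrow 1$ via the explicit tail $\mu_x(k)\sim c\,k^{-5/2}(\theta e^{1-\theta})^k$, which gives the intermediate-$|t|$ control you need. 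With these two details filled in, the argument is complete.
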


\textbf{Proof of Lemma \ref{lm:forest:sub}}.
Throughout the proof we consider $M\geq B(N)-1~$ since otherwise $\mathbb{P}(L_1(N,M)\geq B(N))$ is null and the statement is trivially true. 
By Proposition \ref{prop_forest_rw}, if $X_i^{(x)},i\geq 1,$ are i.i.d. random variables with law $\mu_x$, whatever $x \in (0,e^{-1}]$, one has 
\begin{align}
		\mathbb{P}\left(L_1(N,M)\geq B(N)\right)&=\sum_{r=B(N)}^{M+1}\mathbb{P}\left(\max_{1\leq i\leq N-M} X_i^{(x)}=r~\Big\vert ~
		 S_{N-M}^{(x)}=N\right) \notag\\
		 &\leq (N-M)\sum_{r=B(N)}^{M+1}\frac{\mu_x(r) \cdot \mathbb{P}\left(S_{N-M-1}^{(x)}=N-r\right)}{\mathbb{P}\left(S_{N-M}^{(x)}=N\right)}\notag\\
		 &\leq \frac{N-M}{\mathbb{P}\Big(S_{N-M}^{(x)}=N\Big)} \cdot \sum_{r=B(N)}^{M+1}\mu_x(r).
		 \label{eq:bornesup_foret}
\end{align} 

Following Britikov  \cite{britikov88}, the strategy is then to choose wisely $x\in (0,\mathrm{e}^{-1}]$. We take here $x=\theta e^{-\theta}$ with $\theta=2M/N$. Note that under our hypotheses on $M$, one has $\theta\in (0,1)$ and then $m(\theta)=N/(N-M)$.  And also,
$(N-M)\theta \geq M\to \infty$ (since $M\geq B(N)-1$) and $\left(N-M\right)^{1/3}\left(1-\theta\right)\geq  \varepsilon\left(N-M\right)^{1/3}\to \infty$ as $N \rightarrow \infty$.
Lemma \ref{lm:Britikov_sub} thus yields that for  $N$ large enough and every $M \geq B(N)-1$ verifying $2M-N\leq -\varepsilon N$ 
\begin{equation*}\label{eq:equiv_marche}
	\mathbb{P}\left(S_{N-M}^{(x)}=N\right)\geq\frac{1}{2\sqrt{2\pi}}\cdot \frac{(N-M)^{1/2}(N-2M)^{1/2}}{M^{1/2}N}.
\end{equation*}   
Plugging this bound in \eqref{eq:bornesup_foret}, and using again that $2M-N\leq -\varepsilon N$, we get
\begin{align*}
\mathbb{P}\left( L_1(N,M)\geq B(N)\right)&\leq C_1\cdot N M^{1/2}
\sum_{r=B(N)}^{M+1}\frac{r^{r-2}}{r!} \cdot \frac{x^r}{T(x)}\\
&\leq C_1\cdot N M^{1/2} \sum_{r=B(N)}^{M+1}\frac{r^{r-2}}{r!}\left(\frac{2M}{N}\right)^r \mathrm{e}^{-\frac{2Mr}{N}} \cdot \frac{N^2}{2M(N-M)},
\end{align*}
with $C_1\in (0,\infty)$ depending only on $\varepsilon$. We used Lemma \ref{lm:lambert_fct} to obtain $T(x)$. Next, by Stirling's formula, still under our hypotheses on $M$, this leads to 
\begin{align*}
	\mathbb{P}\left(L_1(N,M) \geq B(N)\right) &\leq \frac{C_2\cdot N^{2}}{B(N)^{1/2}} \sum_{r=B(N)}^{M+1} r^{-5/2}\mathrm{e}^{r}\left(\frac{2M}{N}\right)^r \mathrm{e}^{-\frac{2Mr}{N}} \\
	&\leq  \frac{C_2\cdot N^{2}}{B(N)^{1/2}} \sum_{r=B(N)}^{M+1} r^{-5/2} e^{-r(1-2M/N)^2/2}
\end{align*}
for some $C_2 \in (0,\infty)$ depending only on $\varepsilon$, and all $N$ large enough, where for the second inequality we used that $\mathrm{e}^x(1-x)\leq \mathrm{e}^{-x^2/2}$ for $x\in [0,1]$.  
Finally we get 
\begin{align*}
\mathbb{P}\left(L_1(N,M) \geq B(N)\right) &\leq \frac{C_2\cdot N^{2}}{B(N)^{1/2}}\exp\left(-\left(1-\frac{2M}{N}\right)^2\frac{B(N)}{2}\right)\sum_{r=B(N)}^{M}r^{-5/2}
\end{align*}
which leads to the upper bound of the statement. $\hfill \square$

\bigskip

\begin{lemma}\label{lm:forest:super}
Consider two functions $\omega,g$ such that, as $N \rightarrow \infty$, $\omega(N)\to \infty $, $\omega(N)=o\big(N^{1/3}\big)$, $g(N)\to \infty$ and $g(N)=o\left(\omega(N)\right).$ Then, for $N$ large enough and every $M$ verifying $2M-N=\omega(N)N^{2/3}$,
$$\mathbb{P}\big(L_1(N,M)<g(N)N^{2/3}\big)\leq C\cdot \omega(N)^{5/2}N^{2/3} \cdot \exp\left(-\frac{\omega(N)}{g(N)}\right),$$
for some $C\in (0,\infty)$ independent of $N$.
	\end{lemma}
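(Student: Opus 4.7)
The plan is to adapt the strategy of Lemma \ref{lm:forest:sub} to the supercritical regime, using the random walk representation of Proposition \ref{prop_forest_rw}. Taking the critical value $x=\mathrm{e}^{-1}$ (so that $\mu_{\mathrm{e}^{-1}}(k)\sim\sqrt{2/\pi}\,k^{-5/2}$ has mean $2$ and heavy tail), we write
\[
\mathbb{P}\!\left(L_1(N,M)<L\right) \;=\; \frac{\mathbb{P}\!\left(S^{(\mathrm{e}^{-1})}_n=N,\ \max_{1\le i\le n} X_i<L\right)}{\mathbb{P}\!\left(S^{(\mathrm{e}^{-1})}_n=N\right)},
\]
with $n=N-M$ and $L=g(N)N^{2/3}$, and bound numerator and denominator separately.

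For the denominator, combining Proposition \ref{prop_forest_rw}(1) with Britikov's supercritical estimate (Proposition \ref{lm:Britikov}(3)) together with Stirling's formula, and using $T(\mathrm{e}^{-1})=1/2$, gives an explicit asymptotic. The key arithmetic is that $(2M/N-1)^{-5/2}=N^{5/6}\omega^{-5/2}$, and after the cancellations one obtains $\mathbb{P}(S^{(\mathrm{e}^{-1})}_n=N)\sim c\,N^{-2/3}\omega^{-5/2}$; this is precisely the source of the polynomial factor $N^{2/3}\omega^{5/2}$ appearing in the statement.

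For the numerator, I would use the inclusion $\{S_n=N,\ \max_i X_i<L\}\subset\{\tilde S_n\ge N\}$ where $\tilde S_n=\sum_{i=1}^n X_i\mathbf{1}_{X_i<L}$. A direct tail computation on $\mu_{\mathrm{e}^{-1}}$ yields $\mathbb{E}[Y_1]=2-O(L^{-1/2})$ and $\mathrm{Var}(Y_1)=O(L^{1/2})$, so the mean correction $n\cdot O(L^{-1/2})=O(g^{-1/2}N^{2/3})$ is subdominant (since $g\to\infty$) and the deviation to be achieved is $t\sim\omega N^{2/3}$. Bernstein's inequality then gives a bound of the form $\exp\!\left(-t^2/(2n\sigma^2+\tfrac{2}{3}Lt)\right)$; the hypothesis $g=o(\omega)$ forces $Lt\sim g\omega N^{4/3}\gg g^{1/2}N^{4/3}\sim n\sigma^2$, so the truncation term dominates the denominator and the bound collapses to $\exp(-c\omega/g)$ for some absolute constant $c>0$. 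Combining the two estimates and absorbing $c$ into the constant $C$ (at the cost of weakening $c\omega/g$ to $\omega/g$) yields the inequality in the statement.

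The main technical delicacy is the Bernstein step: one must verify that we sit in the sub-exponential regime (where the truncation level $L$ governs the decay) rather than the sub-Gaussian one (where the variance does). The joint hypotheses $g\to\infty$ and $g=o(\omega)$ are exactly what ensure $g^{1/2}\omega\to\infty$, placing us firmly on the sub-exponential side and producing the rate $\omega/g$. The remaining work is bookkeeping: tracking the $o(1)$ terms in Stirling and in the tail sums of $\mu_{\mathrm{e}^{-1}}$, and verifying that all estimates hold uniformly over $M$ with $2M-N=\omega(N)N^{2/3}$.
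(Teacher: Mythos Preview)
Your approach is correct and essentially the same as the paper's: both use the random-walk representation at $x=\mathrm{e}^{-1}$, lower-bound the denominator $\mathbb{P}\big(S^{(\mathrm{e}^{-1})}_{N-M}=N\big)$ by the $3/2$-stable asymptotic $\sim c\,\omega^{-5/2}N^{-2/3}$, and upper-bound the numerator by a Chernoff-type deviation estimate on the truncated sum. The only cosmetic differences are that the paper invokes the stable local limit theorem directly for the denominator (rather than going through Britikov's formula and Stirling) and carries out the exponential Markov bound by hand with tilt parameter $r^{-1}$ (rather than citing Bernstein), obtaining the exponent $-\omega/g$ exactly.
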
 

\begin{proof} We use here Proposition \ref{prop_forest_rw} with the measure $\mu_{\mathrm{e}^{-1}}$ and let $X_i,i\geq 1$, be i.i.d. random variables with law $\mu_{\mathrm{e}^{-1}}$. The proof is inspired by \cite{BernikovichPavlov11} for similar results in the case of unlabelled forests.  We recall that the expectation of $\mu_{\mathrm{e}^{-1}}$ is equal to 2 and introduce the centered random variables $Y_i=X_i-2$, $i\geq 1$, as well as $\tilde{S}_N=\sum_{i=1}^{N}Y_i  $ and
$ \tilde{S}_N^{(r-2)}=\sum_{i=1}^N Y_i^{(r-2)} $, with $Y_i^{(r-2)}=Y_i\mathbbm{1}_{\{Y_i\leq r-2\}}$, for $N,r \geq 1$. From Proposition \ref{prop_forest_rw}, for any $r\in \mathbb N$:
\begin{align*}
\mathbb{P}\left(L_1(N,M)\leq r\right)&=\frac{\mathbb{P}\left(\max_{i\leq N-M}Y_i\leq r-2,\tilde{S}_{N-M}=\omega(N)N^{2/3}\right)}{\mathbb{P}\left(\tilde{S}_{N-M}=\omega(N)N^{2/3}\right) }\\
&\leq~\frac{\mathbb{P}\left(\tilde{S}_{N-M}^{(r-2)}=\omega(N)N^{2/3}\right)}{\mathbb{P}\left(\tilde{S}_{N-M}=\omega(N)N^{2/3}\right)}.
\end{align*}
The measure $\mu_{\mathrm{e}^{-1}}$ being in the domain of attraction of a $3/2$-stable law, the local limit theorem (see e.g. \cite{GnedenkoKolmogorov54}) yields, under the assumption $2M-N=\omega(N)N^{2/3}$ with $\omega(N)\to \infty $ and $\omega(N)=o\big(N^{1/3}\big)$,
\begin{equation}
\label{eq:forest}
\mathbb{P}\left(\tilde{S}_{N-M}=\omega(N)N^{2/3}\right)=\frac{\left(1+o(1)\right)}{\sqrt{2\pi}} \cdot \omega(N)^{-5/2}N^{-2/3}.
\end{equation}
We now want to get an upper bound for $\mathbb{P}\big(\tilde{S}_{N-M}^{(r-2)}=\omega(N)N^{2/3}\big)$ when $r=g(N)N^{2/3}$ with $g(N) \rightarrow \infty$ and $g(N)=o(\omega(N))$. Since $\big\vert r^{-1} Y_1^{(r-2)}\big \vert \leq 1$ and $\mathrm{e}^x\leq 1+x+x^2$ for all $x \in [-1,1]$, we have that
\begin{align*}
\mathbb{P}\left(\tilde{S}_{N-M}^{(r-2)}=\omega(N)N^{2/3}\right)&\leq \mathrm{e}^{-r^{-1} \omega(N)N^{2/3}}\mathbb{E}\left[\mathrm{e}^{r^{-1} Y_1^{(r-2)}} \right]^{N-M}\\
&\leq  \mathrm{e}^{-r^{-1} \omega(N)N^{2/3}}\left(\mathbb{E}\left[1+r^{-1} Y_1^{(r-2)}+\left(r^{-1} Y_1^{(r-2)}\right)^2\right]\right)^{N-M}.
\end{align*}
The distribution of $Y_1$ yields the existence of $a,b \in (0,\infty)$ such that for every $r$ large enough
$$\mathbb{E}\left[Y_1^{(r-2)}\right]\leq \frac{-a}{\sqrt{r}} \quad \text{ and }\quad \mathbb{E}\Big[\left(Y_1^{(r-2)}\right)^2\Big]\leq b\sqrt{r}$$ 
which then leads to
\begin{align*}
	\mathbb{P}\left(\tilde{S}_{N-M}^{(r-2)}=\omega(N)N^{2/3}\right)&\leq \mathrm{e}^{-r^{-1} \omega(N)N^{2/3}}\left(1+br^{-3/2}\right)^{N-M}\\
	&\leq \mathrm{e}^{-\omega(N)/g(N)}\mathrm{e}^{bNr^{-3/2}/2}.
\end{align*}
Together with (\ref{eq:forest}) and since $N r^{-3/2}=g(N)^{-3/2}\to 0$ as $N\to \infty$, we get the expected upper bound for $\mathbb{P}\big(L_1(N,M)\leq g(N)N^{2/3}\big)$.
\end{proof}

%%%%%%%%%%%%%%%%%%%%%%%%%%  
\section{Properties of the fluid limit functions}
\label{sec:fluid}
%%%%%%%%%%%%%%%%%%%%%%%%%%

In the Introduction, the function $g_p:[0,\infty)\rightarrow [0,1)$, which will describe the fluid limit of the gel in the $p$-frozen model, was defined on $ [1/2,\infty)$ in Definition \ref{def:gel_function} as the inverse of the function $f_p:[0,1) \rightarrow [1/2,\infty)$ given for $t \in [0,1)$ by
\begin{equation}
\label{def:f_p}
f_p(t)~=~\frac{1}{2}+\frac{t}{2p}\int_{0}^1 \frac{u^{\frac{1}{p}}}{1-tu} \mathrm du~=~\frac{1}{2}\sum_{n=0}^{\infty}\frac{t^n}{1+pn}
\end{equation}
and for $t \in [0,1/2]$ by $g_p(t)=0$. All other functions $d_p,v_p,e_p,r_t$ and $t_{p,k},k\geq 1$ were defined from this function $g_p$. We propose in Section \ref{sub:functions} an alternative definition of $g_p$ and of the couple $(g_p,d_p)$ as solutions to a (system of) differential equation(s) and develop several properties of these functions. One difficulty when we will implement in the next section the \emph{differential equation method}  to determine the fluid limits of the processes $G_{p,n}$ and $D_{p,n}$ is that the differential equations characterizing $g_p$ and $(g_p,d_p)$ are \emph{not Lipschitz} in the neighborhood of $t=1/2$, which is the source of technical difficulties in the proof of Theorem \ref{thm:fluid limit}. For this reason, we need to approximate these differential equations by smoother ones, which is done in Section \ref{section:approximation}.  Last, Section \ref{sec:t_k} is devoted to a system of differential equations satisfied by the functions $t_{p,k},k\geq 1$.

\subsection{The functions $g_p,d_p,v_p,e_p$ and $r_p$}
\label{sub:functions}

We will see in the forthcoming Lemma \ref{lm:equadiff} and Lemma \ref{lm:syst_edo} (these lemmas are proved in a more general context and therefore postponed to the next section) that the function $g_p:[1/2,\infty)\rightarrow [0,1)$ is the unique strictly increasing solution to the equation 
\begin{equation}
\tag{$E_{(0)}$}
\label{eq:f}
	g'(t)=\frac{2pg(t)\left(1-g(t)\right)}{1-2t \left(1-g(t)\right)}, \quad t> 1/2, \quad \text{with} \quad g(1/2)=0,
\end{equation}
and that there is a unique solution of strictly increasing functions to the system of equations
\begin{equation}
\tag{$\tilde E_{(0)}$}
\label{eq:syst_EDO_0}
\begin{array}{l}	
	\left\{
	\begin{array}{ll}
		g'(t)=\displaystyle \frac{2pg(t)\left(1-g(t)\right)^2}{1-2t+g(t)+2d(t)} \\
		d'(t)=2(1-p)g(t)\left(1-g(t)\right)+g(t)^2
	\end{array},
	\quad t>\frac{1}{2}, \quad  \text{ with } g(1/2)=d(1/2)=0
	\right.
	\end{array}
\end{equation}
which is denoted $\big(g_p,d_p\big)$, with $g_p$ as above. There is no conflict of notation here, since Proposition \ref{prop:propfunctions} below shows that $d_p$ indeed corresponds to its definition via $g_p$ in (\ref{def:d_p}).

Given the relations (\ref{def:V_{p,n}E_{p,n}}), the functions $v_p,e_p, r_p$ are defined for $t \geq 0$ by
$$
v_p(t)=1-g_p(t); \quad  \quad e_p(t)=t-g_p(t)-d_p(t); \quad  \quad  r_p(t)=\frac{e_p(t)}{v_p(t)},
$$
and, again, Proposition \ref{prop:propfunctions} shows that this corresponds to their definitions in the statement of Theorem \ref{thm:forest}.

\begin{proposition}
\label{prop:propfunctions}
\begin{enumerate}
\item[\emph{1)}] For $t \in [0,1/2]$, $g_p(t)=d_p(t)=0$, $v_p(t)=1$, $e_p(t)=r_p(t)=t$.
\item[\emph{2)}] The functions $g_p,d_p,v_p,e_p,r_p$ are infinitely differentiable on $(1/2,\infty)$, with \linebreak $g_p'(1/2^+)=2(1+p)=-v'_p(1/2^+)$,  $~  d_p'(1/2^+)=0$, $~  e_p'(1/2^+)=-1-2p$, $~  r_p'(1/2^+)=-p$.  
\item[\emph{3)}] As $t \rightarrow \infty$,  $~ 1-g_p(t)=v_p(t) \sim e^{-2pt}$, $~ d_p(t)-t+1 \sim e^{-2pt}$, $~ e_p(t) \sim te^{-4pt}$, $~ r_p(t)  \sim te^{-2pt}$.
\item[\emph{4)}] The ratio function rewrites $r_p(t)=t(1-g_p(t))$, and therefore $e_p(t)=t(1-g_p(t))^2$ and \newline $d_p(t)=t-g_p(t)-t(1-g_p(t))^2$, for $t\geq 0$.
\item[\emph{5)}] While the functions $g_p,d_p,v_p$ are monotonic on $(0,\infty)$ ($g_p$ and $d_p$ are increasing, $v_p$ is decreasing), the functions $r_p$ and $e_p$ are increasing on $(0,1/2]$ and decreasing on $[1/2,\infty)$. In particular, $r_p(t)<1/2$ for $t\neq 1/2$.
\item[\emph{6)}] The function $g_p$ is concave on $[0,\infty)$. The fonction $d_p$ is convex on $[0,\infty)$.
\end{enumerate}
\end{proposition}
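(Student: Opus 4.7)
The plan is to prove Part 4 first, since the explicit formulas $d_p(t)=t-g_p(t)-t(1-g_p(t))^2$, $r_p(t)=t(1-g_p(t))$, $e_p(t)=t(1-g_p(t))^2$ make every other part essentially algebraic. I would verify Part 4 by setting $\tilde d(t):=t-g_p(t)-t(1-g_p(t))^2$, differentiating to get $\tilde d'=1-(1-g_p)^2-g_p'\bigl(1-2t(1-g_p)\bigr)$, and then substituting the identity $g_p'\bigl(1-2t(1-g_p)\bigr)=2pg_p(1-g_p)$ coming from $(E_{(0)})$, which collapses $\tilde d'$ to $2(1-p)g_p(1-g_p)+g_p^2$. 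Since this is exactly the ODE for $d$ in $(\tilde E_{(0)})$ and $\tilde d(1/2)=0$, uniqueness from Lemma \ref{lm:syst_edo} forces $d_p=\tilde d$. Part 1 is then immediate from $g_p\equiv 0$ on $[0,1/2]$. For Part 2, the series in \eqref{def:f_p} shows that $f_p$ is real-analytic on $[0,1)$ with $f_p'>0$, so $g_p$ is real-analytic on $(1/2,\infty)$ by the analytic inverse function theorem, and $v_p,d_p,e_p,r_p$ inherit the smoothness via Part 4. The boundary derivatives follow from $g_p'(1/2^+)=1/f_p'(0)=2(1+p)$ combined with $g_p(1/2)=0$ applied to the explicit formulas.

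For Part 3, I would write $f_p(g)=t$ with $g\to 1^-$ and use the integral representation in \eqref{def:f_p}. Splitting
$$\int_0^1 \frac{u^{1/p}}{1-gu}\,du=\int_0^1 \frac{u^{1/p}-1}{1-gu}\,du+\int_0^1 \frac{du}{1-gu}$$
isolates the logarithmic singularity: the first integrand is bounded (the ratio tends to $1/p$ as $u\to 1^-$), so its integral stays bounded as $g\to 1^-$, while the second evaluates to $-\ln(1-g)/g$. Hence $f_p(g)=-\ln(1-g)/(2p)+O(1)$, which gives $\ln(1-g_p(t))=-2pt+O(1)$ and therefore $1-g_p(t)$ of order $e^{-2pt}$. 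The asymptotics for $d_p,e_p,r_p$ then follow by substituting into Part 4 and noting $te^{-4pt}=o(e^{-2pt})$.

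The genuinely delicate point is the monotonicity of $r_p$ in Part 5: a direct computation of $r_p'$ via the ODE for $g_p$ yields a rational expression whose sign is not transparent. Instead I would use a reindexing identity: expanding $r_p(t)=(1-g_p(t))\,f_p(g_p(t))$ through the series form of $f_p$ and collecting adjacent terms yields
$$r_p(t)=\frac{1}{2}\left(1-p\sum_{n\ge 1}\frac{g_p(t)^n}{(1+pn)\,(1+p(n-1))}\right).$$
Since every term is positive and strictly increasing in $g_p$, and $g_p$ is strictly increasing on $[1/2,\infty)$ from $0$ to $1$, $r_p$ is strictly decreasing on $[1/2,\infty)$; a telescoping partial fraction shows the series sums to $1/p$ in the limit $g_p\to 1^-$, giving $r_p(\infty)=0$ and $r_p(t)<1/2$ for $t>1/2$. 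On $[0,1/2]$, $r_p(t)=e_p(t)=t$ is increasing. The monotonicity of $g_p,v_p,d_p$ is immediate ($g_p$ is increasing by construction, $v_p=1-g_p$ is decreasing, and $d_p'=2(1-p)g_p(1-g_p)+g_p^2\ge 0$), and $e_p=r_p\cdot v_p$ is a product of two positive decreasing functions on $[1/2,\infty)$.

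For Part 6, the series gives $f_p''(t)=\frac{1}{2}\sum_{n\ge 2}n(n-1)t^{n-2}/(1+pn)\ge 0$, so $f_p$ is convex on $[0,1)$, and the standard identity $g_p''=-(f_p''\circ g_p)/(f_p'\circ g_p)^3$ yields $g_p$ concave on $(1/2,\infty)$. Differentiating $d_p'$ gives $d_p''=2g_p'\bigl[(1-p)+g_p(2p-1)\bigr]$, and I would finish by a case split: when $p\ge 1/2$ both summands in the bracket are $\ge 0$, while for $p<1/2$ the bracket equals $(1-p)-g_p(1-2p)>(1-p)-(1-2p)=p>0$ since $g_p<1$. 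The principal obstacle is the one already flagged in Part 5: the reindexing identity for $r_p$ is the essential trick that bypasses the sign ambiguity of the direct differentiation and simultaneously identifies the boundary values $r_p(1/2)=1/2$ and $r_p(\infty)=0$.
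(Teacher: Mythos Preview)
Your proposal is correct and takes essentially the same route as the paper: the one genuinely non-routine step, the telescoping identity $(1-g)f_p(g)=\tfrac{1}{2}-\tfrac{p}{2}\sum_{n\ge 1}\frac{g^n}{(1+pn)(1+p(n-1))}$ used to show $r_p$ is strictly decreasing in Part~5, is exactly the paper's argument. Your Part~4 is a minor reorganization (you verify directly that $\tilde d$ satisfies the $d$-equation in $(\tilde E_{(0)})$ and invoke uniqueness, while the paper instead derives $r_p(t)=t(1-g_p(t))$ inside the proof of Lemma~\ref{lm:syst_edo} via the auxiliary $h(t)=r(t)-t(1-g(t))$ with $h'=-h\,g'/(1-g)$); and your expression $d_p''=2g_p'\bigl[(1-p)+(2p-1)g_p\bigr]$ in Part~6 is in fact the correct one, silently fixing a harmless typo in the paper's $2g_p'(1-p+2pg_p)$.

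One small caveat on Part~3: your splitting of the integral correctly gives $f_p(g)=-\ln(1-g)/(2p)+O(1)$ and hence only $1-g_p(t)=e^{-2pt+O(1)}$, as you carefully say. Pushing to an actual equivalence requires identifying the limit of the bounded piece, namely $\int_0^1\frac{u^{1/p}-1}{1-u}\,du=-(\psi(1/p)+p+\gamma_E)$, which yields $1-g_p(t)\sim e^{-(\psi(1/p)+\gamma_E)}\,e^{-2pt}$; the constant equals $1$ only when $p=1$, so the statement's ``$\sim e^{-2pt}$'' is itself slightly imprecise and your caution is warranted.
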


\begin{proof} Most assertions of this corollary are easy to check by using the differential equations defining $g_p$ and $d_p$ and the relations between the different functions. We leave their proof to the reader. We wish however to point out that the identity $r_p(t)=t(1-g_p(t))$, $t\geq 0$ stated in 4) is shown in the proof of the forthcoming Lemma \ref{lm:syst_edo}, and we detail here the two following points: 

5) The function $r_p$ is decreasing on $[1/2,\infty)$ (note that this implies that $e_p=r_pv_p$ is also decreasing on $[1/2,\infty)$). Indeed, to see this use that $r_p(t)=t(1-g_p(t))$ and note that this function is decreasing on $[1/2,\infty)$ if and only if $t\mapsto f_p(t)(1-t)$ is decreasing on $[0,1)$. Using the series representation (\ref{def:f_p}) of $f_p$, we get that
$$
f_p(t)(1-t)=\frac{1}{2}-\frac{p}{2}  \sum_{n=0}^{\infty} \frac{t^{n+1}}{(1+pn)(1+p(n+1))}
$$
which is clearly decreasing. 

6) The concavity of $g_p$ is a consequence of the convexity of $f_p$ on $[0,1)$, which is an immediate consequence of the series representation of $f_p$. To see the convexity of $d_p$, note that \linebreak $d''_p=2g_p'(1-p+2pg_p)$ on $[1/2,\infty)$ (and 0 otherwise), which is positive.
\end{proof}

Next, in order to establish the asymptotic behavior of the absorption times $A^{(k)}_{p,n}$, $A^{(k+)}_{p,n}$ stated in Theorem \ref{thm:absorption}, we also emphasize the following identity.

\begin{lemma}
\label{lem:identintegral}
Recalling that $\gamma_E$ denotes Euler's constant and $\psi$ the digamma function, defined by $\psi(x)=\frac{\Gamma'(x)}{\Gamma(x)}$ with $\Gamma$ the gamma function, we have
$$
(1-p)\int_0^{\infty} (1-g_p(t)) \mathrm dt ~ = ~ \frac{\psi(1/p) +\gamma_E}{2}.
$$
The function $\psi(1/p) +\gamma_E$ is decreasing in $p$, equal to 0 when $p=1$ and to $1$ when $p=1/2$, and $\psi(1/p) \sim -\ln p$ when $p \rightarrow 0$.
\end{lemma}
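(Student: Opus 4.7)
The plan is to evaluate the integral by passing to the inverse function $f_p$, then recognize the resulting sum via the series representation of the digamma function.

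First I would split the integral at $1/2$. Since $g_p \equiv 0$ on $[0,1/2]$, the contribution on that interval is exactly $1/2$. On $[1/2,\infty)$, $g_p$ is a strictly increasing $\mathcal C^\infty$ bijection onto $[0,1)$ (Proposition \ref{prop:propfunctions}), so the change of variables $s = g_p(t)$, i.e.\ $t = f_p(s)$, $dt = f_p'(s)\, ds$, gives
$$
\int_{1/2}^{\infty}(1-g_p(t))\,dt \;=\; \int_0^1 (1-s)\, f_p'(s)\, ds.
$$
An integration by parts turns this into $[(1-s)f_p(s)]_0^1 + \int_0^1 f_p(s)\,ds$. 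The boundary term at $s=0$ equals $f_p(0)=1/2$; at $s=1^-$, the integral representation $f_p(s)=\tfrac12+\tfrac{s}{2p}\int_0^1 u^{1/p}/(1-su)\,du$ shows $f_p(s)=O(\log(1/(1-s)))$, so $(1-s)f_p(s)\to 0$. Thus the boundary contribution is $-1/2$, which exactly cancels the $1/2$ from $[0,1/2]$, leaving
$$
\int_0^{\infty}(1-g_p(t))\,dt \;=\; \int_0^1 f_p(s)\,ds.
$$

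Next I would plug in the power-series representation $f_p(s)=\tfrac12\sum_{n\geq 0} s^n/(1+pn)$ from \eqref{def:f_p} and integrate term by term (the series converges normally on $[0,1-\varepsilon]$ and $\sum 1/((n+1)(1+pn))<\infty$, so a monotone/dominated argument applies), yielding
$$
\int_0^{\infty}(1-g_p(t))\,dt \;=\; \frac{1}{2}\sum_{n=0}^{\infty}\frac{1}{(n+1)(1+pn)}.
$$
For $p\in(0,1)$ partial fractions give $\frac{1}{(n+1)(1+pn)}=\frac{1}{1-p}\bigl(\frac{1}{n+1}-\frac{1}{n+1/p}\bigr)$. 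Using the classical series for the digamma function, $\psi(x)+\gamma_E=\sum_{n\geq 0}\bigl(\frac{1}{n+1}-\frac{1}{n+x}\bigr)$, specialized at $x=1/p$, one gets
$$
(1-p)\int_0^{\infty}(1-g_p(t))\,dt \;=\; \frac{\psi(1/p)+\gamma_E}{2},
$$
which is the claimed identity. For $p=1$ both sides vanish ($\psi(1)=-\gamma_E$), and this is also the limit of the formula as $p\to 1$.

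Finally, the listed properties of $q(p):=\psi(1/p)+\gamma_E$ follow from standard facts on $\psi$: strict monotonicity of $\psi$ on $(0,\infty)$ together with the chain rule make $q$ decreasing in $p$; the values $\psi(1)=-\gamma_E$ and $\psi(2)=\psi(1)+1=1-\gamma_E$ (from the recurrence $\psi(x+1)=\psi(x)+1/x$) yield $q(1)=0$ and $q(1/2)=1$; and the asymptotic $\psi(x)\sim \ln x$ as $x\to\infty$ gives $q(p)\sim -\ln p$ as $p\to 0^+$.

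The only non-routine point I expect is justifying the boundary behavior $(1-s)f_p(s)\to 0$ as $s\to 1$ and the term-by-term integration; both reduce quickly to the integral representation of $f_p$, so there is no real obstacle.
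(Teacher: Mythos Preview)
Your proof is correct, and it takes a somewhat different route from the paper's. The paper uses the differential equation \eqref{eq:f} to rewrite $1-g_p(t)=\frac{1-2t(1-g_p(t))}{2pg_p(t)}\,g_p'(t)$ before changing variables, which leads to the integral $\frac{1}{2p}\int_0^1 \frac{1-2f_p(s)(1-s)}{s}\,ds$; expanding then produces two separate series $\sum_n \frac{1}{(1+pn)n}$ and $\sum_n \frac{1}{(1+pn)(n+1)}$, each identified via $\psi(x+1)=-\gamma_E+\sum_{n\geq 1}\frac{x}{n(n+x)}$ and the recurrence $\psi(x+1)=\psi(x)+1/x$.

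Your argument is more streamlined: the change of variables followed by integration by parts (or, equivalently, the ``area under the inverse'' identity) yields directly $\int_0^\infty(1-g_p(t))\,dt=\int_0^1 f_p(s)\,ds$, which then gives a single series $\tfrac12\sum_{n\geq 0}\frac{1}{(n+1)(1+pn)}$ and a one-line partial-fraction computation against $\psi(x)+\gamma_E=\sum_{n\geq 0}\bigl(\frac{1}{n+1}-\frac{1}{n+x}\bigr)$. The paper's route never needs the boundary estimate $(1-s)f_p(s)\to 0$ (which you correctly justify via the integral representation), but otherwise your approach is shorter and avoids the ODE entirely.
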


\begin{proof} Using that $g_p(t)=0$ for $t\in [0,1/2]$ and that $(g_p(t))_{t\geq 1/2}$ is solution to (\ref{eq:f}), we have
\begin{eqnarray*}
\int_0^{\infty} (1-g_p(t)) \mathrm dt&=&\frac{1}{2}+\int_{1/2}^{\infty} \frac{1-2t(1-g_p(t))}{2pg_p(t)} g_p'(t) \mathrm dt \\
&\underset{s=g_p(t)}=& \frac{1}{2}+ \int_0^1  \frac{1-2f_p(s)(1-s)}{2ps} \mathrm ds \\
&=& \frac{1}{2}+\frac{1}{2p} \int_0^1 \left(1-  \sum_{n=1}^{\infty} \frac{s^{n-1}(1-s)}{1+pn} \right)\mathrm ds \\
&=&  \frac{1}{2}+\frac{1}{2p}- \frac{1}{2p} \left(\sum_{n=1}^{\infty} \frac{1}{(1+pn)n}-\sum_{n=1}^{\infty} \frac{1}{(1+pn)(n+1)} \right).
\end{eqnarray*}

We then use that 
$$
\psi(x+1)=-\gamma_E+\sum_{n=1}^{\infty} \frac{x}{n(n+x)} \quad \text{ for }x\geq 0,
$$
(see e.g. \cite{spouge94}) and that $~\psi(x+1)=\psi(x)+1/x~$ for $x>0$ (a trivial consequence of the relation $\Gamma(x+1)=x\Gamma(x)$) to get
\begin{eqnarray*}
\int_0^1 (1-g_p(t)) \mathrm dt&=& \frac{1}{2}+\frac{1}{2p}\left(\frac{1}{1-p}\left(\psi(1/p) +\gamma_E\right)-\left(\psi(1/p)+\gamma_E+p \right) \right) \\
&=&\frac{\psi(1/p)+\gamma_E}{2(1-p)}.
\end{eqnarray*}
Last, the above series representation of $\psi$ shows that it is increasing, with $\psi(1)+\gamma_E=0$ and $\psi(2)+\gamma_E=1$. Moreover $\psi(x) \sim \ln(x)$ when $x \rightarrow \infty$, which gives the asymptotic behavior of the integral when $p\rightarrow 0$.
\end{proof}

\subsection{Approximation}
\label{section:approximation}

In order to introduce more smoothness, we consider the following equation for any $\varepsilon \geq 0$, generalizing thus (\ref{eq:f}): 
\begin{equation}
\tag{$E_{(\varepsilon)}$}
\label{def:edo_approx}
	g'(t)=\frac{2pg(t)\left(1-g(t)\right)}{1-2(t+\varepsilon)\left(1-g(t)\right)}, \quad t>1/2.
\end{equation}

For $a\in [0,1)$, an increasing continuously differentiable function $g:\left[1/2,\infty\right)\to [a,1)$ such that $g\left(1/2\right)=a$ and $g$ verifies $(E_{(\varepsilon)})$ is called a solution to $(E_{(\varepsilon)})$ starting from $a$. This implies in particular that $g(t)\to 1$ as $t\to \infty$ and that $1-2(t+\varepsilon)(1-g(t))>0$ for all $t>1/2$.

\begin{lemma}\label{lm:equadiff}
\begin{enumerate}[topsep=0cm]
\item[\emph{1)}] For $\varepsilon \geq 0$ and $a\in \left(\frac{2\varepsilon}{1+2\varepsilon},1\right)$,  there exists a unique solution to $(E_{(\varepsilon)})$ starting from $a$. We denote it here by $g_p^{(\varepsilon,a)}$.
\item[\emph{2)}] Let $\frac{2\varepsilon}{1+2\varepsilon}<a<b<1$, then $~g_p^{(\varepsilon,a)}(t)<g_p^{(\varepsilon,b)}(t)$ for all $t \geq \frac{1}{2}$ and 
$$\sup_{t\geq {1}/{2}}\big\vert g_p^{(\varepsilon,a)}(t)-g_p^{(\varepsilon,b)}(t)\big\vert ~\leq ~\vert b-a\vert.$$

\vspace{-0.4cm}

\item[\emph{3)}] Take $0<\varepsilon_1<\varepsilon_2$ and $a>\frac{2\varepsilon_2}{1+2\varepsilon_2}$. Then $g^{(\varepsilon_1,a)}_{p}(t)< g^{(\varepsilon_2,a)}_{p}(t)$ for all $t\geq \frac{1}{2}$.
\item[\emph{4)}] There exists a unique solution to $(E_{(0)})$ starting from $0$, which is our function $g_p$ defined as the inverse of the function $f_p$ (\ref{def:f_p}). In particular $g_{p}(\cdot+\varepsilon)$ is the solution to $(E_{(\varepsilon)})$ starting from $g_{p}({1}/{2}+\varepsilon)$.
\end{enumerate}
\end{lemma}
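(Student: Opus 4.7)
My plan is to treat $(E_{(\varepsilon)})$ by inverting the strictly increasing flow, which turns the equation into a first-order \emph{linear} ODE in the variable $g$, and to handle the comparison assertions via a sign analysis of the partial derivative $\partial_g F_\varepsilon$, where I set $F_\varepsilon(t,g) := 2pg(1-g)/\bigl(1-2(t+\varepsilon)(1-g)\bigr)$. For assertion 1, the denominator of $F_\varepsilon$ at $(1/2,a)$ equals $a(1+2\varepsilon)-2\varepsilon$, which is strictly positive exactly when $a > 2\varepsilon/(1+2\varepsilon)$, so $F_\varepsilon$ is smooth and positive near $(1/2,a)$ and Cauchy--Lipschitz gives a local strictly increasing solution. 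To extend it globally I invert and view $t=t(g)$, which satisfies the linear ODE
$$
t'(g) + \frac{t(g)}{pg} \;=\; \frac{1}{2pg(1-g)} - \frac{\varepsilon}{pg}, \qquad t(a) = \frac{1}{2}.
$$
Integration with integrating factor $g^{1/p}$ yields an explicit formula for $t(g)$ on $[a,1)$ whose right-hand side diverges as $g \to 1^-$, defining a $C^\infty$ strictly increasing bijection $[a,1) \to [1/2,\infty)$. Its inverse is $g_p^{(\varepsilon,a)}$, and uniqueness is built into the linear ODE.

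For assertions 2 and 3, a direct computation gives
$$
\partial_g F_\varepsilon(t,g) \;=\; \frac{2p\,\bigl[(1-2g) - 2(t+\varepsilon)(1-g)^2\bigr]}{\bigl(1-2(t+\varepsilon)(1-g)\bigr)^2}.
$$
Viewed as a function of $g\in[0,1]$, the bracket is concave (second derivative $-4(t+\varepsilon)<0$), with maximum $1/(2(t+\varepsilon))-1$ attained at $g^\star = 1 - 1/(2(t+\varepsilon))$; this maximum is $\leq 0$ precisely when $t+\varepsilon \geq 1/2$, a condition satisfied throughout the domains we consider. Strict comparison in assertion 2 is the standard non-crossing principle at a hypothetical first meeting time. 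For the sup-norm bound, set $h(t):=g_p^{(\varepsilon,b)}(t)-g_p^{(\varepsilon,a)}(t)\geq 0$: the mean-value theorem gives $h'(t)=\partial_g F_\varepsilon(t,\xi(t))\,h(t)$ with $\partial_g F_\varepsilon \leq 0$ along the curve, so $h$ is non-increasing and $\sup_{t\geq 1/2} h(t) = h(1/2) = b-a$. For assertion 3, the initial strict inequality at $t=1/2^+$ comes from $F_{\varepsilon_2}(1/2,a) > F_{\varepsilon_1}(1/2,a)$ (larger $\varepsilon$ gives a smaller positive denominator), and the same denominator comparison at any hypothetical first crossing time $t^\star$ gives $F_{\varepsilon_2}(t^\star,c) > F_{\varepsilon_1}(t^\star,c)$ with $c$ the common value, ruling the crossing out.

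For assertion 4, I verify directly that $f_p$ solves the inverse ODE equivalent to $(E_{(0)})$, namely $2pg(1-g)\, f_p'(g) + 2(1-g)\, f_p(g) = 1$ on $[0,1)$. Starting from $f_p(g) = \tfrac{1}{2}\sum_{n\geq 0} g^n/(1+pn)$, termwise differentiation combined with $pn/(1+pn) = 1 - 1/(1+pn)$ gives $2pg\, f_p'(g) = g/(1-g) - 2f_p(g) + 1$, from which the identity follows after multiplying by $(1-g)$. Since $f_p(0)=1/2$, the inverse $g_p := f_p^{-1}$ is a strictly increasing solution of $(E_{(0)})$ starting from $0$; uniqueness reduces, as in assertion 1, to the linear-ODE analysis in the inverse variable, the formula for $t(g)$ with $a=0$ remaining valid because $u^{1/p-1}/(1-u)$ is integrable at $0$ for $p\in(0,1]$. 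Finally, if $g$ solves $(E_{(0)})$ and $\varepsilon > 0$, then $\tilde g(s) := g(s+\varepsilon)$ satisfies $\tilde g'(s) = 2p\,\tilde g(s)(1-\tilde g(s))/\bigl(1-2(s+\varepsilon)(1-\tilde g(s))\bigr)$, which is exactly $(E_{(\varepsilon)})$ with initial value $\tilde g(1/2) = g(1/2+\varepsilon)$.

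The main technical point, I expect, is the concavity-based sign analysis of $\partial_g F_\varepsilon$: it is exactly what makes the Lipschitz bound in assertion 2 sharp with constant $1$, and, less obviously, what allows the crossing argument in assertion 3 to go through without fine quantitative control on the full trajectories.
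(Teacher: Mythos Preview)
Your overall strategy—inverting to a first-order linear ODE in the variable $g$—is exactly the paper's, and parts 2, 3, 4 are handled correctly, with only stylistic differences: the paper deduces the comparisons in 2) and 3) from explicit monotonicity of the inverse formula $f_p^{(\varepsilon,a)}$ in $a$ and in $\varepsilon$, while you use ODE comparison/crossing arguments; for 4) the paper works with the integral representation of $f_p$ whereas you verify the inverse ODE termwise from the series. Your concavity computation for $\partial_g F_\varepsilon$ is a clean way to justify what the paper simply asserts (that $x\mapsto 2px(1-x)/(1-2s(1-x))$ is decreasing on the relevant domain).

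There is, however, a genuine gap in part 1. You write that the linear-ODE solution $t(g)$ defines ``a $C^\infty$ strictly increasing bijection $[a,1)\to[1/2,\infty)$'', but you do not prove the strict monotonicity, and this is precisely the non-trivial content of the paper's argument. The condition $a>2\varepsilon/(1+2\varepsilon)$ only gives $t'(a)>0$; one must show that $(t(g)+\varepsilon)(1-g)<1/2$, equivalently $t'(g)>0$, for \emph{all} $g\in[a,1)$. The paper does this via an explicit upper bound on $(f_p^{(\varepsilon,a)}(g)+\varepsilon)(1-g)$ derived from the integral formula, and it is here that the hypothesis $a>2\varepsilon/(1+2\varepsilon)$ is used globally, not just locally. (A shorter alternative: at a hypothetical first zero $g_0$ of $t'$ one computes $t''(g_0)=(t(g_0)+\varepsilon)/\bigl(pg_0(1-g_0)\bigr)>0$, so $g_0$ would be a strict local minimum of $t$, contradicting $t'>0$ on $[a,g_0)$.) Without this step the inverse is not known to be a solution of $(E_{(\varepsilon)})$ in the sense required—namely with positive denominator throughout—and the same omission recurs in part 4, where one needs $f_p(g)(1-g)<1/2$ on $(0,1)$.
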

\begin{proof}
	1) We could use the Cauchy-Lipschitz theorem but prefer to give here a direct proof "by hands", that gives explicitly the inverse of $g_p^{(\varepsilon,a)}$ and adapts immediately to prove the point 4) - for which Cauchy-Lipschitz does not apply. Assume that $g_p^{(\varepsilon,a)}$ exists and let $f_p^{(\varepsilon,a)}:[a,1)\to \left[1/2,\infty\right)$ denotes its inverse. Then $f_p^{(\varepsilon,a)}$ is solution to the linear differential equation
	$$f'(t)=\frac{1-2(f(t)+\varepsilon)(1-t)}{2pt(1-t)}, \quad t\in [a,1)$$ 
and one easily checks that it writes
$$f_p^{(\varepsilon,a)}(t)=\frac{1}{2}-\varepsilon + \frac{\varepsilon a^{1/p}}{t^{1/p}}+\frac{1}{2pt^{1/p}}\int_{a}^t  \frac{u^{1/p}}{1-u}\mathrm{d}u.$$
This shows that $g_p^{(\varepsilon,a)}$ is uniquely determined, if it exists. Its existence will be proved if we show that $f_p^{(\varepsilon,a)}$ is strictly monotone, that is $\big(f_p^{(\varepsilon,a)}(t)+\varepsilon \big)(1-t)<1/2$ for all $t\in [a,1)$. In that aim, note that
\begin{align*}
	f_p^{(\varepsilon,a)}(t)+\varepsilon&=\frac{1}{2}+ \frac{\varepsilon a^{1/p}}{t^{1/p}}+\frac{t}{2p}\int_{a/t}^1 \frac{u^{1/p}}{1-tu}\mathrm{d}u\\
	&\leq \frac{1}{2}+ \frac{\varepsilon a^{1/p}}{t^{1/p}}+\frac{t}{2p(1-t)}\int_{a/t}^1 u^{1/p}\mathrm{d}u\\
	&\leq \frac{1}{2}+ \frac{\varepsilon a^{1/p}}{t^{1/p}}+\frac{t}{2p(1-t)} \cdot \frac{1-\left(a/t\right)^{1+\frac{1}{p}}}{1+1/p}
\end{align*} 
and then
\begin{align*}
	\big(f_p^{(\varepsilon,a)}(t)+\varepsilon \big)(1-t)&\leq \frac{1-t}{2}+\varepsilon a^{1/p}t^{-1/p}(1-t)+\frac{t}{2(p+1)}\big(1-\left(a/t\right)^{1+\frac{1}{p}}\big)\\
	&=\frac{1}{2}+\frac{h(t)}{t^{1/p}}
\end{align*}      
with 
$$h(t)=\varepsilon a^{1/p}(1-t)-\frac{pt^{1+\frac{1}{p}}}{2(p+1)}-\frac{a^{1+\frac{1}{p}}}{2(p+1)}$$
a decreasing function on $[a,1)$. Consequently, for every $t\in [a,1)$ we have $h(t)\leq h(a)$ \linebreak $=a^{1/p}\left(\varepsilon-a\left(\varepsilon +1/2\right)\right) $ and thus $\big(f^{(\varepsilon,a)}_p(t)+\varepsilon\big)(1-t)<1/2 $ as soon as $a>\frac{2\varepsilon}{1+2\varepsilon}$.   

2) The function $f^{(\varepsilon,a)}_p$ defined above is in fact well-defined for all $t \in (0,1)$, and for a fixed $t$, $a \in \left(\frac{2\varepsilon}{1+2\varepsilon},1 \right) \mapsto f^{(\varepsilon,a)}_p(t)$ is decreasing. This implies that $g_p^{(\varepsilon,a)}(t)<g_p^{(\varepsilon,b)}(t)$ for $t \geq 1/2$. 
Then note that for $s>1/2$, the function
$$
x \in  \left(1-\frac{1}{2s},1\right)\mapsto G(s,x):=\frac{2px(1-x)}{1-2s(1-x)}
$$           
 is positive, decreasing. Finally write for $t>1/2$
\begin{eqnarray*}
0 \leq g_p^{(\varepsilon,b)}(t) -g_p^{(\varepsilon,a)}(t) &=& b-a+\int_{1/2}^{t}  G(s,g_p^{(\varepsilon,b)}(s))-G(s,g_p^{(\varepsilon,a)}(s)) ~ \mathrm ds \\
&\leq & b-a.
\end{eqnarray*}
	
3) Here we just use that for any $t \geq a$, the function,
$$
\varepsilon \in (0,\infty) \mapsto \frac{1}{2}-\varepsilon + \frac{\varepsilon a^{1/p}}{t^{1/p}}+\frac{1}{2pt^{1/p}}\int_{a}^t  \frac{u^{1/p}}{1-u}\mathrm{d}u
$$
is decreasing.

4) We proceed as in point 1) and show similarly that $f_p(t)(1-t)<1/2$ for every $t\in (0,1)$ which leads to the result.
\end{proof}

We now turn to an approximation of (\ref{eq:syst_EDO_0}). For $\varepsilon\geq 0$ consider
\begin{equation}
\tag{$\tilde{E}_{(\varepsilon)}$}
\label{eq:syst_EDO}
	\left\{
	\begin{array}{ll}
		g'(t)=\frac{2pg(t)\left(1-g(t)\right)^2}{1-2(t+\varepsilon)+g(t)+2d(t)},\\
		d'(t)=2(1-p)g(t)\left(1-g(t)\right)+g(t)^2
	\end{array}
	\quad t>\frac{1}{2}.
	\right.
\end{equation}
We call solution to $(\tilde E_{(\varepsilon)})$ starting from $(a,b)\in [0,1)\times [0,\infty)$, a couple of strictly increasing continuously differentiable functions $(g,d):\left[1/2,\infty\right) \to \left[a,1\right)\times [b,\infty)$ such that $g\left({1}/{2}\right)=a$, $d\left({1}/{2}\right)=b$ and $(g,d)$ verifies $(\tilde{E}_{(\varepsilon)})$, which implicitly means that $1-2(t+\varepsilon)+g(t)+2d(t)>0$ for every $t>1/2$.

For $\varepsilon\geq 0$ and $\left(a,b\right)\in [0,1)\times [0,\infty)$, we set
$$\delta_{(a,b)}(\varepsilon)=\frac{\varepsilon-b-a^2/2}{\left(1-a\right)^2}$$
(which may be negative).

\begin{lemma}
\label{lm:syst_edo}
Consider $\varepsilon\geq0$ and $(a,b)\in [0,1)\times [0,\infty)$ such that $\delta_{(a,b)}(\varepsilon)\geq 0$.
\begin{enumerate}[topsep=0cm]
\item[\emph{1)}] If $(g,d)$ is a solution to $(\tilde{E}_{(\varepsilon)})$ starting from $(a,b)$ then $g$ is a solution to $(E_{(\delta_{(a,b)}(\varepsilon))})$ starting from $a$.
\item[\emph{2)}] If either $a>\frac{2\delta_{(a,b)}(\varepsilon)}{1+2\delta_{(a,b)}(\varepsilon)}$ or $a=0$ and $b=\varepsilon$, there exists a unique solution to $(\tilde{E}_{(\varepsilon)})$ starting from $\left(a,b\right)$. In particular, there exists a unique solution to
$(\tilde{E}_{(0)})$ starting from $(0,0)$, denoted by $(g_p,d_p)$, with $g_p$ the inverse of $f_p$ (\ref{def:f_p}). 
\end{enumerate}
\end{lemma}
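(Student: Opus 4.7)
The argument rests on identifying a hidden first integral of $(\tilde E_{(\varepsilon)})$ that couples the two equations and allows me to reduce the system to the scalar equation $(E_{(\delta)})$ with $\delta=\delta_{(a,b)}(\varepsilon)$. I introduce
\[
\Phi(t) := t+\varepsilon - g(t)-d(t)-t(1-g(t))^2, \qquad \Psi(t):=\Phi(t)-\delta\,(1-g(t))^2,
\]
together with the denominator $N(t) := 1-2(t+\varepsilon)+g(t)+2d(t)$ of the first equation in $(\tilde E_{(\varepsilon)})$. A short algebraic expansion, using $t+\varepsilon-g-d = \Phi + t(1-g)^2$, yields the key identity
\[
N(t) - (1-g(t))\bigl(1-2(t+\delta)(1-g(t))\bigr) = -2\,\Psi(t).
\]
Once I show $\Psi\equiv 0$, the first equation of $(\tilde E_{(\varepsilon)})$ collapses to $(E_{(\delta)})$ with the same initial condition $g(1/2)=a$, which is part 1 of the lemma.

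To show $\Psi\equiv 0$, I first check that $\Phi(1/2) = \varepsilon - a^2/2 - b = \delta(1-a)^2$, so $\Psi(1/2)=0$. Plugging $d' = 2(1-p)g(1-g)+g^2$ into $1-d'-(1-g)^2$ gives the simplification $1-d'(t)-(1-g(t))^2 = 2p\,g(t)(1-g(t))$, hence $\Phi'(t) = 2pg(1-g) - g'(1-2t(1-g))$, and therefore
\[
\Psi'(t) = 2p\,g(t)(1-g(t)) - g'(t)\bigl(1-2(t+\delta)(1-g(t))\bigr).
\]
Substituting the first equation of $(\tilde E_{(\varepsilon)})$ for $g'$ and using the algebraic identity displayed above, I obtain the linear homogeneous ODE $\Psi'(t) = -\frac{4p\,g(t)(1-g(t))}{N(t)}\,\Psi(t)$. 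Since $N>0$ on $(1/2,\infty)$ for any solution, $\Psi(1/2)=0$ forces $\Psi\equiv 0$, completing part 1.

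For part 2, part 1 yields immediate uniqueness: any solution must have $g = g_p^{(\delta,a)}$ (well-defined under $a>2\delta/(1+2\delta)$ by Lemma \ref{lm:equadiff}, part 1), or $g=g_p$ when $a=0$, $b=\varepsilon$ (then $\delta=0$ and Lemma \ref{lm:equadiff}, part 4, applies); $d$ is then determined by integrating $d' = 2(1-p)g(1-g)+g^2$ from $d(1/2)=b$. For existence, take this $g$ and this $d$. The derivation of $\Psi'(t) = -\frac{4pg(1-g)}{N}\Psi(t)$ sketched above only uses the ODE $(E_{(\delta)})$ satisfied by $g$ and the ODE defining $d$, not yet the first equation of $(\tilde E_{(\varepsilon)})$. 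Hence $\Psi(1/2)=0$ again yields $\Psi\equiv 0$, which is precisely $N=(1-g)(1-2(t+\delta)(1-g))$; combined with $(E_{(\delta)})$ this rewrites as $g'=2pg(1-g)^2/N$, the first equation of $(\tilde E_{(\varepsilon)})$. Specializing to $(\varepsilon,a,b)=(0,0,0)$ gives $\delta=0$ and produces the unique $(g_p,d_p)$ mentioned in the statement.

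The main obstacle is the conceptual step of spotting that $\Phi(t)-\delta(1-g(t))^2$ is the right conserved quantity, which is what decouples the system and links it back to the scalar equation $(E_{(\delta)})$ already studied in Lemma \ref{lm:equadiff}. Once this combination is identified, the verification is mechanical and rests essentially on a single nontrivial simplification, $1-d'-(1-g)^2=2pg(1-g)$, which reflects the precise form of the equation for $d$ in $(\tilde E_{(\varepsilon)})$.
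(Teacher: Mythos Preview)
Your argument is correct and is essentially the same as the paper's. The paper introduces $r(t)=(t+\varepsilon-g(t)-d(t))/(1-g(t))$ and $h(t)=r(t)-(t+\delta)(1-g(t))$, then shows $h\equiv 0$ via the linear ODE $h'=-\frac{g'}{1-g}h$ with $h(1/2)=0$; your $\Psi$ is exactly $(1-g)h$, so the two conserved quantities differ only by this nonvanishing factor and your linear ODE is the translation of theirs.

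One small imprecision in your existence step (part 2): the derivation of $\Psi'=-\frac{4pg(1-g)}{N}\Psi$ in part 1 \emph{does} use the first equation of $(\tilde E_{(\varepsilon)})$ when you substitute for $g'$, contrary to what you write. In the existence direction, with $g$ satisfying $(E_{(\delta)})$, you should instead observe that $\Psi'(t)=2pg(1-g)-g'(1-2(t+\delta)(1-g))=0$ directly (this is precisely $(E_{(\delta)})$), whence $\Psi\equiv 0$; the rest of your conclusion then follows verbatim. This is also how the paper handles existence, phrased through $r$.
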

\begin{proof}
1) Consider $(g,d)$ a solution to $(\tilde{E}_{(\varepsilon)})$ starting from $(a,b)$ and set
$$
 r(t)=\frac{t+\varepsilon-g(t)-d(t)}{1-g(t)}, \quad t \geq 1/2.
$$
The first part of  \eqref{eq:syst_EDO} rewrites for $t>1/2$
	$$g'(t)=\frac{2pg(t)(1-g(t))}{1-2r(t)}.$$  Our goal is to prove that $r(t)=(t+\delta_{(a,b)}(\varepsilon))(1-g(t))$ for every $t \geq 1/2$ which will yield the claim. In that aim set for $t\geq 1/2$, $$h(t)=r(t)-(t+\delta_{(a,b)}(\varepsilon))(1-g(t)).$$ By definition of $\delta_{(a,b)}(\varepsilon)$, $h\left(1/2\right)=0$. Then, for $t > \frac{1}{2}$
	\begin{align*}
		h'(t)&=\frac{1-g'(t)-d'(t)}{1-g(t)}+\frac{g'(t)}{1-g(t)}r(t)-\left(1-g(t)\right)+\big(t+\delta_{(a,b)}(\varepsilon)\big)g'(t)\\
		&=-h(t)\frac{g'(t)}{1-g(t)}
	\end{align*} 
	which implies that $h$ is identically zero. 
	
2) When $a>2\delta_{(a,b)}(\varepsilon)/(1+2\delta_{(a,b)}(\varepsilon))$ or $a=\delta_{(a,b)}(\varepsilon)=0$, Lemma \ref{lm:equadiff} gives the existence of a solution $g$ to $(E_{(\delta_{(a,b)}(\varepsilon))})$ starting from $a$. Defining then $d$ from $g$ by $$d(t)=b+\int_{1/2}^{t}\left(2(1-p)g(s)\left(1-g(s)\right)+g^2(s)\right)\,\mathrm{d}s, \quad t\geq \frac{1}{2},$$
one sees that $(g,d)$ is a solution $(\tilde{E}_{(\varepsilon)})$ starting from $(a,b)$ (using the same strategy as above with the ratio function $r$).

To prove the uniqueness, we use 1) together with Lemma \ref{lm:equadiff} which gives the uniqueness of a solution $g$ to $(E_{(\delta_{(a,b)}(\varepsilon))})$ starting from $a$ (this function being $g_p$ when $a=\delta_{(a,b)}(\varepsilon)=0$). The function $d$ is then uniquely determined from $g$.
\end{proof}

\subsection{The functions $t_{p,k}$}
\label{sec:t_k}

We now turn to the functions $t_{p,k}$ arising as the scaling limits of the number of trees of size $k$, $k\geq 1$, which are defined from the function $g_p$ by
$$
t_{p,k}(t)=\frac{k^{k-2}}{k!}\left(2t\right)^{k-1}\left(1-g_p(t)\right)^k \mathrm{e}^{-2kt\left(1-g_p(t)\right)}.
$$

\bigskip

\begin{proposition}
The sequence of functions $(t_{p,k},k\geq 1)$ is the unique solution to the following system of differential equations:
\begin{equation*}
\label{eq_lim_trees}
	\left\{
	\begin{array}{ll}
		t_k'(t)=\sum_{i+j=k}ijt_i(t)t_j(t)-2kt_k(t)\left(1-(1-p)g_p(t)\right),  \quad t\geq 0, \\
		t_1(0)=1; \quad t_k(0)=0 \quad \text{for }k\geq 2.
	\end{array}
	\right.
\end{equation*}
\end{proposition}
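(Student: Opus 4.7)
The strategy is to verify by direct computation that the explicit formula for $t_{p,k}$ solves the system, and to deduce uniqueness from the triangular structure of the ODEs. Write $v(t)=1-g_p(t)$ and $\theta(t)=2t(1-g_p(t))=2tv(t)$, so that
$$
t_{p,k}(t)=\frac{k^{k-2}}{k!}\,\theta(t)^{k-1}v(t)e^{-k\theta(t)}.
$$
The initial conditions $t_{p,1}(0)=1$ and $t_{p,k}(0)=0$ for $k\geq 2$ are immediate since $\theta(0)=0$ and $v(0)=1$.

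For the ODE, logarithmic differentiation gives, for $t>0$,
$$
\frac{t_{p,k}'(t)}{t_{p,k}(t)}=(k-1)\frac{\theta'}{\theta}+\frac{v'}{v}-k\theta'=\frac{k-1}{t}+k\frac{v'}{v}-k\theta',
$$
using $\theta'/\theta=1/t+v'/v$. On $(1/2,\infty)$ I would invoke point 4 of Proposition \ref{prop:propfunctions}, namely $d_p=t-g_p-t(1-g_p)^2$, to rewrite the denominator in $(\tilde E_{(0)})$ as $1-2t+g_p+2d_p=v(1-\theta)$; this gives $g_p'=2pg_pv/(1-\theta)$ and thus $v'=-2pg_pv/(1-\theta)$, $\theta'=2v(1-\theta-2ptg_p)/(1-\theta)$. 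Plugging in and using $2tv=\theta$, a short computation yields the clean cancellation
$$
k\frac{v'}{v}-k\theta'=-2k\bigl(pg_p+v\bigr)=-2k\bigl(1-(1-p)g_p\bigr).
$$
On $[0,1/2)$, where $g_p\equiv 0$, $v\equiv 1$, $\theta=2t$, the same relation is a direct check; and a one-sided derivative computation at $t=1/2$ (using $g_p(1/2)=0$ and $g_p'(1/2^+)=2(1+p)$) shows that both sides equal $-2t_{p,k}(1/2)$, so the equation extends through $t=1/2$.

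It remains to identify $\frac{k-1}{t}t_{p,k}$ with the convolution source term. Writing
$$
\sum_{\substack{i+j=k\\ i,j\geq 1}}ij\,t_{p,i}(t)t_{p,j}(t)=(2t)^{k-2}v^{k}e^{-k\theta}\sum_{\substack{i+j=k\\ i,j\geq 1}}\frac{i^{i-1}j^{j-1}}{i!\,j!},
$$
and comparing to $\frac{k-1}{t}t_{p,k}(t)=\frac{2(k-1)k^{k-2}}{k!}(2t)^{k-2}v^{k}e^{-k\theta}$, the matching reduces to the classical Abel-type identity
$$
\sum_{\substack{i+j=k\\ i,j\geq 1}}\binom{k}{i}i^{i-1}j^{j-1}=2(k-1)k^{k-2},
$$
which can be read bijectively: both sides count pairs (unrooted labeled tree on $[k]$, oriented edge), the left-hand side via the cutting bijection sending such a pair to the ordered pair of rooted trees obtained by removing the oriented edge, each half rooted at its endpoint of the cut. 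This combinatorial identity is essentially the only substantive input and is the main obstacle to a purely analytic proof.

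For uniqueness, because $i,j\geq 1$ in the convolution the source term of the equation for $t_k$ depends only on $t_1,\ldots,t_{k-1}$. Once these are fixed, the equation for $t_k$ is a linear first-order ODE with continuous coefficient $-2k(1-(1-p)g_p(\cdot))$, so the initial value $t_k(0)$ determines $t_k$ uniquely; the base case $k=1$ is itself a linear equation $t_1'=-2t_1(1-(1-p)g_p)$ with $t_1(0)=1$. An induction on $k$ concludes.
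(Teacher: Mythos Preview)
Your proof is correct and follows essentially the same approach as the paper: verify the explicit formula by direct differentiation, reduce the source term to the Abel-type identity $\sum_{i+j=k}\binom{k}{i}i^{i-1}j^{j-1}=2(k-1)k^{k-2}$, and prove uniqueness by induction using the linear first-order structure. The only stylistic difference is that the paper introduces the ratios $u_k=t_k/t_1^k$, which strips away the common factor $v^k e^{-k\theta}$ and reduces the verification to the polynomial identity $u_{p,k}(t)=\frac{k^{k-2}}{k!}(2t)^{k-1}$ satisfying $u_k'=\sum_{i+j=k}ij\,u_iu_j$; this avoids your logarithmic differentiation and the separate case analysis at $t=1/2$, but lands on exactly the same combinatorial identity (which the paper obtains via the Borel--Tanner interpretation rather than your tree-cutting bijection).
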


\begin{proof}
For the uniqueness of solutions to this system, note that if $(t_k,k\geq 1)$ is a solution to this equation, then $t_1$ is the solution to a linear differential equation of the first order, and so it is uniquely determined by its initial condition $t_1(0)=1$. Then we proceed by induction on $k$, noticing that, given the functions $t_i, i \leq k-1$, the function $t_k$ is also solution to a linear differential equation of the first order and so is uniquely determined by its value at $t=0$.

To prove the existence, we just have to check that the functions $t_{p,k}, k\geq 1$ are solutions. Regarding the initial conditions, we clearly have that $t_{p,1}(0)=1$ since $g_p(0)=0$, and $t_{p,k}(0)=0$ for $k\geq 2$. Then, setting $u_k(t)=t_k(t)/(t_1(t))^k$ for $k \geq 1$, we note that the system of equations rewrites
$$\left\{
	\begin{array}{ll}
	        t_1'(t)=-2t_1(t)(1-(1-p)g_p(t)), \quad t_1(0)=1 \\
		u_k'(t)=\sum_{i+j=k}iju_i(t)u_j(t), \quad u_k(0)=0 \text{ for }k\geq 2
	\end{array}
	\right. \qquad t\geq 0.$$
We immediately see from the definition of $t_{p,1}$ and (\ref{eq:f}) that $t_{p,1}'(t)=-2t_{p,1}(t)(1-(1-p)g_p(t))$, $\forall t \geq 0$. Next, consider the functions
$$
u_{p,k}(t):=\frac{t_{p,k}(t)}{(t_{p,1}(t))^k}=\frac{k^{k-2}}{k!}\left(2t\right)^{k-1}, \quad t \geq 0
$$
and write for $k\geq 2$
\begin{eqnarray*}
\sum_{i+j=k}iju_{p,i}(t)u_{p,j}(t)&=& (2t)^{k-2} \cdot \sum_{i=1}^{k-1}\frac{i^{i-1}}{i!} \cdot \frac{(k-i)^{k-i-1}}{(k-i)!}  \\
&\underset{(\text{by }(\ref{eq:ijk}))}=& \frac{2 k^{k-3}}{(k-2)!} \cdot (2t)^{k-2}.
\end{eqnarray*}
The left-hand side is equal to $u'_{p,k}(t)$, which shows that the functions $t_{p,k},k\geq 1$ are indeed solutions to the system of differential equations of the statement of the proposition.
\end{proof}

%%%%%%%%%%%%%%%%%%%%%%%%%%%%%%%%%%%%%%%%%%%%%%%%
\section{Convergence to the fluid limit}
\label{sec:mise_en_place}	
%%%%%%%%%%%%%%%%%%%%%%%%%%%%%%%%%%%%%%%%%%%%%%%%
	
This section is devoted to the proof of Theorem \ref{thm:fluid limit} and the ensuing results Theorem \ref{thm:forest}, Corollary \ref{prop:cc} and Corollary \ref{cor:CC1}. As announced in the Introduction, we will use the differential equation method as developed by Wormald (see Theorem \ref{thmDEM} in the Appendix) for processes with relatively small one-step jumps, which are approximated by sufficiently smooth functions. To this end, we start by implementing preliminary results in Section \ref{sec:prelimSection4}. Among other things, we recall there that the processes $G_{p,n}$ and $D_{p,n}$ are respectively of order $n^{2/3}$ and $n^{1/3}$ at time $\lfloor n/2 \rfloor$. So, immediately, these processes divided by $n$ and accelerated in time by a factor $n$ converge to 0 uniformly on the interval $[0,1/2]$, and in the rest of the section we can focus on the interval $[1/2,\infty$). Since the differential equations (\ref{eq:f}) and (\ref{eq:syst_EDO_0}) involving $g_p$ and $d_p$ do not satisfy the necessary Lipschitz assumptions at the critical point $t=1/2$, Wormald's theorem cannot be applied directly around that point. We will therefore start by proving the fluid limit for the processes beyond time $(1/2+\varepsilon)n$, for $\varepsilon>0$, see Section \ref{sec:DEM_eps}, after having setting up suitable estimates for the process $G_{p,n}$ at time $(1/2+\varepsilon)n$ in Section \ref{section:starting_pt}.  We will then proceed by approximation, letting $\varepsilon$ tends to $0$, to conclude the proof of Theorem \ref{thm:fluid limit} in Section \ref{sec:proofFL}. Theorem \ref{thm:forest} and Corollaries \ref{prop:cc} and \ref{cor:CC1}, are then proved in Sections \ref{sec:proofF} and \ref{sec:typ_L_trees} respectively.

We recall that $\mathbf{F}_{p,n}$ denotes the filtration generated by the process $(G_{p,n},D_{p,n})$.
	
	\subsection{Preliminaries}
	\label{sec:prelimSection4}
	
	\subsubsection{Approximating a process via its conditional jumps}
	
	A key point underlying our proofs and the differential equation method is to approximate a sequence of processes  with \emph{small} (in $n$) \emph{variations} by its conditional jumps. Formally, we will need the following consequence of Azuma-Hoeffding inequality.
	
	\begin{lemma}\label{lm:approx:esp}
	Let $(u_n)_{n\in \mathbb N}$ be a deterministic sequence of positive real numbers. For each $n \in \mathbb N$, let $\left(Y_n(m)\right)_{m \in \mathbb Z_+}$ be a stochastic process starting from $Y_n(0)=0$ and such that $\left\vert \Delta Y_n(m)\right\vert \leq u_n~$ for every $m \in \mathbb Z_+$. Let then $(\mathbf{G}_n(m))_{m \in \mathbb Z_+}$ denote the filtration generated by the process $Y_n$, and consider $T_n$ a stopping time such that $T_n\leq An$ almost surely, for some deterministic $A>0$ independent of $n$. Then, for all $\varepsilon>0$,
	$$
\mathbb P\Bigg(\bigg|Y_n\left(\lfloor An\rfloor\right)-Y_n\left( T_n\right)-\sum_{m= T_n}^{\lfloor An\rfloor-1} \mathbb{E}\left[\Delta Y_n(m)|\mathbf{G}_n(m)\right]\bigg| \geq \varepsilon\Bigg) \leq 2 \exp\left(-\frac{\varepsilon^2}{8 An u_n^2} \right).
	$$
In particular, when $u_n=o(n^{1/2})$, 	
$$
\frac{Y_n\left(\lfloor An\rfloor\right)-Y_n\left( T_n\right)-\sum_{m= T_n}^{\lfloor An\rfloor-1} \mathbb{E}\left[\Delta Y_n(m)|\mathbf{G}_n(m) \right] }{n}~ \underset{n \rightarrow \infty}{\overset{\mathbb P}\longrightarrow}~ 0.
$$
\end{lemma}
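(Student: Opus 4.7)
The plan is to reduce the lemma to a direct application of the Azuma-Hoeffding inequality, after introducing the natural martingale associated with $Y_n$. First I would set
$$M_n(m) \;=\; Y_n(m) - \sum_{k=0}^{m-1} \mathbb{E}\!\left[\Delta Y_n(k) \,|\, \mathbf{G}_n(k)\right], \qquad m \in \mathbb{Z}_+,$$
which, by construction, is a $(\mathbf{G}_n(m))$-martingale with $|\Delta M_n(k)| \leq 2 u_n$ almost surely. Since $T_n$ is a $(\mathbf{G}_n(m))$-stopping time, the stopped process $M_n^{T_n}(m) := M_n(m \wedge T_n)$ is again a martingale, hence so is the difference
$$Z_n(m) \;:=\; M_n(m) - M_n(m \wedge T_n), \qquad m \in \mathbb{Z}_+,$$
and a direct computation shows that $\Delta Z_n(m) = \Delta M_n(m)\,\mathbbm{1}_{\{m \geq T_n\}}$, so the increments of $Z_n$ are still uniformly bounded by $2 u_n$.

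Next, since $T_n$ takes values in $\mathbb{Z}_+$ and $T_n \leq A n$ almost surely, we have $T_n \leq \lfloor A n\rfloor$ almost surely, and evaluating $Z_n$ at time $m = \lfloor A n\rfloor$ gives
$$Z_n(\lfloor A n\rfloor) \;=\; Y_n(\lfloor A n\rfloor) - Y_n(T_n) - \sum_{m = T_n}^{\lfloor A n\rfloor - 1} \mathbb{E}\!\left[\Delta Y_n(m)\,|\,\mathbf{G}_n(m)\right].$$
Applying the Azuma-Hoeffding inequality to the bounded-increments martingale $Z_n$ at time $\lfloor A n\rfloor$ then yields
$$\mathbb{P}\bigl(|Z_n(\lfloor A n\rfloor)| \geq \varepsilon\bigr) \;\leq\; 2\exp\!\left(-\frac{\varepsilon^2}{2 \lfloor A n\rfloor \, (2 u_n)^2}\right) \;\leq\; 2\exp\!\left(-\frac{\varepsilon^2}{8 A n u_n^2}\right),$$
which is precisely the quantitative bound claimed. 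The second (qualitative) assertion follows at once: replacing $\varepsilon$ by $n\varepsilon$ in the above inequality gives $\mathbb{P}(|Z_n(\lfloor A n\rfloor)|/n \geq \varepsilon) \leq 2\exp(-n\varepsilon^2/(8 A u_n^2))$, and this vanishes as $n \to \infty$ whenever $u_n^2 = o(n)$.

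I do not anticipate a genuine obstacle here; the only two things to verify carefully are that the difference $Z_n$ of $M_n$ with its stopped version is still a martingale with respect to $(\mathbf{G}_n(m))$, and that the increment bound $|\Delta Z_n(m)| \leq 2 u_n$ holds with the factor $2$ (coming from the centering) rather than $u_n$ alone — both of which are routine.
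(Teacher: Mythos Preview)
Your proof is correct and follows essentially the same approach as the paper: construct the compensated martingale $M_n$, subtract its stopped version, and apply Azuma--Hoeffding to the difference at time $\lfloor An\rfloor$. You give a bit more detail (explicitly identifying $Z_n(\lfloor An\rfloor)$ with the quantity in the statement and noting $T_n \leq \lfloor An\rfloor$), but the argument is the same.
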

\begin{proof}
	For each $n\in \mathbb N$, introduce the process $M_n$ defined by $M_n(0)=0$ and
	$$M_n(m)=\sum_{k=0}^{m-1}\left(\Delta Y_n(k)-\mathbb{E}\left[\Delta Y_n(k)|\mathbf{G}_n(k)\right]\right), \quad m\geq 1.$$
This defines a martingale (with respect to the filtration $\mathbf G_n$) with bounded jumps: $\left\vert \Delta M_n(m)\right\vert \leq 2u_n$, $\forall m \in \mathbb Z_+$.
The stopped process defined by $M_n^{T_n}(m)=M_n\left(T_n\wedge m\right)$ is also a martingale, and so is $M_n-M_n^{T_n}$. The jumps of this last martingale are uniformly bounded by $2u_n$ and we conclude by applying Azuma-Hoeffding inequality to $M_n-M_n^{T_n}$ at time $\lfloor An\rfloor$.
\end{proof}

\subsubsection{The processes  before time $n/2$}

From \cite{ContatCurien23,viau25}, the size of the gel $G_{p,n}$ and of the number of discarded edges $D_{p,n}$ are respectively of order $n^{2/3}$ and $n^{1/3}$ at time $\lfloor n/2 \rfloor$ (in the sense that appropriately normalized those random variables have a limit in distribution). Which we summarize roughly as
\begin{equation}
\label{bound1/2}
\sup_{t \leq 1/2} \frac{G_{p,n}(\lfloor nt \rfloor)}{n^{2/3}} = \frac{G_{p,n}(\lfloor n/2 \rfloor)}{n^{2/3}} =O_{\mathbb P}(1) ; \qquad \sup_{t \leq 1/2} \frac{D_{p,n}(\lfloor nt \rfloor)}{n^{2/3}} = \frac{D_{p,n}(\lfloor n/2 \rfloor)}{n^{2/3}}  =o_{\mathbb P}(1).
\end{equation}

\subsubsection{The forest part of $\mathrm{F}_{p,n}$ is never too supercritical}

An important point in our approach is to evaluate the \emph{criticality} of the forest part of the graphs $\mathrm F_{p,n}(m), m \in \mathbb Z_+$, with the vocabulary of uniform random forests of Section \ref{sec:URF}, see in particular Proposition \ref{lm:Britikov} and the following paragraph. From the relations \eqref{def:V_{p,n}E_{p,n}}, the number of vertices $V_{p,n}(m)$ and number of edges $E_{p,n}(m)$ in the forest part of the graph $\mathrm{F}_{p,n}(m)$ verify
\begin{equation}\label{eq:rel_E_V}
		2E_{p,n}(m)-V_{p,n}(m)=2m-n-G_{p,n}(m)-2D_{p,n}(m).
\end{equation}
The sub/sur/criticality is determined by the asymptotic position of this quantity relatively to \linebreak $(V_{p,n}(m))^{2/3}$.
Clearly, the forest part of $\mathrm{F}_{p,n}(m)$ is (sub)critical when $m \leq n/2$ and truly subcritical whem $m=\lfloor n(1/2-\varepsilon)\rfloor$ for some $\varepsilon>0$.
	
The lemma below shows that for $m\geq n/2$ also the forest part of the graph $\mathrm{F}_{p,n}$ cannot be "too supercritical".  It will be crucial in the proof of the key Proposition	\ref{prop:starting_pt}, which in turn implies that the forest part of the graph $\mathrm F_{p,n}(m)$ is subcritical with high probability when $m=\lfloor n(1/2+\varepsilon)\rfloor$ for some $\varepsilon>0$.
Heuristically, the idea of the proof of the lemma below is that if the forest were supercritical at some time, it would contain a giant tree, which would freeze quickly with high probability, which then would lower down the criticality.

	\begin{lemma}\label{lm:supercrit}
		For every $A\in \left(1/2,1\right)$, there exists $c_{A}>0$ such that for  $n$ large enough
		$$\mathbb{P}\left(\exists m\in \left[n/2,An\right]:\, 2E_{p,n}(m)-V_{p,n}(m)\geq \ln(n)^3 n^{2/3}\right) \leq \mathrm{e}^{-c_A\ln(n)^2}.$$
	\end{lemma}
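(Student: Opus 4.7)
Set $M_n(m) := 2 E_{p,n}(m) - V_{p,n}(m)$, $\ell_n := \ln(n)^3 n^{2/3}$, $L_n := c_A \ln(n) \, n^{2/3}$ (for a suitable $c_A>0$), and $T_n := n^{2/3}$. Let $\tau := \inf\{m \in [\lfloor n/2\rfloor, \lfloor An\rfloor] : M_n(m) \geq \ell_n\}$, with $\inf \emptyset = +\infty$. The aim is $\mathbb{P}(\tau < \infty) \leq e^{-c_A \ln^2 n}$. The strategy is the heuristic of the excerpt: strong supercriticality forces a large tree in the forest (via the free forest property and Lemma \ref{lm:forest:super}), and such a large tree freezes on a short time scale, so $M_n$ cannot remain much above the critical level.

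\textbf{Step 1 (Supercriticality forces a large tree).} For each $m \in [n/2, An]$, on $\{M_n(m) \geq \ell_n\}$ one has $V_{p,n}(m) \geq (1-A)n$ and thus $\omega := M_n(m)/V_{p,n}(m)^{2/3} \geq c_A \ln^3 n$. Proposition \ref{prop:freeforest} conditions the forest to be uniform on $\mathcal{W}(V_{p,n}(m), E_{p,n}(m))$, so Lemma \ref{lm:forest:super} with $g := \ln n$ (so $\omega/g \geq c_A \ln^2 n$) bounds the conditional probability that no tree of size $\geq L_n$ is present by $C\omega^{5/2} n^{2/3} e^{-\omega/g} \leq e^{-c_A' \ln^2 n}$. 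A union bound over $m$ (polynomial factors absorbed in the exponential) yields an event $\mathcal{B}$ with $\mathbb{P}(\mathcal{B}) \leq e^{-c_A \ln^2 n}$ outside which every $m$ with $M_n(m) \geq \ell_n$ carries a tree of size $\geq L_n$.

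\textbf{Step 2 (Fast freezing).} A tree in the $p$-frozen model has monotone size until it freezes entirely: it may grow (by absorbing another tree) or disappear (by unicycle formation or by fusion with the gel), but cannot shrink partially. Hence any tree of size $\geq L_n$ existing at time $m_0$ has per-step freezing probability at least $\binom{L_n}{2}/\binom{n}{2} \geq \tfrac12 \ln^2 n/n^{2/3}$ (the event that the next edge lies inside it). The probability it survives $T_n$ steps without freezing is at most $(1 - \tfrac12\ln^2 n/n^{2/3})^{T_n} \leq e^{-\ln^2 n/4}$. Another union bound gives an event $\mathcal{C}$ with $\mathbb{P}(\mathcal{C}) \leq e^{-c_A \ln^2 n}$ outside which every tree of size $\geq L_n$ appearing at any $m_0 \in [n/2, An]$ has frozen by time $m_0 + T_n$.

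\textbf{Step 3 (Chaining the upcrossings).} On $\mathcal{G} := (\mathcal{B} \cup \mathcal{C})^c$, define iteratively the upcrossings $\tau_1 := \tau, \tau_2, \ldots$ of $\ell_n$. At each $\tau_k < \infty$, Step 1 provides a tree of size $\geq L_n$ and Step 2 guarantees its freezing at some time $\tau_k' \in [\tau_k, \tau_k + T_n]$. Since $\Delta M_n \leq 2$, we have $M_n(\tau_k) \leq \ell_n + 2$; the freezing induces a jump $\Delta M_n \leq -(L_n-2)$, and in the intervening window $M_n$ can grow by at most $2T_n$. Therefore
\begin{equation*}
M_n(\tau_k') \leq \ell_n + 4 + 2T_n - L_n \leq \ell_n - L_n/2
\end{equation*}
for $n$ large (as $L_n/T_n \to \infty$). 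The next upcrossing $\tau_{k+1}$ thus requires $M_n$ to grow by at least $L_n/2$, needing $\geq L_n/4$ further steps. Consequently the number of potential upcrossings in $[n/2, An]$ is deterministically bounded by $4(A - 1/2) n/L_n = O(n^{1/3}/\ln n)$. Casting the entire argument as a single union bound — over the at most $O(n^{1/3}/\ln n)$ stopping-time-indexed "big-tree-exists-and-freezes-fast" events, each of probability at least $1 - e^{-c \ln^2 n}$ — and absorbing the polynomial factor into the exponential yields $\mathbb{P}(\tau < \infty) \leq e^{-c_A \ln^2 n}$, as required.

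\textbf{Main obstacle.} The delicate point is Step 3: turning the "self-healing" observation (that $M_n$ is driven down once it exceeds $\ell_n$) into a bound on the probability of \emph{any} upcrossing occurring in the first place. This works because the failure probabilities of Steps 1--2 are already exponentially small in $\ln^2 n$, while the combinatorial count of possible upcrossings grows only polynomially in $n$ (forced by the guaranteed $L_n/2$ downward jump at each upcrossing); polynomial-times-exponential factors can be swept into $e^{-c_A \ln^2 n}$ for $n$ large. The monotonicity of tree sizes in the $p$-frozen model, crucial for the per-step freezing estimate in Step 2, is another essential ingredient.
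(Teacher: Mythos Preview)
Your Steps 1 and 2 are sound and mirror the two ingredients the paper also uses (Lemma~\ref{lm:forest:super} for the existence of a large tree, and the internal-cycle probability for fast freezing). The gap is in Step~3. What you establish on the good event $\mathcal{G}=(\mathcal{B}\cup\mathcal{C})^c$ is a \emph{self-healing} property: whenever $M_n$ exceeds $\ell_n$, it is pushed back below $\ell_n$ within $T_n$ steps. But this does not preclude $\tau<\infty$ on $\mathcal{G}$; it only says the excursion above $\ell_n$ is short. The union bound you describe at the end bounds $\mathbb{P}(\mathcal{B}\cup\mathcal{C})$, not $\mathbb{P}(\tau<\infty)$, because the ``big-tree-exists-and-freezes-fast'' events you are intersecting are all \emph{satisfied} on $\mathcal{G}$. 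No contradiction is derived from $\{\tau<\infty\}\cap\mathcal{G}$.

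The fix is simple and brings you to the paper's argument: run Steps~1--2 at a \emph{lower} threshold, say $\ell_n/2$. On the corresponding good event, at each upcrossing $\tau_k$ of $\ell_n/2$ you have $M_n(\tau_k)\le \ell_n/2+2$ (jumps $\le 2$), a tree of size $\ge L_n$ is present and freezes by time $\tau_k'\le\tau_k+T_n$, and during $[\tau_k,\tau_k']$ one has $M_n(m)\le \ell_n/2+2+2T_n<\ell_n$ for $n$ large. After the freeze $M_n(\tau_k')<\ell_n/2$, so the same applies at the next upcrossing. This gives the genuine inclusion $\{\tau<\infty\}\subset\mathcal{B}\cup\mathcal{C}$, whence the bound. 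The paper implements exactly this: it locates, just before $T_n$, a window $[S_n,S_n+n^{2/3}]$ during which $M_n$ is trapped in the narrow band $[\ell_n/2,\ell_n/2+2n^{2/3}+2]$, and shows that the simultaneous requirements ``supercritical forest'' and ``no freeze of a size-$\ge\ln(n)n^{2/3}$ tree in $n^{2/3}$ steps'' are jointly unlikely.
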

	
\textbf{\textit{Remark.}} This lemma can in fact be extended to all $A>1/2$. To do this, the proof should be refined. As we will only need the version with $A\in \left(1/2,1\right)$ in the following, we will leave it as is.	
	
	\begin{proof}
		Fix $A\in \left(1/2,1\right)$. Consider the stopping time
		$$T_n:=\inf\left\{m\geq n/2:\, 2E_{p,n}(m)-V_{p,n}(m)\geq \ln(n)^3 n^{2/3} \right\}$$
and the random time
$$S_n:=\inf\left\{s\in \left[n/2, T_n\right]\cap \mathbb Z_+ : \frac{\ln(n)^3 n^{2/3}}{2}\leq 2E_{p,n}(m)-V_{p,n}(m)<\ln(n)^3n^{2/3}\quad\forall m \in [s,T_n) \right\}$$
(with the convention $\inf\{\emptyset\}=\infty$). Using that the positive jumps of the process $m \mapsto 2E_{p,n}(m)-V_{p,n}(m)$ are smaller or equal to 2, 
we see that for $n$ not too small $\left\{T_n<\infty \right\}\subset \left\{S_n<\infty \right\}$ and that when $T_n<\infty$, $2E_{p,n}(S_n)-V_{p,n}(S_n)\leq \ln(n)^3n^{2/3}/2+2$. This in turn implies that
for all $m\in [S_n,S_n+n^{2/3}]$, 
$$\frac{\ln(n)^3 n^{2/3}}{2}~\leq~2E_{p,n}(m)-V_{p,n}(m)~\leq 2E_{p,n}(S_n)-V_{p,n}(S_n)+2n^{2/3}~\leq~\frac{\ln(n)^3 n^{2/3}}{2}+2n^{2/3}+2.$$
Consequently,
		\begin{equation}
			\{ T_n\leq An\}\subset\bigcup_{s=\left\lceil\frac{n}{2}\right\rceil}^{\left\lfloor An\right\rfloor}A_n(s)\label{eq:subs:surcrit}
		\end{equation} 
with, for $s \in \mathbb Z_+$, $$A_n(s)=\left\{\forall m\in \left[s,s+n^{2/3}\right],\, \frac{\ln(n)^3 n^{2/3}}{2}\leq 2E_{p,n}(m)-V_{p,n}(m)\leq\frac{\ln(n)^3 n^{2/3}}{2}+2n^{2/3}+2  \right\}.$$
Heuristically, on $A_n(s)$ the forest part of the graph is supercritical over the time interval $[s,s+n^{2/3}]$ and this supercriticality hardly varies. In particular, the vertices of a tree of size larger than $2n^{2/3}+5$ at time $s$, if there are any, will not be frozen at time $\lfloor s+n^{2/3} \rfloor$. 

Let $T^{(1)}_{p,n}(s)$ be the largest tree in the forest at time $s$ (if several trees have the largest size, we choose such a tree at random) and $\# T^{(1)}_{p,n}(s)$ its size. We use the splitting
		\begin{align*}
			A_n(s)\subset &\bigg\{\# T^{(1)}_{p,n}(s)>\ln(n)n^{2/3}, \text{the vertices of }T^{(1)}_{p,n}(s)  \text{ are not frozen at time } \lfloor s+n^{2/3} \rfloor \bigg\}\\ & \cup  \bigg\{\# T^{(1)}_{p,n}(s)\leq \ln(n)n^{2/3},\,  \frac{\ln(n)^3n^{2/3}}{2} \leq 2E_{p,n}(s)-V_{p,n}(s)\leq \ln(n)^3n^{2/3} \bigg\}
		\end{align*}
to obtain a relevant upper bound for $\mathbb P(A_n(s))$. Note that at each step in the process $\mathrm{F}_{p,n}$, the probability that the new arriving edge creates a cycle in a given tree of size $k$ (sending therefore this tree in the gel) is $\frac{k(k-1)}{n(n-1)}$. Consequently,		
		\begin{align*}
			& \mathbb P\left(\# T^{(1)}_{p,n}(s)>\ln(n)n^{2/3}, \text{the vertices of }T^{(1)}_{p,n}(s)  \text{ are not frozen at time } \lfloor s+n^{2/3} \rfloor \right)\notag\\
			& \hspace{2cm}\leq \left(1-\frac{\ln(n)^2n^{4/3}}{n(n-1)}+\frac{\ln(n)n^{2/3}}{n(n-1)}\right)^{n^{2/3}}~\underset{n\to \infty}{\sim} \mathrm{e}^{-\ln(n)^2}.\label{eq:largest_treE_{p,n}ot_freeze}
		\end{align*}
	Next, recalling that $L_1(N,M)$ denotes the size of the largest tree in a uniform random forest with $N$ vertices and $M$ edges, Proposition \ref{prop:freeforest} yields
		\begin{eqnarray*}
			&&\mathbb{P}\left(\# T^{(1)}_{p,n}(s)\leq \ln(n)n^{2/3},\, \frac{\ln(n)^3n^{2/3}}{2} \leq 2E_{p,n}(s)-V_{p,n}(s)\leq \ln(n)^3n^{2/3}  \right)\\
			&\leq & \mathbb{E}\left[\mathbb{P}\left(L_1\left(V_{p,n}(s),E_{p,n}(s)\right)\leq  \frac{\ln(n)}{(1-A)^{2/3}} \cdot V_{p,n}(s)^{2/3}, \right.\right.\\
			&& \hspace{1cm}\left. \left. \frac{\ln(n)^3}{2}\cdot V_{p,n}(s)^{2/3} \leq 2E_{p,n}(s)-V_{p,n}(s)\leq \frac{\ln(n)^3}{(1-A)^{2/3}} \cdot V_{p,n}(s)^{2/3}~\Big\vert~\mathbf{F}_{p,n}(s)\right)\right]
		\end{eqnarray*}
where we used that $(1-A)n \leq V_{p,n}(s) \leq n$ for $s\leq \lfloor An \rfloor$ and recall that $\mathbf{F}_{p,n}$ is the filtration generated by $(V_{p,n},E_{p,n})$. Recall also that $A<1$. 	
Lemma \ref{lm:forest:super}, whose assumptions  are clearly satisfied here, implies that the conditional probability above is smaller than $c_1 (\ln(n))^{15/2} n^{2/3} e^{-c_2\ln(n)^2}$ for deterministic constants $c_1,c_2\in (0,\infty)$ (that depend on $A$) and all $n$ large enough, whatever $s \leq \lfloor An\rfloor$. All in all, we have shown that $$\mathbb P(A_n(s)) \leq e^{-c_3 \ln(n)^2}$$
for some $c_3\in (0,\infty)$ and all $n$ large enough, whatever $s \leq \lfloor An\rfloor$.

Finally, applying the union bound to (\ref{eq:subs:surcrit}) gives the result. 
	\end{proof}

	\subsection{The gel just beyond the critical window}\label{section:starting_pt}
	
	The aim of this section is to provide the following bounds for the gel size just beyond the critical window. This will allow us to implement the stochastic differential method in the next section. 
	
	\begin{proposition}\label{prop:starting_pt}
		For every $p\in (0,1]$, there are constants $c(p),C(p) \in (0,\infty)$ such that for $\varepsilon>0$ small enough
		$$\mathbb{P}\left(G_{p,n}\left(\left\lfloor\frac{n}{2}+\varepsilon n\right\rfloor\right) \in \big[(2+c(p))\varepsilon n~;~C(p)\varepsilon n \big]\right)~\underset{n\to \infty} \longrightarrow~1.  $$ 
	\end{proposition}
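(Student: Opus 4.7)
The target constants are motivated by Proposition \ref{prop:propfunctions}: since $g_p'(1/2^+) = 2(1+p)$, the fluid limit satisfies $g_p(1/2+\varepsilon) = 2(1+p)\varepsilon + O(\varepsilon^2)$, and the plan is to achieve $c(p) = p$ (any value in $(0,2p)$ works) and some absolute $C(p)$, independent of $p$.

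\textbf{Upper bound.} The first step is to invoke the coupling from Remark 3 of the Introduction: for every $p \in (0,1]$, $G_{p,n}(m) \leq G_{1,n}(m)$ stochastically, and $G_{1,n}(m)$ equals the number of vertices lying in cyclic components of the standard Erd\H{o}s--R\'enyi graph $\mathrm{ER}_n(m)$. Classical weakly supercritical Erd\H{o}s--R\'enyi estimates (e.g.\ \cite{pittel90,stepanov70}) then give $G_{1,n}(\lfloor n/2+\varepsilon n\rfloor)/n \to g_1(1/2+\varepsilon)$ in probability, with $g_1(1/2+\varepsilon) \leq 5\varepsilon$ for $\varepsilon$ small. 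Hence $G_{p,n}(\lfloor n/2+\varepsilon n\rfloor) \leq 5\varepsilon n$ with probability tending to one, yielding $C(p) = 5$.

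\textbf{Lower bound.} Write $m_0 = \lfloor n/2\rfloor$ and $m_\varepsilon = \lfloor n/2+\varepsilon n\rfloor$. By \eqref{bound1/2}, $G_{p,n}(m_0)$ and $D_{p,n}(m_0)$ are both $O_{\mathbb P}(n^{2/3}) = o_{\mathbb P}(\varepsilon n)$. Introduce the high-probability event $\mathcal E_n$ intersecting: Lemma \ref{lm:supercrit} on $[m_0, m_\varepsilon]$ (so $2E_{p,n}(m)-V_{p,n}(m) \leq (\ln n)^3 n^{2/3}$ throughout); the upper bound above ($G_{p,n}(m) \leq 5\varepsilon n$ on $[m_0, m_\varepsilon]$); and $G_{p,n}(m_0), D_{p,n}(m_0) \leq n^{3/4}$. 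On $\mathcal E_n$, the pair $(V_{p,n}(m), E_{p,n}(m))$ falls within the scope of parts 2 and 3 of Corollary \ref{corol:est_transitions}. Summing $k \cdot \mathbb{P}(\Delta G_{p,n}(m)=k \mid \mathbf{F}_{p,n}(m))$ using those asymptotics and identifying the resulting series through the tree generating function $T(x) = \sum_k k^{k-1} x^k/k!$, hence through the derivative of $f_p$ (Definition \ref{def:gel_function}), gives uniformly for $m \in [m_0, m_\varepsilon]$ on $\mathcal E_n$,
\[
\mathbb{E}[\Delta G_{p,n}(m) \mid \mathbf{F}_{p,n}(m)] \geq 2(1+p) - O(\varepsilon) - o_n(1).
\]
Since individual jumps of $G_{p,n}$ can reach the size of the largest tree (up to order $\varepsilon n$), Lemma \ref{lm:approx:esp} cannot be applied directly. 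The fix is a truncation: with $u_n = n^{1/2}/(\log n)^2$, let $G^{\leq}_{p,n}$ accumulate only jumps of size $\leq u_n$, so $G^{\leq}_{p,n} \leq G_{p,n}$. A parallel drift computation restricted to $k \leq u_n$ shows $\mathbb{E}[\Delta G^{\leq}_{p,n}(m) \mid \mathbf{F}_{p,n}(m)] \geq 2(1+p) - O(\varepsilon) - o_n(1)$; the sacrificed contribution from $k > u_n$ is negligible because the total probability of such a jump, estimated via Corollary \ref{corol:est_transitions} together with the tail behaviour of $T$, vanishes as $n \to \infty$. Lemma \ref{lm:approx:esp} applied to $G^{\leq}_{p,n}$ (with $u_n = o(n^{1/2})$) then yields $G^{\leq}_{p,n}(m_\varepsilon) \geq (2(1+p)-O(\varepsilon))\varepsilon n$ with probability tending to one, which establishes the lower bound with $c(p) = p$ for $\varepsilon$ sufficiently small.

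\textbf{Main obstacle.} The hardest step is extracting the exact constant $2(1+p)$ from the conditional jump asymptotics of Corollary \ref{corol:est_transitions}: one must split $[m_0, m_\varepsilon]$ according to whether $\Omega_{p,n}(m)$ is bounded (part 2, involving the stable density $p_1$) or tends to $+\infty$ (part 3), patch the two expressions, and then identify the resulting power-series sum in $k^{k-1}/(k!e^k)$ with $g_p'(1/2^+)$ through the tree generating function. A related bootstrapping subtlety is that the drift depends on $G_{p,n}(m)$ itself, which is controlled by using the already-established upper bound $G_{p,n}(m) \leq 5\varepsilon n$ on $\mathcal E_n$; the truncation-and-Azuma step is technical but routine once the drift estimate is in hand.
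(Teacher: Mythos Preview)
Your upper bound is correct and is exactly the paper's argument.

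The lower bound has a real gap. Your key claim, that the truncated conditional drift satisfies $\mathbb{E}[\Delta G^{\leq}_{p,n}(m)\mid\mathbf{F}_{p,n}(m)] \geq 2(1+p)-O(\varepsilon)-o_n(1)$ \emph{uniformly} on $[m_0,m_\varepsilon]$, fails near $m_0=\lfloor n/2\rfloor$. There the gel is only $G_{p,n}(m_0)=O_{\mathbb P}(n^{2/3})$ and the forest is near-critical, so parts 2--3 of Corollary~\ref{corol:est_transitions} apply; the relevant prefactor is $(k-1+2pG_{p,n}(m))/n$, and summing $k\cdot\mathbb{P}(\Delta G_{p,n}(m)=k\mid\mathbf F_{p,n}(m))$ over $k\leq u_n=o(n^{1/2})$ gives at most a constant times $u_n^{3/2}/n+(G_{p,n}(m)/n)\sqrt{u_n}$, which is $o(1)$ when $G_{p,n}(m)=O(n^{2/3})$. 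So near the critical window the truncated drift is $o(1)$, not $2(1+p)$. Your proposed fix for the bootstrapping---controlling the drift through the upper bound $G_{p,n}(m)\leq 5\varepsilon n$---goes the wrong way: in the critical and supercritical forest regimes the drift formulas need a \emph{lower} bound on $G_{p,n}(m)/n$ of order $\varepsilon$, not an upper bound.

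The paper resolves this with a two-stage argument. It does \emph{not} start at $n/2$ but at $\lfloor n/2+\tilde c(p)\varepsilon n\rfloor$ for a carefully chosen $\tilde c(p)=(2+c(p))/(2(1+p))\in(0,1)$. Lemma~\ref{lm:supercrit} (combined with the bound $D_{p,n}=O(\varepsilon^2 n)$ on the good event) first yields the crude lower bound $G_{p,n}(\lfloor n/2+\tilde c(p)\varepsilon n\rfloor)\gtrsim 2\tilde c(p)\varepsilon n$. Only once $G_{p,n}(m)/n$ is already of order $\varepsilon$ does the drift estimate $\geq 2(1+p)-O(\varepsilon)$ hold (Lemma~\ref{lm:min_esp}, which splits into three cases by forest regime and, in the near-critical and supercritical cases, exploits the divergence of $\sum k^{-1/2}$ together with the lower bound on $G_{p,n}(m)$ to overshoot any fixed constant). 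The final arithmetic $2\tilde c(p)+2(1+p)(1-\tilde c(p))>2+c(p)$ closes the argument, and it works only under the constraint $2+c(p)<2(1+p)^2/(1+2p)$, i.e.\ $c(p)<2p^2/(1+2p)$; your target $c(p)=p$ is out of reach by this route.
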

Observe with (\ref{eq:rel_E_V}) that this implies that the forest part of the graph at time $\lfloor (1/2 +\varepsilon) n \rfloor$ verifies with high probability
	\begin{align}
		2E_{p,n}\left(\left\lfloor\frac{n}{2}+\varepsilon n\right\rfloor\right)-V_{p,n}\left(\left\lfloor\frac{n}{2}+\varepsilon n\right\rfloor\right)&=2\varepsilon n-G_{p,n}\left(\left\lfloor\frac{n}{2}+\varepsilon n\right\rfloor\right)-2D_{p,n}\left(\left\lfloor\frac{n}{2}+\varepsilon n\right\rfloor\right)\notag\\
		&\leq -c(p)\varepsilon V_{p,n}\left(\left\lfloor\frac{n}{2}+\varepsilon n\right\rfloor\right)\notag
	\end{align}
which indicates that the forest is in the subcritical regime. Since the largest tree in a subcritical forest is "small" (Lemma \ref{lm:forest:sub}), so are the jumps of $G_{p,n}$. This will help us to verify the \emph{boundedness hypothesis} of the differential equation method (Theorem \ref{thmDEM}). 

The proof of Proposition \ref{prop:starting_pt} is quite involved and divided into several steps. The difficulty lies in proving the lower bound $(2+c(p))\varepsilon n$. For the upper bound, we can simply use a comparison with the standard  Erd\H{o}s-Rényi graph, which yields a constant $C(p)$ which could be any number larger than $4$. So $C(p)$ does not depend on $p$ with our approach (we let however the notation depends on $p$ since in principle one could obtain the statement with any constant larger than $2(1+p)$).

\subsubsection{Proof of the lower bound}

Since $p>0$ we can choose $c(p)>0$ and define $\tilde c(p)$ such that:
$$
2+c(p) <\frac{2(1+p)^2}{1+2p} \quad \text{ and } \quad \tilde{c}(p)=\frac{2+c(p)}{2(1+p)}~\in~(0,1). 
$$
Our goal is to show that for $\varepsilon\in \left(0,1/2\right)$ sufficiently small,  the probability of the event
$$A_n(\varepsilon):=\left\{G_{p,n}\left( \left\lfloor\frac{n}{2}+\varepsilon n\right\rfloor \right) < (2+c(p))\varepsilon n \right\}$$
converges to 0 as $n \rightarrow \infty$.

\textbf{Heuristics and preparatory work.} Our aim is to approximate  $G_{p,n}\left(\left\lfloor \frac{n}{2}+\varepsilon n\right\rfloor\right)$ by the sum of the conditional expectations of its jumps $\mathbb{E}\left[\Delta G_{p,n}(m)\vert \mathbf{F}_{p,n}(m)\right]$, using 
Lemma \ref{lm:approx:esp}, and to evaluate these expectations with the help of Corollary \ref{corol:est_transitions}. There are several obstacles on our way to implement this properly. We will therefore have to position ourselves a little beyond the exit of the critical window and consider only small jumps. Heuristically, we will use the approximation
\begin{equation*}
	G_{p,n}\left(\left\lfloor \frac{n}{2}+\varepsilon n\right\rfloor\right)\approx G_{p,n}\left(\left\lfloor \frac{n}{2}+\tilde{c}(p)\varepsilon n\right\rfloor\right)+\sum_{m=\lfloor \frac{n}{2}+\tilde{c}(p)\varepsilon n\rfloor}^{\left\lfloor \frac{n}{2}+\varepsilon n\right\rfloor-1} \mathbb{E}\left[\Delta G_{p,n}(m)\mathbbm{1}_{\{\Delta G_{p,n}(m)\leq n^{1/4}\}}\vert \mathbf{F}_{p,n}(m)\right]\label{eq:heurist_approx}
\end{equation*}
to get a lower bound for  $G_{p,n}\left(\left\lfloor \frac{n}{2}+\varepsilon n\right\rfloor\right)$, using Lemma \ref{lm:supercrit} to get a lower bound for $G_{p,n}\left(\left\lfloor \frac{n}{2}+\tilde{c}(p)\varepsilon n\right\rfloor\right)$ and Corollary \ref{corol:est_transitions} to estimate the conditional  expectations of the jumps. Even if Lemma \ref{lm:supercrit} only gives a crude lower bound, we will be able to show that the drift of the process over the time interval $\left[\frac{n}{2}+\tilde{c}(p)\varepsilon n,\frac{n}{2}+\varepsilon n\right]$ is large enough to compensate for this shortfall. We have chosen $\tilde{c}(p)$ in such a way that it is both sufficiently large to move away from the critical window, but also small enough so that the information about the drift over $\left[\frac{n}{2}+\tilde{c}(p)\varepsilon n,\frac{n}{2}+\varepsilon n\right]$ can provide the compensation. This will be detailed in the proof of Lemma \ref{lm:min_esp}, but we already observe here that 
\begin{equation}\label{eq:inclusion_An}
	A_n(\varepsilon)\subset \left\{\forall m\in \left[\frac{n}{2}+\tilde{c}(p)\varepsilon n,\frac{n}{2}+\varepsilon n   \right],\, G_{p,n}(m)\leq 2(1+p)\left(m-\frac{n}{2}\right)  \right\}.
\end{equation} 

Let us now introduce a series of events that all have a probability converging to 1 as $n\rightarrow \infty$ and on which it will be easier to work:  

\begin{enumerate}[leftmargin=0.5cm]
\item[$\bullet$]
$  B_n(\varepsilon)${\small{$\displaystyle ~=\left\{\left\vert \sum_{m=\left\lfloor \frac{n}{2}+\tilde{c}(p)\varepsilon n\right\rfloor}^{\left\lfloor\frac{n}{2}+\varepsilon n\right\rfloor-1}\Big(\Delta G_{p,n}(m)\mathbbm{1}_{\{\Delta G_{p,n}(m)\leq n^{1/4}\}}-\mathbb{E}\left[\Delta G_{p,n}(m)\mathbbm{1}_{\{\Delta G_{p,n}(m)\leq n^{1/4}\}}\vert \mathbf{F}_{p,n}(m)   \right]\Big) \right\vert \leq \varepsilon^2 n    \right\}$}}
\item[$\bullet$]  $ \displaystyle C_n(\varepsilon)=\left\{\left\vert \sum_{m=\left\lfloor \frac{n}{2}\right\rfloor }^{\left\lfloor\frac{n}{2}+\varepsilon n\right\rfloor-1}\Big(\Delta D_{p,n}(m)-\mathbb{E}\left[\Delta D_{p,n}(m)\vert \mathbf{F}_{p,n}(m)   \right]\Big) \right\vert \leq \varepsilon^2 n    \right\}\cap \left\{D_{p,n}\left(\frac{n}{2}\right)\leq \varepsilon^2 n \right\}$

\item[$\bullet$] $\displaystyle D_n(\varepsilon)=\left\{\forall m\in \left[\frac{n}{2},\frac{n}{2}+\varepsilon n\right],\, G_{p,n}(m)+2D_{p,n}(m)\geq 2m-n-\ln(n)^3n^{2/3} \right\}.$
\end{enumerate}
 
Lemma \ref{lm:approx:esp} and (\ref{bound1/2}) imply that $\mathbb P(B_n(\varepsilon))\rightarrow 1$, $\mathbb P(C_n(\varepsilon))\rightarrow 1$ as $n \rightarrow \infty$, while  Lemma \ref{lm:supercrit} and the identity (\ref{eq:rel_E_V}) imply that $\mathbb P(D_n(\varepsilon))\rightarrow 1$. We can therefore focus on $$\tilde{A}_n(\varepsilon):=A_n(\varepsilon)\cap B_n(\varepsilon)\cap C_n(\varepsilon)\cap D_n(\varepsilon)$$ to prove that  $\mathbb{P}\left(A_n(\varepsilon)\right) \rightarrow 0$ as $n \rightarrow \infty$.

We finish this preparatory part by a few remarks. First, since we have taken $\varepsilon$ in $\left(0,1/2\right)$, $V_{p,n}(m) \geq \left(1/2-\varepsilon\right)n$, $\forall m \in \left[0,\frac{n}{2}+\varepsilon n\right]$, which improves on $A_n(\varepsilon)$ in
\begin{equation}\label{eq:def_H_n}
V_{p,n}(m) \geq \left(1-(2+c(p))\varepsilon\right)n, \quad \forall m \in \left[0,\frac{n}{2}+\varepsilon n\right]. 
\end{equation} 
Next, since (by Proposition \ref{lm_transitions})
$$\mathbb{E}\left[\Delta D_{p,n}(m)\vert \mathbf{F}_{p,n}(m)\right]=2(1-p)\frac{G_{p,n}(m)\left(n-G_{p,n}(m)\right)}{n(n-1)}+\frac{G_{p,n}(m)\left(G_{p,n}(m)-1\right)}{n(n-1)}, \quad \forall m\geq 0,$$  
there exists $\mathrm{c_D} \in (0,\infty)$ such that 
\begin{equation}\label{eq:maj_D_{p,n}}
	D_{p,n}\left(\left\lfloor\frac{n}{2}+\varepsilon n\right\rfloor\right)\leq \mathrm{c_D} \varepsilon^2n \quad \text{ on } A_n(\varepsilon)\cap C_n(\varepsilon). 
\end{equation}
Then, using this upper bound (\ref{eq:maj_D_{p,n}}), we see that for $n$ large enough and all $m \geq \frac{n}{2}+\tilde{c}(p)\varepsilon n$,
\begin{equation}
\label{eq:min_G_{p,n}}
\frac{G_{p,n}(m)}{n}~\geq~2\tilde{c}(p)\varepsilon -(1+\mathrm{c_D})\varepsilon^2 \quad \text{ on } A_n(\varepsilon)\cap C_n(\varepsilon)\cap D_n(\varepsilon).
\end{equation}
Last, using the identity $E_{p,n}(m)=m-G_{p,n}(m)-D_{p,n}(m)$, we see that for $\varepsilon>0$ small enough, on the event $\tilde{A}_n(\varepsilon)$
\begin{equation}
\label{eq:borne_inf_edges}
	E_{p,n}(m)~\geq\frac{n}{2}+\tilde{c}(p)\varepsilon n-(2+c(p))\varepsilon n -\mathrm{c_D}\varepsilon^2 n~\geq~\left(\frac{1}{2}-\sqrt \varepsilon\right)n 
\end{equation}
for all $m\in \left[\frac{n}{2}+\tilde{c}(p)\varepsilon n, \frac{n}{2}+\varepsilon n  \right]$. 

\bigskip

We now turn to the proof of the following lower bound for the conditional expectations of $\Delta G_{p,n}$ on $\tilde{A}_n(\varepsilon)$, which will allows us to show that $\mathbb P(\tilde A_n(\varepsilon))\rightarrow 0$, and consequently $\mathbb P( A_n(\varepsilon))\rightarrow 0$, as $n \rightarrow \infty$, for sufficiently small $\varepsilon>0$.

\begin{lemma}\label{lm:min_esp}
	There exists $c \in (0,\infty)$ such that for $\varepsilon>0$ small enough, and then $n$ large enough and every $m\in\left[\frac{n}{2}+\tilde{c}(p)\varepsilon n,\frac{n}{2}+\varepsilon n\right]$,
	$$	\mathbb{E}\left[\Delta G_{p,n}(m)\mathbbm{1}_{\{\Delta G_{p,n}(m)\leq n^{1/4}\}}\vert \mathbf{F}_{p,n}(m)   \right]\mathbbm{1}_{\tilde{A}_n(\varepsilon)}\geq 
	\big(2(1+p)-{c}\varepsilon \big)\mathbbm{1}_{\tilde{A}_n(\varepsilon)}$$  \end{lemma}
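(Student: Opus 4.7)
The plan is to estimate $\mathbb{E}[\Delta G_{p,n}(m) \mathbbm{1}_{\{\Delta G_{p,n}(m) \leq n^{1/4}\}} \mid \mathbf{F}_{p,n}(m)]$ via Corollary \ref{corol:est_transitions}, after identifying the criticality of the forest part of $\mathrm{F}_{p,n}(m)$ on $\tilde A_n(\varepsilon)$. Setting $\alpha = G_{p,n}(m)/n$ and $s = m/n - 1/2 \in [\tilde c(p)\varepsilon, \varepsilon]$, the identity \eqref{eq:rel_E_V} together with \eqref{eq:maj_D_{p,n}} yields
\[
1 - 2\rho_m = \frac{\alpha - 2s + O(\varepsilon^2)}{1-\alpha}, \quad \text{where } \rho_m := \frac{E_{p,n}(m)}{V_{p,n}(m)}.
\]
Bounds \eqref{eq:min_G_{p,n}} and \eqref{eq:inclusion_An} constrain $\alpha \in [2\tilde c(p)\varepsilon - O(\varepsilon^2), 2(1+p)s]$, while $D_n(\varepsilon)$ ensures $2E_{p,n}(m) - V_{p,n}(m) \leq \ln(n)^3 n^{2/3}$. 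The appropriate case split is: \textbf{(a)} well-subcritical forest, $\alpha - 2s \geq \varepsilon^{3/2}$; \textbf{(b)} near-critical forest, $\alpha - 2s < \varepsilon^{3/2}$.

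In subcase \textbf{(a)}, I would invoke case~1 of Corollary \ref{corol:est_transitions} and split $k-1+2pG_{p,n}(m) = (k-1) + 2pG_{p,n}(m)$. The factor $\frac{k^{k-1}}{k!}(2\rho_m)^{k-1}e^{-2k\rho_m}$ is the Borel$(\mu = 2\rho_m)$ probability mass function, whose tails decay like $k^{-3/2}e^{-k(1-\mu)^2/2}$. Since $1-\mu \gtrsim \varepsilon^{3/2}$, the truncation at $n^{1/4}$ captures essentially all the mass and the Borel expectations $1/(1-\mu)$ and $\mu(2-\mu)/(1-\mu)^3$ are recovered. The $2pG_{p,n}(m)$ piece then sums to
\[
\frac{2p G_{p,n}(m)(n-G_{p,n}(m))}{n^2(1-\mu)}(1+o(1)) = \frac{2p\alpha(1-\alpha)^2}{\alpha - 2s + O(\varepsilon^2)}(1+o(1)),
\]
while the $(k-1)$ piece contributes $\frac{(1-\alpha)\mu(2-\mu)}{n(1-\mu)^3}(1+o(1)) = O(n^{-1}\varepsilon^{-9/2}) = o(1)$. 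Because $\alpha \mapsto \alpha(1-\alpha)^2/(\alpha - 2s)$ is decreasing on $(2s, 2(1+p)s]$, its minimum over the admissible $\alpha$ is attained at the upper boundary $\alpha = 2(1+p)s$, giving $2(1+p)(1-2(1+p)s)^2 \geq 2(1+p) - c\varepsilon$ for some $c = c(p) > 0$.

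In subcase \textbf{(b)}, $\Omega_{p,n}(m)$ stays in a bounded range, so the lower estimate \eqref{eq:crit_surcrit} applies and yields $\mathbb{P}(\Delta G_{p,n}(m) = k \mid \mathbf{F}_{p,n}(m)) \gtrsim k^{-3/2} G_{p,n}(m)/n$ for $1 \leq k \leq n^{1/4}$. Summing $k$ times this produces a truncated mean of order $\alpha \cdot n^{1/8} \geq \tilde c(p)\varepsilon \cdot n^{1/8}$, which trivially exceeds $2(1+p)$ for $n$ large. Combining the two subcases completes the proof. The main technical obstacle is uniformity across the transition $\alpha \downarrow 2s$: one must verify that the $o(1)$ error in Corollary \ref{corol:est_transitions}~(case~1) remains negligible down to $1-\mu \approx \varepsilon^{3/2}$, and that the heavy-tailed estimate of subcase (b) smoothly dominates $2(1+p)$ throughout the overlap region, so that the two regimes can be glued with a single constant $c(p)$.
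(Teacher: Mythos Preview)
Your case~(a) argument is correct and parallels the paper's Case~1; the minimization in $\alpha$ of $\alpha(1-\alpha)^2/(\alpha-2s)$ is a clean substitute for the paper's rewriting $G/(n-2m+G+2D) = 1 + (2m-n-2D)/(n-2m+G+2D)$.

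However, case~(b) contains a genuine gap: the claim that $\Omega_{p,n}(m)$ stays bounded is false. The condition $\alpha - 2s < \varepsilon^{3/2}$ only gives, via \eqref{eq:rel_E_V} and \eqref{eq:maj_D_{p,n}}, that $2E_{p,n}(m) - V_{p,n}(m) \geq -(\varepsilon^{3/2} + O(\varepsilon^2))\, n$, so $\Omega_{p,n}(m)$ can be of order $-\varepsilon^{3/2} n^{1/3}$, which tends to $-\infty$ for fixed $\varepsilon$ as $n\to\infty$. The bound \eqref{eq:crit_surcrit} requires $2E-V \geq -A\,V^{2/3}$ for a \emph{fixed} constant $A$, and hence does not apply in the band where $2E-V$ lies between roughly $-\varepsilon^{3/2} n$ and $-A\,n^{2/3}$. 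That band is also excluded from your case~(a), so this intermediate regime---where the forest is subcritical but only mildly so---is left uncovered by your split.

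The paper handles this with a three-way split: (1)~$2E-V \leq -2\delta V$, using the subcritical estimate and the Borel mean $1/(1-2\rho)$ as you do; (2)~$-2\delta V \leq 2E-V \leq -A V^{2/3}$, still using Corollary~\ref{corol:est_transitions}~1), but now only the crude fact that the partial sum $S_{N_0}(2\rho)$ can be made arbitrarily large when $\rho$ is close to $1/2$ (Lemma~\ref{lm:partial_sum}~2)), which combined with $G_{p,n}(m)/n \geq \tilde c(p)\varepsilon + O(\varepsilon^2)$ already pushes the conditional expectation above $2(1+p)$; (3)~$2E-V > -A V^{2/3}$, where \eqref{eq:crit_surcrit} applies as in your~(b). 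The constants $A$, $\delta$, $N_0$ are chosen after $\varepsilon$ is fixed. Your two-case argument can be repaired by inserting exactly this middle case.
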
 

\begin{proof}
Fix $\varepsilon \in (0,1/2)$ small enough such that the conclusion of (\ref{eq:borne_inf_edges}) holds on $\tilde{A}_n(\varepsilon)$.
	Corollary \ref{corol:est_transitions} and its consequence \eqref{eq:crit_surcrit} provide $A>0$ (depending on $\varepsilon$) such that for $n$ large enough (the threshold also depends on $\varepsilon$) and all $m\in \left[\frac{n}{2}+\tilde{c}(p)\varepsilon n, \frac{n}{2}+\varepsilon n  \right]$, on the event $\tilde{A}_n(\varepsilon)$:
	
	$\quad \bullet$	for $m\geq 1$  such that $2E_{p,n}(m)-V_{p,n}(m)\leq -AV_{p,n}(m)^{2/3}$ and every $k \in \{1,...,V_{p,n}(m)^{1/4}\} $ 
		\begin{equation}
			\mathbb{P}\left(\Delta G_{p,n}(m)=k\vert \mathbf{F}_{p,n}(m)\right)~\geq~(1-\varepsilon) \frac{k^{k-1}}{k!}\left(\frac{2E_{p,n}(m)}{V_{p,n}(m)}\right)^{k-1}\mathrm{e}^{-2k\frac{E_{p,n}(m)}{V_{p,n}(m)}}\frac{2pG_{p,n}(m)(n-G_{p,n}(m))}{n^2} \label{eq:min_probcdtion}
		\end{equation}
		
	$\quad \bullet$ for $m\geq 1$ such that $2E_{p,n}(m)-V_{p,n}(m)>-AV_{p,n}(m)^{2/3}$ and every $k \in \{1,...,V_{p,n}(m)^{1/4} \}$
		\begin{equation}\label{eq:min_prob_cdti_crit+}
			\mathbb{P}\left(\Delta G_{p,n}(m)=k\vert \mathbf{F}_{p,n}(m)\right)~\geq~\frac{c_A} {k^{3/2}} \cdot \frac{G_{p,n}(m)\left(V_{p,n}(m)-E_{p,n}(m)\right)}{n^2}
		\end{equation}
		where $c_A \in (0,\infty)$ only depends on $A$.

	Here we used that, on $\tilde{A}_n(\varepsilon)$, $E_{p,n}(m)$ and $V_{p,n}(m)$ are deterministically large as soon as $n$ is large, thanks to \eqref{eq:borne_inf_edges} and (\ref{eq:def_H_n}), and also that $V_{p,n}(m)-E_{p,n}(m)-1\geq (V_{p,n}(m)-E_{p,n}(m))/2$ for $n$ large enough, since $V_{p,n}(m)-E_{p,n}(m)=n-m+D_{p,n}(m)\geq (1/2-\varepsilon) n$ when $m \leq n/2+\varepsilon n$. 
	
We will need the existence, ensured by Lemma \ref{lm:partial_sum}, of $\delta>0$ and $N_0 \in \mathbb N$ (depending both on $\varepsilon$) such that: 
\begin{eqnarray}
	&& S_{N_0}(2z)=\sum_{k= 1}^{N_0}\frac{k^k}{k!}\left(2z\right)^{k-1}\mathrm{e}^{-2kz}~\geq~\frac{1-\varepsilon}{1-2z}, \quad \forall z\in \left[0,\frac{1}{2}-\delta\right] \label{eq:min_{F}_{p,n}_1}\\
	&&	2p\cdot (1-\varepsilon)\cdot \tilde{c}(p)\varepsilon \cdot (1-(2+c(p))\varepsilon)\cdot S_{N_0}(2z)~\geq~2(1+p), \quad \forall z\in \left[\frac{1}{2}-\delta,\frac{1}{2}\right] \label{eq:min_{F}_{p,n}_2}\\
	&&	\sum_{k=1}^{N_0}\frac{c_A}{\sqrt{k}} \left(\frac{1}{2}-\varepsilon\right)\tilde{c}(p)\varepsilon~\geq~2(1+p). \label{eq:min_E_crit_sucrit}
\end{eqnarray}
	
We then distinguish three cases to show the expected inequality of the statement for \linebreak $m\in\left[\frac{n}{2}+\tilde{c}(p)\varepsilon n,\frac{n}{2}+\varepsilon n\right]$:	

{\underline{Case 1}, when}  $2E_{p,n}(m)-V_{p,n}(m)\leq -2\delta V_{p,n}(m)$: using \eqref{eq:min_probcdtion} and \eqref{eq:min_{F}_{p,n}_1} we get that for $n$ large enough, on $\tilde{A}_n(\varepsilon)$,
		\begin{align*}
			\mathbb{E}\left[\Delta G_{p,n}(m)\mathbbm{1}_{\{\Delta G_{p,n}(m)\leq n^{1/4}\}}| \mathbf{F}_{p,n}(m)\right]
			&\geq \left(1-\varepsilon\right)^22p\frac{G_{p,n}(m)\left(n-G_{p,n}(m)\right)}{\left(1-2E_{p,n}(m)/V_{p,n}(m)\right)n^2}\\
			&=\left(1-\varepsilon\right)^22p\frac{G_{p,n}(m)\left(n-G_{p,n}(m)\right)^2}{\left(n-2m+G_{p,n}(m)+2D_{p,n}(m)\right)n^2}.
		\end{align*}
Observe that on $\tilde{A}_n(\varepsilon)$, by definition of $A_n(\varepsilon)$,
		$\left(1-G_{p,n}(m)/n\right)^2\geq \left(1-(2+c(p))\varepsilon\right)^2,$ 
		and then use  \eqref{eq:inclusion_An} and \eqref{eq:maj_D_{p,n}} to see that the inequality above implies
		\begin{align*}
			& \mathbb{E}\left[\Delta G_{p,n}(m)\mathbbm{1}_{\{\Delta G_{p,n}(m)\leq n^{1/4}\}}| \mathbf{F}_{p,n}(m)\right] \\
			& \quad \geq \left(1-\varepsilon\right)^2\left(1-(2+c(p))\varepsilon\right)^2 2p\left(1+\frac{2m-n-2D_{p,n}(m)}{n-2m+G_{p,n}(m)+2D_{p,n}(m)}\right) \\
		 	& \quad \geq \left(1-\varepsilon\right)^2\left(1-(2+c(p))\varepsilon\right)^2 2p\left(1+\frac{2m-n-2\mathrm{c_D}\varepsilon^2 n}{p(2m-n)+2\mathrm{c_D}\varepsilon^2n}\right).
		\end{align*}
		Since $m\geq n/2+\tilde{c}(p)\varepsilon n$, the desired inequality follows in this case for some well chosen constant ${c} \in (0,\infty)$, independent of $\varepsilon$ and $n$.

{\underline{Case 2}, when} $-2\delta V_{p,n}(m)\leq 2E_{p,n}(m)-V_{p,n}(m)\leq -AV_{p,n}(m)^{2/3}$: \eqref{eq:min_probcdtion} yields that for $n$ large enough, on $\tilde{A}_n(\varepsilon)$,  
		\begin{align*}
			\mathbb{E}\left[\Delta G_{p,n}(m)\mathbbm{1}_{\{\Delta G_{p,n}(m)\leq n^{1/4}\}}| \mathbf{F}_{p,n}(m)\right] &\geq (1-\varepsilon) 2p\frac{G_{p,n}(m)\left(n-G_{p,n}(m)\right)}{n^2}S_{N_0}\left(\frac{2E_{p,n}(m)}{V_{p,n}(m)}\right).
		\end{align*}
Assume now that $\varepsilon$ is small enough such that $(1+\mathrm c_D)\varepsilon \leq 1$. On $ \tilde{A}_n(\varepsilon)$, one then has $G_{p,n}(m)\geq \tilde{c}(p)\varepsilon n$	and $n-G_{p,n}(m)\geq \left(1-(2+c(p))\varepsilon\right)n$ by (\ref{eq:min_G_{p,n}}) and the definition of $A_n(\varepsilon)$ respectively (taking $n$ larger if necessary). The above inequality therefore leads, together with \eqref{eq:min_{F}_{p,n}_2}, to the lower bound
	\begin{align*}
	\mathbb{E}\left[\Delta G_{p,n}(m)\mathbbm{1}_{\{\Delta G_{p,n}(m)\leq n^{1/4}\}}| \mathbf{F}_{p,n}(m)\right]
	&\geq 2(1+p).
	\end{align*}

{\underline{Case 3}, when} $ 2E_{p,n}(m)-V_{p,n}(m) > -AV_{p,n}(m)^{2/3}$: \eqref{eq:min_prob_cdti_crit+} implies that for sufficiently large $n$, on $\tilde{A}_n(\varepsilon)$,
		\begin{align*}
			\mathbb{E}\left[\Delta G_{p,n}(m)\mathbbm{1}_{\{\Delta G_{p,n}(m)\leq n^{1/4}\}}| \mathbf{F}_{p,n}(m)\right] &\geq \sum_{k=1}^{N_0}\frac{c_A}{\sqrt{k}}\frac{G_{p,n}(m)(V_{p,n}(m)-E_{p,n}(m))}{n^2}	\\
			&\geq \sum_{k=1}^{N_0}\frac{c_A}{\sqrt{k}}\left(\frac{1}{2}-\varepsilon\right)\tilde{c}(p)\varepsilon
		\end{align*}
	where we used that for $m\in \left[\frac{n}{2}+\tilde{c}(p)\varepsilon n,\frac{n}{2}+\varepsilon n\right]$, on $\tilde{A}_n(\varepsilon)$, $G_{p,n}(m)\geq \tilde{c}(p)\varepsilon n$ by (\ref{eq:min_G_{p,n}}) and $V_{p,n}(m)-E_{p,n}(m)=n-m+D_{p,n}(m)\geq \left(1/2-\varepsilon\right)n$. We conclude using \eqref{eq:min_E_crit_sucrit}.
\end{proof}

\textbf{End of the proof of the lower bound of Proposition \ref{prop:starting_pt}.}  We take $\varepsilon>0$ small enough such that the conclusion of Lemma \ref{lm:min_esp} holds.
As announced in the heuristic introduction, we use that
\begin{align*}
		G_{p,n}\left(\left\lfloor\frac{n}{2}+\varepsilon n\right\rfloor\right)&\geq \left(G_{p,n}\left(\left\lfloor\frac{n}{2}+\tilde{c}(p)\varepsilon n\right\rfloor\right)+ \sum_{m=\left\lfloor\frac{n}{2}+\tilde{c}(p)\varepsilon n\right\rfloor}^{\left\lfloor\frac{n}{2}+\varepsilon n\right\rfloor-1}\Delta G_{p,n}(m)\mathbbm{1}_{\{\Delta G_{p,n}(m)\leq n^{1/4}\}} \right)
	\end{align*}
to get, with the definition of $B_n(\varepsilon)$ and (\ref{eq:min_G_{p,n}}), that on $\tilde{A}_n(\varepsilon)$
	\begin{align*}
		G_{p,n}\left(\left\lfloor\frac{n}{2}+\varepsilon n\right\rfloor\right) &\geq \left(2\tilde{c}(p)\varepsilon n +\sum_{m=\left\lfloor\frac{n}{2}+\tilde{c}(p)\varepsilon n\right\rfloor }^{\left\lfloor\frac{n}{2}+\varepsilon n\right\rfloor-1}\mathbb{E}\left[\Delta G_{p,n}(m)\mathbbm{1}_{\{\Delta G_{p,n}(m)\leq n^{1/4}\}}\vert \mathbf{F}_{p,n}(m)   \right]-(2+\mathrm{c_D})\varepsilon^2n    \right) \\
		&\geq  \left(2\tilde{c}(p)\varepsilon n +2(1+p)\left(1-\tilde{c}(p)\right)\varepsilon n -(2+\mathrm{c_D}+C)\varepsilon^2n    \right)
	\end{align*}
with $C \in (0,\infty)$, where we used Lemma \ref{lm:min_esp} to get the second inequality.
Since $G_{p,n}\left(\left\lfloor\frac{n}{2}+\varepsilon n\right\rfloor\right) \leq (2+c(p))\varepsilon n$ on $\tilde{A}_n(\varepsilon)$ by definition of ${A}_n(\varepsilon)$, this implies that 
	$$(2+c(p)) \geq  2\tilde{c}(p)  +2(1+p)\left(1-\tilde{c}(p)\right)  -(2+\mathrm{c_D}+C)\varepsilon.$$
Now, note that by choice and definition of $c(p),\tilde c(p)$, one has $(2+c(p))<2\tilde{c}(p)  +2(1+p)(1-\tilde{c}(p))$. The above reasoning therefore  implies that $\tilde{A}_n(\varepsilon)=\emptyset$ for $\varepsilon$ sufficiently small and all  $n$ large enough. So finally, since $\tilde{A}_n(\varepsilon)=A_n(\varepsilon)\cap B_n(\varepsilon)\cap C_n(\varepsilon)\cap D_n(\varepsilon)$ and the events $B_n(\varepsilon), C_n(\varepsilon), D_n(\varepsilon)$ all have a probability that converges to 1 as $n \rightarrow \infty$, we have indeed that $\mathbb P(A_n(\varepsilon)) \rightarrow 0$ for small values of $\varepsilon$.
  	
	\subsubsection{Proof of the upper bound}
	
The proof of the upper bound is easier as we can used a comparison with the standard Erd\H{o}s-Rényi model. Indeed, as already observed, for any $m$, $G_{p,n}(m)$ is stochastically smaller than $C_{\mathrm{ER},n}(m)$, the total number of vertices  involved at time $m$ in cyclic components of the Erd\H{o}s-Rényi random graph. For $t>1/2$, with high probability as $n \rightarrow \infty$,  the giant component of the Erd\H{o}s-Rényi graph at time $\lfloor nt\rfloor$ is a cyclic component and the number of vertices involved in \emph{other} cyclic components is bounded (see e.g. \cite[Theorem 6.11]{bollobas01}). Consequently, one has \begin{equation*}
	\frac{C_{\mathrm{ER},n}(\left(\lfloor nt\rfloor\right)}{n}~\overset{\mathbb{P}}\longrightarrow~g_{\mathrm{ER}}(t)=g_1(t).
\end{equation*}
Since $g_{\mathrm{ER}}(1/2)=0$ and $g_{\mathrm{ER}}'$ is bounded on $[1/2,\infty)$ (by $g_{\mathrm{ER}}'(1/2^+)=4$), one has $g_{\mathrm{ER}}(1/2+\varepsilon) < C \varepsilon$ for some finite $C>4$ and all $\varepsilon \in (0,1]$. The result follows.

This proof has the advantage of being concise, but relies on results on the standard model. Alternatively it is possible to set up a self-contained proof, based on a similar approach to that used for the lower bound, with the additional difficulty of controlling potential large jumps. 

\subsection{Differential equation method for the process beyond time $(1/2+\varepsilon)n$}
\label{sec:DEM_eps}
		
Throughout this section we let $\varepsilon>0$, $c(p),C(p)$ be such that Proposition \ref{prop:starting_pt} holds and keep a certain amount of flexibility with $\varepsilon$, allowing us to choose it arbitrarily small if necessary. Our aim is to prove, via Wormald's theorem  (Theorem \ref{thmDEM}), that the process $$ \left(\frac{G_{p,n}\left(\left\lfloor nt+\varepsilon n\right\rfloor\right)}{n},\frac{D_{p,n}\left(\left\lfloor nt+\varepsilon n\right\rfloor\right)}{n}\right)_{t\geq 1/2}$$ converges in probability to a fluid limit which will approximate, when $\varepsilon$ goes to $0$, the couple of functions $(g_p,d_p)$, solution to (\ref{eq:syst_EDO_0}).  The convergence of  $ \left(G_{p,n}\left(\left\lfloor t n\right\rfloor\right)/n,D_{p,n}\left(\left\lfloor t n\right\rfloor\right)/n\right)_{t\geq 1/2}$ to $(g_p,d_p)$ as settled in Theorem \ref{thm:fluid limit} will mainly follow by using the triangular inequality, see the next section. 

Here we will work on the event 
\begin{equation}\label{def:I_n(epsilon)}
I_n(\varepsilon)=\left\{(2+c(p))\varepsilon n\leq G_{p,n}\left(\left\lfloor\frac{n}{2}+\varepsilon n\right\rfloor\right) \leq C(p)\varepsilon n \right\}\cap \left\{\varepsilon^3n\leq D_{p,n}\left(\left\lfloor\frac{n}{2}+\varepsilon n\right\rfloor\right)\leq (2C(p)+1)\varepsilon^2 n \right\}
\end{equation}
whose probability tends to $1$ as $n$ goes to infinity (at least for $\varepsilon$ small enough) as a consequence of Proposition \ref{prop:starting_pt}, together with (\ref{bound1/2}), Proposition \ref{lm_transitions} and Lemma \ref{lm:approx:esp} (regarding the bounds on $D_n$, we proceed similarly as around (\ref{eq:maj_D_{p,n}})). We will see in Lemma \ref{cor:edo_aleat} that on this event, provided that $\varepsilon$ is sufficiently small, the equation (\ref{eq:syst_EDO}) defined in Section \ref{section:approximation}, has a unique solution starting at time $t=1/2$ from  $\left(G_{p,n}\left(\left\lfloor\frac{n}{2}+\varepsilon n\right\rfloor\right)/n,D_{p,n}\left(\left\lfloor\frac{n}{2}+\varepsilon n\right\rfloor\right)/n\right)$, for all $n \geq 1$, which legitimates the following definition. 
 
\begin{definition}\label{def:f_nd_n_epsilon}
	For $\varepsilon>0$ small enough and all $n\geq 1$, we define the (random) couple of  functions $\big(g_{p,n}^{(\varepsilon)}(t),d_{p,n}^{(\varepsilon)}(t)\big)_{t\geq 1/2}$ as:
	\vspace{-0.3cm}
	$$\begin{array}{ll}
		\bullet \text{ the solution to \eqref{eq:syst_EDO} starting from } \displaystyle\left(\frac{G_{p,n}\left(\left\lfloor\frac{n}{2}+\varepsilon n\right\rfloor\right)}{n}, \frac{D_{p,n}\left(\left\lfloor\frac{n}{2}+\varepsilon n\right\rfloor\right)}{n}\right) & \text{on }I_n(\varepsilon)\\
		\bullet ~\left(g_p\left(t+\varepsilon\right),d_p(t+\varepsilon)\right)_{t\geq 1/2} &\text{on } I_n(\varepsilon)^c.
	\end{array}$$
\end{definition}

With this definition, we have the following convergence.
\begin{proposition}\label{prop:DEM}
For $\varepsilon>0$ small enough, 	
$$\left(\left(\left\lvert \frac{G_{p,n}\left(\lfloor nt+\varepsilon n\rfloor\right)}{n}-g_{p,n}^{(\varepsilon)}(t) \right\vert, \left\lvert \frac{D_{p,n}\left(\lfloor nt+\varepsilon n\rfloor\right)}{n}-d_{p,n}^{(\varepsilon)}(t) \right\vert \right)\right)_{t\geq \frac{1}{2}}~\underset{n \rightarrow \infty }{\overset{\mathbb{P}}{\longrightarrow}}~0$$	
for the topology of uniform convergence on compacts.
\end{proposition}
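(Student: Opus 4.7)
The plan is to apply Wormald's differential equation method (Theorem \ref{thmDEM}) to the two-dimensional process $(G_{p,n}, D_{p,n})$ started at time $\lfloor n/2 + \varepsilon n\rfloor$, working throughout on the event $I_n(\varepsilon)$ whose probability tends to $1$. The decisive feature of this starting time is that, on $I_n(\varepsilon)$, the denominator $1 - 2(t+\varepsilon) + g + 2d$ appearing in $(\tilde E_{(\varepsilon)})$ is bounded below by $c(p)\varepsilon - O(\varepsilon^2) > 0$ at $t = 1/2$; equivalently, using (\ref{eq:rel_E_V}), the forest part is strictly subcritical with $2E_{p,n} - V_{p,n} \leq -c(p)\varepsilon n/2$. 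This subcriticality is precisely what makes $(\tilde E_{(\varepsilon)})$ non-singular and Lipschitz on the relevant domain and, simultaneously, what provides the small-jump estimates. Because Wormald's theorem accommodates random initial data, its output will be a comparison with the solution of $(\tilde E_{(\varepsilon)})$ launched from the \emph{observed} values of $G_{p,n}/n, D_{p,n}/n$ at $\lfloor n/2+\varepsilon n \rfloor$, which is exactly $(g_{p,n}^{(\varepsilon)}, d_{p,n}^{(\varepsilon)})$.

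Fix a horizon $T > 1/2$. First I would carve out a domain of the form
$$\mathcal D_\varepsilon = \Bigl\{ (t,x,y) \in [1/2,T]\times[0,1]^2 :\, 1 - 2(t+\varepsilon) + x + 2y \geq c_0,\; x \leq 1 - c_0 \Bigr\}$$
for some small $c_0 = c_0(\varepsilon,p,T) > 0$, on which both right-hand sides $f(t,x,y) = 2px(1-x)^2/(1-2(t+\varepsilon)+x+2y)$ and $\tilde f(t,x,y) = 2(1-p)x(1-x)+x^2$ of $(\tilde E_{(\varepsilon)})$ are smooth and Lipschitz with a constant depending only on $c_0$. Both the deterministic trajectory $(g_p(\cdot+\varepsilon), d_p(\cdot+\varepsilon))$ (thanks to Proposition \ref{prop:propfunctions}) and, on $I_n(\varepsilon)$, the random trajectory $(g_{p,n}^{(\varepsilon)}, d_{p,n}^{(\varepsilon)})$ remain strictly inside $\mathcal D_\varepsilon$ on $[1/2,T]$, by continuity of the flow in the initial data (Lemma \ref{lm:equadiff}(2) and its system analogue in Lemma \ref{lm:syst_edo}). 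Let $\tau_n$ be the first time $m$ at which $(m/n-\varepsilon, G_{p,n}(m)/n, D_{p,n}(m)/n)$ exits a slight thickening of $\mathcal D_\varepsilon$.

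The trend and boundedness hypotheses of Wormald's theorem are verified as follows. The exact formula of Proposition \ref{lm_transitions} gives $\mathbb{E}[\Delta D_{p,n}(m) \mid \mathbf F_{p,n}(m)] = \tilde f(m/n-\varepsilon, G_{p,n}(m)/n, D_{p,n}(m)/n) + O(1/n)$ and $|\Delta D_{p,n}| \leq 1$ deterministically. For $G_{p,n}$, Corollary \ref{corol:est_transitions}(1) is applicable since $\Omega_{p,n}(m) \to -\infty$ on $\mathcal D_\varepsilon$; combined with the Borel--Tanner identities $\sum_k k^k \mu^{k-1}e^{-k\mu}/k! = 1/(1-\mu)$ and $\sum_k k^k(k-1)\mu^{k-1}e^{-k\mu}/k! = \mu(2-\mu)/(1-\mu)^3$ from Appendix \ref{app:BT}, computing $\sum_k k\cdot\mathbb{P}(\Delta G_{p,n}=k \mid \mathbf F_{p,n}(m))$ yields
$$\mathbb{E}\bigl[\Delta G_{p,n}(m) \mathbbm{1}_{\{\Delta G_{p,n}(m) \leq \ln(n)^2\}} \mid \mathbf F_{p,n}(m)\bigr] = f\bigl(m/n-\varepsilon, G_{p,n}(m)/n, D_{p,n}(m)/n\bigr) + o(1)$$
uniformly on $\mathcal D_\varepsilon$. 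The tail $k > \ln(n)^2$ is super-polynomially small thanks to Lemma \ref{lm:forest:sub} applied in the subcritical regime, which also guarantees $|\Delta G_{p,n}(m)| \leq \ln(n)^2$ pathwise with high probability up to $\tau_n$. Wormald's theorem then provides the claimed uniform convergence on $[\lfloor n/2+\varepsilon n\rfloor, \tau_n \wedge \lfloor n(T+\varepsilon)\rfloor]$; because the limiting ODE trajectory lies strictly in the interior of $\mathcal D_\varepsilon$ on $[1/2,T]$, a standard bootstrap shows $\tau_n \geq \lfloor n(T+\varepsilon)\rfloor$ with high probability, so the convergence extends to all of $[1/2,T]$. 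Arbitrariness of $T$ then gives uniform convergence on compacts.

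The main technical obstacle is precisely the near-singularity of $(\tilde E_{(\varepsilon)})$: the Lipschitz constant of $f$ on $\mathcal D_\varepsilon$ behaves like $1/c_0$ and degenerates as $\varepsilon \downarrow 0$, so all Wormald error estimates depend quantitatively on $\varepsilon$. This is why the statement must be formulated at fixed $\varepsilon > 0$ (with the limit $\varepsilon \to 0$ performed separately in Section \ref{sec:proofFL} through the monotonicity and stability estimates of Lemma \ref{lm:equadiff}(2)--(3)). A secondary care is that Wormald's theorem is invoked with \emph{random} initial data: this is handled by conditioning on the initial configuration and using that on $I_n(\varepsilon)$ the admissible initial values lie in the fixed compact rectangle $[(2+c(p))\varepsilon, C(p)\varepsilon] \times [\varepsilon^3, (2C(p)+1)\varepsilon^2]$, over which the constants appearing in Wormald's hypotheses are uniform.
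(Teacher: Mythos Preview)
Your proposal is correct and follows essentially the same route as the paper: both apply Wormald's theorem to the shifted process on the high-probability event $I_n(\varepsilon)$, using Corollary \ref{corol:est_transitions}(1) together with the Borel--Tanner mean identity for the trend of $G_{p,n}$, Proposition \ref{lm_transitions} for the trend of $D_{p,n}$, and Lemma \ref{lm:forest:sub} for the boundedness hypothesis, after checking that the random ODE trajectory $(g_{p,n}^{(\varepsilon)},d_{p,n}^{(\varepsilon)})$ stays in a compact subset of a Lipschitz domain (your $\mathcal D_\varepsilon$, the paper's $D_{\varepsilon,A}$). The only cosmetic differences are that the paper takes $\beta(n)=n^{1/4}$ rather than $\ln(n)^2$, and handles $I_n(\varepsilon)^c$ by defining a modified process equal to the deterministic target there rather than by conditioning; your domain should also carry a lower bound on $(t+\varepsilon)-x-y$ (equivalently on $E_{p,n}/n$), as the paper does via the left inequality in Definition \ref{def:domaine}, to ensure the hypothesis $E_{p,n}(m)\to\infty$ of Corollary \ref{corol:est_transitions}(1) holds uniformly.
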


The proof of Proposition \ref{prop:DEM} consists in verifying the different hypotheses of Theorem \ref{thmDEM}, which is done in Section \ref{proof:Prop46}. To achieve this, we start by setting up some preliminary steps.

\subsubsection{Preliminaries}

\textbf{Definition of the domain.} To start with, we need to introduce a domain where the target functions are confined and within which the increments of the process are well approximated by the derivatives of the target functions. Technically, to proceed directly with Theorem \ref{thmDEM}, it is simpler to work with processes starting from 0, so we consider the shifted process defined for $t \geq 0$ by
\begin{equation*}\label{def:F_n_tilde}
	\big(\tilde{g}_{p,n}^{(\varepsilon)}(t),\tilde{d}_{p,n}^{(\varepsilon)}(t)\big)=\big(g_{p,n}^{(\varepsilon)}(t+1/2),d_{p,n}^{(\varepsilon)}\left(t+1/2\right)\big).
\end{equation*} 

The following lemma lays the foundations.

\begin{lemma}\label{cor:edo_aleat} For $\varepsilon>0$ sufficiently small:
\begin{enumerate}[topsep=0cm]
\item[\emph{1)}] For all $n\geq 1$, there exists on $I_n(\varepsilon)$ a unique solution to \eqref{eq:syst_EDO} starting at time $t=1/2$ from $\left(G_{p,n}\left(\left\lfloor\frac{n}{2}+\varepsilon n\right\rfloor\right)/n, D_{p,n}\left(\left\lfloor\frac{n}{2}+\varepsilon n\right\rfloor\right)/n\right)$, so the couples $\big(g_{p,n}^{(\varepsilon)},d_{p,n}^{(\varepsilon)}\big)$ of Definition \ref{def:f_nd_n_epsilon} and its shifted version $\big(\tilde{g}_{p,n}^{(\varepsilon)},\tilde{d}_{p,n}^{(\varepsilon)}\big)$ are indeed well-defined.
\item[\emph{2)}] There exists a deterministic constant $\bar \kappa_{\varepsilon}>0$  and, for all $A>0$, a deterministic constant $\bar K_{\varepsilon,A}\in \left(C(p)\varepsilon,1\right)$ such that simultaneously for all $t\in \left[0,A+1\right]$ and all $n\geq 1$: 

 \emph{(a)} \hspace{0.1cm} $\tilde{g}_{p,n}^{(\varepsilon)}(t) < \bar K_{\varepsilon,A}$
 
 \emph{(b)} \hspace{0.1cm} $\displaystyle \frac{1-\bar K_{\varepsilon,A}}{2}< \frac{t+1/2+\varepsilon-\tilde{g}_{p,n}^{(\varepsilon)}(t)-\tilde{d}_{p,n}^{(\varepsilon)}(t)}{1-\tilde{g}_{p,n}^{(\varepsilon)}(t)}< \frac{1}{2}-\bar \kappa_{\varepsilon}.$
\end{enumerate}
\end{lemma}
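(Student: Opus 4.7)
The overall strategy is to reduce the two-dimensional equation $(\tilde E_{(\varepsilon)})$ to the one-dimensional equation $(E_{(\delta)})$ via Lemma~\ref{lm:syst_edo}~1), and then to apply the monotonicity and the explicit quantitative bounds on solutions of $(E_{(\delta)})$ established in Lemma~\ref{lm:equadiff}.

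For point 1, set $a_n := G_{p,n}(\lfloor n/2 + \varepsilon n\rfloor)/n$ and $b_n := D_{p,n}(\lfloor n/2 + \varepsilon n\rfloor)/n$. On $I_n(\varepsilon)$, $(a_n, b_n)$ lies in the compact rectangle $[(2+c(p))\varepsilon, C(p)\varepsilon] \times [\varepsilon^3, (2C(p)+1)\varepsilon^2]$, on which a direct computation gives $\delta_n := \delta_{(a_n,b_n)}(\varepsilon) = \varepsilon + O(\varepsilon^2)$ uniformly. The hypothesis of Lemma~\ref{lm:syst_edo}~2) rearranges to $a_n > 2\delta_n(1 - a_n)$; using $a_n \geq (2+c(p))\varepsilon$ and $2\delta_n(1 - a_n) = 2\varepsilon + O(\varepsilon^2)$, this holds uniformly once $\varepsilon$ is small. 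Lemma~\ref{lm:syst_edo}~2) then yields existence and uniqueness of the solution on $I_n(\varepsilon)$, so $(g_{p,n}^{(\varepsilon)}, d_{p,n}^{(\varepsilon)})$ and its shift are well defined. On $I_n(\varepsilon)^c$ nothing has to be checked since the functions are prescribed directly.

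For point 2(a), by Lemma~\ref{lm:syst_edo}~1), $g_{p,n}^{(\varepsilon)}$ coincides with the solution $g_p^{(\delta_n, a_n)}$ of $(E_{(\delta_n)})$ starting from $a_n$. The monotonicities in $a$ and $\delta$ from Lemma~\ref{lm:equadiff}~2)--3) dominate this by the deterministic function $g_p^{(\delta_{\max}, C(p)\varepsilon)}$, where $\delta_{\max}$ is the supremum of $\delta_n$ on $I_n(\varepsilon)$. This dominating function is continuous on $[1/2, \infty)$ with limit $1$ at infinity, hence strictly less than $1$ at $t = A + 3/2$; the same holds for $g_p(\cdot + \varepsilon)$ governing the complementary event. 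Taking $\bar K_{\varepsilon, A}$ slightly greater than the maximum of the two values at $t = A + 3/2$ (yet $< 1$ and $> C(p)\varepsilon$) delivers (a).

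The main obstacle is the uniform upper bound in (b). The identity derived in the proof of Lemma~\ref{lm:syst_edo}~1) translates, after shifting, into
$$\frac{t + 1/2 + \varepsilon - \tilde g_{p,n}^{(\varepsilon)}(t) - \tilde d_{p,n}^{(\varepsilon)}(t)}{1 - \tilde g_{p,n}^{(\varepsilon)}(t)} = (t + 1/2 + \delta_n)\big(1 - \tilde g_{p,n}^{(\varepsilon)}(t)\big),$$
from which the lower bound $(1 - \bar K_{\varepsilon, A})/2$ is immediate (since $\delta_n > 0$ and $\tilde g_{p,n}^{(\varepsilon)}(t) < \bar K_{\varepsilon, A}$ strictly, by (a)). The upper bound $1/2 - \bar \kappa_\varepsilon$ must however hold for \emph{all} $t \geq 0$, and is delicate because $t + 1/2 + \delta_n$ grows while $1 - \tilde g_{p,n}^{(\varepsilon)}(t)$ vanishes. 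To obtain it I would revisit the proof of Lemma~\ref{lm:equadiff}~1), which establishes the pointwise estimate
$$\big(f_p^{(\delta, a)}(t) + \delta\big)(1 - t) \leq \frac{1}{2} + \frac{h(t; a, \delta)}{t^{1/p}}, \quad t \in [a, 1],$$
with $h$ the explicit function appearing there, strictly decreasing in $t$ and satisfying $h(a; a, \delta) = a^{1/p}(\delta - a(\delta + 1/2)) < 0$ under our standing condition $a > 2\delta/(1+2\delta)$. Hence $h < 0$ on $[a, 1]$, and the continuous map $(t, a, \delta) \mapsto -h(t; a, \delta)/t^{1/p}$ admits a strictly positive infimum $\bar \kappa_\varepsilon$ over the compact range of parameters (depending only on $\varepsilon$). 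Substituting $t = g_p^{(\delta, a)}(s)$ then returns the desired bound $(s + \delta_n)(1 - g_{p,n}^{(\varepsilon)}(s)) \leq 1/2 - \bar \kappa_\varepsilon$ for every $s \geq 1/2$; the complementary event is handled identically with $(a, \delta) = (g_p(1/2 + \varepsilon), \varepsilon)$, which satisfies the same smallness condition for $\varepsilon$ small since $g_p(1/2 + \varepsilon) = 2(1+p)\varepsilon + O(\varepsilon^2)$.
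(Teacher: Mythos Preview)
Your proof is correct, and for parts 1), 2(a), and the identity plus lower bound in 2(b) it is essentially identical to the paper's argument (same reduction to $(E_{(\delta_n)})$ via Lemma~\ref{lm:syst_edo}, same domination by a deterministic solution via the monotonicities of Lemma~\ref{lm:equadiff}).

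The one genuine difference is in the \emph{upper bound} of 2(b). The paper argues dynamically: it checks that $\tilde r_{p,n}^{(\varepsilon)}(0)\le \tfrac12-\tfrac{c(p)\varepsilon}{2}$, computes $\big(\tilde r_{p,n}^{(\varepsilon)}\big)'$, and shows by contradiction that the stopping time $T=\inf\{t\ge 0:\tilde r_{p,n}^{(\varepsilon)}(t)\ge \tfrac12-\varepsilon^2\}$ is infinite, yielding $\bar\kappa_\varepsilon=\varepsilon^2$. You instead recycle the explicit inequality $(f_p^{(\delta,a)}(t)+\delta)(1-t)\le \tfrac12+h(t;a,\delta)/t^{1/p}$ from the proof of Lemma~\ref{lm:equadiff}~1), observe that $h$ is strictly negative on $[a,1]$ under the standing condition, and extract $\bar\kappa_\varepsilon$ as the infimum of $-h/t^{1/p}$ over the compact parameter set; substituting $t=g_p^{(\delta,a)}(s)$ then gives the bound. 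This is a perfectly valid alternative: it is more economical in that it reuses a computation already performed, and it yields the bound uniformly in $t\ge 0$ without any ODE analysis. The paper's route, on the other hand, is more self-contained (it does not require reopening the proof of Lemma~\ref{lm:equadiff}) and produces an explicit value $\bar\kappa_\varepsilon=\varepsilon^2$, whereas your compactness argument gives only an abstract positive constant of order $\varepsilon^{1+1/p}$.
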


\begin{proof} We use Lemma \ref{lm:syst_edo}. In that aim, introduce
	$$a_n(\varepsilon)=\frac{G_{p,n}\left(\left\lfloor\frac{n}{2}+\varepsilon n\right\rfloor\right)}{n}, \,\,\, b_n(\varepsilon)=\frac{D_{p,n}\left(\left\lfloor\frac{n}{2}+\varepsilon n\right\rfloor\right)}{n} \,\,\, \text{ and }\,  \delta_{n}(\varepsilon)=\frac{\varepsilon-b_n(\varepsilon)-a_n(\varepsilon)^2/2}{\left(1-a_n(\varepsilon)\right)^2}.$$ 
	
1) By definition of $I_n(\varepsilon)$, it is clear that for $\varepsilon>0$ small enough, $0<\delta_{n}(\varepsilon)<a_n(\varepsilon)/2$ on $I_n(\varepsilon).$ The existence and uniqueness of a solution to \eqref{eq:syst_EDO} follows from Lemma \ref{lm:syst_edo} 2). Moreover Lemma \ref{lm:syst_edo} 1) says that $g_{p,n}^{(\varepsilon)}$ is the unique solution to $(E_{(\delta_n(\varepsilon))})$ (this equation is defined, as  (\ref{eq:syst_EDO}), in Section \ref{section:approximation}) starting from $a_n(\varepsilon)$, which will be useful below.

2) First observe that for any $\varepsilon>0$, the couple $(g_p(\cdot+1/2+\varepsilon),d_p(\cdot+1/2+\varepsilon))$ verifies the desired inequalities $(a)$ and $(b)$ for well-chosen constants $\bar \kappa^{0}_{\varepsilon}>0$, $\bar K^{0}_{\varepsilon,A} \in (0,1)$, see Proposition \ref{prop:propfunctions} for details. 

Then observe that for $\varepsilon>0$ small enough, $\delta_n(\varepsilon)< 2\varepsilon$ on $I_n(\varepsilon)$. Consequently, if $\hat{g}_{p,(\varepsilon)}$ denotes the (well-defined) unique solution to $(E_{(2\varepsilon)})$ starting from $C(p)\varepsilon$, since moreover, still on $I_n(\varepsilon)$, ${g}_{p,n}^{(\varepsilon)}$ is the solution to $(E_{(\delta_n(\varepsilon))})$ starting from $a_n(\varepsilon)\leq C(p)\varepsilon$, we have by Lemma \ref{lm:equadiff} that $\tilde{g}_{p,n}^{(\varepsilon)}(t)=g_{p,n}^{(\varepsilon)}(t+1/2)\leq g_{p,n}^{(\varepsilon)}(A+3/2) \leq \hat{g}_{p,(\varepsilon)}(A+3/2)$ for all $t\in \left[0,A+1\right]$.
This yields the upper bound (a) with $\bar K_{\varepsilon,A}=\max(\bar K^{0}_{\varepsilon,A},\hat{g}_{p,(\varepsilon)}(A+1)) \in (0,1)$.

Next, working again on $I_n(\varepsilon)$, we have seen in the proof of Lemma \ref{lm:syst_edo} (up to a time shift of $1/2$) that for every $t\geq 0$
\begin{equation*}\label{eq:r(t)}
	\frac{t+1/2+\varepsilon-\tilde{g}_{p,n}^{(\varepsilon)}(t)-\tilde{d}_{p,n}^{(\varepsilon)}(t)}{1-\tilde{g}_{p,n}^{(\varepsilon)}(t)}= \left(t+1/2+\delta_n(\varepsilon)\right)\big(1-\tilde{g}_{p,n}^{(\varepsilon)}(t)\big)=:\tilde{r}_{p,n}^{(\varepsilon)}(t),
\end{equation*}
leading to the lower bound of $(b)$. Moreover, observe that $\tilde{r}_{p,n}^{(\varepsilon)}$ verifies 
$$\tilde{r}_{p,n}^{(\varepsilon)}(0)\leq \frac{1}{2}-\frac{c(p)\varepsilon}{2}\quad\, \text{and } \quad \left(\tilde{r}_{p,n}^{(\varepsilon)}\right)'(t)=\left(1-\tilde{g}_{p,n}^{(\varepsilon)}(t)\right)\left(1-(t+1/2+\delta_n(\varepsilon))\frac{2p\tilde{g}_{p,n}^{(\varepsilon)}(t)}{1-2\tilde{r}_{p,n}^{(\varepsilon)}(t)}\right), \, t> 0.$$
For $\varepsilon>0$ small enough, by continuity of $\tilde{r}_{p,n}^{(\varepsilon)}$, the time $T:=\inf\big\{t\geq 0:\tilde{r}_{p,n}^{(\varepsilon)}(t)\geq 1/2-\varepsilon^2 \big\}$ (with the usual convention $\inf\{\emptyset\}=\infty$) is strictly positive. It $T$ were finite, there would exist $\eta \in (0,T)$ such that for every $t\in [T-\eta,T]$, $\tilde{r}_{p,n}^{(\varepsilon)}(t)\geq 1/2-2\varepsilon^2$. As $\tilde{g}_{p,n}^{(\varepsilon)}$ is increasing and verifies $\tilde{g}_{p,n}^{(\varepsilon)}(0)\geq (2+c(p))\varepsilon$, it is easy to see that for $\varepsilon>0$ small enough, this would lead to $\big(\tilde{r}_{p,n}^{(\varepsilon)}\big)'(t)\leq 0$ for any $t\in [T-\eta,T]$, which would contradict the definition of $T$. Thus, $T=\infty$ on $I_n(\varepsilon)$ and the claim follows with $\bar \kappa_{\varepsilon}=\min(\bar \kappa^{0}_{\varepsilon},\varepsilon^2)$. 
\end{proof}

Now set
\begin{equation*}
	K_{\varepsilon,A}=\frac{1+\bar K_{\varepsilon,A}}{2}  \quad \text{ and }\quad \kappa_{\varepsilon}=\frac{\bar \kappa_{\varepsilon}}{2},
\end{equation*}
and consider the following domain.

\begin{definition}\label{def:domaine}
	For $\varepsilon>0$ small enough and $A>0$, 
	$$	D_{\varepsilon,A}=\left\{(t,g,d)\in \left(-\varepsilon, A+1\right)\times(0,K_{\varepsilon,A})\times (0,A+1)~:~\frac{1-K_{\varepsilon,A}}{2}<\frac{t+1/2+\varepsilon-d-g}{1-g}<\frac{1}{2}-\kappa_{\varepsilon} \right\}.$$

Additionally, define  for $(t,g,d)\in D_{\varepsilon,A}$,
$$F_{\varepsilon}\left(t,g,d\right)=\frac{2pg(1-g)^2}{1-2(t+1/2+\varepsilon)+g+2d}.$$
\end{definition}

\bigskip

\textbf{Some consequences.} This way, the process $\big(\big(t,\tilde{g}_{p,n}^{(\varepsilon)}(t),\tilde{d}_{p,n}^{(\varepsilon)}(t)\big):t\in \left[0,A\right]\big)$ is confined in a compact subset of $D_{\varepsilon,A}$, according to Lemma \ref{cor:edo_aleat} and the definition of the event $I_n(\varepsilon)$ (which gives strictly positive lower bounds for the processes $\tilde{g}_{p,n}^{(\varepsilon)},\tilde{d}_{p,n}^{(\varepsilon)}$).
The function $F_{\varepsilon}$ is chosen so that $$g'(t+1/2)=F_{\varepsilon}\left(t,g(t+1/2),d(t+1/2)\right),\quad t>0$$ for any couple of functions $(g,d)$ solution to (\ref{eq:syst_EDO}). Moreover, observe that for all $(t,g,d)\in D_{\varepsilon,A}$ the denominator in $F_{\varepsilon}(t,g,d)$ belongs to the interval $\left[(1-K_{\varepsilon,A})2 \kappa_{\varepsilon},K_{\varepsilon,A}\right] \subset (0,1)$,  so that $F_{\varepsilon}$ is well-defined, bounded and Lipschitz continuous on $D_{\varepsilon,A}$. 

Let us now define, for $\varepsilon>0$, the time-translated processes:
\begin{equation*}
\begin{array}{ll}
G_{p,n}^{(\varepsilon)}(m) =G_{p,n}(\lfloor m+(1/2+\varepsilon) n\rfloor) & \qquad D_{p,n}^{(\varepsilon)}(m)=D_{p,n}(\lfloor m+(1/2+\varepsilon) n\rfloor) \\
V_{p,n}^{(\varepsilon)}(m)=V_{p,n}(\lfloor m+(1/2+\varepsilon) n\rfloor) & \qquad E_{p,n}^{(\varepsilon)}(m)=E_{p,n}(\lfloor m+(1/2+\varepsilon) n\rfloor)
\end{array}
\qquad m \geq 0
\end{equation*}
and $\big({\mathbf{F}}^{(\varepsilon)}_{p,n}(m)\big)_{m\in \mathbb Z_+}$ be the associated filtration.
Consider their modifications, 
	\begin{equation*}\label{def:X_tilde}
		\Big(\tilde{G}^{(\varepsilon)}_{p,n}(m),\tilde{D}^{(\varepsilon)}_{p,n}(m)\Big):=
		\left\{
		\begin{array}{ll}
		\left(G_{p,n}(\lfloor m+(1/2+\varepsilon) n\rfloor),D_{p,n}(\lfloor m+(1/2+\varepsilon) n\rfloor)\right) & \text{ on } I_n(\varepsilon)\\
			n\cdot\left(g_p\left(\frac{m}{n}+1/2+\varepsilon\right),d_p\left(\frac{m}{n}+1/2+\varepsilon\right)\right)&\text{ on } I_n(\varepsilon)^c,
		\end{array}
		\right.
	\end{equation*} 
as well as $\tilde{V}^{(\varepsilon)}_{p,n}(m)=n-\tilde{G}^{(\varepsilon)}_{p,n}(m)$, $\tilde{E}^{(\varepsilon)}_{p,n}(m)= \left\lfloor m+(1/2+\varepsilon)n \right\rfloor -\tilde{G}^{(\varepsilon)}_{p,n}(m)-\tilde{D}^{(\varepsilon)}_{p,n}(m)$. Note that these processes are adapted to the filtration ${\mathbf{F}}^{(\varepsilon)}_{p,n}$. These modifications eliminates potential initialization issues since, as already observed, Lemma \ref{cor:edo_aleat} and the definition of $D_{\varepsilon,A}$ imply that $$\left(0, \frac{\tilde{G}^{(\varepsilon)}_{p,n}(0)}{n},\frac{\tilde{D}^{(\varepsilon)}_{p,n}(0)}{n}\right)\in D_{\varepsilon,A}.$$
Consider then the exit time 
\begin{equation}
\label{def:exittime} 
H_{D_{\varepsilon,A}}:=\inf\left\{m\geq 0:\, \left(\frac{m}{n},\frac{\tilde{G}^{(\varepsilon)}_{p,n}(m)}{n},\frac{\tilde{D}^{(\varepsilon)}_{p,n}(m)}{n}\right)\notin D_{\varepsilon,A}  \right\},
\end{equation}
which is a stopping-time with respect to the filtration ${\mathbf F}^{(\varepsilon)}_{p,n}$.
Note the following consequences of the definition of $D_{\varepsilon,A}$: for every $m \in \big[0 ,H_{D_{\varepsilon,A}} \big)$ 
\begin{equation}\label{eq:A1}
	\tilde{V}^{(\varepsilon)}_{p,n}(m)> \left(1-K_{\varepsilon,A}\right)n \tag{A1}
\end{equation}
and for $n$ small enough (depending on $\varepsilon$ an $A$)
\begin{equation}\label{eq:A2}
	\frac{1-{K}_{\varepsilon,A}}{3}<\frac{\tilde{E}^{(\varepsilon)}_{p,n}(m)}{\tilde{V}^{(\varepsilon)}_{p,n}(m)}<\frac{1}{2}-\kappa_{\varepsilon},\tag{A2}
\end{equation}
which implies that the forest part of $\tilde{G}^{(\varepsilon)}_{p,n}(m)$ is subcritical as long as $m\in \big[0,H_{D_{\varepsilon,A}}\big)$, and more precisely that
\begin{equation}\label{eq:A4}
\tilde \Omega_{p,n}^{(\varepsilon)}(m):=\frac{2\tilde E_{p,n}^{(\varepsilon)}(m)-\tilde V_{p,n}^{(\varepsilon)}(m)}{\tilde V_{p,n}^{(\varepsilon)}(m)^{2/3}} \leq -2\kappa_{\varepsilon}(1-K_{\varepsilon,A})^{1/3} \cdot n^{1/3}.\tag{A3}
\end{equation} 

Combining \eqref{eq:A1} with the lower bound of \eqref{eq:A2} also yields for $n$ small enough and any $m\in \big[0,H_{D_{\varepsilon,A}}\big)$,
\begin{equation}\label{eq:A3}
	\tilde{E}^{(\varepsilon)}_{p,n}(m)\geq \frac{(1-K_{\varepsilon,A})^2}{3} \cdot n.\tag{A4}
\end{equation} 

\bigskip

\textbf{Approximation of the conditional jumps.} We have now the material to prove the following lemma, which is a key point of the proof of Proposition \ref{prop:DEM}. 

	\begin{lemma}\label{lm:trend:hyp}
		As $n\to \infty$, uniformly over $m\in \big[0,H_{D_{\varepsilon,A}}\big)$, one has
		$$\left\vert\mathbb{E}\left[\Delta \tilde{G}^{(\varepsilon)}_{p,n}(m) | {\mathbf{F}}^{(\varepsilon)}_{p,n}(m)  \right] -
		F_{\varepsilon}\left(\frac{m}{n},\frac{\tilde{G}^{(\varepsilon)}_{p,n}(m)}{n},\frac{\tilde{D}^{(\varepsilon)}_{p,n}(m)}{n}\right)\right\vert\leq \lambda(n)$$ 
		for a deterministic function $\lambda(n){=}o(1)$.
	\end{lemma}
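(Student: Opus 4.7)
I will evaluate the conditional expectation separately on the $\mathbf F^{(\varepsilon)}_{p,n}(0)$-measurable events $I_n(\varepsilon)$ and $I_n(\varepsilon)^c$. On $I_n(\varepsilon)^c$ the modified process is the deterministic curve $n(g_p,d_p)(\cdot/n+1/2+\varepsilon)$; since $g_p$ is smooth on $[1/2+\varepsilon,\infty)$ and the couple $(g_p,d_p)$ satisfies the system \eqref{eq:syst_EDO_0}, a first-order Taylor expansion gives
$\mathbb{E}[\Delta \tilde G^{(\varepsilon)}_{p,n}(m)\mid \mathbf F^{(\varepsilon)}_{p,n}(m)] = g_p'(m/n+1/2+\varepsilon) + O(1/n) = F_\varepsilon(m/n,g_p,d_p)+O(1/n)$
uniformly in $m/n\in[0,A+1]$. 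So the claim holds with $\lambda(n)=O(1/n)$ on that event.

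On $I_n(\varepsilon)$ one has $\tilde G^{(\varepsilon)}_{p,n}=G^{(\varepsilon)}_{p,n}$ and Proposition~\ref{lm_transitions} supplies the exact law of the jumps. Throughout $[0,H_{D_{\varepsilon,A}})$, the deterministic bounds \eqref{eq:A1}--\eqref{eq:A3} put us deep in the subcritical regime for the underlying forest: $\tilde\Omega^{(\varepsilon)}_{p,n}(m)\leq -c_\varepsilon n^{1/3}$, $\tilde E^{(\varepsilon)}_{p,n}(m)\geq c'_\varepsilon n$, and the ratio $\mu_m:=2\tilde E^{(\varepsilon)}_{p,n}(m)/\tilde V^{(\varepsilon)}_{p,n}(m)$ satisfies $\mu_m\leq 1-2\kappa_\varepsilon$. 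I would fix a cutoff $k_n=n^{1/4}$ --- simultaneously below $\epsilon(\tilde E)\sqrt{\tilde E}$ so Corollary~\ref{corol:est_transitions}(1) applies uniformly, and large enough to kill the Borel tail exponentially --- and split the conditional expectation at $k_n$. The $k>k_n$ part is bounded pointwise by $\Delta \tilde G\leq L_1(\tilde V,\tilde E)$: Lemma~\ref{lm:forest:sub} (with its $\varepsilon$ taken to be $2\kappa_\varepsilon$) yields $\mathbb{P}(L_1>k_n\mid \mathbf F^{(\varepsilon)}_{p,n}(m))\leq Cn^{2}k_n^{-2}e^{-\kappa_\varepsilon^2 k_n}$, hence a total contribution of at most $n\cdot\mathbb{P}(L_1>k_n\mid\cdot)=o(1)$.

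For $k\leq k_n$, Corollary~\ref{corol:est_transitions}(1) rewrites, uniformly in $k$ and $m$,
\[
\mathbb{P}(\Delta \tilde G^{(\varepsilon)}_{p,n}(m)=k\mid \mathbf F^{(\varepsilon)}_{p,n}(m)) = (1+o(1))\,\frac{k^{k-1}}{k!}\mu_m^{k-1}e^{-k\mu_m}\,\frac{(k-1+2p\tilde G^{(\varepsilon)}_{p,n}(m))(n-\tilde G^{(\varepsilon)}_{p,n}(m))}{n^2},
\]
the uniformity of the $o(1)$ being guaranteed by the deterministic bounds on $\tilde\Omega$ and $\tilde E$. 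Recognising $\frac{k^{k-1}\mu^{k-1}e^{-k\mu}}{k!}$ as the Borel mass function $B_\mu(k)$ and extending the summation to infinity at exponential cost in $k_n$, I apply the moment identities $\sum_{k\geq 1} k B_\mu(k)=1/(1-\mu)$ and $\sum_{k\geq 1} k(k-1)B_\mu(k)=\mu(2-\mu)/(1-\mu)^3$ (see Appendix~\ref{app:BT}) to obtain
\[
\sum_{k\geq 1} k\,\mathbb{P}(\Delta \tilde G^{(\varepsilon)}_{p,n}(m)=k\mid \mathbf F^{(\varepsilon)}_{p,n}(m)) = (1+o(1))\,\frac{n-\tilde G^{(\varepsilon)}_{p,n}(m)}{n^2}\bigg[\frac{\mu_m(2-\mu_m)}{(1-\mu_m)^3}+\frac{2p\tilde G^{(\varepsilon)}_{p,n}(m)}{1-\mu_m}\bigg].
\]
The first bracketed term is $O(1/n)$ since $1-\mu_m\geq 2\kappa_\varepsilon$ and $n-\tilde G\leq n$; the second is the principal contribution.

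The conclusion is algebraic: the relation $1-\mu_m=(n-\tilde G^{(\varepsilon)}_{p,n}(m)-2\tilde E^{(\varepsilon)}_{p,n}(m))/(n-\tilde G^{(\varepsilon)}_{p,n}(m))$ together with $n-\tilde G^{(\varepsilon)}_{p,n}(m)-2\tilde E^{(\varepsilon)}_{p,n}(m) = -2m-2\varepsilon n+\tilde G^{(\varepsilon)}_{p,n}(m)+2\tilde D^{(\varepsilon)}_{p,n}(m)+O(1)$ converts the dominant term into exactly $F_\varepsilon(m/n,\tilde G^{(\varepsilon)}_{p,n}(m)/n,\tilde D^{(\varepsilon)}_{p,n}(m)/n)+O(1/n)$. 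Gathering all error terms yields $\lambda(n)=o(1)$. I expect the main technical delicacy to be the uniformity of each $o(1)$ over $m\in[0,H_{D_{\varepsilon,A}})$, which is precisely what the deterministic domain bounds \eqref{eq:A1}--\eqref{eq:A3} were designed to provide, and the reason so much preparatory work preceded this lemma.
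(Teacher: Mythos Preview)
Your proposal is correct and follows essentially the same route as the paper: split on $I_n(\varepsilon)$ versus $I_n(\varepsilon)^c$, use the smoothness of $g_p$ on the latter, and on the former truncate the jump at a cutoff of order $n^{1/4}$, control the tail via Lemma~\ref{lm:forest:sub}, and approximate the truncated sum through Corollary~\ref{corol:est_transitions}(1) and the Borel mean identity. The only cosmetic difference is that the paper absorbs the $(k-1)/n$ piece of the transition probability into the $o(1)$ by the crude bound $k-1\le n^{1/4}$, whereas you keep it and evaluate $\sum_k k(k-1)B_{\mu_m}(k)=\mu_m(2-\mu_m)/(1-\mu_m)^3$ explicitly to see it contributes $O(1/n)$; either device works under the uniform lower bound $1-\mu_m\ge 2\kappa_\varepsilon$ guaranteed by \eqref{eq:A2}.
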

	\begin{proof}
For $n\geq 1$ and $m\in \big[0,H_{D_{\varepsilon,A}}\big)$, using the definition of $F_{\varepsilon}$, we have that
\begin{align}
		&\bigg\vert\mathbb{E}\Big[\Delta \tilde{G}^{(\varepsilon)}_{p,n}(m) | {\mathbf{F}}^{(\varepsilon)}_{p,n}(m)  \Big] -	F_{\varepsilon}\left(\frac{m}{n},\frac{\tilde{G}^{(\varepsilon)}_{p,n}(m)}{n},\frac{\tilde D_{p,n}(m)}{n}\right) \bigg\vert \label{3terms}\\
			\leq & \left\vert\mathbb{E}\left[\Delta G_{p,n}^{(\varepsilon)}(m) | {\mathbf{F}}^{(\varepsilon)}_{p,n}(m)  \right] -\frac{2pG_{p,n}^{(\varepsilon)}(m)(n-G_{p,n}^{(\varepsilon)}(m))}{n^2\left(1-2E_{p,n}^{(\varepsilon)}(m)/V_{p,n}^{(\varepsilon)}(m)\right)}\right\vert\mathbbm{1}_{\{I_n(\varepsilon)\}} \notag \\	
	+& \left \vert \frac{2pG_{p,n}^{(\varepsilon)}(m)(n-G_{p,n}^{(\varepsilon)}(m))^2}{n^3\left(V_{p,n}^{(\varepsilon)}(m)-2\big(m/n+1/2+\varepsilon-G_{p,n}^{(\varepsilon)}(m)-D_{p,n}^{(\varepsilon)}(m)\big)\right)} -\frac{2pG_{p,n}^{(\varepsilon)}(m)(n-G_{p,n}^{(\varepsilon)}(m))^2}{n^3\big(V_{p,n}^{(\varepsilon)}(m)-2E_{p,n}^{(\varepsilon)}(m)\big)} \right\vert \mathbbm{1}_{\{I_n(\varepsilon)\}} \notag\\		
	+&\left\lvert n\left(g_p\left(\frac{m+1}{n}+\frac{1}{2}+\varepsilon\right)-g_p\left(\frac{m}{n}+\frac{1}{2}+\varepsilon\right)\right) -F_{\varepsilon}\left(\frac{m}{n},g_p\left(\frac{m}{n}+\frac{1}{2}+\varepsilon\right),d_p\left(\frac{m}{n}+\frac{1}{2}+\varepsilon\right)\right) \right\rvert \mathbbm{1}_{\{I_n(\varepsilon)^c\}}. \notag
\end{align}

We will show that each of the three terms in the right-hand side of (\ref{3terms}) is deterministically bounded by a function independent of $m$ that converges to 0 as $n \rightarrow \infty$.
		
1) We start by bounding from above the last term: since $\left(g_p(\cdot+\varepsilon),d_p(\cdot+\varepsilon)\right)$ is a solution to \eqref{eq:syst_EDO}, one has for every $m \in \big[0,H_{D_{\varepsilon,A}}\big)$,
\begin{align*}
n\left(g_p\left(\frac{m+1}{n}+\frac{1}{2}+\varepsilon\right)-g_p\left(\frac{m}{n}+\frac{1}{2}+\varepsilon\right)\right)&=\int_{m}^{m+1}F_{\varepsilon}\left(\frac{s}{n},g_p\left(\frac{s}{n}+\frac{1}{2}+\varepsilon\right),d_p\left(\frac{s}{n}+\frac{1}{2}+\varepsilon\right)\right)\mathrm{d}s\\
&=F_{\varepsilon}\left(\frac{m}{n},g_p\left(\frac{m}{n}+\frac{1}{2}+\varepsilon\right),d_p\left(\frac{m}{n}+\frac{1}{2}+\varepsilon\right)\right)+O\left(\frac{1}{n}\right)
\end{align*}
where the $O\left(1/n\right)$ is uniform over $m \in \big[0,H_{D_{\varepsilon,A}}\big)$, as a consequence of the Lipschitz continuity of $F_{\varepsilon}$ over $D_{\varepsilon,A}$ and the fact that the derivatives of $g_p$ and $d_p$ are bounded. 

2) Regarding the middle term  in the right-hand side of (\ref{3terms}), we note, using e.g. (\ref{eq:A4}), that it is bounded from above by a constant times 
$$
\frac{m/n+1/2+\varepsilon-\lfloor m/n+1/2+\varepsilon \rfloor}{n^2} \leq \frac{1}{n^2}
$$
for all $m\in \big[0,H_{D_{\varepsilon,A}}\big)$.

3) Last, on the event $I_n(\varepsilon)$, for every $m\in \big[0,H_{D_{\varepsilon,A}}\big)$,
		\begin{align*}
		  &\left\vert\mathbb{E}\left[\Delta G_{p,n}^{(\varepsilon)}(m) | {\mathbf{F}}^{(\varepsilon)}_{p,n}(m)  \right] -\frac{2pG_{p,n}^{(\varepsilon)}(m)(n-G_{p,n}^{(\varepsilon)}(m))}{n^2\big(1-2E_{p,n}^{(\varepsilon)}(m)/V_{p,n}^{(\varepsilon)}(m)\big)}\right\vert\\
			\leq &\left\vert \mathbb{E}\left[\Delta G_{p,n}^{(\varepsilon)}(m)\mathbbm{1}_{\{\Delta G_{p,n}^{(\varepsilon)}(m)\leq V_{p,n}^{(\varepsilon)}(m)^{1/4}\}} | {\mathbf{F}}^{(\varepsilon)}_{p,n}(m)  \right]-\frac{2p G_{p,n}^{(\varepsilon)}(m)(n-G_{p,n}^{(\varepsilon)}(m))}{n^2}S_{V_{p,n}^{(\varepsilon)}(m)^{1/4}}\left(\frac{2E_{p,n}^{(\varepsilon)}(m)}{V_{p,n}^{(\varepsilon)}(m)}\right) \right\vert \\
			&+  \mathbb{E}\left[\Delta G_{p,n}^{(\varepsilon)}(m)\mathbbm{1}_{\{\Delta G_{p,n}^{(\varepsilon)}(m)> V_{p,n}^{(\varepsilon)}(m)^{1/4}\}} | {\mathbf{F}}^{(\varepsilon)}_{p,n}(m)  \right] \\
			&+ 2p \left\vert \frac{1}{1-2E_{p,n}^{(\varepsilon)}(m)/V_{p,n}^{(\varepsilon)}(m)}-S_{V_{p,n}^{(\varepsilon)}(m)^{1/4}}\left(\frac{2E_{p,n}^{(\varepsilon)}(m)}{V_{p,n}^{(\varepsilon)}(m)}\right)  \right\vert 
		\end{align*}
		where $S_{N}$ is defined in \eqref{def:S_N} for $N \in \mathbb N$. It remains to show that on the event $I_n(\varepsilon)$ and uniformly over $m<H_{D_{\varepsilon,A}}$, each of the three terms in the right-hand side of this inequality is smaller than a deterministic function of $n$ that converges to 0 as $n \rightarrow \infty$. In the lines below we implicitly work on   $I_n(\varepsilon)$ and with  $m\in \big[0,H_{D_{\varepsilon,A}}\big)$.
		
	\hspace{0.5cm} $\bullet$ By \eqref{eq:A3}, $E_{p,n}^{(\varepsilon)}(m)$ is greater than a constant times $n$, and by \eqref{eq:A4}, $\Omega_{p,n}^{(\varepsilon)}(m)$ is smaller than a negative constant times $n^{1/3}$. Since moreover $V_{p,n}^{(\varepsilon)}(m)\leq n$, Corollary \ref{corol:est_transitions} 1) applies and implies that uniformly in $1 \leq k \leq V_{p,n}^{(\varepsilon)}(m)^{1/4}$,
	\begin{align*}
		& \left\vert \mathbb{P}\big(\Delta G_{p,n}^{(\varepsilon)}(m)=k | {\mathbf{F}}^{(\varepsilon)}_{p,n}(m)\big)-\frac{k^{k-1}}{k!}\left(\frac{2E_{p,n}^{(\varepsilon)}(m)}{V_{p,n}^{(\varepsilon)}(m)}\right)^{k-1}\mathrm{e}^{-2k\frac{E_{p,n}^{(\varepsilon)}(m)}{V_{p,n}^{(\varepsilon)}(m)}}\left(\frac{2pG_{p,n}^{(\varepsilon)}(m)\left(n-G_{p,n}^{(\varepsilon)}(m)\right)}{n^2}\right)\right\vert  \\
		&\leq \left(o_{\big(E_{p,n}^{(\varepsilon)}(m),\Omega_{p,n}^{(\varepsilon)}(m)\big)}(1)+O_{\big(E_{p,n}^{(\varepsilon)}(m),\Omega_{p,n}^{(\varepsilon)}(m)\big)}(1) \cdot \frac{n^{1/4}}{n}\right) \cdot \frac{k^{k-1}}{k!}\left(\frac{2E_{p,n}^{(\varepsilon)}(m)}{V_{p,n}^{(\varepsilon)}(m)}\right)^{k-1}\mathrm{e}^{-2k\frac{E_{p,n}^{(\varepsilon)}(m)}{V_{p,n}^{(\varepsilon)}(m)}}.
	\end{align*}
	There thus exists a deterministic $\lambda_1(n)$ (independent on $m \in \big[0,H_{D_{\varepsilon,A}}\big)$) that converges to 0 as $n \rightarrow \infty$ such that	
	\begin{align*}
		\Bigg\vert \mathbb{E}&\Big[\Delta G_{p,n}^{(\varepsilon)}(m)\mathbbm{1}_{\{\Delta G_{p,n}^{(\varepsilon)}(m)\leq V_{p,n}^{(\varepsilon)}(m)^{1/4}\}} | {\mathbf{F}}^{(\varepsilon)}_{p,n}(m)  \Big]-\frac{2p G_{p,n}^{(\varepsilon)}(m)\big(n-G_{p,n}^{(\varepsilon)}(m)\big)}{n^2}S_{V_{p,n}^{(\varepsilon)}(m)^{1/4}}\left(2\frac{E_{p,n}^{(\varepsilon)}(m)}{V_{p,n}^{(\varepsilon)}(m)}\right) \Bigg\vert\\
		&\leq \lambda_1(n) \cdot \sum_{k\geq 1}\frac{k^{k}}{k!}\left(\frac{2E_{p,n}^{(\varepsilon)}(m)}{V_{p,n}^{(\varepsilon)}(m)}\right)^{k-1}\mathrm{e}^{-2k\frac{E_{p,n}^{(\varepsilon)}(m)}{V_{p,n}^{(\varepsilon)}(m)}} \leq \lambda_1\left(n\right) \cdot (2\kappa_{\varepsilon})^{-1}
	\end{align*}
	where we used (\ref{lm:Borel:law}) and the upper bound of the inequality \eqref{eq:A2} to get the last line. 
		
	\hspace{0.5cm} $\bullet$ As we just implicitly said, \eqref{eq:A2} implies
$2E_{p,n}^{(\varepsilon)}(m)-V_{p,n}^{(\varepsilon)}(m) \leq -2\kappa_{\varepsilon}V_{p,n}^{(\varepsilon)}(m)$. Recall also that, by (\ref{eq:A1}),$V_{p,n}^{(\varepsilon)}(m)$ is deterministically bounded from below by a constant times $n$. Lemma \ref{lm:forest:sub} and the free forest property of Proposition \ref{prop:freeforest} thus imply that for $n$ large enough
\begin{align}
\label{maj:esp_sauts}
	\mathbb{E}\bigg[\Delta G_{p,n}^{(\varepsilon)}(m)\mathbbm{1}_{\{\Delta G_{p,n}^{(\varepsilon)}(m)>V_{p,n}^{(\varepsilon)}(m)^{1/4}\}}\vert {\mathbf{F}}^{(\varepsilon)}_{p,n}(m)\Big]
	&\leq n\,\mathbb{P}\Big(\Delta G_{p,n}^{(\varepsilon)}(m)>V_{p,n}^{(\varepsilon)}(m)^{1/4} \vert {\mathbf{F}}^{(\varepsilon)}_{p,n}(m) \Big)\notag\\
	&\leq c_1 nV_{p,n}^{(\varepsilon)}(m)^{3/2}\exp\big(-c_2V_{p,n}^{(\varepsilon)}(m)^{1/4}\big) \notag\\
	&\leq c_1n^{5/2}\exp\big(-c'_2 n^{1/4}\big),
\end{align}	
where $c_1,c_2,c'_2$ belong to $(0,\infty)$ and only depend on $\varepsilon$ and $A$.

	\hspace{0.5cm}  $\bullet$ Using again that the ratio $E_{p,n}^{(\varepsilon)}(m)/V_{p,n}^{(\varepsilon)}(m)$ belongs to the compact $\left[0,1/2-\kappa_{\varepsilon}\right]$, we can bound from above the last term by a deterministic function that converges to 0 as $n\rightarrow \infty$, thanks to Lemma \ref{lm:partial_sum} 1) and the fact, by \eqref{eq:A1}, that $V_{p,n}^{\varepsilon}(m) \geq (1-K_{\varepsilon,A})n$.
\end{proof}

\subsubsection{Proof of Proposition \ref{prop:DEM}}
\label{proof:Prop46}

We have now the material to apply Theorem \ref{thmDEM} to the process $\big(\tilde{G}^{(\varepsilon)}_{p,n},\tilde{D}^{(\varepsilon)}_{p,n}\big)$ introduced in the preliminaries, working on the domain $D_{\varepsilon,A}$ of Definition \ref{def:domaine} for some fixed $A>0$. Let us check all the hypotheses of the theorem. First, both processes $\tilde{G}^{(\varepsilon)}_{p,n},\tilde{D}^{(\varepsilon)}_{p,n}$ are uniformly bounded by $n$. Then, recall the definition of $F_{\varepsilon}$  (Definition \ref{def:domaine}) and set 
$$
G(t,g,d)=2(1-p)g(1-g)+g^2, \quad \text{ for }(t,g,d) \in D_{\varepsilon,A}.
$$ 
The functions $F_{\varepsilon}$ and $G$ are Lipschitz continuous on $D_{\varepsilon,A}$: this is obvious for $G$ and was discussed for $F_{\varepsilon}$ in the paragraph after Definition \ref{def:domaine}. Besides, by construction, 
$D_{\varepsilon,A}$ contains the closure of the set 
$$\left\{\left(0,g,d\right):\mathbb{P}\left(\big(\tilde{G}^{(\varepsilon)}_{p,n}(0),\tilde{D}^{(\varepsilon)}_{p,n}(0)\big)=n\cdot(g,d)\right)\neq 0 \text{ for some }n \right\}.$$
Next, recall the definition (\ref{def:exittime}) of the exit time $H_{D_{\varepsilon,A}}$, and check:

	\begin{enumerate}[leftmargin=0.8cm, topsep=0cm]
			\item[$\bullet$] \textbf{The boundedness hypothesis.}
			Since $g_p'$ is bounded on $\left[1/2,A+3/2\right]$, we have for $n$ large enough
	\begin{equation*}
		\mathbb{P}\left(\exists m \in \big[0,H_{D_{\varepsilon,A}}\big):\Delta \tilde{G}^{(\varepsilon)}_{p,n}(m)>n^{1/4}   \right)=\mathbb{P}\left(\exists m \in \big[0,H_{D_{\varepsilon,A}}\big):\Delta G_{p,n}^{(\varepsilon)}(m)>n^{1/4},I_n(\varepsilon) \right).
	\end{equation*}
	Consequently, proceeding in a similar way to (\ref{maj:esp_sauts}), recalling also (\ref{eq:A1}), we deduce that
\begin{align*}
	\mathbb{P}\left(\exists m \in \big[0,H_{D_{\varepsilon,A}}\big)\right.&\Big.:\Delta \tilde{G}^{(\varepsilon)}_{p,n}(m)>n^{1/4}   \Big) \\
&\leq \sum_{m=0}^{\lfloor(A+1)n\rfloor}\mathbb{E}\left[\mathbb{P}\left(\Delta {G}^{(\varepsilon)}_{p,n}(m)>n^{1/4}, m < H_{D_{\varepsilon,A}}\vert {\mathbf{F}}^{(\varepsilon)}_{p,n}(m)\right)\mathbbm 1_{I_n(\varepsilon)}\right]\\
				&\leq  c_1n^{5/2}\exp\big(-c_2 n^{1/4}\big),
			\end{align*}
			where $c_1,c_2$ belong to $(0,\infty)$ and only depend on $\varepsilon$ and $A$. Besides, $\lvert \Delta \tilde{D}^{(\varepsilon)}_{p,n}(m)\rvert\leq 1$ for all $m\geq 0$. This therefore yields the boundedness hypothesis of Theorem \ref{thmDEM}, with, keeping the notation introduced there, $\beta(n)=n^{1/4}$ and $\gamma(n)=c'_1n^{5/2}\exp\big(-c'_2 n^{1/4}\big)$.
			\item[$\bullet$] \textbf{The trend hypothesis.} By Lemma \ref{lm:trend:hyp}, for every $m\in \big[0,H_{D_{\varepsilon,A}}\big)$
			\begin{align*}
	\left\vert\mathbb{E}\left[\Delta \tilde{G}^{(\varepsilon)}_{p,n}(m) | {\mathbf{F}}^{(\varepsilon)}_{p,n}(m)  \right] -F_{\varepsilon}\left(\frac{m}{n},\frac{\tilde{G}^{(\varepsilon)}_{p,n}(m)}{n},\frac{\tilde{D}^{(\varepsilon)}_{p,n}(m)}{n}\right)\right\vert
	&\leq \lambda(n)
	\end{align*} 
with $\lambda(n)=o(1)$. With a similar (but much simpler) approach,  we get that simultaneously for all $m\in \big[0,H_{D_{\varepsilon,A}}\big)$, thanks to Proposition \ref{lm_transitions} and the definition of $d_p$ via \eqref{eq:syst_EDO},
			$$\left\vert \mathbb{E}\left[\Delta \tilde{D}^{(\varepsilon)}_{p,n}(m) | {\mathbf{F}}^{(\varepsilon)}_{p,n}(m)  \right] -G\left(\frac{m}{n},\frac{\tilde{G}^{(\varepsilon)}_{p,n}(m)}{n},\frac{\tilde{D}^{(\varepsilon)}_{p,n}(m)}{n}\right) \right \vert= O\left(\frac{1}{n} \right)$$
for a deterministic $O(1/n)$ independent of $m\in \big[0,H_{D_{\varepsilon,A}}\big)$.			
		
		\end{enumerate}
Finally recall from Definition \ref{def:f_nd_n_epsilon} and Lemma \ref{cor:edo_aleat} that $\big(\tilde{g}_{p,n}^{(\varepsilon)},\tilde{d}_{p,n}^{(\varepsilon)}\big)$ is the unique solution to equation (\ref{eq:syst_EDO}) starting from $\big(\tilde{G}^{(\varepsilon)}_{p,n}(0)/n,\tilde{D}^{(\varepsilon)}_{p,n}(0)/n\big)$ and that $\big(\big(t,\tilde{g}_{p,n}^{(\varepsilon)}(t),\tilde{d}_{p,n}^{(\varepsilon)}(t)\big):t\in \left[0,A\right]\big)$ is confined in a compact subset of $D_{\varepsilon,A}$. All in all, all this implies that the conclusion (b) of Theorem \ref{thmDEM} holds with $\eta(n)=\max(\lambda(n)+n^{-1/2}+2n\gamma(n);n^{-1/13})=o(1)$ and $\sigma(n)=A$, yielding that, with high probability as $n\to \infty$,
\begin{equation*}
	\tilde{G}^{(\varepsilon)}_{p,n}(m)=n\cdot \tilde{g}_{p,n}^{(\varepsilon)}(m/n)+o(n) \quad\text{ and }\quad  \tilde{D}^{(\varepsilon)}_{p,n}(m)=n\cdot \tilde{d}_{p,n}^{(\varepsilon)}(m/n)+o(n),
\end{equation*}
uniformly over $m\in [0,An]$. Since $\big(\tilde{G}^{(\varepsilon)}_{p,n},\tilde{D}^{(\varepsilon)}_{p,n}\big)$ coincides with $\big(G_{p,n}^{(\varepsilon)},D_{p,n}^{(\varepsilon)}\big)$ on $I_n(\varepsilon)$, whose probability tends to $1$ as $n$ tends to infinity, and since $\tilde{g}_{p,n}^{(\varepsilon)}\left(\left\lfloor nt\right\rfloor/n\right)-\tilde{g}_{p,n}^{(\varepsilon)}(t)=O\left(1/n\right)$ uniformly on compact sets, this in turn yields that
\begin{equation*}
		\left(\left(\left\lvert \frac{G_{p,n}\left(\lfloor nt+(1/2+\varepsilon) n\rfloor\right)}{n}-\tilde{g}_{p,n}^{(\varepsilon)}(t) \right\vert, \left\lvert \frac{D_{p,n}\left(\lfloor nt+(1/2+\varepsilon) n\rfloor\right)}{n}-\tilde{d}_{p,n}^{(\varepsilon)}(t) \right\vert \right)\right)_{t\geq 0}~\underset{n \rightarrow \infty }{\overset{\mathbb{P}}{\longrightarrow}}~0
\end{equation*}
for the topology of uniform convergence on compacts, which is equivalent to the statement of the proposition.

	\subsection{Proof of Theorem \ref{thm:fluid limit}}
	\label{sec:proofFL}		

\emph{Convergence of the rescaled gel $G_{p,n}(\lfloor n~\cdot \rfloor)/n$.} Our goal is to prove that for any (large) $A>0$ and any (small) $\delta>0$,
$$
\mathbb P\left(\sup_{t \in [0,A]}\left \vert \frac{G_{p,n}\left(\lfloor nt\rfloor\right)}{n} -g_p(t) \right\vert >\delta\right) \underset{n \rightarrow \infty}\longrightarrow 0. 
$$
Recall the definition of the event $I_n(\varepsilon)$ in (\ref{def:I_n(epsilon)}) and that for $\varepsilon>0$ sufficiently small $\mathbb P(I_n(\varepsilon))\rightarrow 1$ as $n \rightarrow \infty$. It is therefore sufficient to prove that for a well-chosen, small, $\varepsilon>0$ (that may depend on $A$ and $\delta$),
\begin{equation}
\label{cv_X_A}
\mathbb P\left(\sup_{t \in [0,A]}\left \vert \frac{G_{p,n}\left(\lfloor nt\rfloor\right)}{n} -g_p(t) \right\vert >\delta, I_n(\varepsilon)\right) \underset{n \rightarrow \infty}\longrightarrow 0. 
\end{equation}
In that aim, consider $g_{p,n}^{(\varepsilon)}$ as defined in Definition \ref{def:f_nd_n_epsilon} and write
\begin{eqnarray*}
\sup_{t \in [0,A]}\left \vert \frac{G_{p,n}\left(\lfloor nt\rfloor\right)}{n} -g_p(t) \right\vert &\leq& \sup_{t \in \left[0,\frac{1}{2}+\varepsilon\right]}\left \vert \frac{G_{p,n}\left(\lfloor nt\rfloor\right)}{n} -g_p(t) \right\vert +\sup_{t \in \left[\frac{1}{2}+\varepsilon,A\right]}\left \vert \frac{G_{p,n}\left(\lfloor nt\rfloor\right)}{n} -g_p(t) \right\vert \\
&\leq &  \frac{G_{p,n}\big(\big\lfloor n\big(\frac{1}{2}+\varepsilon\big)\big\rfloor\big)}{n} +g_p\left(\frac{1}{2}+\varepsilon\right) + \sup_{t \in \left[\frac{1}{2},A-\varepsilon\right]}\left \vert \frac{G_{p,n}\left(\lfloor n(t+\varepsilon)\rfloor\right)}{n} -g^{(\varepsilon)}_{p,n}(t) \right\vert \\
&&+ \sup_{t \in \left[\frac{1}{2},A-\varepsilon\right]}\left \vert  g^{(\varepsilon)}_{p,n}(t) -g_p(t+\varepsilon) \right\vert
\end{eqnarray*}
where we used that the process $G_{p,n}$ and the function $g_p$ are non-decreasing. 

On $I_n(\varepsilon)$, since $G_{p,n}\big(\big\lfloor n(1/2+\varepsilon)\big\rfloor\big) \leq C(p)\varepsilon n$ and $g_p$ is continuous on $[1/2,\infty)$, we have that for $\varepsilon$ (determinist and independent of $n$) small enough
$$
 \frac{G_{p,n}\big(\big\lfloor n(\frac{1}{2}+\varepsilon)\big\rfloor\big)}{n} +g_p\left(\frac{1}{2}+\varepsilon\right) \leq C(p)\varepsilon+g_p\left(\frac{1}{2}+2\varepsilon\right) \leq \frac{\delta}{4}.
$$
(We will need the ``$+2\varepsilon$" later on.) Besides, from Lemma  \ref{cor:edo_aleat} and its proof, we know that on $I_n(\varepsilon)$, for $\varepsilon$ (determinist, independent of $n$) sufficiently small, $g_{p,n}^{(\varepsilon)}$ is the unique solution to the equation $(E_{(\delta_n(\varepsilon))})$ starting from $g_{p,n}^{(\varepsilon)}(1/2)= G_{p,n}\big(\lfloor n(1/2+\varepsilon)\rfloor\big)/n$, where 
$$0<\delta_n(\varepsilon)=\frac{\varepsilon-d_{p,n}^{(\varepsilon)}(1/2)-g_{p,n}^{(\varepsilon)}(1/2)^2/2}{(1-g_{p,n}^{(\varepsilon)}(1/2))^2} \leq 2 \varepsilon$$
(with $d_{p,n}^{(\varepsilon)}(1/2)=D_{p,n}\big(\big\lfloor n(1/2+\varepsilon)\big\rfloor\big)/n$). Since $g(\cdot+\delta_n(\varepsilon))$ is also a solution to $(E_{(\delta_n(\varepsilon)}))$, Lemma \ref{lm:equadiff} 2) then yields
\begin{align*}
	\sup_{t\in \left[\frac{1}{2},A-\varepsilon\right]} \left \vert g_{p,n}^{(\varepsilon)}\left(t\right)- g_p\left(t+\delta_n(\varepsilon)\right)\right\vert&\leq \left\vert g_{p,n}^{(\varepsilon)}\left(\frac{1}{2}\right)- g_p\left(\frac{1}{2}+\delta_n(\varepsilon)\right)\right\vert \\
	 &\leq  \frac{G_{p,n}\big(\big\lfloor n(\frac{1}{2}+\varepsilon)\big\rfloor\big)}{n}+g_p\left(\frac{1}{2}+2 \varepsilon \right) \leq  \frac{\delta}{4}.
\end{align*}
To complete, the mean value theorem gives for $\varepsilon  \in (0, 1]$,
$$\sup_{t\in \left[\frac{1}{2},A-\varepsilon\right]}\left\vert g_p\left(t+\delta_n(\varepsilon)\right) -g_p\left(t+\varepsilon\right)\right\vert\leq \sup_{t\in \left[\frac{1}{2},A+1\right]}\lvert g_p'(t)\rvert  \varepsilon.$$
All in all, we have proved so far that on $I_n(\varepsilon)$, for $\varepsilon$ (determinist, independent of $n$) sufficiently small,
$$
\sup_{t \in [0,A]}\left \vert \frac{G_{p,n}\left(\lfloor nt\rfloor\right)}{n} -g_p(t) \right\vert \leq \frac{3 \delta}{4} + \sup_{t \in \left[\frac{1}{2},A-\varepsilon\right]}\left \vert \frac{G_{p,n}\left(\lfloor n(t+\varepsilon)\rfloor\right)}{n} -g^{(\varepsilon)}_{p,n}(t) \right\vert.
$$
By Proposition \ref{prop:DEM}, the supremum in the right-hand side converges in probability to 0, leading to (\ref{cv_X_A}).

\bigskip

\emph{Convergence of the rescaled number of discarded edges $D_{p,n}(\lfloor n~\cdot \rfloor)/n$.} We proceed similarly. For fixed $A>0$ and $\delta>0$, our goal is to prove that for $\varepsilon$ small enough
$$
\mathbb P\left(\sup_{t \in [0,A]}\left \vert \frac{D_{p,n}\left(\lfloor nt\rfloor\right)}{n} -d_p(t) \right\vert >\delta, I_n(\varepsilon)\right) \underset{n \rightarrow \infty}\longrightarrow 0. 
$$
As above, we use the triangular inequality to get
\begin{eqnarray*}
\sup_{t \in [0,A]}\left \vert \frac{D_{p,n}\left(\lfloor nt\rfloor\right)}{n} -d_p(t) \right\vert 
&\leq &  \frac{D_{p,n}\big(\big\lfloor n\big(\frac{1}{2}+\varepsilon\big)\big\rfloor\big)}{n} +d_p\left(\frac{1}{2}+\varepsilon\right) + \sup_{t \in \left[\frac{1}{2},A-\varepsilon\right]}\left \vert \frac{D_{p,n}\left(\lfloor n(t+\varepsilon)\rfloor\right)}{n} -d^{(\varepsilon)}_{p,n}(t) \right\vert \\
&&+ \sup_{t \in \left[\frac{1}{2},A-\varepsilon\right]}\left \vert  d^{(\varepsilon)}_{p,n}(t) -d_p(t+\varepsilon) \right\vert.
\end{eqnarray*}
Regarding the initialization at time $1+2+\varepsilon$, we proceed as with $G_{p,n}$, recalling that $D_{p,n}(1/2+\varepsilon)$ is smaller than a constant times $\varepsilon^2 n$ on $I_n(\varepsilon)$ and that $d_p(1/2)=0$. Recall next that for $t\geq \frac{1}{2}$,
$$d_p'(t+\varepsilon)=2(1-p)g_p(t+\varepsilon)(1-g_p(t+\varepsilon))+g_p^2(t+\varepsilon), \quad \big(d_{p,n}^{(\varepsilon)}\big)'(t)=2(1-p)g_{p,n}^{(\varepsilon)}(t)\big(1-g_{p,n}^{(\varepsilon)}(t)\big)+\big(g_{p,n}^{(\varepsilon)}\big)^2(t).$$
Since we have seen that $\sup_{t\in\left[\frac{1}{2},A-\varepsilon \right]}\big\lvert g_{p,n}^{(\varepsilon)}\left(t\right)-g_p(t+\varepsilon) \big\rvert$ can be made arbitrarily small on $I_n{(\varepsilon)}$ and since the functions $g_{p,n}^{(\varepsilon)},g_p$ are positive and bounded from above by 1, we have that, on $I_n(\varepsilon)$, for $\varepsilon$ (determinist, independent of $n$) sufficiently small,
$$
\sup_{t \in [0,A]}\left \vert \frac{D_{p,n}\left(\lfloor nt\rfloor\right)}{n} -d_p(t) \right\vert \leq \frac{3 \delta}{4} + \sup_{t \in \left[\frac{1}{2},A-\varepsilon\right]}\left \vert \frac{D_{p,n}\left(\lfloor n(t+\varepsilon)\rfloor\right)}{n} -d^{(\varepsilon)}_{p,n}(t) \right\vert
$$
and we conclude with Proposition \ref{prop:DEM}.

\subsection{Proof of Theorem \ref{thm:forest}}
\label{sec:proofF}

Theorem \ref{thm:forest} is a consequence of Theorem \ref{thm:fluid limit}. The convergence of the triplet \linebreak $\big(V_{p,n}(\lfloor n~\cdot \rfloor)/n, E_{p,n}(\lfloor n~\cdot \rfloor)/n, R_{p,n}(\lfloor n~\cdot \rfloor)\big)$ follows immediately from Theorem \ref{thm:fluid limit}, together with the relations (\ref{def:V_{p,n}E_{p,n}}) and the properties of the functions $v_p,e_p,r_p$ highlighted in Proposition \ref{prop:propfunctions}.

Regarding the number of trees $N_{p,n}^{(k)}$ of size $k$, $k \in \mathbb N$, we first note that since the process
$$\sum_{k\geq 1}\frac{kN_{p,n}^{(k)}(\lfloor n~\cdot \rfloor)}{n}=\frac{V_{p,n}(\lfloor n~\cdot \rfloor)}{n}~\underset{n\rightarrow \infty}{\overset{\mathbb P}\longrightarrow}v_p=1-g_p=\sum_{k\geq 1} kt_{p,k}$$
it is sufficient to prove  separately the convergence for each $k \in \mathbb N$ of $~kN_{p,n}^{(k)}(\lfloor n \cdot \rfloor)/n~$ to $t_{p,k}$ to get the convergence  of the sequence $(kN_{p,n}^{(k)}(\lfloor n \cdot \rfloor)/n,k\geq 1)$ for the norm $\|\cdot \|_1$ in $\ell^1$. 

Then, note from the dynamic of the $p$-frozen model that for all $k \in \mathbb N$ and all $m \in \mathbb N$
\begin{equation}\label{eq:esp_N_k}
	\mathbb{E}\left[\Delta N_{p,n}^{(k)}(m) | \mathbf{F}^{(N)}_{p,n}(m)\right]=\sum_{i+j=k}ij\frac{N_{p,n}^{(i)}(m)N_{p,n}^{(j)}(m)}{n^2}-2k\frac{N_{p,n}^{(k)}(m)\left(n-(1-p)G_{p,n}(m)\right)}{n^2}+O\left(\frac{1}{n}\right)
\end{equation}
where $ \mathbf{F}^{(N)}_{p,n}$ denotes the filtration generated by the sequence $(N_{p,n}^{(k)})_{k\geq 1}$, 
for a deterministic $O(1/n)$ that only depends on $k$, not on $m$. We will proceed by induction on $k$ to get the convergence of $N_{p,n}^{(k)}(\lfloor n \cdot\rfloor)/n$ to the function $t_{p,k}$, relying on an approximation of the gel $G_{p,n}$ by its fluid limit. 

We prove in detail the initial step, when $k=1$, the induction step will then proceed similarly. Fix $A>0$ and define for any integer $m \in \left[0,An\right]$
$$Y_n^{(1)}(m)=\frac{N_{p,n}^{(1)}(m)}{n}-t_{p,1}\left(\frac{m}{n}\right),$$
where we recall that 
$$
t_{p,1}(t)=\left(1-g_p(t)\right)e^{-2t \left(1-g_p(t)\right)}, \qquad t\geq 0.
$$
It is easy to see, using that $g_p$ is solution to the equation (\ref{eq:f}), that $t_{p,1}'(t)=-2t_{p,1}(t) \left(1-(1-p)g_p(t) \right)$ for all $t\geq 0$ -- which is bounded as well as its derivative since $g_p'$ is bounded -- so we have for every $m \in \mathbb N$, by Taylor's expansion,
\begin{align}
\label{deriv:t_p}
	 t_{p,1}\left(\frac{m+1}{n}\right)-t_{p,1}\left(\frac{m}{n}\right)=-\frac{2}{n}\cdot t_{1,p}\left(\frac{m}{n}\right)\left(1-(1-p)g_p\left(\frac{m}{n}\right)\right)+O\left(\frac{1}{n^2}\right).
\end{align}
In particular the jumps of $Y_n^{(1)}$ are deterministically bounded by a $O(1/n)$.
Then  consider for  $\delta>0$ the event
$$J_n(\delta)=\left\{ \sup_{0\leq m\leq \lfloor An \rfloor}\left\lvert \frac{G_{p,n}(m)}{n}-g_p\left(\frac{m}{n}\right)\right\rvert \leq \delta \right\}\cap \left\{\sup_{0\leq m\leq \lfloor An \rfloor} \left\lvert  Y_n^{(1)}(m)-\sum_{i=0}^{m-1}	\mathbb{E}\left[\Delta Y_n^{(1)}(i) | \mathbf{F}^{(N)}_{p,n}(i)\right] \right\rvert \leq \delta \right\}.$$
The convergence in probability of the rescaled process $G_{p,n}(\lfloor n \cdot \rfloor )/n$ to $g_p$ and Lemma \ref{lm:approx:esp} imply that $\mathbb P(J_n(\delta)) \rightarrow 1$ as $n \rightarrow \infty$. Next, on $J_n(\delta)$, using (\ref{eq:esp_N_k}) for $k=1$ and (\ref{deriv:t_p}), we see that for every $n$ deterministic large enough and every $m\in \left[0,An\right]$, we have 
\begin{align*}
	\left\lvert\mathbb{E}\left[\Delta Y_n^{(1)}(m)\vert \mathbf{F}^{(N)}_{p,n}(m)\right]\right\rvert
	&\leq \frac{2}{n}\left\lvert Y_n^{(1)}(m)\right\rvert+\frac{3\delta}{n},
\end{align*}
which in turn leads to
\begin{align*}
	\lvert Y_n^{(1)}(i)\rvert \leq  \frac{2}{n}\sum_{i=0}^{m-1}\lvert Y_n^{(1)}(j)\rvert + \delta (3A+1).
\end{align*}
We then conclude with the discrete version of Grönwall's Lemma that
$\sup_{0\leq m\leq \lfloor An \rfloor}\lvert Y_n^{(1)}(m)\rvert\leq$ \linebreak $\delta (3A+1) \mathrm{e}^{2A}$. Since $\delta$ can be chosen arbitrarily small, this proves the convergence in probability to 0 of $\sup_{0\leq m\leq \lfloor An \rfloor}\lvert Y_n^{(1)}(m) \rfloor$, which in turn gives the convergence in probability of  $~N_{p,n}^{(1)}(\lfloor n \cdot\rfloor)/n~$ to the function $t_{p,1}$ for the topology of uniform convergence on $[0,A]$, using that $\sup_{t\geq0} \lvert t_{p,1}(\lfloor nt\rfloor/n)-t_{p,1}(t) \rvert=O(1/n)$.

The proof of the induction step to pass from $k$ to $k+1$ is similar, using that $\big(G_{p,n}/n,N_{p,n}^{(1)}/n,...,N_{p,n}^{(k)}/n\big)$ is approximated by $\left(g_p\left(\cdot/n\right),t_{p,1}\left(\cdot /n\right),...,t_{p,k-1}\left(\cdot/n\right)\right)$, together with \eqref{eq:esp_N_k} and the definition and properties of the fonctions $t_{p,i}, i \geq 1$. Details are left to the reader.

\subsection{Largest and typical trees}
\label{sec:typ_L_trees}

\textbf{Proof of Corollary \ref{prop:cc}.} Let $t\in [0,\infty) \backslash \{1/2\}$. By  Theorem \ref{thm:forest}, $V_{p,n}(\lfloor nt \rfloor)/n$ converges in probability towards $1-g_p(t)>0$ and the ratio $R_{p,n}(t)=E_{p,n}(t)/V_{p,n}(t)$ converges in probability  towards $r_p(t)=t(1-g_p(t))$ which lies in $(0,1/2)$ since $t \neq 1/2$, by Proposition \ref{prop:propfunctions}. Together with the free forest property of Proposition \ref{prop:freeforest} and Proposition \ref{prop:largest_trees}, this implies that 
$$
\frac{\# T^{(i)}_{p,n}(\lfloor nt \rfloor)}{\ln(n)} \underset{n \rightarrow \infty}{\overset{\mathbb P}\longrightarrow} \frac{1}{2t(1-g_p(t))-1-\ln(2t(1-g_p(t)))}, \quad \forall i \in \mathbb N.
$$
Hence the result, since $g_p(t)=0$ when $t<1/2$.
$\hfill \square$

\bigskip

\textbf{Proof of Corollary \ref{cor:CC1}.} Fix $t \geq 0$.
From the exchangeability of the vertices of the model and Theorem \ref{thm:forest} (using that $~kN_{p,n}^{(k)}(\lfloor nt \rfloor)/n \leq 1~$ and $~V_{p,n}(\lfloor nt \rfloor))/n \leq 1~$ to apply the Dominated Convergence Theorem), one has for any $k\in \mathbb N$,
$$
\mathbb P\left(\mathrm{CC}^*_{p,n}(\lfloor nt \rfloor) \text{ is a tree of size } k\right)~=~\mathbb E\left[\frac{kN_{p,n}^{(k)}(\lfloor nt \rfloor)}{n}\right] ~\underset{n \rightarrow \infty} \longrightarrow ~ kt_{p,k}(t).
$$
and 
$$
\mathbb P\left(\mathrm{CC}^*_{p,n}(\lfloor nt \rfloor) \text{ is a tree}\right)~=~\mathbb E\left[\frac{V_{p,n}(\lfloor nt \rfloor)}{n}\right]~\underset{n \rightarrow \infty} \longrightarrow ~ 1-g_p(t).
$$
Then let $\mathrm t$ be one of the $k^{k-2}$ trees with $k$ vertices labeled $1,\ldots,k$. Recalling that the labels in $\mathrm{CC}^*_{p,n}(\lfloor nt \rfloor)$ are an increasing relabeling from 1 to $\# \mathrm{CC}^*_{p,n}(\lfloor nt \rfloor)$ of the initial labels of the connected component containing the vertex 1 at time $\lfloor nt \rfloor$, one sees from Proposition \ref{prop_forest_rw} that
$$
\mathbb P\left(\mathrm{CC}^*_{p,n}(\lfloor nt \rfloor)=\mathrm t ~ | ~\text{$\mathrm{CC}^*_{p,n}(\lfloor nt \rfloor)$ is a tree of size $k$} \right)~=~ \frac{1}{k^{k-2}}.
$$ 
Consequently,
\begin{eqnarray*}
\mathbb P\left(\mathrm{CC}^*_{p,n}(\lfloor nt \rfloor)=\mathrm t  ~ | ~\text{$\mathrm{CC}_{n,1}(\lfloor nt \rfloor)$ is a tree} \right)& \underset{n \rightarrow \infty} \longrightarrow& \frac{kt_{p,k}(t)}{(1-g_p(t)) k^{k-2}} \\
&& ~=~ \frac{\left(2t\right)^{k-1}\left(1-g_p(t)\right)^{k-1} \mathrm{e}^{-2kt\left(1-g_p(t)\right)}}{(k-1)!}.
\end{eqnarray*}
We conclude by recalling that a Galton-Watson tree with offspring distribution Poisson with mean $\lambda>0$ -- we denote $\mathrm{GW}_{\mathrm {Poi}(\lambda)}$ such a tree -- equipped with uniformly random labels from $1$ to $\#\mathrm{GW}_{\mathrm {Poi}(\lambda)}$ on its vertices and where the original order is forgotten, as well as the root, verifies for any tree $\mathrm t$ with $k$ vertices labeled $1,\ldots,k$,
$$
\mathbb P\big(\mathrm{GW}_{\mathrm {Poi}(\lambda)}= \mathrm t \big)~=~\frac{e^{-k\lambda } \lambda^{k-1}}{\prod_{i=1}^k n_i!} \cdot \frac{1}{k!} \cdot \prod_{i=1}^k n_i! \cdot k~= \frac{e^{-k\lambda } \lambda^{k-1}}{(k-1)!},
$$
where $n_i$ denotes the number of children of the vertex  $i$ (here $e^{-k\lambda } \lambda^{k-1}/\prod_{i=1}^k n_i!$ is the probability that the Galton-Watson tree, yet ordered and unlabeled, is equal to an ordered unlabeled version of $\mathrm t$, $1/k!$ corresponds to adding the labels,  $\prod_{i=1}^k n_i!$ to removing the order, and last $k$ to forgetting the root). Taking $\lambda=2t(1-g_p(t))$, this corresponds to the above limit in distribution of $\mathrm{CC}^*_{p,n}(\lfloor nt \rfloor)$ conditioned to be a tree. $\hfill \square$

\bigskip

\textbf{Remark.} We could have proved this result by using the continuous model of Section \ref{sec:Poisson} together with Lemma \ref{lem:cvPnk}, and then de-Poissonizing with the help of (\ref{bound:Poisson}). With this approach, one has to use the identity 
$$
e^{2(1-p)\int_0^t g_p(u) \mathrm du}~=~\left(1-g_p(t)\right)e^{2tg_p(t)}, \quad \forall t \geq 0
$$
to conclude. This identity can be proved by verifying that the derivatives of the functions in the left and right hand sides are equal, using the equation (\ref{eq:f}) satisfied by $g_p$.

%%%%%%%%%%%%%%%%%%%%%%%%%%%%%%%%
\section{A continuous version of the model}
\label{sec:continuous}
%%%%%%%%%%%%%%%%%%%%%%%%%%%%%%%%

We introduce in this short section a continuous version of the $p$-frozen Erd\H{o}s-R\'enyi model, $p \in [0,1]$, and then underline some of its properties in the fluid limit.

\subsection{Poissonization}
\label{sec:Poisson}

Starting with $n$ isolated vertices labelled $1,\ldots,n$, we consider, on each of the $n(n-1)/2$ potential edges, independently, a Poisson point process (PPP in the following) with intensity $1/n$. When such a PPP rings, if the edge has not already been added to the graph:
\begin{enumerate}[topsep=0pt]
\item[-] either the edge connects two tree-components of the current graph and we add it to the graph
\item[-] or it connects two vertices of unicycle-components and we discard it
\item[-] or it connects a tree-component and a unicycle-component and we add it with probability $p$ and discard it otherwise.
\end{enumerate}
We let $\mathcal {F}_{p,n}(t)$ denote the graph at time $t \geq 0$ and emphasize that if $\mathcal N$ is a standard Poisson process (with intensity 1), independent of the discrete $p$-frozen model $\mathrm{F}_{p,n}$, then,
\begin{equation}
\label{lien:dc}
\big(\mathcal {F}_{p,n}(t), t\geq 0 \big)~\overset{\mathrm{(d)}}{=}~\big(\mathrm{F}_{p,n}\left(\mathcal N((n-1)t/2)\right), t\geq 0 \big).
\end{equation}

\subsection{Fluid limit in the continuous model}
\label{sec:flcontinuous}

We let $(\mathcal G_{p,n}(t),t\geq 0)$ denote the gel size process in this continuous version of the model (that is the total number of vertices that belong to unicycle components) and state the following corollary of Theorem \ref{thm:fluid limit}. Of course, the other results on the fluid limit of the discrete model also transfer easily to the continuous model, but we  only highlight here what we will really need in the rest of the paper. Here $p \in(0,1]$.

\begin{corollary}
\label{cor:continuous_cv}
As $n \rightarrow \infty$,
$$\left(\frac{\mathcal G_{p,n}(t)}{n},~ t \geq 0 \right)~\overset{\mathbb P}\longrightarrow~\left(g_p(t/2),~ t\geq 0\right)$$
for the topology of uniform convergence on compacts. Consequently, for each $A>0$,
$$
 \int_0^{A} \left(1-\frac{\mathcal G_{p,n}(t)}{n}\right) \mathrm dt~\overset{\mathbb P}\longrightarrow~\int_0^{A} \left(1-g_p(t/2)  \right) \mathrm dt.
$$
\end{corollary}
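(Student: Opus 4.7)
The plan is to use the coupling (\ref{lien:dc}) to transfer the discrete fluid limit of Theorem \ref{thm:fluid limit} to the continuous setting via a composition argument.

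\textbf{First step: random time change.} By (\ref{lien:dc}) we can assume, on a common probability space, that $\mathcal G_{p,n}(t) = G_{p,n}(\mathcal N((n-1)t/2))$ for all $t\geq 0$, with $\mathcal N$ a standard Poisson process independent of $(\mathrm F_{p,n}(m), m \in \mathbb Z_+)$. The classical functional law of large numbers for the Poisson process (e.g. via Doob's maximal inequality applied to the centered Poisson martingale) yields, for every fixed $A>0$,
$$
\sup_{t\in [0,A]}\bigg| \frac{\mathcal N((n-1)t/2)}{n} - \frac{t}{2}\bigg|~ \underset{n\to\infty}{\overset{\mathbb P}\longrightarrow}~0.
$$

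\textbf{Second step: composition.} Fix $A>0$ and set $M_n(t) = \mathcal N((n-1)t/2)/n$. Theorem \ref{thm:fluid limit} gives $\sup_{s\in[0,A+1]}|G_{p,n}(\lfloor ns\rfloor)/n - g_p(s)| \to 0$ in probability. Since $g_p$ is continuous (hence uniformly continuous on compacts) and $M_n(t)\leq A/2 + 1/2$ for every $t\in[0,A]$ with high probability, a standard composition argument (writing $|G_{p,n}(\lfloor nM_n(t)\rfloor)/n - g_p(t/2)|$ as the sum of $|G_{p,n}(\lfloor nM_n(t)\rfloor)/n - g_p(M_n(t))|$ and $|g_p(M_n(t))-g_p(t/2)|$, the first bounded by the uniform discrete fluid limit on $[0,A+1]$, the second controlled by uniform continuity of $g_p$ and the first step) yields
$$
\sup_{t\in[0,A]} \bigg|\frac{\mathcal G_{p,n}(t)}{n} - g_p(t/2)\bigg|~ \underset{n\to\infty}{\overset{\mathbb P}\longrightarrow}~0.
$$
Note that $\mathcal N((n-1)t/2)$ is a priori not an integer multiple of $n$, but $G_{p,n}(m) = G_{p,n}(\lfloor ns\rfloor)$ when $m = \lfloor ns \rfloor$, so the uniform convergence on $[0,A+1]$ of $s \mapsto G_{p,n}(\lfloor ns \rfloor)/n$ applied at $s = \mathcal N((n-1)t/2)/n$ (which equals $M_n(t)$ up to a negligible $O(1/n)$) covers this.

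\textbf{Third step: the integral.} Since $0 \leq 1 - \mathcal G_{p,n}(t)/n \leq 1$ and $0 \leq 1 - g_p(t/2) \leq 1$ for every $t\geq 0$, we simply bound
$$
\bigg|\int_0^A \bigg(1-\frac{\mathcal G_{p,n}(t)}{n}\bigg)\mathrm dt - \int_0^A (1 - g_p(t/2))\mathrm dt \bigg| \leq A \cdot \sup_{t\in[0,A]}\bigg|\frac{\mathcal G_{p,n}(t)}{n}-g_p(t/2)\bigg|,
$$
and the right-hand side tends to $0$ in probability by the second step.

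The only non-routine point is ensuring that the composition with the random time-change $M_n$ preserves the convergence in probability uniformly on $[0,A]$; this is handled by the uniform continuity of $g_p$ and by taking the discrete fluid limit on a slightly larger interval $[0,A+1]$ to absorb the fluctuations of $M_n$. Apart from that, the argument is an essentially direct de-Poissonization.
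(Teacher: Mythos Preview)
Your proof is correct and follows essentially the same approach as the paper: both use the coupling (\ref{lien:dc}) to write $\mathcal G_{p,n}(t)=G_{p,n}(\mathcal N((n-1)t/2))$, invoke the functional law of large numbers for the Poisson process, and conclude via the triangle inequality using Theorem \ref{thm:fluid limit} and the regularity of $g_p$ (you use uniform continuity, the paper uses boundedness of $g_p'$, which amounts to the same thing). The integral statement is handled identically.
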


\begin{proof} We write for $t\geq0$, $\mathcal G_{p,n}(t)=G_{p,n}(\mathcal N((n-1)t/2))$, where $G_{p,n}$ is a discrete version of the $p$-frozen model and $\mathcal N$ is the Poisson process involved in (\ref{lien:dc}), independent of $G_{p,n}$, $\forall n$.  
Using that the derivative of $g_p$ is bounded on $\mathbb R_+$ and the convergence in probability of the rescaled process $\mathcal N(n \cdot)/n$ towards the identity function (for the topology of uniform convergence on compacts), we have that for all $A>0$, as $n \rightarrow \infty$,
$$
\sup_{t \in [0,A]} \left| g_p\left(\frac{\mathcal N((n-1)t/2)}{n}\right)-g_p(t/2)\right|~\overset{\mathbb P} \longrightarrow~0.
$$
Together with Theorem \ref{thm:fluid limit} and a standard use of the triangular inequality this leads to the announced convergence.
\end{proof}

\medskip

The convergence of the integrals can be completed as follows:

\begin{corollary}
\label{cor:cvintegrale}
As $n \rightarrow \infty$,
$$\int_0^{\infty} \left(1-\frac{\mathcal G_{p,n}(t)}{n} \right) \mathrm dt~\overset{\mathbb P}\longrightarrow~\int_0^{\infty} \left(1-g_p(t/2)\right) \mathrm dt=\frac{\psi(1/p)+\gamma_E}{1-p}.$$
Additionally, for any sequence of positive real numbers $(t_n)$ such that $t_n \rightarrow \infty$,
$$\int_0^{t_n} \left(1-\frac{\mathcal G_{p,n}(t)}{n} \right) \mathrm dt~\overset{\mathbb P}\longrightarrow~\frac{\psi(1/p)+\gamma_E}{1-p}.$$
\end{corollary}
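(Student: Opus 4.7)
The substitution $u=t/2$ combined with Lemma~\ref{lem:identintegral} gives $\int_0^\infty(1-g_p(t/2))\,dt = 2\int_0^\infty(1-g_p(u))\,du = (\psi(1/p)+\gamma_E)/(1-p)$, so both announced limits coincide. My plan is to combine Corollary~\ref{cor:continuous_cv} (which gives the convergence on every compact $[0,A]$) with a uniform-in-$n$ tail estimate on $[A,\infty)$. Specifically, both statements of the corollary reduce to showing that, for each $\varepsilon>0$, there is $A=A(\varepsilon)$ so that for all $n$ large enough
\[
\mathbb{P}\!\left(\int_A^\infty (1-\mathcal G_{p,n}(t)/n)\,dt > \varepsilon\right) < \varepsilon.
\]
Granted this, the first statement follows by the triangle inequality from Corollary~\ref{cor:continuous_cv} on $[0,A]$ together with $\int_A^\infty(1-g_p(t/2))\,dt\to 0$; the second follows from the first by noting that for any $t_n\to\infty$, eventually $t_n\geq A$, and then monotonicity of the gel gives $\int_{t_n}^\infty(1-\mathcal G_{p,n}(t)/n)\,dt\leq \int_A^\infty(1-\mathcal G_{p,n}(t)/n)\,dt$, together with the identity $\int_0^{t_n}=\int_0^\infty-\int_{t_n}^\infty$.

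For the tail estimate I would bound the expectation. By exchangeability of the vertices and Fubini,
\[
\mathbb{E}\!\left[\int_A^\infty(1-\mathcal G_{p,n}(t)/n)\,dt\right] \;=\; \int_A^\infty \mathbb{P}(T_1^{(n)}>t)\,dt,
\]
where $T_1^{(n)}$ is the continuous-time freezing time of vertex~$1$. The key observation is a lower bound on the freezing intensity: whenever vertex~$1$ still sits in a tree, every edge from its tree to a vertex of the gel is necessarily unused (else vertex~$1$ would already be frozen), rings at rate $1/n$, and, upon ringing, is accepted with probability~$p$. The predictable intensity of the counting process $\mathbbm{1}_{\{T_1^{(n)}\leq t\}}$ is therefore bounded below by $p\,\mathcal G_{p,n}(s)/n$, and the standard exponential formula for the first jump of a counting process with predictable intensity yields
\[
\mathbb{P}(T_1^{(n)}>t)\;\leq\; \mathbb{E}\!\left[\exp\!\left(-\frac{p}{n}\int_0^t \mathcal G_{p,n}(s)\,ds\right)\right].
\]

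To make this bound uniformly integrable in $t$, I would fix $s_0$ with $g_p(s_0/2)>3/4$ and decompose the expectation according to the first integer $k\geq 0$ for which $\mathcal G_{p,n}(s_0+k)\geq n/2$; denote this index by $K_n$. On the event $\{K_n=k\}$, monotonicity of the gel gives $(p/n)\int_0^t \mathcal G_{p,n}(s)\,ds \geq (p/2)(t-s_0-k)$ for $t\geq s_0+k$, so the integrand is at most $e^{-p(t-s_0-k)/2}$. Summing over $k$ and integrating in $t$, the tail bound takes the form $(2/p)\,e^{-p(A-s_0)/2}\cdot \mathbb{E}[e^{pK_n/2}]$. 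The Azuma--Hoeffding concentration that already underlies Corollary~\ref{cor:continuous_cv} (in the spirit of Lemma~\ref{lm:approx:esp}) shows that $\mathbb{P}(K_n\geq k)$ decays exponentially in $k$ fast enough to give $\sup_n\mathbb{E}[e^{pK_n/2}]<\infty$. The tail expectation is therefore $\leq C\,e^{-p(A-s_0)/2}$ for $A$ large and $n$ large, and Markov closes the argument.

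The main obstacle is exactly this uniform-in-$n$ tail control. Pointwise one has $\mathbb{P}(T_1^{(n)}>t)\to 1-g_p(t/2)$ with $1-g_p(t/2)\sim e^{-pt}$, which is integrable on any $[A,\infty)$; but transferring this asymptotic exponential decay into a dominating integrable function \emph{uniformly in $n$} requires more than mere convergence in probability of the gel. Since $T_1^{(n)}$ can a priori be as large as the absorption time $\mathcal A_{p,n}=O_{\mathbb P}(\ln n)$, no naive moment bound suffices, and the quantitative exponential concentration coming from the differential-equation method is what is needed to rule out the pathological event where the gel grows slowly.
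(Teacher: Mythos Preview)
Your overall architecture (convergence on $[0,A]$ via Corollary~\ref{cor:continuous_cv} plus a uniform tail bound) is the same as the paper's, and your reduction of the second assertion to the first is fine. The difficulty is exactly where you locate it, but your proposed resolution has two gaps.

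\textbf{The exponential formula.} The inequality $\mathbb P(T_1^{(n)}>t)\le \mathbb E\big[\exp(-\tfrac{p}{n}\int_0^t\mathcal G_{p,n}(s)\,ds)\big]$ is not the standard consequence of an intensity lower bound. What the supermartingale argument gives is $\mathbb E\big[\mathbbm 1_{\{T_1^{(n)}>t\}}\exp(\tfrac{p}{n}\int_0^t\mathcal G_{p,n})\big]\le 1$, which is a different inequality; passing to yours would be immediate for a Cox process, but here $\mathcal G_{p,n}$ jumps upward precisely when vertex~$1$ freezes, and a simple two-state toy model shows that the displayed bound can fail in such self-referential situations. A correct version is to freeze the comparison rate once the gel has reached $n/2$: on $\{\mathcal G_{p,n}(A)\ge n/2\}$ the predictable freezing rate of vertex~$1$ is at least $p/2$ for all $s\ge A$, hence $\mathbb P(T_1^{(n)}>t,\ \mathcal G_{p,n}(A)\ge n/2)\le e^{-p(t-A)/2}$ for $t\ge A$.

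\textbf{The exponential moment of $K_n$.} Even granting the formula, your route through the \emph{expectation} $\mathbb E[\int_A^\infty(1-\mathcal G_{p,n}/n)]=\mathbb E[(T_1^{(n)}-A)_+]$ forces you to control the contribution of the rare event $\{K_n\ \text{large}\}$, and for that you invoke $\sup_n\mathbb E[e^{pK_n/2}]<\infty$. This is not delivered by Corollary~\ref{cor:continuous_cv} (pure convergence in probability) nor by Lemma~\ref{lm:approx:esp} (the jumps of $\mathcal G_{p,n}$ are not uniformly bounded, and the resulting Azuma bound degrades with $n$). It may be extractable from the quantitative error terms in Wormald's theorem, but that is substantial extra work and not stated anywhere in the paper.

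\textbf{How the paper avoids both issues.} The paper does not bound the tail expectation; it bounds the tail \emph{probability}. With $\sigma_n(1/2)=\inf\{t:\mathcal G_{p,n}(t)\ge n/2\}$ it splits
\[
\mathbb P\!\Big(\int_{2A}^\infty(1-\mathcal G_{p,n}/n)\ge \varepsilon/3\Big)\le \mathbb P(\sigma_n(1/2)>A)+\mathbb P\!\Big(\int_{A}^\infty\big(1-\tfrac{\mathcal G_{p,n}(t+\sigma_n(1/2))}{n}\big)\,dt\ge \varepsilon/3\Big).
\]
The first term tends to~$0$ by Corollary~\ref{cor:continuous_cv} alone (no rates needed). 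For the second, Lemma~\ref{lem:ub_k} supplies the deterministic bound $\mathbb E[1-\mathcal G_{p,n}(t+\sigma_n(1/2))/n]\le e^{-pt/2}$, valid for all $n$, and Markov finishes. This lemma is your intensity idea applied to the \emph{whole gel} after the random time $\sigma_n(1/2)$ rather than to a single vertex, and the random time-shift is what makes the bound uniform in~$n$ without any moment control on $\sigma_n(1/2)$ itself.
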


\medskip

The proof of this corollary is partly based on the following lemma.

\begin{lemma}
\label{lem:ub_k}
 Let $\sigma_n(1/2):=\inf\{t \geq 0: \mathcal G_{p,n}(t) \geq n/2\}$ and set for $t\geq 0$
$$k_n(t)=\mathbb E\left[1-\frac{\mathcal G_{p,n}(t+\sigma_n(1/2))}{n} \right].$$
Then, $k_n(t) \leq  e^{-pt/2}$ for all $t\geq 0$ and all $n\in \mathbb N$.
\end{lemma}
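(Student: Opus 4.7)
The plan is to reduce $k_n(t)$ to a bound on each vertex's freezing probability. Writing $V(s) := n - \mathcal{G}_{p,n}(s) = \sum_{v=1}^n X_v(s)$, with $X_v(s) := \mathbf{1}\{v \text{ lies in the forest part of } \mathcal{F}_{p,n}(s)\}$, we have $k_n(t) = \mathbb{E}\big[V(\sigma_n(1/2)+t)\big]/n$. Since the gel is non-decreasing, $\mathcal{G}_{p,n}(s) \geq n/2$ for every $s \geq \sigma_n(1/2)$, and in particular $V(\sigma_n(1/2)) \leq n/2$. The strong Markov property at the stopping time $\sigma_n(1/2)$ lets us work conditionally on $\mathcal{F}_{\sigma_n(1/2)}$, effectively restarting the process in a regime where the gel is permanently at least $n/2$.

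The key dynamical input is a vertex-by-vertex lower bound on the freezing rate. From the Poissonized dynamics of Section \ref{sec:Poisson}, for any $s$, any forest vertex $v$, and any $u$ currently in the gel, the pair $\{v,u\}$ has a PPP of rate $1/n$ and each of its rings adds the edge (freezing the entire tree containing $v$) with probability $p$. Summing over $u$ only, the conditional rate $\lambda_v(s)$ at which $X_v$ switches from $1$ to $0$ therefore satisfies
\[
\lambda_v(s) ~\geq~ \frac{p\, \mathcal{G}_{p,n}(s)}{n} ~\geq~ \frac{p}{2}, \qquad s \geq \sigma_n(1/2),
\]
where the second inequality uses the monotonicity of the gel. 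Internal cycle-creation only adds to $\lambda_v$, so this crude bound is deliberately wasteful.

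Since $X_v$ is $\{0,1\}$-valued and can only jump from $1$ to $0$, at instantaneous rate $\lambda_v(s)$ when $X_v(s)=1$, Kolmogorov's forward equation gives for $h_v(t):= \mathbb{E}[X_v(\sigma_n(1/2)+t)\mid \mathcal{F}_{\sigma_n(1/2)}]$
\[
h_v'(t) ~=~ -\mathbb{E}\big[\lambda_v(\sigma_n(1/2)+t)\, X_v(\sigma_n(1/2)+t)\mid \mathcal{F}_{\sigma_n(1/2)}\big] ~\leq~ -\frac{p}{2}\, h_v(t),
\]
whence $h_v(t) \leq X_v(\sigma_n(1/2))\, e^{-pt/2}$ by Gr\"onwall. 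Summing over $v$ and using $V(\sigma_n(1/2)) \leq n/2$ yields $\mathbb{E}[V(\sigma_n(1/2)+t) \mid \mathcal{F}_{\sigma_n(1/2)}] \leq (n/2)\,e^{-pt/2}$, and taking expectations gives $k_n(t) \leq \tfrac{1}{2} e^{-pt/2} \leq e^{-pt/2}$, which is even slightly better than the stated bound. The only mildly delicate step is the Kolmogorov/Gr\"onwall argument for the jump indicator $X_v$; I expect this to be the main obstacle, but it can equivalently be handled by a direct coupling argument showing that the first freezing time of $v$ after $\sigma_n(1/2)$ is stochastically dominated by an $\mathrm{Exp}(p/2)$ random variable, by Poisson thinning of the independent PPPs on the edges $\{v,u\}$ as $u$ ranges over the (enlarging) gel.
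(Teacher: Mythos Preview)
Your proof is correct and takes a genuinely different route from the paper's. The paper works directly with the aggregate quantity $k_n(t)$: it lower-bounds the probability that the gel gains at least one vertex during $[t,t+\varepsilon]$ by $p\big(1-e^{-\varepsilon(n-\ell)/2}\big)$ when $\mathcal G_{p,n}=\ell\geq n/2$, turns this into the difference inequality $k_n(t+\varepsilon)-k_n(t)\leq -2p\,k_n(t)\,\frac{1-e^{-\varepsilon n/4}}{n}$, iterates it over a mesh of size $\varepsilon$, and sends $\varepsilon\to 0$. Your argument instead decomposes $V=\sum_v X_v$ and bounds, for each forest vertex $v$ after $\sigma_n(1/2)$, the freezing rate from below by $p\mathcal G_{p,n}(s)/n\geq p/2$; the coupling with the minimum of the independent thinned PPPs on edges to the initial gel then gives $\mathbb P(X_v(\sigma_n(1/2)+t)=1\mid\mathcal F_{\sigma_n(1/2)})\leq e^{-pt/2}$ without any discretization. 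This is arguably cleaner, avoids the $\varepsilon\to 0$ limit, and even yields the slightly sharper $k_n(t)\leq\tfrac12 e^{-pt/2}$ by summing only over the $V(\sigma_n(1/2))\leq n/2$ forest vertices. The Kolmogorov/Gr\"onwall version you sketch first is also valid (the full graph process is a bounded-rate Markov jump process, so $t\mapsto\mathbb E[X_v(\sigma_n(1/2)+t)\mid\mathcal F_{\sigma_n(1/2)}]$ is differentiable with the stated derivative), but your coupling alternative is the more transparent justification.
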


\begin{proof} 
The dynamic of the continuous model implies that conditional on $\mathcal G_{p,n}(t+\sigma_n(1/2))=\ell$ (for $\ell$ integer, $n/2 \leq \ell \leq n$), 
we have that $
\mathcal G_{p,n}(t+\varepsilon+\sigma_n(1/2)) \geq \mathcal G_{p,n}(t+\sigma_n(1/2))+1$  with probability greater than  
$$p  \left(1-\exp(-\varepsilon  \ell (n-\ell)/n)\right)~\geq~p \left(1-\exp(-\varepsilon (n-\ell)/2)\right), \quad \forall \varepsilon>0.
$$
Since the function $~x \mapsto (1-\exp(-x))/x ~$ is decreasing on $(0,\infty)$ and since $n-\ell \leq n/2$ when $\ell \geq n/2$, the above inequality implies that
$$
k_n(t+\varepsilon)-k_n(t)=\frac{\mathbb E\left[\mathcal G_{p,n}(t+\sigma_n(1/2))-\mathcal G_{p,n}(t+\varepsilon+\sigma_n(1/2)) \right]}{n}~\leq~ -2p  k_n(t)\frac{1-\exp(-\varepsilon n/4)}{n}.
$$
Consequently, for all $t\geq 0$ and all $\varepsilon>0$,
\begin{eqnarray*}
k_n(t)~\leq ~k_n\left(\varepsilon \left \lfloor \frac{t}{\varepsilon} \right\rfloor \right) &\leq& k_n(0) \left(1-2p \frac{1-\exp(-\varepsilon n/4)}{n} \right)^{\lfloor \frac{t}{\varepsilon}\rfloor}~ \underset{\varepsilon \rightarrow 0}\longrightarrow ~ k_n(0) \exp(-pt/2),
\end{eqnarray*}
which gives the upper bound $k_n(t) \leq k_n(0) \exp(-pt/2) \leq \exp(-pt/2)$ for all $t\geq 0$ and all $n\in \mathbb N$.
\end{proof}

\bigskip

\textbf{Proof of Corollary \ref{cor:cvintegrale}.} Lemma \ref{lem:identintegral} gives the identity $\int_0^{\infty} \left(1-g_p(t/2)\right) \mathrm dt=\big(\psi(1/p)+\gamma_E\big)/(1-p).$ Then
fix $\varepsilon >0, \delta>0$. We want to show that 
\begin{equation}
\label{eq:cvP}
\mathbb P\left(\left | \int_0^{\infty} \left(1-\frac{\mathcal G_{p,n}(t)}{n}\right) \mathrm dt- \int_0^{\infty} \left(1-g_p(t/2)  \right) \mathrm dt \right| \geq \varepsilon \right) \leq \delta~\text{ for all } n \text{ large enough.}
\end{equation}
In that aim fix $A>0$ large enough so that $g_p(A/2)>1/2$ and
$$
\int_{2A}^{\infty} \left(1-g_p(t/2)\right) \mathrm dt \leq \frac{\varepsilon}{3} \quad \text{and} \quad \frac{3}{\varepsilon} \int_A^{\infty} e^{-pt/2} \mathrm dt \leq \frac{\delta}{4}.
$$
Then note that
\begin{eqnarray*}
&& \mathbb P\left(\left | \int_0^{\infty} \left(1-\frac{\mathcal G_{p,n}(t)}{n}\right) \mathrm dt- \int_0^{\infty} \left(1-g_p(t/2)  \right) \mathrm dt \right| \geq \varepsilon \right) \\
&\leq&  \mathbb P\left( \int_{2A}^{\infty} \left(1-\frac{\mathcal G_{p,n}(t)}{n}\right) \mathrm dt \geq \frac{\varepsilon}{3} \right) 
+ \mathbb P\left(\left | \int_0^{2A} \left(1-\frac{\mathcal G_{p,n}(t)}{n}\right) \mathrm dt- \int_0^{2A} \left(1-g_p(t/2)  \right) \mathrm dt \right| \geq \frac{\varepsilon}{3}\right).
\end{eqnarray*}
By Corollary \ref{cor:continuous_cv}, the last probability converges to 0 and is therefore smaller than $\delta/2$ for $n$ large enough. Besides, with the notation of Lemma  \ref{lem:ub_k}, 
\begin{eqnarray*}
 \mathbb P\left( \int_{2A}^{\infty} \left(1-\frac{\mathcal G_{p,n}(t)}{n}\right) \mathrm dt \geq \frac{\varepsilon}{3} \right) &\leq& \mathbb P\left(A < \sigma_n(1/2)\right) \\
 &+& \mathbb P\left( \int_{2A-T_n(1/2)}^{\infty}\left(1-\frac{\mathcal G_{p,n}(t+\sigma_n(1/2))}{n}\right) \mathrm dt \geq \frac{\varepsilon}{3}, A \geq  \sigma_n(1/2) \right) \\
 &\leq &  \mathbb P\left(A <\sigma_n(1/2)\right) + \frac{3}{\varepsilon} \int_A^{\infty} k_n(t) \mathrm dt,
\end{eqnarray*}
where for the second inequality we used that $2A-\sigma_n(1/2) \geq A$ when $\sigma_n(1/2)\leq A$ and then Markov's inequality. By Lemma \ref{lem:ub_k} and the choice of $A$, $\frac{3}{\varepsilon} \int_A^{\infty} k_n(t) \mathrm dt \leq \delta/4$. Whereas $ \mathbb P\left(A <\sigma_n(1/2)\right)= \mathbb P\left(\mathcal G_{p,n}(A) <n/2\right)$ which converges to 0 by Corollary \ref{cor:continuous_cv} and since $g_p(A/2)>1/2$, so this probability is also smaller than $\delta/4$ for large $n$.  All this leads to the expected claim (\ref{eq:cvP}).

The proof of the convergence in probability of $~\int_0^{t_n} \left(1-\mathcal G_{p,n}(t)/n \right) \mathrm dt~$  to $~\int_0^{\infty} \left(1-g_p(t/2)\right) \mathrm dt$ when $t_n \rightarrow \infty$ is similar.
$\hfill \square$

%%%%%%%%%%%%%%%%%%%%%%
\section{Total gelation time and vicinity}
\label{sec:gelation}
%%%%%%%%%%%%%%%%%%%%%%

In order to study the first time at which all the vertices of $\mathrm F_{p,n}$ are frozen, and the number of trees of size $k \in \mathbb N$ in its neighborhood,  we work in this section with the continuous model, which offers more independence and eases the proofs. At the end of the section, in Subsection \ref{sec:dePoisson}, we transfer the results to the discrete model and prove Theorem \ref{thm:absorption} and Proposition \ref{prop:arbreskdiscrets}. In the continuous setting, we recall that $\big(\mathcal F_{p,n}(t),t\geq 0\big)$ denotes the $p$-frozen model on $n$ vertices and $\big(\mathcal G_{p,n}(t),t\geq 0\big)$ the corresponding process of mass of gel. As mentioned in the Introduction, the presence of trees of size $k \in \mathbb N$ in the model is relative to the threshold time
$$
\mathsf t^{(k)}_{p,n}=\frac{\ln(n)}{kp}+\frac{k-1}{kp}\ln \left(\frac{\ln(n)}{kp} \right),
$$
which is decreasing in $k$ provided that $n$ is large enough. 
Our main goal is to compare the last time at which there is a tree of size $k$ in the continuous model, denoted by $\mathcal A_{p,n}^{(k)}$, and the last time at which there is a tree of size greater or equal to $k$, denoted by $\mathcal A_{p,n}^{(k+)}$, to the threshold time $\mathsf t^{(k)}_{p,n}$ (Theorem \ref{prop:tau_k_continu} and Theorem \ref{prop:A_k_continu}). In that aim we start by evaluating the number of trees of size $k$ at any time $t$. In particular, we prove that at time  $\mathsf t^{(k)}_{p,n}+c$, $c \in \mathbb R$, the number of trees of size $k$ converges in distribution towards a Poisson distribution whose parameter depends on $k,p,c$, the digamma function $\psi$ and the Euler constant $\gamma_E$ (Proposition \ref{prop:Poisson_continu}) .

\subsection{On the number of trees of size $k$}

Let 
$$\mathcal N_{p,n}^{(k)}(t)=\frac{1}{k}\sum_{i=1}^n \mathbbm 1_{\{i  \text{ belongs to a tree of size }k \text{ at time }t\}}$$ be the number of trees of size $k$ at time $t \geq 0$. In order to get some information on this quantity, 
we introduce the following notation. If $\mathrm t_1,\ldots,\mathrm t_\ell$ are trees of size $k$ with vertices in $\{1,\ldots,n\}$ and no common vertices (so necessarily $\ell k \leq n$), we set for all times $t\geq 0$
$$
P_n^{(k)}(\ell,t):=\mathbb P \big(\mathrm t_1,\ldots,\mathrm t_{\ell} \text{ are connected components of } \mathcal{F}_{p,n}(t)\big).
$$
A key observation is that this probability can be expressed as follows.

\begin{lemma}
\label{lem:cvPnk}
For all $t\geq 0$,
\begin{eqnarray*}
P_n^{(k)}(\ell,t) &=& \left(1-e^{-t/n}\right)^{\ell (k-1)} \left(e^{-t/n} \right)^{\frac{\ell k(\ell k-1)}{2}-\ell(k-1)}e^{-\frac{\ell k(n-\ell k)}{n}pt} \\
&& \times \; \mathbb E\left[e^{-\frac{\ell k(n-\ell k)}{n} (1-p)\int_0^t \left(1-\frac{\mathcal G_{p,n-\ell k}(u)}{n- \ell k} \right) \mathrm du} \right]
\end{eqnarray*}
where $\mathcal G_{p,n-\ell k}$ is the gel mass process of a $\mathcal F_{p,n-\ell k}$ model.
\end{lemma}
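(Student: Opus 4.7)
The plan is to express the event ``$\mathrm t_1,\ldots,\mathrm t_\ell$ are connected components of $\mathcal F_{p,n}(t)$'' as the intersection of three events: $\mathcal E_1$, that each of the $\ell(k-1)$ tree edges of $\mathrm t_1,\ldots,\mathrm t_\ell$ has been added to the graph by time $t$; $\mathcal E_2$, that none of the $\binom{\ell k}{2}-\ell(k-1)$ non-tree potential edges among the $\ell k$ internal vertices (those of $\bigcup_j\mathrm t_j$) has been added by time $t$; and $\mathcal E_3$, that none of the $\ell k(n-\ell k)$ cross edges between an internal vertex and an outside vertex has been added by time $t$. First I would verify the equivalence: on $\mathcal E_2\cap\mathcal E_3$ the $\ell k$ internal vertices form at every time a sub-forest of $\mathrm t_1\cup\cdots\cup\mathrm t_\ell$ (no chord ever fires and no cross edge is added, so the internal components never merge with the outside and never get frozen), so every ring of an internal tree-edge PPP genuinely adds that edge because its endpoints lie in distinct sub-components of the growing sub-forest (any sub-edge-set of a tree is cycle-free). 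Consequently $\mathcal E_1\cap\mathcal E_2\cap\mathcal E_3$ is equivalent to ``each of the $\ell(k-1)$ tree edges fires at least once in $[0,t]$, no non-tree internal PPP fires, and no cross edge is added''.

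The internal PPPs are independent of everything else, and each potential edge carries an independent rate $1/n$ PPP, so the independence of Poisson processes on disjoint edges yields immediately $\mathbb P(\mathcal E_1)=(1-e^{-t/n})^{\ell(k-1)}$ and $\mathbb P(\mathcal E_2)=(e^{-t/n})^{\binom{\ell k}{2}-\ell(k-1)}$, accounting for the first two factors of the claimed formula. For $\mathbb P(\mathcal E_3\mid\mathcal E_1\cap\mathcal E_2)$ the idea is to introduce the ``pure outside'' process $\mathcal F^{\mathrm{out}}$, defined by running the $p$-frozen dynamics on the $n-\ell k$ outside vertices driven only by the outside PPPs (ignoring cross edges); by construction this is precisely the $p$-frozen model on $n-\ell k$ vertices whose gel mass is denoted $\mathcal G_{p,n-\ell k}$ in the statement. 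Crucially, on $\mathcal E_3$ no cross edge is ever added, so the true restriction of $\mathcal F_{p,n}$ to the outside coincides with $\mathcal F^{\mathrm{out}}$; moreover $\mathcal E_1\cap\mathcal E_2$ depends only on internal PPPs, independent of $\mathcal F^{\mathrm{out}}$ and of the cross PPPs.

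The core computation is then to condition on $\mathcal F^{\mathrm{out}}$. On $\mathcal E_1\cap\mathcal E_2$ every internal vertex $v$ belongs to a tree at every time, so at a ring at time $s$ of the PPP of a cross edge $e_{v,w}$ the edge is added with probability $1$ if $w$ lies in a tree of $\mathcal F^{\mathrm{out}}(s)$ and with probability $p$ if $w$ lies in a unicycle. Writing $T_w(s):=\mathbbm 1\{w\text{ is in a tree of }\mathcal F^{\mathrm{out}}(s)\}$, a marking/thinning argument identifies the ``first time $e_{v,w}$ is added'' with the first point of an inhomogeneous Poisson process of rate $\lambda_{v,w}(s)=[p+(1-p)T_w(s)]/n$, and these addition processes are mutually independent across the $\ell k(n-\ell k)$ cross edges once $\mathcal F^{\mathrm{out}}$ is given. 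Using the identity $\sum_{w\notin\bigcup_j\mathrm t_j}T_w(s)=(n-\ell k)-\mathcal G^{\mathrm{out}}(s)$ and summing the compensators over $v$ and $w$ gives
$$
\mathbb P\bigl(\mathcal E_3\,\big|\,\mathcal F^{\mathrm{out}},\mathcal E_1\cap\mathcal E_2\bigr)=\exp\!\left(-\frac{\ell k(n-\ell k)pt}{n}-\frac{\ell k(n-\ell k)(1-p)}{n}\int_0^t\!\Bigl(1-\frac{\mathcal G^{\mathrm{out}}(s)}{n-\ell k}\Bigr)\mathrm ds\right),
$$
and taking the expectation over $\mathcal F^{\mathrm{out}}$ (whose gel process is $\mathcal G_{p,n-\ell k}$) produces exactly the last two factors of the announced formula.

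The hard part will be making rigorous the compensator identity for the cross edges. Cleanly, one attaches to each ring of the rate-$1/n$ PPP of $e_{v,w}$ an independent uniform mark (thought of as an ``acceptance coin''), reads off the ``added'' event from the pair (ring, mark) using the $\mathcal F^{\mathrm{out}}$-predictable field $T_w$, and invokes the thinning theorem for marked Poisson processes together with the independence across distinct $(v,w)$ given $\mathcal F^{\mathrm{out}}$. A secondary technical point is the decoupling of the true outside dynamics from the cross PPPs on $\mathcal E_3$, which is what legitimizes substituting $\mathcal G^{\mathrm{out}}$ for the true outside gel mass and is in turn what allows the conditional independence used above.
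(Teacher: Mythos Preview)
Your proof is correct and follows exactly the paper's approach: the same three-event decomposition (tree edges present, no internal chords, no cross edge added), the same identification of the outside dynamics with an independent $\mathcal F_{p,n-\ell k}$ model, and the same thinning computation for the cross-edge probability conditional on that outside process. You are more explicit than the paper about the ``added'' versus ``PPP fired'' equivalence and about the decoupling that legitimizes replacing the true outside gel by $\mathcal G_{p,n-\ell k}$, but the substance is identical.
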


This lemma, as well as the following results below and some technical corollaries to prepare Section \ref{sec:extinctiontimes}, will be proved in Section \ref{sub:probaPnk} and Section \ref{sub:moments}.

Using the fluid limit approximation in the continuous model (Section \ref{sec:flcontinuous}), this result will in particular give us useful estimates to evaluate the asymptotic behavior of the moments of $\mathcal N_{p,n}^{(k)}(t_n)$ at some times $t_n$ that may depend on $n$. Indeed, one easily sees, recalling that there are $k^{k-2}$ different trees involving a fixed set of  $k$ labeled vertices (this is Cayley's formula) and using the exchangeability of the vertices $1,\ldots,n$, that
\begin{equation}
\label{esp:N}
\mathbb E\left[\mathcal N_{p,n}^{(k)}(t)\right]=\frac{n}{k} \binom{n-1}{k-1} k^{k-2} P^{(k)}_n(1,t).
\end{equation}
More generally we can express the factorial moments of $k \mathcal N_{p,n}^{(k)}(t)$ in terms of the probabilities $P^{(k)}_n\left(\ell,t\right)$. This is the aim of the following lemma, where $\mathcal P_j$ denotes the set of partitions of an integer $j$, i.e. the set of finite non-increasing sequences $(n_1,\ldots,n_\ell) \in \mathbb N$, with $\ell$ the length of the sequence, such that $\sum_{i=1}^\ell n_i=j$. For such a sequence and all $1\leq i \leq j$, we let $m_i$ denote the number of occurrences of the integer $i$ in the sequence.

\begin{lemma} 
\label{lem:factorial_moments}
For all times $t\geq 0$ and all positive integers $j \leq n$ 
\begin{eqnarray*}
&& \mathbb E\left[\prod_{i=0}^{j-1}(k\mathcal N_{p,n}^{(k)}(t)-i) \right] \\
&=& 
\sum_{\substack{(n_1,\ldots,n_\ell) \in \mathcal P_j  \\ \text{ such that } n_i \leq k \: \forall i, \text{and }\ell k\leq n}} \frac{n!}{(n-k\ell)!} \binom{j}{n_1,\ldots,n_\ell} \cdot \prod_{i=1}^j \frac{1}{m_i!} \cdot \prod_{i=1}^l\frac{1}{(k-n_i)!} \cdot (k^{k-2})^\ell P^{(k)}_n(\ell,t),
\end{eqnarray*}
whereas $\prod_{i=0}^{j-1}(k\mathcal N_{p,n}^{(k)}(t)-i)=0$ when $j>n$.
\end{lemma}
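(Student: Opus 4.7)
The plan is to decompose $k\mathcal N_{p,n}^{(k)}(t)$ as the sum of indicators $\mathbbm 1_{A_i}$, where $A_i = \{i \text{ belongs to a tree of size } k \text{ at time } t\}$, apply the standard factorial-moment identity
\begin{equation*}
\mathbb E\!\left[\prod_{i=0}^{j-1}\bigl(k\mathcal N_{p,n}^{(k)}(t)-i\bigr)\right] \;=\; \sum_{\substack{(v_1,\ldots,v_j)\\ \text{distinct}}} \mathbb P\bigl(A_{v_1}\cap\cdots\cap A_{v_j}\bigr),
\end{equation*}
and use the exchangeability of the labelling of the vertices in $\mathcal F_{p,n}(t)$ to reduce the right-hand side to $\frac{n!}{(n-j)!}\,\mathbb P(A_1\cap\cdots\cap A_j)$.

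Next I would decompose $\mathbb P(A_1\cap\cdots\cap A_j)$ according to the set-partition $\pi$ of $\{1,\ldots,j\}$ \emph{induced} by common tree-membership: two of the distinguished vertices lie in the same block of $\pi$ iff they belong to the same tree. On the event $A_1\cap\cdots\cap A_j$, every block of $\pi$ has size at most $k$ (because every tree containing one of those vertices has size exactly $k$), and distinct blocks correspond to vertex-disjoint trees. If $\pi$ has $\ell$ blocks of sizes $n_1,\ldots,n_\ell$, the event ``the induced partition is $\pi$ and $A_1\cap\cdots\cap A_j$ holds'' is realised by specifying, for each block $b_s$, a set $V_s\supset b_s$ with $V_s\cap\{1,\ldots,j\}=b_s$, $|V_s|=k$, together with a labelled tree structure on $V_s$, such that the $V_s$ are pairwise disjoint. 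The number of such extensions is
\begin{equation*}
\frac{(n-j)!}{(k-n_1)!\cdots (k-n_\ell)!\,(n-k\ell)!}\cdot (k^{k-2})^{\ell},
\end{equation*}
using Cayley's formula for the tree structures, and for each such choice the probability that the resulting $\ell$ trees are connected components of $\mathcal F_{p,n}(t)$ is by definition $P_n^{(k)}(\ell,t)$.

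Finally I would regroup the sum over set-partitions by their integer-partition type $(n_1,\ldots,n_\ell)\in\mathcal P_j$ with parts $\leq k$. The number of set-partitions of $\{1,\ldots,j\}$ of a given type equals $\binom{j}{n_1,\ldots,n_\ell}\prod_{i=1}^j (m_i!)^{-1}$, where the multinomial coefficient counts ordered sequences of blocks of the prescribed sizes and the factor $\prod m_i!^{-1}$ removes the overcount coming from permutations of blocks of equal size. Combining everything with the prefactor $n!/(n-j)!$ and absorbing $(n-j)!/(n-k\ell)!$ into $n!/(n-k\ell)!$ yields exactly the stated formula. For $j>n$ the product $\prod_{i=0}^{j-1}(k\mathcal N_{p,n}^{(k)}(t)-i)$ vanishes deterministically since $k\mathcal N_{p,n}^{(k)}(t)\leq n$, which is consistent with the sum being empty (any admissible partition satisfies $j=\sum n_s\leq k\ell\leq n$). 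The main bookkeeping point, rather than any real difficulty, will be justifying the passage from a sum over set-partitions to a sum over integer partitions with the correct multiplicity $\prod m_i!^{-1}$; everything else is a direct application of exchangeability and Cayley's formula.
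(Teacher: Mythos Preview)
Your proposal is correct and follows essentially the same approach as the paper: both decompose $k\mathcal N_{p,n}^{(k)}(t)$ as a sum of indicator variables, apply the factorial-moment identity to reduce to $\frac{n!}{(n-j)!}\mathbb P(A_1\cap\cdots\cap A_j)$ via exchangeability, and then partition that probability according to how the vertices $1,\ldots,j$ are grouped into common size-$k$ trees, counting the extensions with Cayley's formula. Your treatment is slightly more explicit in first summing over set-partitions and then regrouping by integer-partition type, whereas the paper goes directly to integer partitions, but this is only a presentational difference.
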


As said, together with the previous expression of the probabilities $P^{(k)}_n\left(\ell,t\right)$, this will allow us to obtain asymptotics of these moments. Notably this will lead us to the following estimates on expectation of the number of trees of size $k' \in \mathbb N$ at the threshold time $\mathsf t^{(k)}_{p,n}$:

\begin{proposition}
\label{cor:boundsexp}
For all $k,k' \in \mathbb N$ and $c \in \mathbb R$, as $n\rightarrow \infty$
$$\mathbb E\left[\mathcal N_{p,n}^{(k')}\big(\mathsf t^{(k)}_{p,n}+c \big)\right] = O\left( \left(\frac{n}{\ln(n)} \right)^{1-k'/k} \right).$$
\end{proposition}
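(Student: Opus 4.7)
The plan is to upper-bound $\mathbb E\bigl[\mathcal N_{p,n}^{(k')}(t)\bigr]$ directly by combining the exact identity (\ref{esp:N}) with the expression of $P^{(k')}_n(1,t)$ provided by Lemma \ref{lem:cvPnk} specialised to $\ell=1$. Namely, writing $t_n := \mathsf t^{(k)}_{p,n}+c$, we have $t_n = O(\ln n)$ and in particular $t_n/n \to 0$, so that $(1-e^{-t_n/n})^{k'-1}\leq (t_n/n)^{k'-1}$, while $(e^{-t_n/n})^{k'(k'-1)/2-(k'-1)}$ stays bounded. The factor $e^{-k'(n-k')pt_n/n}$ is comparable to $e^{-k'pt_n}$ up to a bounded multiplicative factor (since $k'(n-k')/n = k'+O(1/n)$ and $t_n=O(\ln n)$). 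Finally, the expectation $\mathbb E\bigl[e^{-\frac{k'(n-k')}{n}(1-p)\int_0^{t_n}(1-\mathcal G_{p,n-k'}(u)/(n-k'))\,\mathrm du}\bigr]$ is trivially $\leq 1$ since the exponent is non-positive. Using the crude bound $\binom{n-1}{k'-1}\leq n^{k'-1}/(k'-1)!$, these estimates combine to yield, for all $n$ large enough,
\begin{equation*}
\mathbb E\bigl[\mathcal N_{p,n}^{(k')}(t_n)\bigr]\;\leq\; C_{p,k'}\cdot n\, t_n^{\,k'-1}\, e^{-k'pt_n},
\end{equation*}
for a constant $C_{p,k'}\in(0,\infty)$ depending only on $p,k'$ (the dependence on $c$ being absorbed into $C_{p,k'}$ via $t_n\sim \ln(n)/(kp)$).

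The second step is the elementary asymptotic calculation of $n\,t_n^{\,k'-1}e^{-k'pt_n}$. By definition of $\mathsf t^{(k)}_{p,n}$,
\begin{equation*}
e^{-k'p\mathsf t^{(k)}_{p,n}} \;=\; n^{-k'/k}\left(\frac{\ln n}{kp}\right)^{-k'(k-1)/k}.
\end{equation*}
Combining this with $t_n^{\,k'-1}\sim \bigl(\ln n/(kp)\bigr)^{k'-1}$ and the factor $n$, and using the identity $(k'-1)-k'(k-1)/k = k'/k-1$, one obtains
\begin{equation*}
n\,t_n^{\,k'-1}e^{-k'pt_n}\;=\;O\!\left(n^{1-k'/k}(\ln n)^{k'/k-1}\right)\;=\;O\!\left(\Bigl(\tfrac{n}{\ln n}\Bigr)^{1-k'/k}\right),
\end{equation*}
which is the announced bound.

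There is no real obstacle in this proof: once the exact formula of Lemma \ref{lem:cvPnk} is in hand, the argument reduces to a sequence of crude upper bounds on the various factors (the expectation being simply bounded by $1$) followed by the exponent bookkeeping above. Note that no lower bound on $\mathbb E[\mathcal N^{(k')}_{p,n}(t_n)]$ is needed here, which is why the slightly delicate control of the integral $\int_0^{t_n}(1-\mathcal G_{p,n-k'}(u)/(n-k'))\,\mathrm du$ via Corollary \ref{cor:cvintegrale} can be avoided at this stage.
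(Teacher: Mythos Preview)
Your proof is correct. The overall structure matches the paper's (combine identity (\ref{esp:N}) with the expression of $P^{(k')}_n(1,t)$ from Lemma~\ref{lem:cvPnk}, then do the exponent bookkeeping with the definition of $\mathsf t^{(k)}_{p,n}$), but you take a shortcut at one point: whereas the paper invokes Corollary~\ref{cor:cvP}, which relies on Corollary~\ref{cor:cvintegrale} (and hence on the fluid limit and Lemma~\ref{lem:identintegral}) to obtain the \emph{exact} asymptotic of $P^{(k')}_n(1,\mathsf t^{(k)}_{p,n}+c)$, you simply bound the expectation factor
\[
\mathbb E\Bigl[e^{-\tfrac{k'(n-k')}{n}(1-p)\int_0^{t_n}\bigl(1-\tfrac{\mathcal G_{p,n-k'}(u)}{n-k'}\bigr)\,\mathrm du}\Bigr]\le 1,
\]
which is all that is needed for the stated $O(\cdot)$ upper bound. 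Your route is therefore more elementary and self-contained for this proposition. The paper's route, on the other hand, produces the sharp asymptotic equivalence with explicit constant, showing the bound is tight; that extra precision is not used in Proposition~\ref{cor:boundsexp} itself, but the underlying Corollary~\ref{cor:cvP} is needed anyway for Proposition~\ref{prop:Poisson_continu}. One cosmetic remark: your constant $C_{p,k'}$ should in principle be written $C_{p,k',c}$, since the factor $e^{-k'pc}$ depends on $c$.
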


And more precisely when $k'=k$:

\begin{proposition}
\label{prop:Poisson_continu}
For all $k \in \mathbb N$ and all $c \in \mathbb R$
$$
\mathcal N_{p,n}^{(k)}(\mathsf t^{(k)}_{p,n}+c) \; \underset{n \rightarrow \infty}{\overset{(\mathrm d)}\longrightarrow} \; \mathcal P\left(\frac{k^{k-2}e^{-kpc} e^{-k\left(\psi(1/p)+\gamma_{\mathrm E}\right)}}{k!} \right).
$$
Additionally, we have the convergence of each positive moment of $~\mathcal N_{p,n}^{(k)}(\mathsf t^{(k)}_{p,n}+c)$ to the corresponding moment of the limit Poisson distribution.
\end{proposition}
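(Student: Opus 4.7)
My plan is to prove convergence in distribution via the method of factorial moments, which characterises a Poisson distribution: $Y\sim \mathcal P(\lambda)$ if and only if $\mathbb E[Y(Y-1)\cdots(Y-j+1)]=\lambda^j$ for every $j\geq 1$. Set $X_n:=\mathcal N_{p,n}^{(k)}(\mathsf t^{(k)}_{p,n}+c)$ and $t_n:=\mathsf t^{(k)}_{p,n}+c$. The starting point is the identity
\begin{equation*}
\mathbb E\bigl[X_n(X_n-1)\cdots(X_n-j+1)\bigr]=\frac{n!\,(k^{k-2})^j}{(n-jk)!\,(k!)^j}\,P_n^{(k)}(j,t_n),\qquad jk\leq n,
\end{equation*}
obtained by the same exchangeability-plus-Cayley-formula counting used in the proof of Lemma \ref{lem:factorial_moments}; it is simpler here because distinct components are automatically vertex-disjoint, so the factorial moment merely counts ordered $j$-tuples of vertex-disjoint trees of size $k$ that appear as components at time $t_n$.

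The next step is to extract the large-$n$ asymptotics of $P_n^{(k)}(j,t_n)$ from Lemma \ref{lem:cvPnk}. Since $t_n=O(\ln n)$, one has $t_n/n\to 0$, hence $(1-e^{-t_n/n})^{j(k-1)}\sim (t_n/n)^{j(k-1)}$ and $(e^{-t_n/n})^{\binom{jk}{2}-j(k-1)}\to 1$; likewise $e^{-jkp t_n(1-jk/n)}=e^{-jkp t_n}(1+o(1))$ because $jk\,t_n/n\to 0$. The only non-elementary factor is
\begin{equation*}
\mathbb E\!\left[\exp\!\left(-\tfrac{jk(n-jk)(1-p)}{n}\!\int_0^{t_n}\!\Bigl(1-\tfrac{\mathcal G_{p,n-jk}(u)}{n-jk}\Bigr)\mathrm du\right)\right].
\end{equation*}
Applying the second part of Corollary \ref{cor:cvintegrale} to the $(n-jk)$-vertex model along the diverging sequence $t_n$ shows that the integral converges in probability to $(\psi(1/p)+\gamma_E)/(1-p)$, so the whole exponent converges in probability to the deterministic constant $jk(\psi(1/p)+\gamma_E)$; since the exponential is bounded above by $1$, dominated convergence yields that this expectation tends to $e^{-jk(\psi(1/p)+\gamma_E)}$ (the case $p=1$ is trivially consistent, both sides equaling $1$).

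Combining these asymptotics with $n!/(n-jk)!\sim n^{jk}$ and plugging in the explicit form of $\mathsf t^{(k)}_{p,n}$, one finds $e^{-kp t_n}=e^{-kpc}\bigl(\ln(n)/(kp)\bigr)^{-(k-1)}/n$, whereas $t_n^{k-1}\sim (\ln(n)/(kp))^{k-1}$, so the logarithmic factors cancel and
\begin{equation*}
n^j\, t_n^{j(k-1)}\, e^{-jkp t_n}\,\longrightarrow\,e^{-jkpc}.
\end{equation*}
Putting everything together gives
\begin{equation*}
\mathbb E\bigl[X_n(X_n-1)\cdots(X_n-j+1)\bigr]\,\longrightarrow\,\left(\frac{k^{k-2}}{k!}\,e^{-kpc}\,e^{-k(\psi(1/p)+\gamma_E)}\right)^{\!j}=\lambda^j
\end{equation*}
for every fixed $j\geq 1$, which is the $j$-th factorial moment of $\mathcal P(\lambda)$, yielding the claimed convergence in distribution.

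For the convergence of every positive integer moment, I would then write $X_n^j$ as the finite linear combination of its factorial moments through the Stirling numbers of the second kind, $X_n^j=\sum_{i=1}^{j}S(j,i)X_n(X_n-1)\cdots(X_n-i+1)$; the same identity being valid for $\mathcal P(\lambda)$, the convergence of each factorial moment entails the convergence $\mathbb E[X_n^j]\to\mathbb E[\mathcal P(\lambda)^j]$. The main obstacle in this programme is the dominated-convergence step on a slightly moving probability space: one must ensure that the exponent defined through $\mathcal G_{p,n-jk}$ (rather than $\mathcal G_{p,n}$) still enjoys the fluid-limit estimate of Corollary \ref{cor:cvintegrale}, which is straightforward because $n-jk\to\infty$ at the same rate as $n$ and $t_n\to\infty$, so the hypotheses of that corollary hold along the sequence $(n-jk)$; the uniform bound $\exp(-(\cdot))\leq 1$ then makes the passage to the limit automatic.
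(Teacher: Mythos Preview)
Your proof is correct and takes a more direct route than the paper. The key simplification is that you compute the factorial moments of $X_n=\mathcal N_{p,n}^{(k)}(t_n)$ itself, whereas the paper (via Lemma~\ref{lem:factorial_moments}) computes the factorial moments of $kX_n$, the number of \emph{vertices} in size-$k$ trees. The latter forces a sum over partitions $(n_1,\ldots,n_\ell)\in\mathcal P_j$ recording how the $j$ chosen vertices group into components, and the paper then reassembles the generating function, eventually collapsing the partition sum to exactly your single-term formula
\[
\mathbb E\!\left[\prod_{i=0}^{j-1}(X_n-i)\right]=\frac{n!\,(k^{k-2})^j}{(n-jk)!\,(k!)^j}\,P_n^{(k)}(j,t_n).
\]
Because the paper passes through the full generating function $\sum_{\ell\ge 1}(\cdots)(x^k-1)^\ell$, it needs the uniform bound of Corollary~\ref{cor:TCD1} to justify dominated convergence over $\ell\le n/k$; you avoid this entirely since for each fixed $j$ your expression is a single term, and the standard factorial-moment criterion for Poisson convergence (the limiting moments $\lambda^j$ satisfy Carleman's condition) does the rest. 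Your treatment of the asymptotics of $P_n^{(k)}(j,t_n)$ is essentially a re-derivation of Corollary~\ref{cor:cvP}, and the dominated-convergence step for the $\mathcal G_{p,n-jk}$ integral is justified exactly as you say. The Stirling-number argument for moment convergence is also clean and equivalent to what the paper obtains.
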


\subsubsection{Estimates on the probabilities $P_n^{(k)}(\ell,t)$}
\label{sub:probaPnk}

We set up in this part results related to the probabilities $P_n^{(k)}(\ell,t)$, starting with the proof of Lemma \ref{lem:cvPnk} and then several corollaries that will be useful in the sequel.

\bigskip

\textbf{Proof of Lemma \ref{lem:cvPnk}.} By exchangeability, we  may assume that $\mathrm t_1,\ldots,\mathrm t_\ell$ are trees of size $k$ with vertices in $\{1,\ldots,\ell k\}$ and no common vertices. Note that the evolution process of the graph reduced to the vertices $\ell k+1,\ldots,n$ before there are interactions with vertices $1,\ldots, \ell k$ follows the $\mathcal F_{p,n-\ell k}$ model. The trees $\mathrm t_1,\ldots,\mathrm t_\ell$ are then connected components of  $\mathcal {F}_{p,n}(t)$ if and only if:
\vspace{-0.3cm}
\begin{enumerate}[leftmargin=0.8cm]
\item[$\bullet$] the $\ell (k-1)$ edges of the $\ell$ trees $\mathrm t_1,\ldots,\mathrm t_\ell$ have been added at time $t$, which happens with probability $\big(1-e^{-t/n}\big)^{\ell(k-1)}$
\item[$\bullet$] and, the $\ell k(\ell k-1)/2-\ell(k-1)$ other possible edges between the vertices  $1,\ldots,\ell k$ have not been added at time $t$, which happens with probability $\big(e^{-t/n}\big)^{\frac{\ell k(\ell k-1)}{2}-\ell(k-1)}$, independently
\item[$\bullet$] and, the $\ell k(n-\ell k)$ edges between one of the vertices $1,\ldots,\ell k$ and one of the vertices $\ell k+1,\ldots,n$ have not been added before time $t$: conditioning on the dynamic of the vertices $\ell k+1,\ldots,n$, this happens with probability
$$\mathbb E\left[e^{-\frac{\ell k}{n}\left(\int_0^t \left(n-\ell k-\mathcal G_{p,n-\ell k}(u)\right)\mathrm du+p\int_0^t \mathcal G_{p,n-\ell k}(u) \mathrm du\right)} \right] =\mathbb E\Bigg[e^{-\frac{\ell k(n-\ell k)}{n}\left(pt + (1-p) \int_0^t  \left(1-\frac{\mathcal G_{p,n-\ell k}(u)}{n-\ell k} \right)\mathrm du\right)} \Bigg],$$ independently.
\end{enumerate}
This gives the announced expression of $P^{(k)}_n(\ell,t)$.
$\hfill \square$

\bigskip

This easily leads us to:

\begin{corollary}
\label{cor:cvP}
For any sequence of times $(t_n)$ such that $t_n \rightarrow \infty$ and $t_n=o(n)$, for all $\ell \in \mathbb N$,
\begin{eqnarray*}
P^{(k)}_n(\ell,t_n)
 &=&\left( \frac{t_n}{n}\right)^{\ell(k-1)}e^{-\ell kpt_n-\ell k\left(\psi(1/p)+\gamma_{\mathrm E}\right) +o(1)} \\
 &=& \big(P^{(k)}_n(1,t_n)\big)^{\ell}\left(1+o(1)\right).
\end{eqnarray*}
In particular,
for all $c \in \mathbb R$
$$
n^k \cdot P^{(k)}_n\big(1,\mathsf t^{(k)}_{p,n}+c\big) \underset{n \rightarrow \infty} \longrightarrow e^{-kpc} \cdot e^{-k\left(\psi(1/p)+\gamma_{\mathrm E}\right)}.
$$
\end{corollary}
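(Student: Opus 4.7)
The plan is to start from the explicit formula of Lemma~\ref{lem:cvPnk} and estimate each of its four factors under the hypotheses $t_n\to\infty$ and $t_n=o(n)$. Writing
$$P_n^{(k)}(\ell,t_n) = \underbrace{(1-e^{-t_n/n})^{\ell(k-1)}}_{\mathrm{(I)}}\cdot \underbrace{(e^{-t_n/n})^{\frac{\ell k(\ell k-1)}{2}-\ell(k-1)}}_{\mathrm{(II)}}\cdot \underbrace{e^{-\frac{\ell k(n-\ell k)}{n}pt_n}}_{\mathrm{(III)}}\cdot \underbrace{\mathbb E\left[e^{-\alpha_n X_n}\right]}_{\mathrm{(IV)}},$$
with $\alpha_n = \frac{\ell k(n-\ell k)}{n}(1-p)$ and $X_n = \int_0^{t_n}(1-\mathcal G_{p,n-\ell k}(u)/(n-\ell k))\,\mathrm du$, the first three factors are handled by direct Taylor expansions. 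Indeed $t_n/n\to 0$ gives $\mathrm{(I)}=(t_n/n)^{\ell(k-1)}(1+o(1))$ and $\mathrm{(II)}=1+o(1)$, whereas $t_n=o(n)$ gives $\mathrm{(III)}=e^{-\ell k pt_n+o(1)}$ since the discrepancy $\frac{(\ell k)^2}{n}p t_n\to 0$.

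The key step is the control of the expectation $\mathrm{(IV)}$. Here I would apply Corollary~\ref{cor:cvintegrale} to the model $\mathcal F_{p,n-\ell k}$ (with $n-\ell k\to\infty$ for fixed $\ell, k$) along the diverging sequence $t_n$, yielding $X_n\overset{\mathbb P}\longrightarrow (\psi(1/p)+\gamma_{\mathrm E})/(1-p)$. Combined with $\alpha_n\to \ell k(1-p)$, Slutsky's lemma gives $\alpha_n X_n\overset{\mathbb P}\longrightarrow \ell k(\psi(1/p)+\gamma_{\mathrm E})$. Since $e^{-\alpha_n X_n}\in[0,1]$ is uniformly bounded, dominated convergence applies and $\mathrm{(IV)}\to e^{-\ell k(\psi(1/p)+\gamma_{\mathrm E})}$. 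This is the only nontrivial step of the proof, and it is immediate from earlier results.

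Multiplying the four estimates gives the first announced asymptotic. The identity $P_n^{(k)}(\ell,t_n)=(P_n^{(k)}(1,t_n))^{\ell}(1+o(1))$ is then a direct consequence, since applying the first formula with $\ell$ replaced by $1$ shows that both expressions share the same leading term $(t_n/n)^{\ell(k-1)}e^{-\ell k p t_n-\ell k(\psi(1/p)+\gamma_{\mathrm E})}$.

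For the last statement, I would substitute $t_n=\mathsf t^{(k)}_{p,n}+c$ into the $\ell=1$ case. By the very definition of $\mathsf t^{(k)}_{p,n}$ in \eqref{def:tnk}, one has $kp\,\mathsf t^{(k)}_{p,n}=\ln n+(k-1)\ln(\ln n/(kp))$, so that
$$e^{-kpt_n}=\frac{1}{n}\cdot\Big(\frac{kp}{\ln n}\Big)^{k-1}e^{-kpc},$$
while $t_n^{k-1}=(\ln n/(kp))^{k-1}(1+o(1))$. Therefore
$$n^k\cdot P_n^{(k)}(1,t_n) \;=\; \Big(\frac{kp\, t_n}{\ln n}\Big)^{k-1}e^{-kpc-k(\psi(1/p)+\gamma_{\mathrm E})+o(1)} \;\longrightarrow\; e^{-kpc}e^{-k(\psi(1/p)+\gamma_{\mathrm E})},$$
since $kp\,t_n/\ln n\to 1$. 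No technical obstacle is anticipated; the only point that requires care is the bookkeeping ensuring that all error terms from the four factors combine into a uniform $o(1)$.
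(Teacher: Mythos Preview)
Your proof is correct and follows essentially the same approach as the paper's: start from the explicit formula of Lemma~\ref{lem:cvPnk}, handle the elementary factors by Taylor expansion using $t_n/n\to 0$, and invoke Corollary~\ref{cor:cvintegrale} together with bounded convergence for the expectation term. Your write-up is in fact more detailed than the paper's, which compresses the first three factors into a single $O(t_n/n)$ error and states the conclusion for the expectation in one line.
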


\begin{proof} $\bullet$ From the expression of Lemma \ref{lem:cvPnk}, we immediately see that when $t_n \rightarrow \infty$ and $t_n=o(n)$,
$$
P^{(k)}_n(\ell,t_n)=\left( \frac{t_n}{n}\right)^{\ell(k-1)}e^{-\ell kpt_n+O\left(\frac{t_n}{n}\right)} \cdot \mathbb E \left[e^{-\ell k (1-p) \int_0^{t_n} \left(1-\frac{\mathcal G_{p,n-\ell k}(u)}{n- \ell k} \right) \mathrm du+O\left(\frac{t_n}{n}\right) }\right],
$$
where the $O\left(\frac{t_n}{n}\right)$ in the expectation is deterministic. 
So by Corollary \ref{cor:cvintegrale} and then Lemma \ref{lem:identintegral},
$$
\mathbb E \left[e^{-\ell k (1-p) \int_0^{t_n} \left(1-\frac{\mathcal G_{p,n-\ell k}(u)}{n- \ell k} \right) \mathrm du+O\left(\frac{t_n}{n}\right) }\right] \underset{n \rightarrow \infty} \longrightarrow e^{-\ell k\left(\psi(1/p)+\gamma_{\mathrm E}\right)}.
$$

$\bullet$ Applying this to $t_n=\mathsf t^{(k)}_{p,n}+c$ immediately gives $n^k \cdot P^{(k)}_n\big(1,\mathsf t^{(k)}_{p,n}+c\big)  \rightarrow e^{-kpc} \cdot e^{-k\left(\psi(1/p)+\gamma_{\mathrm E}\right)}$.
\end{proof}

\bigskip

We will also need the following control in order to apply, later in Section \ref{sub:moments}, the Dominated Convergence Theorem.

\begin{corollary}
\label{cor:TCD1}
Fix $c \in \mathbb R$.
For every $\varepsilon>0$, there exists $\ell_{\varepsilon} \in \mathbb N$ and $n_{\varepsilon} \in \mathbb N$ such that
$$
\frac{n!}{\ell ! (n-k\ell)!} \cdot P_n^{(k)}(\ell,\mathsf t^{(k)}_{p,n}+c)  \leq \varepsilon^{\ell}, \quad \text{ for all }  n \geq n_{\varepsilon} \text{ and all } \ell_{\varepsilon} \leq \ell \leq n/k.
$$
\end{corollary}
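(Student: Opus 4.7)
The plan is to work from the explicit expression for $P_n^{(k)}(\ell,t_n)$ supplied by Lemma \ref{lem:cvPnk}, with $t_n:=\mathsf t^{(k)}_{p,n}+c$, and to control the combinatorial prefactor $n!/(\ell!(n-k\ell)!)$ using standard bounds combined with the uniform-in-$\ell$ asymptotic
\[
nt_n^{k-1}e^{-kpt_n}=e^{-kpc}(1+O(\ln\ln n/\ln n)),
\]
obtained by substituting the closed form of $\mathsf t^{(k)}_{p,n}$ into the expression (this is the computation underlying Corollary \ref{cor:cvP}). Using $(1-e^{-t_n/n})^{\ell(k-1)}\leq(t_n/n)^{\ell(k-1)}$, $n!/(n-k\ell)!\leq n^{k\ell}$ and $E_n\leq 1$, this yields for $n$ large enough
\[
\frac{n!}{\ell!(n-k\ell)!}P_n^{(k)}(\ell,t_n)~\leq~\frac{(2e^{-kpc})^{\ell}}{\ell!}\cdot (e^{-t_n/n})^{\binom{\ell k}{2}-\ell(k-1)}\cdot e^{(\ell k)^{2}pt_n/n}.
\]

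I then split the range $[\ell_\varepsilon,n/k]$ according to the size of $(\ell k)^{2}t_n/n$. In the \emph{Poisson range} $\ell\leq\sqrt{n/(k^{2}pt_n)}$, the factor $e^{(\ell k)^{2}pt_n/n}\leq e$ is bounded and the intermediate factor $(e^{-t_n/n})^{\binom{\ell k}{2}-\ell(k-1)}\leq 1$ can be dropped; Stirling's inequality $\ell!\geq (\ell/e)^{\ell}$ then produces the bound $(2eC/\ell)^{\ell}$ with $C=e^{-kpc}$, which is $\leq\varepsilon^{\ell}$ as soon as $\ell\geq\ell_\varepsilon:=\lceil 4eC/\varepsilon\rceil$. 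In the \emph{saddle-point range} $\ell>\sqrt{n/(k^{2}pt_n)}$, one sharpens the intermediate factor to $\exp(-(\ell k)^{2}t_n/(2n)+O(\ell t_n/n))$, which combined with the third factor yields the net exponential $\exp((\ell k)^{2}t_n(p-1/2)/n+O(\ell t_n/n))$. When $p\leq 1/2$ this is bounded and the Poisson-range argument carries over verbatim. For $p>1/2$ one extracts additional suppression from (i) the Gaussian refinement $n!/(n-k\ell)!\leq n^{k\ell}\exp(-(k\ell)^{2}/(2n)(1+o(1)))$, and, when $p<1$, from (ii) $E_n\leq e^{-c_p\ell k(1-\ell k/n)}+O((n-\ell k)^{-1/3})$, where the tail bound on $I_{n,\ell}\geq\int_{0}^{1/2}(1-\mathcal G_{p,n-\ell k}(u)/(n-\ell k))\,\mathrm du$ uses the subcriticality of $\mathcal G_{p,N}$ on $[0,1/2]$ together with a stochastic domination by the cyclic-vertex process of the standard Erd\H os--R\'enyi graph.

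The main obstacle is the extreme sub-regime $\ell k\asymp n$, particularly when $p=1$ (so $E_n\equiv 1$) and $n-\ell k$ is too small for (ii) to be effective. The rescue comes from a direct Stirling analysis: writing $\alpha:=k\ell/n$, one checks that the factor $(t_n/n)^{\ell(k-1)}$ exactly cancels the dominant $(1-1/k)n\ln n$ term in $\ln[n!/(\ell!(n-k\ell)!)]$, leaving the ``$B$-factor'' contribution $-(\ell k)^{2}t_n/(2n)$ as the dominant remaining term, which for $\alpha$ bounded away from $0$ yields a bound $\exp(-\Omega(n\ln n))$, far smaller than $\varepsilon^{\ell}=\exp(\ell\ln\varepsilon)$. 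Choosing $\ell_\varepsilon$ and $n_\varepsilon$ to dominate the thresholds of all three sub-regimes simultaneously closes the argument; the delicate absorption step, where the $p$-dependent quadratic bad factor must be matched with the various correction sources uniformly in $\ell$, is what accounts for most of the technical work.
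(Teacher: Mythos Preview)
Your starting bound and the case $p\le 1/2$ are fine and coincide with the paper's argument. The gap is in your treatment of $p>1/2$ in the saddle-point range. The ``bad'' factor you have to kill is
\[
\exp\!\Big(\tfrac{(\ell k)^2 t_n}{n}\,(p-\tfrac12)\Big),
\]
which, with $t_n\sim \ln n/(kp)$, contributes $\ell\cdot \ell k(1-\tfrac{1}{2p})\tfrac{\ln n}{n}$ to the exponent. Both of your proposed cures are too weak by exactly a factor $t_n$: the Gaussian refinement (i) gives only $-(k\ell)^2/(2n)$, and the bound (ii) on $E_n$ gives only $-c_p\,\ell k(1-\ell k/n)$; neither carries a $t_n$ and neither can absorb the quadratic. (Also, (ii) as you state it is not justified: you need an exponential-moment bound on $\int_0^{1/2}\mathcal G_{p,N}(u)/N\,\mathrm du$, not just $O_{\mathbb P}(N^{-1/3})$.)

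What actually wins is the $1/\ell!$ that is already sitting in your display. Dividing the exponent by $\ell$ and using $t_n\le(1+\eta)\ln n/(kp)$, you are left with a convex function
\[
h(\ell)=\text{const}-\ln\ell + a\,\frac{\ln n}{n}\,\ell k,\qquad a:=(1+\eta)^2\Big(1-\frac{1}{2p}\Big)<1,
\]
so $h$ attains its maximum over $[\ell_\varepsilon,n/k]$ at an endpoint; at $\ell_\varepsilon$ it is $\approx -\ln\ell_\varepsilon$ and at $n/k$ it is $\approx -\ln n+a\ln n=-(1-a)\ln n\to-\infty$. This single convexity check is the paper's proof and replaces your three sub-regimes: once you see that $a<1$, the Poisson-range Stirling argument carries over to \emph{all} $\ell\le n/k$, including the extreme regime $\ell k\asymp n$. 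Your separate ``direct Stirling'' rescue there is correct but redundant, and the intermediate saddle-point range for $p=1$ (where neither (ii) nor your extreme-regime argument applies) is otherwise not covered by what you wrote.
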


\begin{proof}
The "constants" $c_1,c_2,c_3$ appearing in this proof may depend on $k$ and $c$, but not on $n$ or $\ell \leq n/k$.
On the one hand, by Stirling's formula, there exists some constant $c_1$ such that  
\begin{equation}
\label{majo:facto}
\frac{n!}{\ell ! (n-k\ell)!} \; \leq \; \frac{n^{k\ell}}{\ell !} \; \leq \; c_1n^{k\ell} e^{\ell-\ell \ln(\ell)}, \quad \forall n,\ell \geq 1, \ell \leq n/k.
\end{equation}
On the other hand, since the expectation involved in the expression of $P_n^{(k)}(\ell,\mathsf t^{(k)}_{p,n}+c)$ is bounded from above by 1, and since $1-e^{-x}\leq x$ for all $x\geq 0$, 
$$
P_n^{(k)}(\ell,\mathsf t^{(k)}_{p,n}+c) \leq \left( \frac{\mathsf t^{(k)}_{p,n}+c}{n}\right)^{\ell (k-1)} \cdot e^{-\frac{\mathsf t^{(k)}_{p,n}+c}{n} \left( \frac{\ell k(\ell k-1)}{2}-\ell(k-1)+p\ell k(n-\ell k)\right)}.
$$
It is easy to see, using the definition of $\mathsf t^{(k)}_{p,n}$, that for $n$ large enough, simultaneously for all $\ell \geq 1$,
$$
e^{\ell(k-1) \ln(\mathsf t^{(k)}_{p,n}+c)-p\ell k \mathsf t^{(k)}_{p,n}} \leq \frac{e^{\ell}}{n^\ell}.
$$
And also, for $\ell \leq n/k$, that 
$$
e^{\frac{-c}{n} \left( \frac{\ell k(\ell k-1)}{2}-\ell(k-1)+p\ell k(n-\ell k)\right)} \leq e^{c_2 \ell}
$$
for some constant $c_2 \in (0,\infty)$. 
This implies that
\begin{equation}
\label{majo:P_n}
P_n^{(k)}(\ell,\mathsf t^{(k)}_{p,n}+c) \leq \frac{e^{(1+c_2) \ell}}{n^{k\ell}} \cdot e^{-\frac{\mathsf t^{(k)}_{p,n}}{n} \left( \frac{\ell k(\ell k-1)}{2}-\ell(k-1)-p(\ell k)^2\right)}.
\end{equation}
$\bullet$ If $p\leq 1/2$, $ \frac{\ell k(\ell k-1)}{2}-\ell(k-1)-p(\ell k)^2 \geq -\frac{3\ell k}{2}$ for all $\ell \geq 1$, so we have that, since moreover $\mathsf t^{(k)}_{p,n} \geq 0$ and $\frac{\mathsf t^{(k)}_{p,n}}{n} \rightarrow 0$ as $n \rightarrow \infty$,
$$
P_n^{(k)}(\ell,\mathsf t^{(k)}_{p,n}+c) \leq \frac{e^{c_3 \ell}}{n^{k\ell}} 
$$
for all $n$ large enough and all $\ell \geq 1$. Together with (\ref{majo:facto}) this clearly leads to the statement of the corollary.

$\bullet$ If $p\in (1/2,1]$, consider $\eta>0$ such that $a:=(1+\eta)^2(1-\frac{1}{2p}) \in (0,1)$. We then use that $\frac{\mathsf t^{(k)}_{p,n}}{n} \leq (1+\eta) \frac{\ln(n)}{kpn}$ for $n$ large enough, and that $-\frac{\ell k(\ell k-1)}{2}+\ell(k-1)+p(\ell k)^2\leq (1+\eta) (p-\frac{1}{2}) (\ell k)^2$ for $\ell$ large enough, to get for those $n,\ell$, using (\ref{majo:P_n}),
$$
P_n^{(k)}(\ell,\mathsf t^{(k)}_{p,n}+c) \leq \frac{e^{(1+c_2) \ell}}{n^{k\ell}} \cdot e^{ (1+\eta)^2 \frac{\ln(n)}{n} \cdot (1-\frac{1}{2p}) \ell^2 k}=\frac{e^{\ell \left((1+c_2)+a\frac{\ln(n)}{n} \ell k\right)}}{n^{k\ell}}.
$$
Together with (\ref{majo:facto}), we obtain for those $n,\ell$, assuming moreover that $c_1 \leq e^{\ell}$,
\begin{equation}
\label{ineq:intermediaire1}
\frac{n!}{\ell ! (n-k\ell !)} \cdot P_n^{(k)}(\ell,\mathsf t^{(k)}_{p,n}+c) \leq e^{\ell \left((3+c_2)+a\frac{\ln(n)}{n}\ell k-\ln(\ell) \right)}=e^{\ell h(\ell)}
\end{equation}
where $h(x):=(3+c_2)+a\frac{\ln(n)}{n}x k-\ln(x)$. One easily sees that this function is convex on $(0,\infty)$, with
$$
h(x) \leq 3+c_2+\max \left(a \frac{\ln(n)}{n}\ x_0k-\ln(x_0); a \ln(n)-\ln(n/k) \right) \text{ when } x \in [x_0 ; n/k]
$$
(whatever $x_0>0$ is).
For every $\varepsilon>0$, there exists $\tilde \ell_{\varepsilon} \in \mathbb N$ such that $3+c_2+1-\ln(\tilde \ell_{\varepsilon}) \leq \ln(\varepsilon)$. Then, take $x_0 =\tilde \ell_{\varepsilon}$.  Next,
there exists $\tilde n_{\varepsilon} \in \mathbb N$ such that for all $n \geq \tilde n_{\varepsilon}$, we have both $3+c_2+a \ln(n)-\ln(n/k) \leq \ln(\varepsilon)$ (since $a<1$) and $a \frac{\ln(n)}{n} \tilde \ell_{\varepsilon} k\leq 1$. All this implies that for $n \geq \tilde n_{\varepsilon}$ and then $\ell \in [\tilde \ell_{\varepsilon},n/k]$,
$$
h(\ell) \leq \ln(\varepsilon).
$$
Together with (\ref{ineq:intermediaire1}), this gives the expected upper bound.
\end{proof}

\bigskip

Last, we set up the following bound, in order to prove later Corollary \ref{cor:treesafterk}.

\bigskip

\begin{corollary}
\label{cor:unifbound}
Let $k\in \mathbb N$, $c \in \mathbb R$. 
Then for all $n$ large enough,
$$
\sup_{k+1 \leq i\leq n} e^{i} \cdot n^i \cdot P_n^{(i)}(1,\mathsf t^{(k)}_{p,n}+c) \leq \frac{1}{n^{\frac{1}{2k}}}.
$$
\end{corollary}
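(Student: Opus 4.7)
The approach is to bound $P_n^{(i)}(1, \mathsf t^{(k)}_{p,n}+c)$ directly from Lemma~\ref{lem:cvPnk}, discarding the expectation factor (which is $\leq 1$) and using the inequality $1-e^{-x} \leq x$. Writing $t_n := \mathsf t^{(k)}_{p,n}+c$, this gives
$$ e^i n^i P_n^{(i)}(1, t_n) \;\leq\; e \cdot n \cdot (e t_n)^{i-1} \exp\!\left(-\frac{t_n}{n}\left[\frac{(i-1)(i-2)}{2} + p\,i(n-i)\right]\right), $$
and taking the logarithm expresses the right-hand side as a quadratic in $i$,
$$ \ln\!\left(e^i n^i P_n^{(i)}(1, t_n)\right) \;\leq\; U(i) := L_0 + L_1\, i + L_2\, i^2, $$
with $L_1 = 1 + \ln t_n - pt_n + 3t_n/(2n)$, $L_2 = (p-1/2)\,t_n/n$ and $L_0 = \ln n - \ln t_n - t_n/n$. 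In particular $L_1 \sim -\ln n/k$, while $L_2$ carries the sign of $p - 1/2$ and is of order $\ln n /n$.

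The first key step is to show that the supremum of $U$ over $i \in [k+1, n]$ is attained at $i = k+1$. When $p \leq 1/2$, $L_2 \leq 0$ and the vertex $-L_1/(2L_2)$ of the parabola is non-positive, so $U$ is decreasing on $[0,\infty)$. When $p \in (1/2, 1)$, $U$ is convex with vertex $i^* = -L_1/(2L_2) \sim pn/(2(p-1/2)) > n$, so $U$ is still decreasing on $[k+1, n]$. In the borderline case $p = 1$ the vertex equals $n(1+o(1))$ and $U$ may start increasing a little before $i = n$; to handle this I would check directly that $U(n) \sim -n\ln n/(2k)$, which is much smaller than $U(k+1)$ and so does not affect the supremum. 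In all cases one concludes $\sup_{k+1 \leq i \leq n} U(i) = U(k+1)$ for $n$ large enough.

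The second step is to evaluate $U(k+1)$ using the expansions $pt_n = \frac{\ln n}{k} + \frac{k-1}{k}\ln\!\left(\frac{\ln n}{kp}\right) + O(1)$ and $\ln t_n = \ln\ln n + O(1)$ built into the definition of $\mathsf t^{(k)}_{p,n}$. After the cancellations one obtains
$$ U(k+1) \;=\; -\frac{\ln n}{k} + \frac{\ln\ln n}{k} + O(1), $$
which is bounded above by $-\ln n/(2k)$ for $n$ large enough. Exponentiating yields the stated bound $\sup_{k+1 \leq i \leq n} e^i n^i P_n^{(i)}(1, t_n) \leq n^{-1/(2k)}$.

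The main obstacle I anticipate is the case $p > 1/2$: the quadratic coefficient $L_2$ is then positive, so a naive linear bound on $U$ would fail for large $i$. The resolution is that $L_2 = O(\ln n /n)$ is small enough to push the parabola's vertex beyond $n$ when $p < 1$, and the non-negative factor $(e^{-t_n/n})^{(i-1)(i-2)/2}$ kept in the bound is exactly what prevents $U$ from blowing up in this regime. A secondary delicacy is the bookkeeping of the $O(\ln\ln n)$ corrections at $i = k+1$, which must remain strictly below the $\ln n/(2k)$ margin; this is where the precise form of $\mathsf t^{(k)}_{p,n}$, with its $(k-1)\ln(\ln n/(kp))/(kp)$ secondary term designed to absorb the $k\ln t_n$ contribution coming from $\ln(et_n)^{i-1}$ at $i=k+1$, plays its decisive role.
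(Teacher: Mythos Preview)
Your proof is correct and follows essentially the same route as the paper's: bound $P_n^{(i)}$ via Lemma~\ref{lem:cvPnk}, take logarithms to get a quadratic in $i$, argue that the maximum over $[k+1,n]$ is at $i=k+1$, and evaluate there. The only organizational difference is that the paper splits the monotonicity argument as $p\in(0,1)$ versus $p=1$, checking the sign of the (linear) derivative $h_n'$ at both endpoints $k+1$ and $n$, whereas you split as $p\le 1/2$, $p\in(1/2,1)$, $p=1$ and locate the parabola's vertex; these are equivalent, and your treatment of the $p=1$ boundary case (comparing $U(k+1)$ with $U(n)\sim -n\ln n/(2k)$ and using convexity) is exactly what the paper does.
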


\begin{proof}
From Lemma \ref{lem:cvPnk},
\begin{eqnarray*}
 e^{i} \cdot n^i \cdot P_n^{(i)}(1,\mathsf t^{(k)}_{p,n}+c) &\leq& e^{i+i\ln(n)+(i-1)\ln\Big(\frac{\mathsf t^{(k)}_{p,n}+c}{n}\Big)-\left(\frac{i(i-1)}{2}-(i-1)\right)\frac{\mathsf t^{(k)}_{p,n}+c}{n}-\frac{i(n-i)}{n}p(\mathsf t^{(k)}_{p,n}+c)} \\
 &=& e^{\ln(n)\cdot h_n(i)}
\end{eqnarray*}
where $h_n$ is the polynomial of degree 2 defined for $x \in \mathbb R$ by
$$
h_n(x)=\frac{x}{\ln(n)}+x+(x-1)\frac{\ln\Big(\frac{\mathsf t^{(k)}_{p,n}+c}{n}\Big)}{\ln(n)}-\left(\frac{x(x-1)}{2}-(x-1)\right)\frac{\mathsf t^{(k)}_{p,n}+c}{n \ln(n)}-\frac{x(n-x)}{n \ln(n)}p(\mathsf t^{(k)}_{p,n}+c).
$$
We let the reader check that as $n \rightarrow \infty$, 
$$h_n(k+1)\rightarrow -1/k, \quad  h'_n(k+1)\rightarrow -1/k, \quad h'_n(n) \rightarrow -(1-p)/kp.$$ Hence when $p \in (0,1)$, for $n$ large enough, $h_n'$ is strictly negative, and therefore $h_n$ strictly decreasing, on $[k+1,n]$, uniformly smaller than $-1/2k$ (for $n$ large enough). When $p=1$, $h''_n(x)=\frac{\mathsf t^{(k)}_{p,n}+c}{n\ln(n)}$ for all $x$, hence for $n$ large enough $g_n$ is convex, and $h_n(n) \sim-n/2k$ which, together with $h_n(k+1)\rightarrow -1/k$, implies that $h_n$ is also uniformly smaller than $-1/2k$ on $[k+1,n]$ for $n$ large enough. 

In conclusion, whatever $p \in (0,1]$, we have that for $n$ large enough and then all $i \in \llbracket k+1 ,n\rrbracket$,
$$
e^{i} \cdot n^i \cdot P_n^{(i)}(1,\mathsf t^{(k)}_{p,n}+c) \leq e^{-\ln(n) \frac{1}{2k}}.
$$
\end{proof}

\subsubsection{Moments and asymptotics of $\mathcal N_{p,n}^{(k)}$}
\label{sub:moments}

We start this section with the proof of the identities of Lemma \ref{lem:factorial_moments}. We will then see how to use them to prove, together with the estimates of Corollary \ref{cor:cvP} and Corollary \ref{cor:TCD1}, the bounds on $\mathcal N_{p,n}^{(k')}(\mathsf t^{(k)}_{p,n}+c)$, $k' \in \mathbb N$ of Proposition \ref{cor:boundsexp} and the asymptotic distribution of $\mathcal N_{p,n}^{(k)}(\mathsf t^{(k)}_{p,n}+c)$ stated in Proposition \ref{prop:Poisson_continu}. Last, in complement and to prepare the next section on the behavior of the times $\mathcal A_{p,n}^{(k)}$, $\mathcal A_{p,n}^{(k+)}$, we set up a corollary saying that there is asymptotically no tree of size strictly larger than $k$ at time $\mathsf t^{(k)}_{p,n}+c$, for any $c$ (Corollary \ref{cor:treesafterk}). 

\bigskip

\textbf{Proof of Lemma \ref{lem:factorial_moments}.}
Since $k\mathcal N_{p,n}^{(k)}(t) \in \llbracket 0;n \rrbracket$, its $j$-th factorial moment is null when $j>n$. In the sequel we fix $j\leq n$.
For  $1 \leq i \leq n$ and $t\geq 0$, consider the random variable $$Y_{n,i:}=\mathbbm 1_{\{\text{the vertice } i \text{ belongs to a tree of size $k$ at time } t\}}$$
so that $k\mathcal N_{p,n}^{(k)}(t)=\sum_{i=1}^n Y_{n,i}$. The five lines that follow are classical in the study of random graphs:
using that $Y_{n,i}^2=Y_{n,i}$, one sees by induction (on $j$) that
$$
\prod_{i=0}^{j-1}(k\mathcal N_{p,n}^{(k)}(t)-i)=\sum_{1 \leq i_1 \neq i_2 \neq \ldots \neq i_j \leq n} \prod_{m=1}^j Y_{n,i_m}.
$$
Since the random variables $Y_{n,i}, 1 \leq i \leq n$ are exchangeable, this gives the following expression for the $j$-th factorial moment of $k\mathcal N_{p,n}^{(k)}(t)$:
\begin{eqnarray*}
\mathbb E\left[\prod_{i=0}^{j-1}(k\mathcal N_{p,n}^{(k)}(t)-i) \right]&=& \frac{n!}{(n-j)!}\cdot \mathbb E\left[\prod_{i=1}^j Y_{n,i}\right].
\end{eqnarray*}
Next, $\mathbb E\left[\prod_{i=1}^j Y_{n,i}\right]$ is the probability that the vertices $1,\ldots,j$ belong to a tree of size $k$ at time $t$. By decomposing according to the number of vertices among $1,\ldots, j$ which belong to a same tree of size $k$ -- which gives a partition of $j$ -- and using that the number of trees on  $k$ labeled vertices is $k^{k-2}$, we obtain:
\begin{equation*}
 \mathbb E\left[\prod_{i=1}^j Y_{n,i}\right] = \sum_{\substack{(n_1,\ldots,n_{\ell}) \in \mathcal P_j, \\ n_i \leq k \: \forall i, \; \ell k\leq n}} \binom{j}{n_1,\ldots,n_{\ell}} \cdot \frac{1}{\prod_{i=1}^j m_i!} \cdot \binom{n-j}{k-n_1,\ldots,k-n_\ell, n-k\ell} \cdot (k^{k-2})^{\ell} P^{(k)}_n(\ell,t).\\
\end{equation*}
Together with the above expression of the $j$-th factorial moment of $k\mathcal N_{p,n}^{(k)}(t)$ this gives the result.
$\hfill \square$

\bigskip

\textbf{Proof of Proposition \ref{cor:boundsexp}.}
We combine (\ref{esp:N}) with Corollary \ref{cor:cvP} and the definition of $\mathsf t^{(k)}_{p,n}$, to see that 
\begin{eqnarray*}
\mathbb E\left[\mathcal N_{p,n}^{(k')}\big(\mathsf t^{(k)}_{p,n}+c \big)\right] &\underset{n\rightarrow \infty}\sim& n^{k'} \cdot \frac{k'^{k'-2}}{k'!} \cdot P_n^{(k')}\big(1,\mathsf t^{(k)}_{p,n}+c \big) \\
&\underset{n\rightarrow \infty} \sim & n^{k'} \cdot \frac{k'^{k'-2}}{k'!} \cdot n^{1-k'-\frac{k'}{k}} \cdot (\ln(n))^{\frac{k'}{k}-1} \cdot  (kp)^{\frac{k'}{k}-k'} \cdot e^{- k' p c-k' \left(\psi(1/p)+\gamma_{\mathrm E}\right)} \\
&=& O \left( \left(\frac{n}{\ln(n)} \right)^{1-k'/k} \right).
\end{eqnarray*}
$\hfill \square$

\bigskip

\textbf{Proof of Proposition \ref{prop:Poisson_continu}.}
1) We start with the convergence in distribution and in that aim use that the probability generating function of a $\llbracket 0 ; n \rrbracket $--valued random variable can be expressed in terms of its factorial moments, which here gives
$$
\mathbb E\left[x^{k \mathcal N^{(k)}_{p,n}(\mathsf t^{(k)}_{p,n}+c)} \right]=1 + \sum_{j=1}^{n}  \frac{(x-1)^{j}}{j!}  \mathbb E\left[\prod_{i=0}^{j-1}\big(k \mathcal N_{p,n}^{(k)}(\mathsf t^{(k)}_{p,n}+c)-i\big) \right], \quad \forall x \in \mathbb R.
$$
From Lemma \ref{lem:factorial_moments}, we rewrite the sum as follows
\begin{eqnarray*}
&& \sum_{j=1}^{n}  \frac{(x-1)^{j}}{j!} \mathbb E\left[\prod_{i=0}^{j-1}\big(k \mathcal N_{p,n}^{(k)}(\mathsf t^{(k)}_{p,n}+c)-i\big) \right] \\
& =& \sum_{j=1}^{n} \frac{ (x-1)^{j}}{j!} \sum_{\substack{(n_1,\ldots,n_\ell) \in \mathcal P_j  \\ n_i \leq k \: \forall i, \; \ell k\leq n}} \frac{n!}{(n-k\ell)!} \binom{j}{n_1,\ldots,n_\ell} \cdot \prod_{i=1}^j \frac{1}{m_i!} \cdot \prod_{i=1}^\ell \frac{1}{(k-n_i)!} \cdot (k^{k-2})^\ell P^{(k)}_n(\ell,\mathsf t^{(k)}_{p,n}+c) \\
&=& \sum_{\ell=1}^{\lfloor n/k\rfloor} \frac{1}{\ell !} \frac{1}{(k!)^{\ell}}\sum_{n_1=1}^k\ldots  \sum_{n_{\ell}=1}^k \prod_{i=1}^{\ell}\binom{k}{n_i} \cdot \frac{n!}{(n-k\ell)!}  (k^{k-2})^{\ell} P^{(k)}_n(\ell,\mathsf t^{(k)}_{p,n}+c) \cdot (x-1)^{\sum_{i=1}^{\ell}n_i} \\
&=& \sum_{\ell=1}^{\lfloor n/k\rfloor} \frac{1}{\ell !} \frac{1}{(k!)^{\ell}}  \cdot \frac{n!}{(n-k\ell)!}  (k^{k-2})^{\ell} P^{(k)}_n(\ell,\mathsf t^{(k)}_{p,n}+c) \left( \sum_{m=1}^k \binom{k}{m} (x-1)^{m}\right)^\ell \\
&=&  \sum_{\ell=1}^{\lfloor n/k\rfloor} \frac{1}{\ell !} \frac{1}{(k!)^{\ell}}  \cdot \frac{n!}{(n-k\ell)!}  (k^{k-2})^{\ell} P^{(k)}_n(\ell,\mathsf t^{(k)}_{p,n}+c) (x^k-1)^\ell.
\end{eqnarray*}
Next, from Corollary \ref{cor:cvP}, for each fixed $\ell \in \mathbb N$,
$$
\frac{n!}{(n-k\ell)!}  P^{(k)}_n(\ell,\mathsf t^{(k)}_{p,n}+c)  \; \underset{n \rightarrow \infty} \longrightarrow \;  \left( e^{-kpc} \cdot e^{-k\left(\psi(1/p)+\gamma_{\mathrm E} \right)}\right)^{\ell},
$$
which leads to 
$$
\mathbb E\left[x^{k \mathcal N^{(k)}_{p,n}(\mathsf t^{(k)}_{p,n}+c)} \right] \; \underset{n \rightarrow \infty} \longrightarrow \; e^{\frac{k^{k-2}}{k!} e^{-kpc} \cdot e^{-k\left(\psi(1/p)+\gamma_{\mathrm E} \right)}(x^k-1)}, 
$$
since we can use the Dominated Convergence Theorem thanks to Corollary \ref{cor:TCD1} (taking there, e.g., $\varepsilon$ such that $\varepsilon (x^k-1) \frac{k^{k-2}}{k!} \leq 1/2$).

Consequently, the probability generating function of $\mathcal N^{(k)}_{p,n}(\mathsf t^{(k)}_{p,n}+c)$ has the following asymptotic behavior
$$
\mathbb E\left[x^{\mathcal N^{(k)}_{p,n}(\mathsf t^{(k)}_{p,n}+c)} \right] \; \underset{n \rightarrow \infty} \longrightarrow \;  e^{\frac{k^{k-2}}{k!} e^{-kpc} \cdot e^{-k\left(\psi(1/p)+\gamma_{\mathrm E} \right)}(x-1)}, \quad \forall x \in \mathbb R,
$$
and we recognize in the right-hand side the probability generating function of the Poisson distribution with parameter $\frac{k^{k-2}}{k!} e^{-kpc} \cdot e^{-k\left(\psi(1/p)+\gamma_{\mathrm E} \right)}$.

\smallskip

2) Using the same arguments, we see that for each fixed $j \in \mathbb N$,
\begin{eqnarray*}
 \mathbb E\left[\prod_{i=0}^{j-1}\big(k \mathcal N_{p,n}^{(k)}(\mathsf t^{(k)}_{p,n}+c)-i\big) \right] &\leq& j! \sum_{\ell =1}^{\lfloor n/k\rfloor} \frac{ (k^{k-2})^{\ell}}{\ell !} \frac{1}{(k!)^{\ell}}  \cdot \frac{n!}{(n-k\ell)!}  P^{(k)}_n(\ell,\mathsf t^{(k)}_{p,n}+c) (2^{k}-1)^\ell,
\end{eqnarray*}
which, thanks to Corollary \ref{cor:cvP} and Corollary \ref{cor:TCD1}, is bounded from above by a finite number independent of $n$. This holds for all $j \in \mathbb N$, consequently any positive moment of $\mathcal N_{p,n}^{(k)}(\mathsf t^{(k)}_{p,n}+c)$ is bounded from above independently of $n$. Together with the convergence in distribution of $\mathcal N_{p,n}^{(k)}(\mathsf t^{(k)}_{p,n}+c)$ to the  Poisson distribution with parameter $k^{k-2}e^{-kpc} \cdot e^{-k\left(\psi(1/p)+\gamma_{\mathrm E} \right)}/k!$, this is sufficient to get the convergence of every positive moment of $\mathcal N_{p,n}^{(k)}(\mathsf t^{(k)}_{p,n}+c)$ to the corresponding moment of the limit Poisson distribution.
$\hfill \square$

\bigskip

Last, the expression (\ref{esp:N}) of the expectation of $\mathcal N_{p,n}^{(i)}(t)$ for $i \geq k+1$, together with Corollary \ref{cor:unifbound}, give immediately that with high probability, there is no tree of size strictly larger than $k$ at times $\mathsf t^{(k)}_{p,n}+c$, for any $c$:

\begin{corollary} 
\label{cor:treesafterk}
For every $k \in \mathbb N$ and every $c \in \mathbb R$, 
$$\mathbb P\left(\text{there exists a tree of size}\geq k+1 \text{  at time } \mathsf t^{(k)}_{p,n} +c \right) \; \underset{n \rightarrow \infty} \longrightarrow \; 0.$$
\end{corollary}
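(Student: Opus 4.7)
The plan is to apply Markov's inequality to the total number of trees of size at least $k+1$ at time $\mathsf t^{(k)}_{p,n}+c$, namely to the random variable $\sum_{i=k+1}^{n} \mathcal N_{p,n}^{(i)}(\mathsf t^{(k)}_{p,n}+c)$, and then bound each expectation uniformly in $i$ using Corollary \ref{cor:unifbound}. Indeed, the event $\{\text{there exists a tree of size} \geq k+1\}$ is contained in $\{\sum_{i=k+1}^{n} \mathcal N_{p,n}^{(i)}(\mathsf t^{(k)}_{p,n}+c) \geq 1\}$, so it suffices to show that $\sum_{i=k+1}^n \mathbb E[\mathcal N_{p,n}^{(i)}(\mathsf t^{(k)}_{p,n}+c)] \to 0$.

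First, I would use identity (\ref{esp:N}) to write, for each $i \in \llbracket k+1, n \rrbracket$,
$$\mathbb E\big[\mathcal N_{p,n}^{(i)}(t)\big] \;=\; \frac{n}{i}\binom{n-1}{i-1} i^{i-2}\, P_n^{(i)}(1,t) \;\leq\; \frac{i^{i-2}}{i!}\, n^i\, P_n^{(i)}(1,t),$$
and then invoke Stirling's formula in the form $i^{i-2}/i! \leq C\, e^i\, i^{-5/2}$ for some absolute constant $C$. This yields
$$\mathbb E\big[\mathcal N_{p,n}^{(i)}(\mathsf t^{(k)}_{p,n}+c)\big] \;\leq\; \frac{C}{i^{5/2}}\cdot e^i\, n^i\, P_n^{(i)}(1,\mathsf t^{(k)}_{p,n}+c).$$

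Second, I would apply Corollary \ref{cor:unifbound}, which provides the uniform bound $e^i\, n^i\, P_n^{(i)}(1,\mathsf t^{(k)}_{p,n}+c) \leq n^{-1/(2k)}$ valid for all $i \in \llbracket k+1, n\rrbracket$ and all $n$ sufficiently large. Summing gives
$$\sum_{i=k+1}^{n} \mathbb E\big[\mathcal N_{p,n}^{(i)}(\mathsf t^{(k)}_{p,n}+c)\big] \;\leq\; C\, n^{-1/(2k)} \sum_{i=k+1}^{\infty} \frac{1}{i^{5/2}} \;=\; O\big(n^{-1/(2k)}\big),$$
since $\sum i^{-5/2}$ converges. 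The desired claim then follows from Markov's inequality.

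All the real difficulty has already been absorbed into the preparatory estimate Corollary \ref{cor:unifbound}; once that uniform-in-$i$ bound is in hand, the remaining argument is a one-line Markov bound plus a convergent tail sum. The only point requiring a minimum of care is the crude Stirling reduction, which must be uniform in $i$ and produces the summable factor $i^{-5/2}$ that makes the series finite.
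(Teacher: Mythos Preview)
Your proof is correct and follows essentially the same approach as the paper's own proof: bound the probability by $\sum_{i=k+1}^n \mathbb E[\mathcal N_{p,n}^{(i)}(\mathsf t^{(k)}_{p,n}+c)]$, use \eqref{esp:N} together with the Stirling bound $i^{i-2}/i!\leq C e^i i^{-5/2}$, and then apply Corollary~\ref{cor:unifbound} to conclude that the sum is $O(n^{-1/(2k)})$. The only cosmetic difference is that the paper first applies a union bound and then Markov term-by-term, whereas you apply Markov to the total count; both routes yield the same sum of expectations.
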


\textbf{Proof.}
From (\ref{esp:N}),
\begin{eqnarray*}
\mathbb P\left(\text{there exists a tree of size}\geq k+1 \text{  at time } \mathsf t^{(k)}_{p,n} +c \right) &\leq& \sum_{i=k+1} ^n \mathbb P\left(\mathcal N_{p,n}^{(i)}(\mathsf t^{(k)}_{p,n}+c) \geq 1\right) \\
&\leq & \sum_{i=k+1}^n \mathbb E\left[\mathcal N_{p,n}^{(i)}(\mathsf t^{(k)}_{p,n}+c) \right] \\
&\leq& \sum_{i=k+1}^n n^i \frac{i^{i-2}}{i!} P_n^{(i)}(1,\mathsf t^{(k)}_{p,n}+c). 
\end{eqnarray*}
By Stirling's formula, $\frac{i^{i-2}}{i!} \leq C \frac{e^i}{i^2\sqrt i}$ for some finite $C$ and all $i\geq 1$. Together with Corollary \ref{cor:unifbound}, this implies that for $n$ large enough
\begin{eqnarray*}
\mathbb P\left(\text{there exists a tree of size}\geq k+1 \text{  at time } \mathsf t^{(k)}_{p,n} +c \right) 
&\leq& C\sum_{i=k+1}^n  n^i \frac{e^i}{i^2\sqrt i} P_n^{(i)}(1,\mathsf t^{(k)}_{p,n}+c) \\
&\leq & \frac{C}{n^{\frac{1}{2k}}} \sum_{i=k+1}^n  \frac{1}{i^2\sqrt i}
\end{eqnarray*}
which converges to 0 as $n \rightarrow \infty$.
$\hfill \square$

\subsection{Asymptotics of $\mathcal A_{p,n}^{(k)}$, $\mathcal A_{p,n}^{(k+)}$}
\label{sec:extinctiontimes}

We are now ready to study the last times at which there is a tree of size $k \in \mathbb N$ or of size greater or equal to $k$ in the model $\mathcal {F}_{p,n}$:
$$\mathcal A_{p,n}^{(k)}=\sup\left\{t\geq 0:\mathcal N_{p,n}^{(k)}(t) \geq 1\right\},\qquad
\mathcal A_{p,n}^{(k+)}=\sup\left\{t\geq 0:\sum_{i \geq k} \mathcal N_{p,n}^{(i)}(t) \geq 1 \right\}.$$
We recall that $\mathrm{Gu}$ denote a standard G\" umbel distribution. 

\begin{theorem}
\label{prop:tau_k_continu}
For all $k \in \mathbb N$
$$
\mathcal A_{p,n}^{(k)} - \mathsf t^{(k)}_{p,n}  \; \underset{n \rightarrow \infty}{\overset{(\mathrm d)}\longrightarrow} \; \frac{\mathrm{Gu}}{kp}-\frac{\Psi(1/p)+\gamma_{\mathrm E}}{p}+\frac{\ln (k^{k-2}/k!)}{kp}.
$$
\end{theorem}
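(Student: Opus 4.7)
I aim to prove that for each $c\in \mathbb R$,
$$\mathbb P\big(\mathcal A^{(k)}_{p,n} \leq \mathsf t^{(k)}_{p,n}+c\big) \underset{n\to \infty}\longrightarrow \exp\big(-\lambda_k(c)\big), \quad \lambda_k(c)=\frac{k^{k-2}}{k!}\mathrm{e}^{-kpc-k(\psi(1/p)+\gamma_{\mathrm E})}.$$
A direct computation of $\mathbb P(X \le c)$ for $X$ the random variable on the right-hand side of the statement, using $\mathbb P(\mathrm{Gu}\le x)=\mathrm{e}^{-\mathrm{e}^{-x}}$, confirms that this is exactly its cumulative distribution function. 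Since trivially $\{\mathcal A^{(k)}_{p,n}\le \mathsf t^{(k)}_{p,n}+c\}\subset\{\mathcal N^{(k)}_{p,n}(\mathsf t^{(k)}_{p,n}+c)=0\}$, the upper bound $\limsup_n \mathbb P(\mathcal A^{(k)}_{p,n}\le \mathsf t^{(k)}_{p,n}+c)\le \mathrm{e}^{-\lambda_k(c)}$ is immediate from Proposition \ref{prop:Poisson_continu}.

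For the matching lower bound, setting $s=\mathsf t^{(k)}_{p,n}+c$, it suffices to prove
$$\mathbb P\big(\mathcal N^{(k)}_{p,n}(s)=0,\ \mathcal A^{(k)}_{p,n}>s\big)\underset{n\to\infty}\longrightarrow 0.$$
The event in question requires a new tree of size exactly $k$ to appear after time $s$. As gel components never shrink and trees cannot split, the only creation mechanism is the merger of two disjoint trees of sizes $i$ and $k-i$ with $1\le i\le k-1$. In the Poissonized model, the instantaneous rate at which a specified pair of such trees merges is $i(k-i)/n$ (each of the $i(k-i)$ edges between them fires at rate $1/n$, and between two trees an edge is always added). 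Markov's inequality applied to the counting process of size-$k$ creation events then gives
$$\mathbb P\big(\mathcal N^{(k)}_{p,n}(s)=0,\ \mathcal A^{(k)}_{p,n}>s\big)\ \le\ \sum_{i=1}^{k-1}\frac{i(k-i)}{2n}\int_s^\infty \mathbb E\big[\mathcal N^{(i)}_{p,n}(u)\mathcal N^{(k-i)}_{p,n}(u)\big]\,\mathrm du.$$

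To estimate the joint moments, I apply the same method as in the proof of Lemma \ref{lem:cvPnk}: for two vertex-disjoint trees $T_1,T_2$ of sizes $i,k-i$ occupying a fixed $k$-subset of vertices, the probability that both are connected components of $\mathcal F_{p,n}(u)$ has the same gel-integral expectation factor as $P^{(k)}_n(1,u)$ and differs from it only in the combinatorial edge factor by a ratio $(1-\mathrm{e}^{-u/n})^{-1}\mathrm{e}^{-u/n}\sim n/u$, since the two trees use $k-2$ edges together rather than $k-1$. Summing over Cayley-labelled pairs and applying Corollary \ref{cor:cvP}, one deduces an asymptotic of the form
$$\mathbb E\big[\mathcal N^{(i)}_{p,n}(u)\mathcal N^{(k-i)}_{p,n}(u)\big] \sim \binom{k}{i}\frac{i^{i-2}(k-i)^{k-i-2}}{k^{k-2}}\cdot\frac{n}{u}\cdot\mathbb E\big[\mathcal N^{(k)}_{p,n}(u)\big]$$
for $u$ in the relevant window. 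Combined with the sharp equivalent $\mathbb E[\mathcal N^{(k)}_{p,n}(u)]\sim (nk^{k-2}/k!)\mathrm{e}^{-k(\psi(1/p)+\gamma_{\mathrm E})}u^{k-1}\mathrm{e}^{-kpu}$ (from the proof of Proposition \ref{cor:boundsexp}) and the identity $\mathrm{e}^{-kp\mathsf t^{(k)}_{p,n}}=(kp/\ln n)^{k-1}/n$ from the definition of $\mathsf t^{(k)}_{p,n}$, the tail integral evaluates to
$$\int_s^\infty u^{k-2}\mathrm{e}^{-kpu}\,\mathrm du\sim \frac{(\mathsf t^{(k)}_{p,n})^{k-2}\mathrm{e}^{-kp(\mathsf t^{(k)}_{p,n}+c)}}{kp}=O\!\left(\frac{1}{n\ln n}\right).$$
Assembling all factors yields a bound of order $1/\ln n$, which vanishes; combined with Proposition \ref{prop:Poisson_continu} this delivers $\liminf_n \mathbb P(\mathcal A^{(k)}_{p,n}\le \mathsf t^{(k)}_{p,n}+c)\ge \mathrm{e}^{-\lambda_k(c)}$.

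The principal technical obstacle is the joint-moment estimate and its integration over the full range $u\in[s,\infty)$: Corollary \ref{cor:cvP} only provides the sharp equivalent for $u\to\infty$ with $u=o(n)$, so for $u\gtrsim n$ one must rely on cruder bounds. This is harmless: in that tail regime one may use $P^{(i,k-i)}_n(u)\le (1-\mathrm{e}^{-u/n})^{k-2}\mathrm{e}^{-k(n-k)pu/n}$ directly (the Laplace transform of the gel integral is at most $1$) and exploit the exponential decay $\mathrm{e}^{-kpu}$, which ensures the tail contribution is negligible compared to the main one.
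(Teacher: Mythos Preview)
Your argument is correct and the sandwich
\[
\mathbb P\big(\mathcal N^{(k)}_{p,n}(s)\geq 1\big)\ \leq\ \mathbb P\big(\mathcal A^{(k)}_{p,n}>s\big)\ \leq\ \mathbb P\big(\mathcal N^{(k)}_{p,n}(s)\geq 1\big)+\mathbb P\big(\text{a size-$k$ tree is created after }s\big)
\]
is exactly the decomposition the paper uses. The difference lies in how you and the paper dispatch the second term.

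You bound the expected number of size-$k$ creation events by integrating the instantaneous rate over $u\in[s,\infty)$, controlling $\mathbb E[\mathcal N^{(i)}_{p,n}(u)\mathcal N^{(k-i)}_{p,n}(u)]$ via a two-tree analogue of Lemma~\ref{lem:cvPnk} and then using the crude upper bound $P^{(i,k-i)}_n(u)\le (1-e^{-u/n})^{k-2}e^{-k(n-k)pu/n}$ to make the integral tractable uniformly in $u$. This is self-contained and gives exactly what Theorem~\ref{prop:tau_k_continu} needs. The paper instead conditions once at time $s$: it invokes Lemma~\ref{lm:formation}, whose proof enumerates all tuples of trees present at time~$s$ that could merge into a size-$i$ tree and applies the $O(1/n^{m-1})$ merging bound of Lemma~\ref{lem:treei}, combined with the single-time moment bounds $\mathbb E[\mathcal N^{(\ell)}_{p,n}(s)]=O((n/\ln n)^{1-\ell/k})$ from Proposition~\ref{cor:boundsexp}.

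What each buys: your route avoids the auxiliary Lemmas~\ref{lem:treei}--\ref{lm:formation} and any multi-way merger analysis, at the price of having to integrate and worry about uniformity in $u$ (which you correctly handle by falling back on the crude bound). The paper's route avoids the integration entirely and works at the single time $s$, but it proves the stronger statement that no tree of size $\geq k$ is formed after $s$; this is essential for the subsequent Theorem~\ref{prop:A_k_continu}, where one must rule out arbitrarily large trees. Your argument, as written, only controls trees of size exactly $k$ and would need extension to reach that next result.
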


Since $\mathsf t^{(k)}_{p,n} \ll \mathsf t_{p,n}^{(k-1)}$ as $n \rightarrow \infty$, a consequence of this result is that $\mathbb P\big(\mathcal A_{p,n}^{(k)}<\mathcal A_{p,n}^{(k-1)}<\ldots<\mathcal A_{p,n}^{(1)}\big)\rightarrow 1$  for all $k \geq 2$. This alone is however not sufficient to claim that there is asymptotically no trees of size larger than $k$ after $\mathcal A_{p,n}^{(k)}$, but we can improve it as follows.

\begin{theorem}
\label{prop:A_k_continu}
For all $k \in \mathbb N$, 
$$
\mathbb P\left(\mathcal A^{(k+)}_{p,n} =\mathcal A_{p,n}^{(k)} \right)  \; \underset{n \rightarrow \infty}{\longrightarrow} \; 1.
$$
Consequently,
$$
\mathcal A_{p,n}^{(k+)} - \mathsf t^{(k)}_{p,n}  \; \underset{n \rightarrow \infty}{\overset{(\mathrm d)}\longrightarrow} \; \frac{G}{kp}-\frac{\Psi(1/p)+\gamma_{\mathrm E}}{p}+\frac{\ln (k^{k-2}/k!)}{kp}.
$$
In particular, this gives the asymptotic behavior of the total gelation time, $\mathcal A_{p,n}^{(1+)}=\inf\{t\geq 0:\mathcal G_{p,n}(t)=n\}$.
\end{theorem}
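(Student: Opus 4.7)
The plan is to prove the first assertion $\mathbb P(\mathcal A^{(k+)}_{p,n} = \mathcal A^{(k)}_{p,n}) \to 1$; the distributional convergence of $\mathcal A^{(k+)}_{p,n} - \mathsf t^{(k)}_{p,n}$ will then follow immediately from Theorem~\ref{prop:tau_k_continu}, since $\mathcal A^{(k+)}_{p,n} - \mathcal A^{(k)}_{p,n} \to 0$ in probability. Every tree of size $\geq k$ has size exactly $k$ or size $\geq k+1$, so $\mathcal A^{(k+)}_{p,n} = \max(\mathcal A^{(k)}_{p,n}, \mathcal A^{(k+1)+}_{p,n})$, and everything reduces to showing $\mathbb P(\mathcal A^{(k+1)+}_{p,n} > \mathcal A^{(k)}_{p,n}) \to 0$.

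Fix $\varepsilon > 0$. By Theorem~\ref{prop:tau_k_continu}, I will choose $c > 0$ such that $\mathbb P(\mathcal A^{(k)}_{p,n} < \mathsf t^{(k)}_{p,n} - c) < \varepsilon$ for all $n$ large. On the complementary event, $\mathcal A^{(k+1)+}_{p,n} > \mathcal A^{(k)}_{p,n}$ forces the existence of some tree of size $\geq k+1$ at a time $\tau \geq \mathsf t^{(k)}_{p,n} - c$. Since components only merge (they never split or shrink), such a tree either already is a size-$\geq k+1$ component at time $\mathsf t^{(k)}_{p,n} - c$ (event $B_1$), or is produced by some merger in $(\mathsf t^{(k)}_{p,n} - c, \tau]$ whose output has size $\geq k+1$ (event $B_2$). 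The probability of $B_1$ tends to $0$ by Corollary~\ref{cor:treesafterk}, which holds for every real shift. The remaining task is to bound $\mathbb P(B_2)$.

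In the Poissonized model the rate at which an $(a,b)$-merger occurs at time $t$ is $\tfrac{ab}{n}\,\mathcal N_{p,n}^{(a)}(t)\,\mathcal N_{p,n}^{(b)}(t)$ (with the usual $a = b$ adjustment), so Markov's inequality yields
\[\mathbb P(B_2) \;\leq\; \sum_{j \geq k+1}\;\sum_{\substack{a+b = j \\ 1 \leq a \leq b}} \frac{ab}{n}\int_{\mathsf t^{(k)}_{p,n} - c}^{\infty} \mathbb E\!\left[\mathcal N_{p,n}^{(a)}(t)\mathcal N_{p,n}^{(b)}(t)\right] dt.\]
To estimate the two-tree expectation I will establish a direct pair analogue of Lemma~\ref{lem:cvPnk}: the same independence-of-PPP argument applied to two disjoint specified trees $\mathrm t_a, \mathrm t_b$ of respective sizes $a,b$ yields a factorized expression for $P_n^{(a,b)}(t) := \mathbb P(\mathrm t_a,\mathrm t_b\text{ are both components at }t)$ of exactly the same form as Lemma~\ref{lem:cvPnk} with $\ell k$ replaced by $a+b$. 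Coupled with the counting identity $\mathbb E[\mathcal N_{p,n}^{(a)}(t)\mathcal N_{p,n}^{(b)}(t)] = \binom{n}{a}\binom{n-a}{b} a^{a-2} b^{b-2}\,P_n^{(a,b)}(t)$ (and its symmetric $a=b$ variant), this reduces the problem to the asymptotic analysis of $P_n^{(a,b)}$, which I will carry out exactly as in Corollary~\ref{cor:cvP}.

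The hard part will be the uniform asymptotic control over all contributing pairs $(a,b)$. Using the resulting estimates together with the incomplete-gamma tail bound $\int_T^{\infty} t^{j-2} e^{-jpt}\,dt \sim T^{j-2} e^{-jpT}/(jp)$ at $T = \mathsf t^{(k)}_{p,n} - c$, each term with $a+b = j$ is expected to be of order $n^{1 - j/k}(\ln n)^{j/k-2} e^{O(c)}$, which vanishes for every $j \geq k+1$ with the dominant contribution coming from $j = k+1$. Stirling-type estimates of the form $a^{a-2} b^{b-2}/(a! b!) = O(e^{a+b}/(ab)^{5/2})$ make the sum over $(a,b)$ (equivalently $j$) summable, so $\mathbb P(B_2) \to 0$. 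Combining the bounds on $B_1$, $B_2$ and $\{\mathcal A^{(k)}_{p,n} < \mathsf t^{(k)}_{p,n} - c\}$ proves the first assertion, and the second follows at once.
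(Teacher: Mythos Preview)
Your overall architecture --- pick a reference time via Theorem~\ref{prop:tau_k_continu}, split into ``a tree of size $\geq k+1$ is already present at that time'' ($B_1$) and ``such a tree is created afterwards'' ($B_2$), and dispatch $B_1$ with Corollary~\ref{cor:treesafterk} --- is exactly the paper's proof. The only divergence is in how you handle $B_2$: the paper simply invokes the already-proved Lemma~\ref{lem:formation} (which you are in any case relying on implicitly through Theorem~\ref{prop:tau_k_continu}), whereas you set out to reprove a version of it via a direct first-moment bound on the merger rate.

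Your rate-integration route is a legitimate alternative and, for each \emph{fixed} $j=a+b$, your computation correctly gives a contribution of order $n^{1-j/k}(\ln n)^{j/k-2}$. The soft spot is the uniformity over $j$ that you yourself flag as ``the hard part''. The pair analogue of Corollary~\ref{cor:cvP} that you propose is an asymptotic for \emph{fixed} $(a,b)$: the approximations $(1-e^{-t/n})^{j-2}\sim(t/n)^{j-2}$, $e^{-j(n-j)pt/n}\sim e^{-jpt}$, and the fluid-limit evaluation of the expectation factor all break down once $j$ grows with $n$, and the Stirling factor $e^{j}/(ab)^{5/2}$ by itself does not compensate for this. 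So the sentence ``Stirling-type estimates make the sum summable'' is not yet a proof.

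There is a one-line fix that also happens to be the device behind Lemma~\ref{lem:formation}: on $B_1^c$ every tree at time $\mathsf t^{(k)}_{p,n}-c$ has size $\leq k$, so the \emph{first} merger after that time whose output has size $\geq k+1$ necessarily has both inputs of size $\leq k$ and hence output of size at most $2k$. Thus $B_2\cap B_1^c$ only involves $j\in\{k+1,\ldots,2k\}$ with $a,b\leq k$, a finite range on which your fixed-$(a,b)$ estimates apply directly and give $\mathbb P(B_2\cap B_1^c)\to 0$. With that reduction (or by simply citing Lemma~\ref{lem:formation}) your argument is complete and equivalent to the paper's.
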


To prove these results, we start by setting some preliminary lemmas.

\subsubsection{Preliminaries}

In the following lemmas, c.c. is used as an abbreviation of \emph{connected component}. 

\begin{lemma}
Fix $\ell_1,\ell_2 \in \mathbb N$. For $n\geq \ell_1+\ell_2$, 
consider $\mathrm t_1,\mathrm t_2$ two trees with vertices in $\{1,\ldots,n\}$ and no common vertices, with respective sizes $\ell_1,\ell_2$. Then for every stopping time $T>0$,
\begin{eqnarray*}
&& \mathbb P\left(\mathrm t_1,\mathrm t_2 \text{ connect during the process to give a tree of size }\ell_1+\ell_2 \; | \; \mathrm t_1,\mathrm t_2 \text{ are c.c. of } \mathcal {F}_{p,n}(T)\right) \\
&\leq& \frac{\ell_1\ell_2}{p(n-1)}.
\end{eqnarray*}
\end{lemma}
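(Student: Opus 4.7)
The plan is to apply the strong Markov property at time $T$: conditionally on $\mathrm{t}_1,\mathrm{t}_2$ being connected components of $\mathcal{F}_{p,n}(T)$, the edge-indexed PPPs of rate $1/n$ generating $\mathcal{F}_{p,n}$ restart from $T$ with fresh independent memoryless exponential clocks, and the desired event can then be recast as a race. Call an event \emph{altering} if it changes the connected component containing some vertex of $\mathrm{t}_1$ or $\mathrm{t}_2$. The event ``$\mathrm{t}_1,\mathrm{t}_2$ merge during the process into a tree of size $\ell_1+\ell_2$'' is equivalent to ``the first altering event is the ringing of one of the $\ell_1\ell_2$ cross edges between $\mathrm{t}_1$ and $\mathrm{t}_2$.'' Indeed, any cross ring is automatically added (both endpoints lie in trees) and produces a tree of size $\ell_1+\ell_2$ immediately; conversely, any other altering event---an internal ring in $\mathrm{t}_j$ (cycle, freezing), an external ring from $\mathrm{t}_j$ to a tree (growth, so any later merger would give size strictly larger than $\ell_1+\ell_2$), or an external ring to a unicycle retained with probability $p$ (absorption, so $\mathrm{t}_j$'s vertices become frozen forever)---permanently rules out producing a tree of size exactly $\ell_1+\ell_2$ containing $\mathrm{t}_1\cup\mathrm{t}_2$.

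It remains to bound the probabilities in this race. The cross-ring rate is constantly $\ell_1\ell_2/n$ while $\mathrm{t}_1,\mathrm{t}_2$ remain intact. Write $V'(t)$ for the number of vertices in tree components outside $\mathrm{t}_1\cup\mathrm{t}_2$ and $G(t)$ for the gel size at time $t$, so that $V'(t)+G(t)=n-\ell_1-\ell_2$ as long as no altering event has occurred. Summing acceptance probabilities across the external edges incident to $\mathrm{t}_j$ gives an instantaneous rate of external altering events on $\mathrm{t}_j$ equal to
\begin{equation*}
\frac{\ell_j\bigl(V'(t)+p\,G(t)\bigr)}{n}\;\geq\;\frac{p\,\ell_j\,\bigl(V'(t)+G(t)\bigr)}{n}\;=\;\frac{p\,\ell_j\,(n-\ell_1-\ell_2)}{n},
\end{equation*}
using $p\leq 1$. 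Summing over $j\in\{1,2\}$, the total rate of non-cross altering events is bounded below by $\lambda^*:=p(\ell_1+\ell_2)(n-\ell_1-\ell_2)/n$, uniformly in $t$.

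The standard time-inhomogeneous competing-Poisson inequality---if type $A$ has constant rate $\lambda_A$ and type $B$ has rate at least $\lambda_B^*$ at every time, then $\mathbb{P}(A\text{ first})\leq \lambda_A/(\lambda_A+\lambda_B^*)$, as seen by integrating $\int_0^\infty\lambda_A e^{-\int_0^t(\lambda_A+\lambda_B(s))\,ds}\,dt$---then yields
\begin{equation*}
\mathbb{P}(\text{cross edge wins})\;\leq\;\frac{\ell_1\ell_2/n}{\ell_1\ell_2/n+\lambda^*}\;\leq\;\frac{\ell_1\ell_2}{p\,(\ell_1+\ell_2)(n-\ell_1-\ell_2)}.
\end{equation*}
Setting $s=\ell_1+\ell_2$, the factorization $s(n-s)-(n-1)=(s-1)(n-s-1)\geq 0$ for $2\leq s\leq n-1$ gives $s(n-s)\geq n-1$ and hence the claimed bound $\ell_1\ell_2/(p(n-1))$. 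The boundary case $s=n$ is trivial: then $\ell_1\ell_2\geq n-1$, so $\ell_1\ell_2/(p(n-1))\geq 1/p\geq 1$.

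The main obstacle is that the rate of external altering events depends on the evolving state of the outside graph $(V'(t),G(t))$, which is itself a complicated random process. The key trick to sidestep this is the elementary uniform-in-$t$ lower bound $V'(t)+p\,G(t)\geq p(n-\ell_1-\ell_2)$ coming from $p\leq 1$, which decouples the argument from the outside dynamics and reduces everything to a clean two-rate competing-Poisson race.
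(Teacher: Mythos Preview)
Your proof is correct and follows essentially the same race argument as the paper's: the event occurs precisely when the first edge added touching $\mathrm t_1\cup\mathrm t_2$ is a cross edge, and one bounds the ratio of the cross rate to the total ``altering'' rate. The only cosmetic difference is that the paper discretizes at the successive PPP ring times $s_1<s_2<\ldots$ and sums a geometric series (including the internal and cross edges in the competitor, which yields $\mathbb P(\tilde E_n(s_i))\geq 2p/n$ directly), whereas you stay in continuous time, keep only the external edges in the competitor, and then need the extra arithmetic step $(\ell_1+\ell_2)(n-\ell_1-\ell_2)\geq n-1$ to close the bound.
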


\begin{proof}
We let $V(\mathrm t_i)$ denote the set of vertices of $\mathrm t_i$, for $i=1,2$.
We also let $T<s_1<s_2<\ldots$ be the times larger than $T$ at which the PPP governing $\mathcal {F}_{p,n}$ rings, set $s_0=T$, and introduce for $i\geq 1$ the events:
\begin{enumerate}
\item[$\bullet$] $E_{n}(s_i)$=\{at time $s_i$ an edge is added between a vertex of $V(\mathrm t_1)$ and a vertex of $V(\mathrm t_2)$\} 
\item[$\bullet$] $\tilde E_{n}(s_i)$=\{at time $s_i$ an edge stemming from either a vertex of $V(\mathrm t_1)$ or a vertex of $V(\mathrm t_2)$ is added in the process\}.
\end{enumerate}
Note that
\begin{eqnarray*}
 &&\mathbb P\left(\mathrm t_1,\mathrm t_2 \text{ connect during the process to give a tree of size }\ell_1+\ell_2 \; | \; \mathrm t_1,\mathrm t_2 \text{ are c.c. of } \mathcal {F}_{p,n}(T)\right) \\
 &=& \mathbb P\left(\cup_{i=1}^{\infty} E_{n}(s_i) \cap_{j=1}^{i-1} \big(\tilde E_{n}(s_j)\big)^c \; | \; \mathrm t_1,\mathrm t_2 \text{ are c.c. of } \mathcal {F}_{p,n}(T)\right) \\
 &=& \sum_{i=1}^{\infty}  \mathbb P\left(E_{n}(s_i) \cap_{j=1}^{i-1} \big(\tilde E_{n}(s_j)\big)^c \; | \; \mathrm t_1,\mathrm t_2 \text{ are c.c. of } \mathcal {F}_{p,n}(T)\right).
\end{eqnarray*}
The dynamic of the process $\mathcal {F}_{p,n}$ implies that when  $\mathrm t_1,\mathrm t_2 \text{ are c.c. of } \mathcal {F}_{p,n}(s_{i-1})$, for all $i\geq 1$:
$$
 \mathbb P\left(E_{n}(s_i) \; | \; \mathrm t_1,\mathrm t_2 \text{ are c.c. of } \mathcal {F}_{p,n}(s_{i-1}) \right)= \frac{2\ell_1\ell_2}{n(n-1)}
$$ 
and
\begin{eqnarray*}
&& \mathbb P\left(\tilde E_{n}(s_i) \; | \; \mathrm t_1,\mathrm t_2 \text{ are c.c. of } \mathcal {F}_{p,n}(s_{i-1}), \mathcal G_{p,n}(s_{i-1}) \right) \\
 &=& \frac{2}{n(n-1)} \cdot \big(\ell_1\left(n-\mathcal G_{p,n}(s_{i-1})-1\right)+\ell_2\left(n-\mathcal G_{p,n}(s_{i-1})-1-\ell_1\right)+(\ell_1+\ell_2)p \mathcal G_{p,n}(s_{i-1}) \big)  \\
 &=& \frac{2}{n(n-1)} \cdot \big((\ell_1+\ell_2)(n-(1-p)\mathcal G_{p,n}(s_{i-1})-1 )-\ell_1\ell_2 \big) \\
 &\geq& \frac{2}{n(n-1)} \cdot \big( (\ell_1+\ell_2)(pn+(1-p)(\ell_1+\ell_2)-1)-\ell_1\ell_2\big),
\end{eqnarray*}
where we have used in the last line that $\mathcal G_{p,n}(s_{i-1})\leq n-(\ell_1+\ell_2)$ when $\mathrm t_1,\mathrm t_2 \text{ are c.c. of } \mathcal {F}_{p,n}(s_{i-1})$. This leads to
$$
 \mathbb P\left(\tilde E_{n}(s_i) \; | \;  \mathrm t_1,\mathrm t_2 \text{ are c.c. of } \mathcal {F}_{p,n}(s_{i-1})\right) \geq  \frac{2p}{n}
$$
(to see this note that for $n\geq \ell_1+\ell_2+1$, the function $p \mapsto (\ell_1+\ell_2)(pn+(1-p)(\ell_1+\ell_2)-1)-\ell_1\ell_2 -(n-1)p$ is increasing in $p$ and positive for $p=0$ ; whereas for $n=\ell_1+\ell_2$ it is decreasing in $p$ and positive for $p=1$). We then use these bounds to get 
\begin{eqnarray*}
 &&\sum_{i=1}^{\infty}  \mathbb P\left(E_{n}(s_i) \cap_{j=1}^{i-1} \big(\tilde E_{n}(s_j)\big)^c \; | \; \mathrm t_1,\mathrm t_2 \text{ are c.c. of } \mathcal {F}_{p,n}(T)\right) \\
 &\leq & \sum_{i=1}^{\infty}  \frac{2\ell_1\ell_2}{n(n-1)} \cdot \left(1- \frac{2p}{n}\right)^{i-1} \\
 &=& \frac{\ell_1\ell_2}{p(n-1)}.
\end{eqnarray*}
\end{proof}

In fact, we will only need that the order of magnitude of this probability is $O(1/n)$. More generally, for every fixed $i\geq 2 $ and $m \geq 2$, we have 

\begin{lemma}
\label{lem:treei}
Let $(\ell_1,\ldots,\ell_m)\in \mathbb N^m$, with $\sum_{j=1}^m \ell_j=i$. For $n\geq i$,
consider $\mathrm t_1,\ldots,\mathrm t_m$ some trees with vertices in $\{1,\ldots,n\}$ and no common vertices, with respective sizes $\ell_1,\ldots,\ell_m \in \mathbb N$. Then for every time $t>0$,
\begin{eqnarray*}
&& \mathbb P\left(\mathrm t_1,\ldots,\mathrm t_m \text{ connect during the process to give a tree of size }i \;  | \; \mathrm t_1,\ldots \mathrm t_m \text{ are c.c. of } \mathcal {F}_{p,n}(t)\right) \\
&=& O \left( \frac{1}{n^{m-1}}\right). \
\end{eqnarray*}
\end{lemma}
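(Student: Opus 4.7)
The strategy is induction on $m$, and it is convenient to prove the slightly stronger statement obtained by replacing $t$ with an arbitrary stopping time $T$ (which is genuinely stronger since deterministic times are stopping times, and is exactly what lets the induction step recurse). The base case $m=2$ is the preceding lemma, which is already formulated for a stopping time.

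For the induction step, fix $m \geq 3$ and condition on $\mathcal{E}=\{\mathrm t_1,\ldots,\mathrm t_m \text{ are c.c. of } \mathcal{F}_{p,n}(T)\}$. Let $T^{\ast} > T$ denote the first time after $T$ at which the Poisson dynamics \emph{add} an edge touching some vertex of $\bigcup_{j=1}^{m} V(\mathrm t_j)$, i.e.\ the first ring on such a pair that is not discarded by the $p$-rule. For the event $A$ that the $\mathrm t_j$ eventually merge into a single tree of size $i$ to occur, the edge added at $T^{\ast}$ must connect two of the trees, say $\mathrm t_j$ and $\mathrm t_k$: any other addition either closes a cycle inside some $\mathrm t_l$ (freezing it) or links $\mathrm t_l$ to an external vertex (creating a component containing more than $i$ vertices), both of which exclude $A$.

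Reproducing the geometric-sum argument of the base case, the per-ring conditional probability of the merger $\mathrm t_j \leftrightarrow \mathrm t_k$ equals $\frac{2\ell_j \ell_k}{n(n-1)}$, whereas the per-ring conditional probability of \emph{any} added edge touching $\bigcup_j V(\mathrm t_j)$ is at least $\frac{2}{n(n-1)}\bigl(\binom{i}{2} + ip(n-i)\bigr) \geq ip/n$ for $n$ large (using that $\mathcal{G}_{p,n} \leq n-i$ as long as the $\mathrm t_j$'s remain tree components, so that a ring between a $\mathrm t_j$-vertex and an external vertex contributes at least a factor $p$). Summing geometrically yields
\begin{equation*}
\mathbb{P}\bigl(\text{the edge added at }T^{\ast}\text{ realizes the merger }\mathrm t_j \leftrightarrow \mathrm t_k \,\big|\, \mathcal{E}\bigr) \;\leq\; \frac{2\ell_j \ell_k}{i\,p\,(n-1)} \;=\; O(1/n).
\end{equation*}
Conditional on this merger, the graph at the stopping time $T^{\ast}$ contains the $m-1$ disjoint trees $\mathrm t_j \cup \mathrm t_k \cup \{e\}$ and $(\mathrm t_l)_{l \neq j,k}$, of total size $i$, as connected components. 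The induction hypothesis (in its stopping-time form) applied at $T^{\ast}$ to these $m-1$ trees yields a further factor $O(1/n^{m-2})$, and summing over the $\binom{m}{2}$ choices of $(j,k)$ gives $\mathbb{P}(A\mid\mathcal{E}) = O(1/n^{m-1})$.

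The one point requiring care is maintaining the stopping-time form of the statement across induction steps, so that the recursion can invoke it at the stopping time $T^{\ast}$ (using the strong Markov property of the underlying family of independent Poisson processes). Once that is set up, the recursion simply multiplies one factor of $O(1/n)$ per merger event; the implicit constants depend on $m$, $i$, and $p$ but not on $n$, which is exactly what the lemma's statement requires.
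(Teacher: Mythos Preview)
Your proposal is correct and takes essentially the same approach as the paper, which simply says ``since only two trees can connect at a time, this is easily proved by induction on $m$, using the previous lemma.'' You have supplied exactly the details of that induction, including the useful observation that the statement should be carried in its stopping-time form so that the recursion can be applied at the random merger time $T^{\ast}$; this is consistent with the fact that the base-case lemma is already stated for stopping times.
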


\begin{proof}
Since only two trees can connect at a time, this is easily proved by induction on $m$, using the previous lemma.
\end{proof}

Consequently,

\begin{lemma} 
\label{lem:formation}
When $k\geq 2$, for any $c \in \mathbb R$,
$$
\mathbb P \left(\text{a tree of size $\geq k$ is formed after time }\mathsf t^{(k)}_{p,n} +c \right)  \; \underset{n \rightarrow \infty}{\longrightarrow} \; 0.
$$
\end{lemma}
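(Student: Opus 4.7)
My approach combines Corollary \ref{cor:treesafterk} with a union bound over all possible ``merging configurations'' that could produce a tree of size $\geq k$ after time $t_n := \mathsf t^{(k)}_{p,n}+c$. The starting point is that, by Corollary \ref{cor:treesafterk}, all trees present at time $t_n$ have size at most $k$ with probability tending to $1$. On this event, the formation of a tree of size $\geq k$ after $t_n$ necessarily arises from the coalescence of $m \geq 2$ disjoint trees $T_1, \ldots, T_m$ that are components at time $t_n$, with sizes $\ell_1, \ldots, \ell_m \leq k$ and $i := \sum_j \ell_j \geq k$, which eventually merge during the subsequent process into a single tree of size $i$.

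I will then bound the probability of the bad event by the expected number of such configurations, summing over $m \geq 2$, $i \geq k$ and partitions $(\ell_j)$ of $i$ with all parts $\leq k$. For each configuration, Lemma \ref{lem:treei} provides the conditional merging probability $O\bigl(n^{-(m-1)}\bigr)$. A straightforward extension of Lemma \ref{lem:cvPnk} to disjoint trees of heterogeneous sizes shows that the probability $P_n^{(\ell_1, \ldots, \ell_m)}(t_n)$ that specified disjoint trees of sizes $(\ell_j)$ are components at time $t_n$ is dominated, for $i$ small compared to $n$, by $(t_n/n)^{i-m} e^{-i p t_n}$. Combined with the crude count $n^i \prod_j \ell_j^{\ell_j-2}/\ell_j!$ for the number of ordered $m$-tuples of disjoint trees with these sizes, and with $t_n \sim \ln(n)/(kp)$, each summand reduces to $O\bigl(n^{1-i/k}(\ln n)^{i/k - m}\bigr)$.

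The final step is to verify that this sum vanishes. When $i = k$, summing over $m \in \{2, \ldots, k\}$ and partitions of $k$ contributes $O(1/\ln n)$; when $i > k$, the negative power $n^{1-i/k}$ dominates and a geometric-series estimate over $i$ yields an overall contribution of order $n^{-1/k} (\ln n)^{(k+1)/k} \to 0$. The main technical obstacle is carrying out the multi-size analogue of Lemma \ref{lem:cvPnk} cleanly and checking that the resulting bounds are summable uniformly in $(m, i)$; conceptually, however, no new input is needed beyond Corollary \ref{cor:treesafterk} and Lemma \ref{lem:treei}.
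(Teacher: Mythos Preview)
Your approach is very close to the paper's for each fixed $i$: both decompose the event according to which trees present at time $t_n$ eventually merge, and both combine Lemma~\ref{lem:treei} with an estimate on the number of such configurations. The paper uses Proposition~\ref{cor:boundsexp} (bounding the number of trees of each size via Markov) where you use the multi-size analogue of Lemma~\ref{lem:cvPnk}; these give essentially the same exponent $n^{1-i/k}(\ln n)^{O(1)}$ for each fixed $i$.

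The genuine difference is how the range of $i$ is handled. You sum over all $i\ge k$ and claim a geometric series. But Lemma~\ref{lem:treei} is stated for \emph{fixed} sizes; the implied constant in $O(n^{-(m-1)})$ depends on $(\ell_1,\ldots,\ell_m)$, and the induction behind it gives a bound of order $(i^2/p)^{m-1}$. Since all parts are $\le k$ you have $m\ge i/k$, so this constant is super-exponential in $i$ and wrecks the geometric-series estimate once $i\gg\sqrt{\ln n}$. Similarly, your bound $P_n^{(\ell_1,\ldots,\ell_m)}(t_n)\le (t_n/n)^{i-m}e^{-ipt_n}$ drops the factor $e^{(p-\frac12)i^2 t_n/n}$ coming from the internal non-edges and the $i(n-i)/n$ exponent (compare the proof of Corollary~\ref{cor:TCD1}), which is not innocuous for $p>1/2$ and large $i$. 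So the summability you assert in $(m,i)$ is not established, and making it uniform would require real additional work, contrary to your claim that ``no new input is needed''.

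The paper avoids all of this with a one-line combinatorial reduction. Since trees merge two at a time and (by Corollary~\ref{cor:treesafterk}) all trees at time $t_n$ have size $\le k$ with high probability, the \emph{first} tree of size $\ge k+1$ to appear after $t_n$ arises from two pieces of size $\le k$ and hence has size $\le 2k$. Thus the event that a tree of size $\ge 2k+1$ is ever formed forces the prior formation of a tree of size in $\{k+1,\ldots,2k\}$. This reduces the problem to the finitely many values $i\in\{k,\ldots,2k\}$, for which the fixed-$i$ bound suffices and no uniformity in $i$ is needed.
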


\begin{proof}
$\bullet$ We start by proving that for any $i \geq k$,
\begin{equation}
\label{cv:uni}
\mathbb P \left(\text{a tree of size $i$ is formed after }\mathsf t^{(k)}_{p,n} +c \right)  \; \underset{n \rightarrow \infty}{\longrightarrow} \; 0.
\end{equation}
Indeed, for $i \geq k$, 
\begin{eqnarray*}
&& \mathbb P \left(\text{a tree of size $i$ is formed after }\mathsf t^{(k)}_{p,n} +c \right) \\
&\leq & \sum_{m=2}^i \; \sum_{\substack{\ell_1 \geq \ldots \geq \ell_m \\\sum_{j=1}^m \ell_j=i}} \mathbb P \left(\text{a tree of size $i$ is formed after time }\mathsf t^{(k)}_{p,n} +c \text{ from $m$ trees present at time } \right.\\
&&  \left.  \hspace{3cm}  \mathsf t^{(k)}_{p,n} +c \text{ with respective sizes }\ell_1,\ldots,\ell_m\right) \\
&=& \sum_{m=2}^i \; \sum_{\substack{\ell_1 \geq \ldots \geq \ell_m \\\sum_{j=1}^m \ell_j=i}} \mathbb P \left(\text{a tree of size $i$ is formed after time }\mathsf t^{(k)}_{p,n} +c \text{ from $m$ trees present at time } \right.\\
&&  \Big.   \mathsf t^{(k)}_{p,n} +c \text{ with respective sizes }\ell_1,\ldots,\ell_m; \mathcal N_{p,n}^{(\ell_j)}(\mathsf t^{(k)}_{p,n} +c) \leq (\ln(n))^{\frac{1}{2}}\left(\frac{n}{\ln(n)}\right)^{1-\frac{\ell_j}{k}}, \forall 1\leq j \leq m\Big) \\
&+& o(1) 
\end{eqnarray*}
where the $o(1)$ (relative to $n\rightarrow \infty$) is a consequence of Proposition \ref{cor:boundsexp}. We then conclude with Lemma \ref{lem:treei} and again Proposition \ref{cor:boundsexp} which imply that when $\sum_{j=1}^m \ell_j=i$,
\begin{eqnarray*}
&& \mathbb P \left(\text{a tree of size $i$ is formed after time }\mathsf t^{(k)}_{p,n} +c \text{ from $m$ trees present at time } \mathsf t^{(k)}_{p,n} +c \right. \\ 
&& \Big. \hspace{0.2cm}\text{ with respective sizes }\ell_1,\ldots,\ell_m;  \mathcal N_{p,n}^{(\ell_j)}(\mathsf t^{(k)}_{p,n} +c) \leq (\ln(n))^{\frac{1}{2}}\left(\frac{n}{\ln(n)}\right)^{1-\frac{\ell_j}{k}}, \forall 1\leq j \leq m\Big) \\
&\leq& O\left(\frac{1}{n^{m-1}} \right) \cdot \left(\prod_{j=1}^m  (\ln(n))^{\frac{1}{2}}\left(\frac{n}{\ln(n)}\right)^{1-\frac{\ell_j}{k}} \right) \\
&=& O \left((\ln(n))^{\frac{i}{k}-\frac{m}{2}} \cdot n^{1-\frac{i}{k}} \right) 
\end{eqnarray*}
which converges to 0 as soon as $i\geq k$.

$\bullet$ Next, to improve this in
$
\mathbb P \big(\text{a tree of size $\geq k$ is formed after }\mathsf t^{(k)}_{p,n} +c \big)  \; {\rightarrow} \; 0
$ as $n \rightarrow \infty$,
we bound from above this probability by
$$
 \mathbb P\left(\text{a tree of size $\geq 2k+1$ is formed after }\mathsf t^{(k)}_{p,n} +c \right)+\sum_{i=k}^{2k}\mathbb P\left(\text{a tree of size $i$ is formed after }\mathsf t^{(k)}_{p,n} +c \right)
$$
and note that (since only two trees can connect at a time)
\begin{eqnarray*}
&&  \hspace{1cm}  \mathbb P\left(\text{a tree of size $\geq 2k+1$ is formed after }\mathsf t^{(k)}_{p,n} +c \right) ~\leq \\ 
&& \hspace{-0.5cm} \mathbb P\left(\text{there is a tree of size}\geq k+1 \text{  at time } \mathsf t^{(k)}_{p,n} +c \right)
+ \sum_{i=k}^{2k} \mathbb P\left(\text{a tree of size $i$ is formed after }\mathsf t^{(k)}_{p,n} +c \right).
\end{eqnarray*}
So, we have
\begin{eqnarray*}
&&  \hspace{1cm} \mathbb P \left(\text{a tree of size $\geq k$ is formed after }\mathsf t^{(k)}_{p,n} +c \right) ~\leq \\ 
&& \hspace{-0.5cm} 2\sum_{i=k}^{2k} \mathbb P \left(\text{a tree of size $i$ is formed after } \mathsf t^{(k)}_{p,n} +c \right) + \mathbb P\left(\text{there is a tree of size}\geq k+1 \text{  at time } \mathsf t^{(k)}_{p,n} +c \right),
\end{eqnarray*}
and this last sum converges to 0 according to (\ref{cv:uni}) and Corollary \ref{cor:treesafterk}.
\end{proof}

\subsubsection{Proof of Theorem \ref{prop:tau_k_continu}}

Since $\mathcal N_{p,n}^{(k)}(t)$ is the number of trees of size $k$ present at time $t$, we have, for any $c \in \mathbb R$, 
\begin{eqnarray*}
\mathbb P\left(\mathcal N_{p,n}^{(k)}(\mathsf t^{(k)}_{p,n}+c) \geq 1 \right) &\leq& \mathbb P\left(\mathcal A_{p,n}^{(k)} > \mathsf t^{(k)}_{p,n} +c \right) \\
&\leq& \mathbb P\left(\mathcal N_{p,n}^{(k)}(\mathsf t^{(k)}_{p,n}+c) \geq 1\right) + \mathbb P\left(\text{a tree of size $k$ is formed after time }\mathsf t^{(k)}_{p,n}+c\right).
\end{eqnarray*}
(When $k=1$, $\mathbb P\big(\mathcal A_{p,n}^{(1)} > \mathsf t^{(1)}_{p,n} +c \big)= \mathbb P\big(\mathcal N_{p,n}^{(1)}(\mathsf t^{(1)}_{p,n}+c)\geq 1\big)$.)
According to Lemma \ref{lem:formation}, when $k\geq 2$, the last probability converges to 0 as $n \rightarrow \infty$. Consequently, by Proposition \ref{prop:Poisson_continu}
\begin{eqnarray*}
 \mathbb P\left(\mathcal A_{p,n}^{(k)} > \mathsf t^{(k)}_{p,n} +c \right) &\underset{n \rightarrow \infty}\sim& \mathbb P\left(\mathcal N_{p,n}^{(k)}(\mathsf t^{(k)}_{p,n}+c) \geq 1 \right) \\
&\underset{n \rightarrow \infty}\rightarrow &  1-e^{-\frac{k^{k-2}e^{-kpc} e^{-k\left(\psi(1/p)+\gamma_{\mathrm E}\right)}}{k!}}
\end{eqnarray*}
and therefore $ \mathbb P\left(kp(\mathcal A_{p,n}^{(k)} - \mathsf t^{(k)}_{p,n} )+k\left(\psi(1/p)+\gamma_{\mathrm E}\right)-\ln (k^{k-2}/k!) >x\right) \underset{n \rightarrow \infty}\rightarrow 1-e^{-e^{-x}}$, for all $x \in \mathbb R$.

\subsubsection{Proof of Theorem \ref{prop:A_k_continu}}

Since $\mathcal A^{(k+)}_{p,n} \geq \mathcal A_{p,n}^{(k)}$, we just need to show that $\mathbb P\big(\mathcal A^{(k+)}_{p,n} > \mathcal A_{p,n}^{(k)}\big) \rightarrow 0$ as $n \rightarrow \infty$. 
Fix $\varepsilon>0$ and let $c_{\varepsilon} \in \mathbb R$ be sufficiently small so that $\mathbb P\big(\mathcal A_{p,n}^{(k)}<\mathsf t^{(k)}_{p,n}+c_{\varepsilon}  \big) \leq \varepsilon$ for all $n$ large enough (such a  $c_{\varepsilon}$ exists by Theorem \ref{prop:tau_k_continu}).
Splitting the probability $\mathbb P\big(\mathcal A^{(k+)}_{p,n} > \mathcal A_{p,n}^{(k)}\big)$ according to whether $\mathcal A_{p,n}^{(k)}<\mathsf t^{(k)}_{p,n}+c_{\varepsilon} $ or not, we get the upper bound 
\begin{eqnarray*}
\mathbb P\left(\mathcal A^{(k+)}_{p,n} > \mathcal A_{p,n}^{(k)}\right) &\leq&   \varepsilon+ \mathbb P\left(\mathcal A^{(k+)}_{p,n} > \mathcal A_{p,n}^{(k)}  \geq \mathsf t^{(k)}_{p,n}+c_{\varepsilon} \right)\\
&\leq&  \varepsilon+ \mathbb P\left(\text{there exists a tree of size $\geq k+1$ at time }\mathsf t^{(k)}_{p,n} +c_{\varepsilon}\right) \\
&& \hspace{0.25cm} + \;  \mathbb P\left( \text{a tree of size $\geq k+1$ is formed after time }\mathsf t^{(k)}_{p,n} +c_{\varepsilon} \right).
\end{eqnarray*}
By Corollary \ref{cor:treesafterk} and Lemma \ref{lem:formation}, the two latest probabilities converge to 0 as $n \rightarrow \infty$. Since this holds for every $\varepsilon>0$, we are done.

\subsection{De-Poissonization}
\label{sec:dePoisson}

Starting from the discrete model $\mathrm{F}_{p,n}$ and a standard, independent, Poisson process $\mathcal N$, we work here with the version $\mathrm{F}_{p,n}(\mathcal N((n-1) \cdot /2))$ of the continuous model. All straight notations will refer to the discrete model, while curved notations will refer to the continuous model. 

To show that Theorem \ref{prop:A_k_continu} induces Theorem \ref{thm:absorption} and that Proposition \ref{prop:Poisson_continu} induces Proposition \ref{prop:arbreskdiscrets}, we use the bound
\begin{equation}
\label{bound:Poisson}
\mathbb P\left(|Y_{\mathcal P(\lambda)}-\lambda| \geq a \right)~\leq~\frac{\mathbb E[|Y_{\mathcal P(\lambda)}-\lambda|^3]}{a^3}=\frac{\lambda}{a^3}
\end{equation}
for $a>0$, where $Y_{\mathcal P(\lambda)}$ denotes a Poisson random variable with mean $\lambda>0$. 

\bigskip

\textbf{Proof of Theorem \ref{thm:absorption}.} Noticing that
$$
\mathcal A_{p,n}^{(k+)}=\inf \left\{t\geq 0:\mathcal N\left(\left(\frac{n-1}{2}\right)t\right) \geq A_{p,n}^{(k+)}+1 \right\},
$$
we have for all fixed $x \in \mathbb R$ and $\varepsilon>0$,
\begin{eqnarray*}
&& \mathbb P\left( \mathcal A_{p,n}^{(k+)} - \mathsf t^{(k)}_{p,n} \leq 2x-\varepsilon\right) -\mathbb P\left(\mathcal N\left(\left(\frac{n-1}{2}\right)\left(\mathsf t^{(k)}_{p,n}+2x-\varepsilon \right) \right) > n \left(\frac{\mathsf t^{(k)}_{p,n}}{2}+x \right)+1\right)\\
&\leq& \mathbb P\left(\frac{A_{p,n}^{(k+)}}{n} -\frac{\mathsf t^{(k)}_{p,n}}{2} \leq x\right) \\
&\leq& \mathbb P\left(\mathcal A_{p,n}^{(k+)} - \mathsf t^{(k)}_{p,n}\leq 2x+\varepsilon \right)+ \mathbb P\left(\mathcal N\left(\left(\frac{n-1}{2}\right)\left(\mathsf t^{(k)}_{p,n}+2x+\varepsilon \right) \right) < n \left(\frac{\mathsf t^{(k)}_{p,n}}{2}+x \right)+1\right).
\end{eqnarray*}
Together with (\ref{bound:Poisson}) and the limit in distribution of  $\mathcal A_{p,n}^{(k+)} - \mathsf t^{(k)}_{p,n}$ from Theorem \ref{prop:A_k_continu} towards an absolutely continuous law, this yields the expected limit in distribution of ${A_{p,n}^{(k+)}}/{n} -{\mathsf t^{(k)}_{p,n}}/{2}$.

Regarding the relation between $A_{p,n}^{(k+)}$ and $A_{p,n}^{(k)}$, we use that $\big\{A_{p,n}^{(k+)}=A_{p,n}^{(k)}\big\}=\big\{\mathcal A_{p,n}^{(k+)}=\mathcal A_{p,n}^{(k)}\big\}$ to get, again with the help of Theorem \ref{prop:A_k_continu}, the full statement of Theorem \ref{thm:absorption}. 
$\hfill \square$

\bigskip

\textbf{Proof of Proposition \ref{prop:arbreskdiscrets}.}
Fix $k \in \mathbb N$, $c\in \mathbb R$ and $\varepsilon>0$. 
Let $E_{n}^{(1)}$ be the event "no tree of size $k$ if formed after time $\mathsf t^{(k)}_{p,n}+c-\varepsilon$" in the continuous model $\mathcal {F}_{p,n}$ and $$E_{n}^{(2)}:=\left\{\frac{n}{2}(\mathsf t^{(k)}_{p,n}+c) \in \left[\mathcal N\left(\left(\frac{n-1}{2}\right) \cdot (\mathsf t^{(k)}_{p,n}+c-\varepsilon)\right), \mathcal N\left(\left(\frac{n-1}{2}\right)\cdot(\mathsf t^{(k)}_{p,n}+c+\varepsilon)\right)\right] \right\}.$$
By Lemma \ref{lem:formation} and the bound (\ref{bound:Poisson}), $\mathbb P\big(E_{n}^{(1)} \cap E_{n}^{(2)}\big) \rightarrow 1$ as $n \rightarrow \infty$. Then we use that 
$$
\mathcal N^{(k)}_{p,n}(t)=N_{p,n}^{(k)}\left(\mathcal N \left(\frac{n-1}{2} \cdot t \right) \right), \quad \forall t \geq 0,
$$
to get for $i \in \mathbb Z_+$, 
\begin{eqnarray*}
\mathbb P\left(N_{p,n}^{(k)}\left(\left\lfloor \frac{n}{2} \cdot (\mathsf t^{(k)}_{p,n}+c)\right\rfloor\right) \leq i \right)&=&\mathbb P\left(N_{p,n}^{(k)}\left(\left\lfloor \frac{n}{2} \cdot (\mathsf t^{(k)}_{p,n}+c)\right\rfloor\right) \leq i, E_{n}^{(1)} \cap E_{n}^{(2)} \right) +o(1) \\
&\leq & \mathbb P\left(\mathcal N_{p,n}^{(k)}\left(\mathsf t^{(k)}_{p,n}+c+\varepsilon \right) \leq i \right) +o(1).
\end{eqnarray*}
This leads, with Proposition \ref{prop:Poisson_continu}, to 
$$
\limsup_{n \rightarrow \infty} \mathbb P\left(N_{p,n}^{(k)}\left(\left\lfloor \frac{n}{2} \cdot (\mathsf t^{(k)}_{p,n}+c)\right\rfloor\right) \leq i \right) \leq \mathbb P\left(Y_{\mathcal P\left(k^{k-2}e^{-kp(c+\varepsilon)}e^{-k(\psi(1/p)+\gamma_E)}/k!\right)} \leq i \right)
$$
where we still use the notation $Y_{\mathcal P(\lambda)}$ for a Poisson random variable with mean $\lambda$.
Similarly,
$$
\liminf_{n \rightarrow \infty} \mathbb P\left(N_{p,n}^{(k)}\left(\left\lfloor \frac{n}{2} \cdot (\mathsf t^{(k)}_{p,n}+c)\right\rfloor\right) \leq i \right) \geq \mathbb P\left(Y_{\mathcal P\left(k^{k-2}e^{-kp(c-\varepsilon)}e^{-k(\psi(1/p)+\gamma_E)}/k!\right)} \leq i \right).
$$
We get the expected result by letting $\varepsilon \rightarrow 0$.
$\hfill \square$

%%%%%%%%%%%%%%%%%%%%%%%%%%%
\section{Concluding remarks and open questions}
\label{sec:open}
%%%%%%%%%%%%%%%%%%%%%%%%%%%

We end this paper with a few remarks and related questions on the $p$-frozen model.

$\bullet$ \textbf{The case $p=0$,} where the evolution of unicycle components is stopped as soon as they are created, is different in nature from the cases $p\in (0,1]$. While Theorem \ref{thm:fluid limit} should also hold when $p=0$ with a function $g_0$ defined as in Definition \ref{def:gel_function} and the related function $d_0$ (\ref{def:d_p}), that is for $t\geq 1/2$
\begin{equation*}
	g_0(t)=1-\frac{1}{2t} \qquad \text{and} \qquad d_0(t)=t-1+\frac{1}{4t},
\end{equation*}
its proof requires a partly different approach. Mainly because the forest part of the graph $\mathrm F_{0,n}(m)$ when $m \gg n/2$ is no more subcritical (as it is when $p \in (0,1]$) but critical. The approach of Section \ref{sec:mise_en_place} needs therefore to be adapted, but we note that the results of Section \ref{section:forests} are still valid. Regarding the total gelation time and the last times at which there are trees of size $k$, $k\geq1$, one expect an asymptotic behavior in $n^2$ -- instead of $n\ln(n)$ when $p \in (0,1]$. Although several intermediate results such as Lemma \ref{lem:cvPnk} and Lemma \ref{lem:factorial_moments} remain valid when $p=0$ and points to this $n^2$ order, the difficulty to implement precisely the behavior of the total gelation time when $p=0$ lies in the presence of trees of all sizes in its vicinity (unlike the case  $p \in (0,1]$ where there are only isolated vertices). These questions will be considered in a future work.

$\bullet$ \textbf{Fluid limit of unicycle components.} When $p \in (0,1]$ the unicycle components continue to grow after their formation, according to a dynamic which is asymptotically similar to the evolution of the gel: they attract new trees with a weight proportional to their size. This is a reinforcement process, see e.g. Pemantle's survey \cite{Pemantle07} on that topic. One could then expect that the fluid limit of a unicycle component after its formation is the same as that of the gel shifted in time, up to a multiplicative random constant to determine.    

$\bullet$ \textbf{Asymptotic distribution of unicycle components at the gelation time.} To complete the result on the asymptotic behavior of the total gelation time $A_{p,n}$ obtained in Theorem \ref{thm:absorption}, it would be very interesting to determine the asymptotic behavior of the number of unicycle components present at that time $A_{p,n}$, as well as of the vector of their relative sizes. In this direction, Krapivsky \cite{krapisvky24} conjectured that the number $U_{p,n}$ of unicycle components at  time $A_{p,n}$ verifies
\begin{equation*}
	\frac{\mathbb E\left[U_{p,n}\right]}{\ln(n)} ~ \underset{n \rightarrow \infty}{\longrightarrow}~  \frac{1}{6}\left(1+\frac{1}{p}\right).
\end{equation*} 
And when $p=1/2$, the question is solved thanks to Proposition 4 of \cite{ContatCurien23}, which implies that the distribution of the final partition of unicycle components at the gelation time is the same as that of a random mapping. This, in the limit, gives a Poisson-Dirichlet distribution with parameter $1/2$ for the relative sizes of unicycle components ranked in decreasing order, see e.g. \cite{AldousexchSF}.

%%%%%%%%%%%%%%%%%
\section*{Acknowledgements}
\addcontentsline{toc}{section}{Acknowledgements}
%%%%%%%%%%%%%%%%%
We thank Nicolas Curien for helpful discussions on the fluid limit part of this work and for pointing out the connection between the final partition of unicycle components when $p=1/2$ and random mappings. We also thank Pavel Krapivsky for presenting his work \cite{krapisvky24} to us during a visit to Paris.

%%%%%%%%%%%%
\appendix 
\section{Appendix}
\label{sec:app}
%%%%%%%%%%%%

\subsection{The Borel-Tanner distribution}
\label{app:BT}

The Borel distribution and its generalization the Borel-Tanner distribution \cite{Borel42},\cite{Tanner61} were initially introduced for models in queueing theory and relatively branching processes, and are also used since then for applications in real-word phenomena. We gather here some of their basic properties and highlight some consequences we shall need throughout the paper.

\begin{definition} A random variable $B$  follows a \emph{Borel distribution} with parameter $\theta \in (0,1]$ if it is $\mathbb N-$valued and
$$
\mathbb P(B=k)=\frac{k^{k-2}}{(k-1)!}\cdot \theta^{k-1} e^{-\theta k}, \qquad \forall k \in \mathbb N.
$$
For $r \in \mathbb N$, a random variable $T_r$  follows a \emph{Borel-Tanner distribution} with parameter $\theta \in (0,1]$ if it takes its values in $\{r,r+1,r+2, \ldots\}$ and
$$
\mathbb P(T_r=k)=\frac{r}{(k-r)!}\cdot k^{k-r-1}\theta^{k-r} e^{-\theta k}, \qquad \forall k\geq r.
$$
\end{definition}

From our random trees perspective, the Borel distribution with parameter $\theta$ is the distribution of the total progeny of a Galton-Watson tree with Poisson offspring distribution with mean $\theta$, and the Borel-Tanner distribution with parameters $(r,\theta)$  is the distribution of the total progeny of a forest composed by $r$ independent Galton-Watson trees with Poisson offspring distribution with mean $\theta$. In particular, note that if $B,B'$ are independent random variables, both following a Borel distribution with parameter $\theta$, the identity  $\mathbb P(B+B'=k)=\mathbb P(T_2=k)=\sum_{i=1}^{k-1}\mathbb P(B=i)\mathbb P(B'=k-i)$ leads for all $k\geq 2$ to:
\begin{equation}
\label{eq:ijk}
\frac{2 k^{k-3}}{(k-2)!}=\sum_{i=1}^{k-1} \frac{i^{i-2}}{(i-1)!} \cdot \frac{(k-i)^{k-i-2}}{(k-i-1)!}.
\end{equation}

\bigskip

\textbf{Mean and approximation.} When $\theta \in (0,1)$, the expectation of $B$ is finite:
\begin{equation}
\label{lm:Borel:law}
\mathbb E[B]=\sum_{k=1}^{\infty}\frac{k^k}{k!} \cdot \theta^{k-1}\mathrm{e}^{-\theta k}=\frac{1}{1-\theta}.
\end{equation}
We will need in Section \ref{sec:mise_en_place} some estimates on this sum. For $\theta \in [0,1/2)$ let $S(\theta)={1}/{(1-\theta)}$, and for $\theta \in [0,1/2]$ and $N\in \mathbb N$, 
\begin{equation}
		S_N(\theta)=\sum_{k= 1}^N\frac{k^k}{k!}\cdot \theta^{k-1}\mathrm{e}^{-\theta k}. \label{def:S_N}
\end{equation}

\medskip

\begin{lemma}\label{lm:partial_sum}
\begin{enumerate}[topsep=0cm, itemsep=0cm]
\item[\emph{1)}] The sum $S_N$ converges to $S$ uniformly on all compact subsets of $\left[0,1/2\right)$, and $S_N\left(1\right)\to \infty$ as $N \rightarrow \infty$.
\item[\emph{2)}] For any $A>0$, there exists $N_0\geq 1$ and $\delta>0$ such that for every $N\geq N_0$ and $\theta \in \left[1-\delta,1\right]$, $S_N(\theta)\geq A.$
\end{enumerate}
\end{lemma}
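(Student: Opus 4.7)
Both parts rest on two elementary ingredients: the closed-form identity \eqref{lm:Borel:law}, which gives the pointwise limit $S_N(\theta) \to S(\theta) = 1/(1-\theta)$ for every $\theta \in (0,1)$, and Stirling's formula, which is needed to handle the boundary point $\theta=1$. The central structural observation is that, because every term $\frac{k^k}{k!}\theta^{k-1}e^{-k\theta}$ is non-negative, the sequence $N \mapsto S_N(\theta)$ is non-decreasing for each fixed $\theta \in [0,1]$, and each $S_N$ is continuous (in fact analytic) on $[0,1]$.

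\textbf{Part 1.} For the uniform convergence on compact subsets of $[0,1/2)$, I would apply Dini's theorem: the continuous functions $S_N$ increase pointwise on any compact $K \subset [0,1)$ to the continuous limit $S(\theta) = 1/(1-\theta)$, and Dini's theorem promotes this to uniform convergence on $K$. Alternatively, without invoking \eqref{lm:Borel:law}, Stirling's formula gives
\[
\frac{k^k}{k!}\theta^{k-1}e^{-k\theta} \;\underset{k\to\infty}{\sim}\; \frac{(\theta e^{1-\theta})^k}{\theta\sqrt{2\pi k}},
\]
and since $\theta \mapsto \theta e^{1-\theta}$ is non-decreasing on $[0,1]$ with maximum value $1$ at $\theta=1$, we have $\sup_{\theta \in [0,\theta_0]}\theta e^{1-\theta} < 1$ for any $\theta_0 < 1$, whence the Weierstrass M-test yields uniform convergence on $[0,\theta_0]$. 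For the divergence of $S_N(1)$, Stirling yields $\frac{k^k}{k!e^k} \sim \frac{1}{\sqrt{2\pi k}}$, and since $\sum_k k^{-1/2} = \infty$ we conclude $S_N(1) \to \infty$.

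\textbf{Part 2 and main obstacle.} Given $A>0$, I would first choose $N_0 \in \mathbb{N}$ such that $S_{N_0}(1) \geq A+1$, which is possible thanks to the divergence proved in Part 1. Since $S_{N_0}$ is a finite sum of continuous functions, it is continuous on $[0,1]$, so there exists $\delta>0$ such that $|S_{N_0}(\theta) - S_{N_0}(1)| \leq 1$ for all $\theta \in [1-\delta,1]$, hence $S_{N_0}(\theta) \geq A$ on $[1-\delta,1]$. Monotonicity in $N$ then gives $S_N(\theta) \geq S_{N_0}(\theta) \geq A$ for every $N \geq N_0$ and every $\theta \in [1-\delta,1]$. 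There is no genuine obstacle here; the statement is an essentially self-contained calculus exercise, the only minor care being the verification of the hypotheses of Dini's theorem (continuity of the approximants, continuity of the limit, and monotone pointwise convergence).
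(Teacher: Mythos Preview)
Your proof is correct and follows essentially the same strategy as the paper's (very terse) proof. The only minor variation is in Part~2: where you invoke continuity of $S_{N_0}$ at $\theta=1$ to obtain $\delta$, the paper instead observes that each term $\theta\mapsto\theta^{k-1}e^{-k\theta}$ is non-increasing on $[1-1/k,1]$, hence $S_{N_0}$ is non-increasing on $[1-1/N_0,1]$, which gives the explicit choice $\delta=1/N_0$.
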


\begin{proof} Point 1) is obvious. For 2), note that there exists $N_0\geq 1$ such that $S_{N_0}(1)\geq A.$ Then use that $S_N$ is non-increasing on $\left[1-1/N,1\right]$. \end{proof}	

\bigskip

\textbf{Connexion with the measures $\mu_x$. }
Recall from Section \ref{sec:URF} the definition of $\mu_x$, for $x\in(0,e^{-1}]$, by
$$
\mu_x(k)=\frac{k^{k-2}}{k!} \cdot \frac{x^k}{T(x)}, ~ \forall k \geq 1, \quad \text{with } \quad T(x)=\sum_{k\geq 1} \frac{k^{k-2}}{k!} x^k,
$$
and note that the Borel distribution with parameter $\theta \in (0,1]$ is the size-biasing of $\mu_{\theta e^{-\theta}}$. This remark leads to the following (well-known in the theory of uniform random forests) points.

\medskip

\begin{lemma}\label{lm:lambert_fct}
	If $x=\theta\mathrm{e}^{-\theta}$ with $\theta\in (0,1]$ then
	 $$T(x)=\theta\left(1-\theta/2\right) \quad, \quad \sum_{k=1}^{\infty} k\mu_x(k)=\frac{2}{2-\theta}, \quad \text{and} \quad \mathrm{Var}_{\mu_x}=\frac{2\theta}{(1-\theta)(2-\theta)^2}$$
	\emph{(}the variance is infinite when $\theta=1$\emph{)}. 
\end{lemma}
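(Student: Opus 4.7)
The three identities all flow from the classical \emph{tree function} $\mathcal T(x)=\sum_{k\geq 1}\frac{k^{k-1}}{k!}x^k$, which on $[0,\mathrm e^{-1}]$ is the inverse of $\theta\mapsto \theta\mathrm e^{-\theta}$; in particular $\mathcal T(\theta\mathrm e^{-\theta})=\theta$ for every $\theta\in[0,1]$, and implicit differentiation of $\mathcal T(x)\mathrm e^{-\mathcal T(x)}=x$ yields
$$
\mathcal T'(x)=\frac{\mathcal T(x)/x}{1-\mathcal T(x)},\qquad x\in(0,\mathrm e^{-1}).
$$
The plan is to express each of the three quantities in terms of $\mathcal T$ and $\mathcal T'$ and then specialize to $x=\theta\mathrm e^{-\theta}$.

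First I would compute $T(x)$. A termwise differentiation gives $xT'(x)=\sum_{k\geq 1}\frac{k^{k-1}}{k!}x^k=\mathcal T(x)$. Performing the change of variables $x=u\mathrm e^{-u}$, so that $\mathrm dx=(1-u)\mathrm e^{-u}\mathrm du$ and $\mathcal T(x)/x=u/(u\mathrm e^{-u})=\mathrm e^u$, one obtains
$$
T(\theta\mathrm e^{-\theta})=\int_0^\theta \frac{\mathcal T(x(u))}{x(u)}\,\mathrm dx(u)=\int_0^\theta(1-u)\,\mathrm du=\theta-\frac{\theta^2}{2},
$$
using $T(0)=0$. This establishes the first identity.

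Next, for the moments of $\mu_x$, the trick is to recognize
$$
\sum_{k\geq 1}k\,\mu_x(k)=\frac{1}{T(x)}\sum_{k\geq 1}\frac{k^{k-1}}{k!}x^k=\frac{\mathcal T(x)}{T(x)},\qquad \sum_{k\geq 1}k^2\mu_x(k)=\frac{1}{T(x)}\sum_{k\geq 1}\frac{k^{k}}{k!}x^k=\frac{x\mathcal T'(x)}{T(x)}.
$$
Specializing at $x=\theta\mathrm e^{-\theta}$ and inserting $\mathcal T(x)=\theta$, $T(x)=\theta(1-\theta/2)$, and $x\mathcal T'(x)=\mathcal T(x)/(1-\mathcal T(x))=\theta/(1-\theta)$, the mean becomes $\theta/[\theta(1-\theta/2)]=2/(2-\theta)$, and
$$
\sum_{k\geq 1}k^2\mu_x(k)=\frac{\theta/(1-\theta)}{\theta(1-\theta/2)}=\frac{2}{(1-\theta)(2-\theta)}.
$$
Subtracting the square of the mean gives
$$
\mathrm{Var}_{\mu_x}=\frac{2}{(1-\theta)(2-\theta)}-\frac{4}{(2-\theta)^2}=\frac{2(2-\theta)-4(1-\theta)}{(1-\theta)(2-\theta)^2}=\frac{2\theta}{(1-\theta)(2-\theta)^2},
$$
and one checks separately that when $\theta=1$ the series $\sum k^2\mu_x(k)$ diverges, because the general term is of order $k^{-3/2}\cdot k^2=k^{1/2}$.

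There is no real obstacle here: the only subtlety is justifying termwise differentiation and the change of variables, but this is immediate since all the series involved are analytic on $[0,\mathrm e^{-1})$ and continuous up to $\mathrm e^{-1}$ from the left (using Stirling, $\frac{k^{k-2}}{k!}(\mathrm e^{-1})^k=O(k^{-5/2})$ and $\frac{k^{k-1}}{k!}(\mathrm e^{-1})^k=O(k^{-3/2})$).
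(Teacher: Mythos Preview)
Your proof is correct and is essentially the same as the paper's, just phrased in tree-function language: the paper uses the Borel-distribution facts $\sum_{k\ge1}\frac{k^{k-1}}{k!}\theta^k e^{-k\theta}=\theta$ and $\sum_{k\ge1}\frac{k^{k}}{k!}\theta^k e^{-k\theta}=\theta/(1-\theta)$, which are exactly your identities $\mathcal T(\theta e^{-\theta})=\theta$ and $x\mathcal T'(x)=\theta/(1-\theta)$, and then obtains $T(x(\theta))$ by differentiating in $\theta$ (your change of variables is the same computation read the other way). One tiny slip: in your final sentence $\mu_{e^{-1}}(k)\asymp k^{-5/2}$, so $k^2\mu_{e^{-1}}(k)\asymp k^{-1/2}$, not $k^{1/2}$; the divergence conclusion is of course unaffected.
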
 

\begin{proof}
By definition of the Borel distribution and its expectation,
$$\sum_{k\geq 1}\frac{k^{k-1}}{k!}\cdot \theta^{k}\mathrm{e}^{-k\theta}=\theta,  \qquad \sum_{k\geq 1}\frac{k^{k}}{k!}\cdot \theta^{k}\mathrm{e}^{-k\theta}=\frac{\theta}{1-\theta}, \quad  \forall \theta \in [0,1].$$
Setting $x(\theta)=\theta\mathrm{e}^{-\theta}$, $~T(0)=0$ and differentiating the function
$
\theta \in [0,1] \mapsto T(x(\theta)),
$
we see that 
\begin{eqnarray*}
\partial_{\theta} T(x(\theta))=\sum_{k\geq 1}\frac{k^{k-1}}{k!}\theta^{k-1}\mathrm{e}^{-k\theta}-\sum_{k\geq 1}\frac{k^{k-1}}{k!}\theta^{k}\mathrm{e}^{-k\theta} = 1-\theta.
\end{eqnarray*}
Since $T(x(0))=0$, this indeed gives $T(x(\theta))=\theta-\theta^2/2$ for all $\theta \in (0,1]$ and then 
$$
\sum_{k=1}^{\infty} k\mu_{x(\theta)}(k)=\frac{1}{T(x(\theta))} \cdot \sum_{k\geq 1}\frac{k^{k-1}}{k!}\theta^{k}\mathrm{e}^{-k\theta}=\frac{2}{2-\theta}
$$
$$
\sum_{k=1}^{\infty} k^2\mu_{x(\theta)}(k)=\frac{1}{T(x(\theta))} \cdot \sum_{k\geq 1}\frac{k^{k}}{k!}\theta^{k}\mathrm{e}^{-k\theta}=\frac{2}{(1-\theta)(2-\theta)},
$$
leading to the result.
\end{proof}

\subsection{Wormald's differential equation method}

We give here a version of Wormald's theorem, initially proved in \cite{wormald95} and then deepened in \cite{wormald97,Warnke19}. Fix $k \in \mathbb N$. For $n \in \mathbb N$, let $\big(\mathbf F_n(m),m \in \mathbb Z_+ \big)$ be a filtration and let $Y^{(1)}_n,...,Y^{(k)}_n$ be $\mathbf F_{n}$-adapted discrete-time stochastic processes. Assume that there exists some constant $C_0$ such that $|Y_n^{(l)}(m)| <C_0 n$ almost surely for all $m \in \mathbb Z_+$, $1 \leq l \leq k$, $n \in \mathbb N$. Let then $D$ be a bounded open subset of $\mathbb{R}^{k+1}$ and for $1\leq l\leq k$, $$F_l:D \rightarrow \mathbb R\quad  \text{be a Lipschitz function}.$$ Finally let $H_D(Y^{(1)}_n,...,Y^{(k)}_n)$ be the first time $m \in \mathbb Z_+$ at which $$\left(\frac{m}{n},\frac{Y^{(1)}_n(m)}{n},\ldots,\frac{Y^{(k)}_n(m)}{n}\right)\notin D$$
with the usual convention $\inf{\{\emptyset\}}=\infty$. This  is a stopping time with respect to the filtration $\mathbf F_{n}$.

\begin{theorem}[Theorem 5.1 in \cite{wormald97}, Theorem 2 in \cite{Warnke19}]
\label{thmDEM}
	Assume that $D$ contains the closure of 
	$$\left\{(0,z_1,\ldots,z_k)\in \mathbb{R}^{k+1}:\mathbb{P}\big(Y_n^{(l)}(0)=z_l n, 1\leq l\leq k \big)\neq 0 \text{ for some } n\right\}$$
	and that the two following hypotheses hold for all $n \in \mathbb N$:
	\begin{enumerate}
		\item[$\bullet$] \emph{Boundedness hypothesis.} For some functions $\beta:\mathbb N \rightarrow [1,\infty)$ and $\gamma:\mathbb N \rightarrow [0,1]$ the probability that
		$$\max_{1\leq l\leq k}\left\vert \Delta Y^{(l)}_n(m+1)\right\vert \leq \beta(n),$$ conditional on $\mathbf F_{n}(m)$, is at least $1-\gamma(n)$ when $m<H_D(Y^{(1)}_n,...,Y^{(k)}_n)$.
		\item[$\bullet$] \emph{Trend hypothesis.} For some function $\lambda:\mathbb N \rightarrow \mathbb R_+$ such that $\lambda=o(1)$ as $n \rightarrow \infty$, for all $1 \leq l\leq k$,
		$$\left\vert \mathbb{E}\left[\Delta Y^{(l)}_n(m)\vert \mathbf{F}_{n}(m) \right]-F_l\left(\frac{m}{n},\frac{Y^{(1)}_n(m)}{n},...,\frac{Y^{(k)}_k(m)}{n}\right)\right\vert\leq \lambda(n)$$ when $m<H_D(Y^{(1)}_n,...,Y^{(k)}_n)$.
	\end{enumerate}
	Then:
	\begin{enumerate}[topsep=0cm]
		\item[\emph{(a)}] For $(0,{z}_1,\ldots,{z}_a)\in D$, the system of differential equations
		$$y'_l(t)=F_l(t,y_1,\ldots,y_k),\quad y_l(0)=\hat{z}_l, \quad l=1,\ldots,k $$
		has a unique maximal solution.
		\item[\emph{(b)}] Let $\eta(n) \geq \lambda(n)+C_0 n\gamma(n) $ with $\eta(n)=o(1)$. For a sufficiently large constant $C$, with probability $1-O\left(n\gamma(n)+\frac{\beta(n)}{\eta(n)}\exp\left(-\frac{n\eta^3(n)}{\beta^3(n)}\right)   \right),$
		$$Y^{(l)}_n(m)=n y_l\left(\frac{m}{n}\right)+O\left(\eta(n) n\right) $$
		uniformly in $0\leq m\leq \sigma(n) n $ and $1\leq l \leq k$, where $y_l$ is the solution in \emph{(a)} with $z_l=Y^{(l)}_n(0)/n$, and $\sigma(n)$ is the supremum of the times $t$ to which the solution can be extended before reaching within $\ell^{\infty}$-distance $C\eta(n)$ of the boundary of $D$.
	\end{enumerate}
\end{theorem}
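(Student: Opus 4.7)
The existence and uniqueness of the maximal solution $y=(y_1,\ldots,y_k)$ in part (a) follows from the Picard--Lindel\"of theorem, since $F=(F_1,\ldots,F_k)$ is Lipschitz on each compact subset of the open set $D$ and the initial condition $(0,z_1,\ldots,z_k)$ lies in $D$; one extends the local solution maximally as long as it stays inside a compact subset of $D$.

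For part (b), the plan is to control the discrepancies $E_n^{(l)}(m) := Y_n^{(l)}(m) - n\, y_l(m/n)$ up to the stopping time $H := H_D(Y_n^{(1)},\ldots,Y_n^{(k)})$. First, let $B_n$ be the event that $|\Delta Y_n^{(l)}(m)|\le \beta(n)$ for every $1\le l\le k$ and every $m<H$ with $m\le \sigma(n) n$; by the boundedness hypothesis and a union bound, $\mathbb P(B_n^c)=O(n\gamma(n))$. Decompose, for $m<H$,
$$\Delta E_n^{(l)}(m) = \Delta M_n^{(l)}(m) + R_n^{(l)}(m),$$
where $\Delta M_n^{(l)}(m) := \Delta Y_n^{(l)}(m)-\mathbb E[\Delta Y_n^{(l)}(m)\mid \mathbf F_n(m)]$ is a martingale difference and $R_n^{(l)}(m)$ is $\mathbf F_n(m)$-measurable. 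Using the trend hypothesis, the Taylor expansion $n[y_l((m+1)/n)-y_l(m/n)] = F_l(m/n,y(m/n)) + O(1/n)$, and the Lipschitz property of $F_l$, we obtain
$$|R_n^{(l)}(m)| \le \lambda(n) + \frac{C_F}{n}\,\max_{l'}|E_n^{(l')}(m)| + O(1/n),$$
with $C_F$ a Lipschitz constant of $F$ on a suitable compact of $D$.

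On $B_n$, the martingale $M_n^{(l)}$ has jumps bounded by $2\beta(n)$, so Azuma--Hoeffding gives, at each fixed $m^*$, the pointwise estimate $\mathbb P(|M_n^{(l)}(m^*)|\ge \eta(n) n/2)\le 2\exp(-c\,n\eta(n)^2/\beta(n)^2)$. To get a uniform bound over $m$ I would sample on a grid of spacing $\lfloor n\eta(n)^2/\beta(n)^2\rfloor$, on which the martingale can drift by at most $\beta(n)\cdot n\eta(n)^2/\beta(n)^2=n\eta(n)^2/\beta(n)$ between grid points while having $O(\beta(n)^2/\eta(n)^2)$ points, and then optimize the grid size to produce the probability $O\bigl(\frac{\beta(n)}{\eta(n)}\exp(-n\eta(n)^3/\beta(n)^3)\bigr)$ claimed in the theorem for $\max_{l,\,m\le \sigma(n) n}|M_n^{(l)}(m)|\le n\eta(n)/2$.

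On the intersection of $B_n$ with this martingale event, iterating
$$\max_l |E_n^{(l)}(m+1)| \le \Bigl(1+\frac{C_F}{n}\Bigr) \max_l |E_n^{(l)}(m)| + \lambda(n) + O(1/n) + \max_l |\Delta M_n^{(l)}(m)|$$
and applying the discrete Gronwall inequality yield $\max_l |E_n^{(l)}(m)| = O(\eta(n) n)$ uniformly in $m\le \sigma(n) n \wedge H$. The main technical obstacle is the interplay between the error estimate and the stopping time $H$: the bound on $R_n^{(l)}$ is only valid while $m<H$, whereas one wants to conclude that the trajectory does not leave $D$ before time $\sigma(n) n$. This is handled by a standard bootstrap: introduce $\tau := H \wedge \inf\{m : \max_l |E_n^{(l)}(m)| > C\eta(n) n\}$, observe that on the good event $|E_n^{(l)}(m)|\le C\eta(n) n /2$ strictly before $\tau$, and use the definition of $\sigma(n)$ --- namely that $y(t)$ remains at $\ell^\infty$-distance $>C\eta(n)$ from $\partial D$ for $t\le \sigma(n)$ --- to deduce that $\tau \wedge \sigma(n) n = \sigma(n) n$, thereby closing the argument.
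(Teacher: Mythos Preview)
The paper does not prove this theorem: it is quoted in the Appendix as a known result, with references to Wormald \cite{wormald97} and Warnke \cite{Warnke19}, and is used as a black box in Section~\ref{sec:mise_en_place}. There is therefore no ``paper's own proof'' to compare your attempt against.

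That said, your sketch follows the standard line of argument one finds in those references: Picard--Lindel\"of for part~(a), and for part~(b) a martingale/drift decomposition, Azuma--Hoeffding for the martingale fluctuations, a discrete Gronwall argument for the accumulated drift error, and a bootstrap to handle the exit-time issue. This is the right architecture. One point where your sketch is loose is the derivation of the precise failure probability $O\bigl(\tfrac{\beta(n)}{\eta(n)}\exp(-n\eta(n)^3/\beta(n)^3)\bigr)$: your grid argument as written gives a union over $O(\beta(n)^2/\eta(n)^2)$ points of tails of size $\exp(-c\,n\eta(n)^2/\beta(n)^2)$, which does not immediately match; getting the exact exponent requires a more careful supermartingale argument (as in Warnke's proof) rather than a naive grid. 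Also, the Taylor step $n[y_l((m+1)/n)-y_l(m/n)]=F_l(m/n,y(m/n))+O(1/n)$ needs that $F_l$ is bounded on the relevant compact, which you use implicitly. These are refinements rather than gaps in the overall strategy.
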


\bibliographystyle{siam}
\bibliography{biblioER_V10}

\end{document}